\newtheorem{prop}{\bf Proposition}[section]
\newtheorem{cor}[prop]{\bf Corollary}
\newtheorem{rem}[prop]{\bf Remark}
\newtheorem{thm}[prop]{\bf Theorem}
\newtheorem{lemma}[prop]{\bf Lemma}
\newtheorem{hyp}[prop]{\bf Assumption}
\newcommand{\norme}[1]{\left\|#1\right\|}
\newcommand{\normesmall}[1]{\|#1\|}
\newcommand{\normebig}[1]{\big\|#1\big\|}
\newcommand{\sumiunn}{\sum_{i=1}^n}
\newcommand{\operatorvec}{{\textnormal{vec}}}
\newcommand{\minvp}{\lambda_{\min}}
\newcommand{\maxvp}{\lambda_{\max}}
\newcommand{\trace}{{ \mathrm{tr} }}
\newcommand{\Id}{\mathbb{I}}
\newcommand{\INTST}{\mathbb{N}^{*}}
\newcommand{\Rb}{\mathbb{R}}
\newcommand{\Rstarplus}{\mathbb{R}_+^*}
\newcommand*{\support}{\mathrm{support}}
\newcommand{\Indicator}{\mathds{1}}
\newcommand{\convD}{
\underset{n\to +\infty}{\overset{d}   {\longrightarrow}}}
\newcommand{\convP}{
\underset{n\to +\infty}{\overset{P}   {\longrightarrow}}}
\newcommand*{\eventA}{{\mathcal{A}}}
\newcommand*{\eventB}{{\mathcal{B}}}
\newcommand*{\eventC}{{\mathcal{C}}}
\newcommand{\iid}{\text{i.i.d.}}
\newcommand{\Normale}{\mathcal{N}}
\newcommand*{\QuantileGauss}{{ q_{\Normale(0,1)} }}
\newcommand*{\QuantileGaussAtsmall}[1]{\QuantileGauss(#1)}
\newcommand*{\QuantileGaussAt}[1]{\QuantileGauss \big( #1 \big)}
\newcommand*{\QuantileGaussAtBig}[1]{\QuantileGauss \Big( #1 \Big)}
\newcommand{\alphamin}{{ \alpha_{\min_{}} }}
\newcommand{\Distribution}{P}
\newcommand{\Distributionde}[1]{\Distribution_{#1}}
\newcommand{\Thetatilde}{\widetilde{\Theta}}
\newcommand*{\ThetaBEE}{\Theta_{\mathrm{EE}}}
\newcommand*{\ThetaBEEsigmaknown}{\Theta_{\mathrm{EE}}^{\textnormal{known}}}
\newcommand*{\Thetasigmaknown}{\Theta^{\textnormal{known}}}
\newcommand*{\ThetaBEEstrict}{\Theta_{\mathrm{EE}}^{\mathrm{strict}}}
\newcommand{\ThetaGauss}{\Theta_{\mathrm{Gauss}}}
\newcommand{\ThetaKasy}{\Theta_{\mathrm{Kasy}}}
\newcommand{\ThetaBC}{\Theta_{\mathrm{BC}}}
\newcommand*{\setDistribXeps}{\mathcal{P}_{X,\eps}}
\newcommand{\Probsousthetan}{{\Prob_{\theta}}}
\newcommand{\Expsousthetan}{{\E_{\theta}}}
\newcommand{\Expsoustheta}{{\E_{\theta}}}
\newcommand{\Varsoustheta}{{\V_{\theta}}}
\newcommand{\Prob}{\mathbb{P}}
\newcommand{\E}{\mathbb{E}}
\newcommand{\V}{\mathbb{V}}
\newcommand{\MeanXi}{\overline{\xi}_{n}}
\newcommand{\lambdatroisn}{\lambda_{3,n}}
\newcommand*{\majorantKurtxi}{K_{\xi}}
\newcommand*{\majorantEnormeXepsquatre}{K_{\eps}}
\newcommand*{\majorantEXXprimeconcentration}{K_{\mathrm{reg}}}
\newcommand*{\minlambdaEXXt}{\lambda_{\mathrm{reg}}}
\newcommand*{\minlambdaEXXtepssquare}{\lambda_{\eps}}
\newcommand*{\majorantEnormeXquatreunplusrho}{K_{X}}
\newcommand{\uu}{{u}}
\newcommand{\CIEdgu}{  \textnormal{CI}_\uu^{\textnormal{Edg}}(1-\alpha,n)}
\newcommand{\CIAsympu}{\textnormal{CI}_\uu^{\textnormal{Asymp}}(1-\alpha,n)}
\newcommand{\CS}{\textnormal{CS}}
\newcommand{\CSalphan}{\CS(1-\alpha,n)}
\newcommand{\CSalphannumero}[1]{{\CS_{\mathrm{#1}}(1-\alpha,n)}}
\newcommand{\CSHoeff}{\CSalphannumero{Hoeff}}
\newcommand{\CIalphannumero}[1]{{\mathrm{CI}_{\mathrm{#1}}(1-\alpha,n)}}
\newcommand{\CIsigmaknown}{\CIalphannumero{\sigmaknown}}
\newcommand{\CIsigmanhat}{\CIalphannumero{\sigmanhat}}
\newcommand{\Edg}{{\mathrm{Edg}_n}}
\newcommand{\DeltanE}{{\Delta_{n,\mathrm{E}}}}
\newcommand{\DeltanB}{{\Delta_{n,\mathrm{B}}}}
\newcommand*{\nuEdg}{\nu_n^{\mathrm{Edg}}}
\newcommand*{\nuApprox}{\nu_n^{\mathrm{Approx}}}
\newcommand*{\nuVar}{\nu_n^{\textrm{Var}}}
\newcommand*{\RnLin}{R_{n,\mathrm{lin}}}
\newcommand*{\RnVar}{R_{n,\mathrm{var}}}
\newcommand*{\QnEdg}{Q_n^{\mathrm{Edg}}}
\newcommand*{\Cn}{C_n}
\newcommand*{\betazero}{\beta_{0}}
\newcommand{\betazeroindice}[1]{\beta_{0,{#1}}}
\newcommand{\betahat}{\widehat{\beta}}
\newcommand*{\AsymptoticVarbetahat}{V}
\newcommand{\Vhat}{{   \widehat{V}}}
\newcommand{\Voracle}{{\mathscr{V}}}
\newcommand{\sigmaknown}{\sigma_{\textnormal{known}}}
\newcommand{\sigmadeux}{\sigma^2}
\newcommand*{\sigmanhat}{ \widehat{\sigma}}
\newcommand*{\sigmadeuxnhat}{ \widehat{\sigma}^2}
\newcommand*{\sigmazeronhat}{ \widehat{\sigma}_0}
\newcommand*{\sigmazerocarrenhat}{\widehat{\sigma}^2_0}
\newcommand{\eps}{\varepsilon}
\newcommand{\epshat}{\widehat{\eps}}
\title{Can we have it all? \texorpdfstring{\\}{} Non-asymptotically valid and asymptotically exact
\texorpdfstring{\\}{}confidence intervals for expectations and linear regressions
}
\author{Alexis Derumigny\thanks{Delft University of Technology, Mourik Broekmanweg 6,
2628 XE  Delft, Netherlands.
\newline E-mail address: a.f.f.derumigny@tudelft.nl},
Lucas Girard\thanks{Centre de Recherche en \'Economie et de Statistiques (CREST), CNRS, \'Ecole polytechnique, GENES, ENSAE Paris, Institut Polytechnique de Paris, 91120 Palaiseau, France.
\newline E-mail address: lucas.girard@ensae.fr},
Yannick Guyonvarch\thanks{PSAE-INRAE, 22 Place de l'Agronomie, 91120 Palaiseau, France.
\newline E-mail address:
yannick.guyonvarch@inrae.fr
\newline \indent We would like to thank Laurent Davezies, Xavier D'Haultf\oe{}uille, seminar or conference participants at CREST, University of Surrey, Université Paris-Sud, CIREQ Montreal Econometrics Conference 2022, CEPS Economics seminar, and Paris Econometrics Workshop 2025.
All possible errors remain ours.
Lucas Girard was supported by Project-ANR-23-CE26-0008 MLIVE, headed by Elia Lapenta, during the final period of the project (from September 2024).}
}
\date{\today{}
}
\begin{document}

\maketitle

\begin{abstract}
We contribute to bridging the gap between large- and finite-sample inference
by studying confidence sets (CSs) that are both \textit{non-asymptotically valid} and \textit{asymptotically exact uniformly} (NAVAE) over semi-parametric statistical models.
NAVAE CSs are not easily obtained; for instance, we show they do not exist over the set of Bernoulli distributions.
We first derive a generic sufficient condition: NAVAE CSs are available as soon as uniform asymptotically exact CSs are.
Second, building on that connection, we construct closed-form NAVAE confidence intervals (CIs) in two standard settings -- scalar expectations and linear combinations of OLS coefficients -- under moment conditions only.
For expectations, our sole requirement is a bounded kurtosis.
In the OLS case, our moment constraints accommodate heteroskedasticity and weak exogeneity of the regressors.
Under those conditions, we enlarge the Central Limit Theorem-based CIs, which are asymptotically exact, to ensure non-asymptotic guarantees. Those modifications vanish asymptotically so that our CIs coincide with the classical ones in the limit.
We illustrate the potential and limitations of our approach through a simulation study.

\medskip

\noindent
\textbf{Keywords}: non-asymptotic inference; efficient confidence intervals; heteroskedastic linear models.

\noindent
\textbf{MSC Classification}: 62G15; 62J05.
\end{abstract}

\section{Introduction}
\label{CIchap:sec:introduction}

Mathematical statistics results are often divided into two branches: non-asymptotic (also called finite-sample) and asymptotic results.
In this way, most confidence sets are typically designed to have either good asymptotic properties (based on limiting results, usually the Central Limit Theorem) or good non-asymptotic properties (based on concentration inequalities).
In this paper, we try to bridge this gap and construct confidence sets -- more precisely, intervals -- that are NAVAE, i.e., both \textit{Non-Asymptotically Valid} (their coverage probability is at least their nominal level for any sample size \(n\)) and \textit{Asymptotically Exact}
\textit{uniformly} over the statistical model (their coverage probability uniformly tends to the nominal level as $n \to \infty$).

\medskip

Non-asymptotically exact confidence sets (defined by their coverage probability being exactly equal to their nominal level for any sample size) are automatically NAVAE uniformly on the corresponding parameter set.
However, they can be constructed only in very particular cases, such as the classical Student confidence interval for the expectation of a normal distribution.
Even in seemingly simple cases, constructing uniform NAVAE confidence sets can prove difficult or sometimes even impossible, as displayed in the following result on Bernoulli distributions.
\begin{prop}
    Let \(\mathfrak{Ber}\) be the class of all Bernoulli distributions with parameter \({p \in (0,1)}\).
    For any~\(\alpha \in (0,1)\), there is no confidence set for $p$ that is both~non-asymptotically valid and~asymptotically exact uniformly over $\mathfrak{Ber}$ at nominal level~\(1 - \alpha\).
    \label{prop:impossibility_NAVAE_Bernoulli}
\end{prop}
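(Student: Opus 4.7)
I would argue by contradiction, exploiting the fact that under any $p$ sufficiently close to $0$ the data $(X_1,\dots,X_n)$ coincides with the all-zero vector with near-unit probability. This will force non-asymptotic validity to produce a severe overcoverage for small $p$, contradicting uniform convergence of the coverage to $1-\alpha<1$.

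Assume a CS family $(\CSalphan)_{n \geq 1}$ is NAVAE uniformly over $\mathfrak{Ber}$, and denote by $B_n \subseteq (0,1)$ the deterministic realization of $\CSalphan$ on the all-zero outcome. Splitting the coverage probability according to whether $\{X_1=\cdots=X_n=0\}$ occurs yields
\[
\Prob_p\!\left(p \in \CSalphan\right) \leq (1-p)^n \, \Indicator\{p \in B_n\} + \bigl(1-(1-p)^n\bigr).
\]
If $p \notin B_n$ the right-hand side equals $1-(1-p)^n$, and non-asymptotic validity at level $1-\alpha$ forces $(1-p)^n \leq \alpha$, i.e.\ $p \geq \delta_n := 1 - \alpha^{1/n}$. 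Contrapositively, $(0, \delta_n) \subseteq B_n$ for every $n \geq 1$.

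A Taylor expansion yields $\delta_n \sim |\log \alpha|/n$, so any sequence $p_n \to 0$ with $n p_n \to 0$ eventually lies in $(0,\delta_n)$; the choice $p_n = 1/n^2$ works. For such $p_n$, the inclusion $p_n \in B_n$ gives
\[
\Prob_{p_n}\!\left(p_n \in \CSalphan\right) \geq \Prob_{p_n}(X_1=\cdots=X_n=0) = (1-p_n)^n \longrightarrow 1,
\]
so $\sup_{p \in (0,1)} \Prob_p(p \in \CSalphan) \to 1$, contradicting the required uniform convergence of the coverage to $1-\alpha$.

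\textbf{Main obstacle.} The argument reduces to a one-step contradiction; the only substantive choice is the perturbation $p_n$. It must lie in the forced neighborhood $(0,\delta_n) \sim (0,|\log \alpha|/n)$ yet be small enough that $(1-p_n)^n \to 1$, i.e.\ $n p_n \to 0$. The window between $1/n$ and $o(1/n)$ is wide, and $p_n = 1/n^2$ (or $p_n = 1/(n\log n)$) fits comfortably. The entire argument rests on recognizing that the all-zero outcome retains non-negligible probability even as $p \to 0^+$, which is precisely what forbids simultaneously finite-sample validity and asymptotic exactness at a sub-unit nominal level.
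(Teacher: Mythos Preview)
Your proof is correct and follows essentially the same route as the paper's: both exploit that under any sufficiently small $p$ the all-zero sample has near-unit probability, so non-asymptotic validity forces the confidence set evaluated at $(0,\dots,0)$ to contain that $p$, which in turn produces coverage near $1$ and contradicts uniform asymptotic exactness. The only cosmetic difference is that the paper chooses a single explicit $p_n$ (via $q_n = \max\{((1+\alpha)/2)^{1/n},(1-\alpha/2)^{1/n}\}$) giving a clean lower bound $\alpha/2$ on the coverage gap valid for \emph{every} $n$, whereas you first characterize the whole forced interval $(0,\delta_n)\subseteq B_n$ and then pick $p_n=1/n^2$ to drive the coverage to $1$ asymptotically; both choices land inside the same interval and yield the same contradiction.
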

Confidence sets and their properties 
are formally defined in Section~\ref{sec:not_qual_measure} and a proof of this proposition is given in Appendix~\ref{proof:prop:impossibility_NAVAE_Bernoulli}.
Note that it is not a consequence of the impossibility results by \cite{bahadur1956} or by \cite{bertanha2016impossible}.
It is also unrelated to \cite{dufour1997some}'s impossibility results, which apply to locally almost unidentified parameters; this is not the case here since there is no identification issue.

\medskip

Despite the negative result of Proposition~\ref{prop:impossibility_NAVAE_Bernoulli}, we construct in this paper NAVAE confidence intervals uniformly over infinite-dimensional classes of distributions for two specific parameters:
\begin{itemize}
    \item the expectation of a (potentially non-normal) distribution, discussed in Section~\ref{sec:expectation_example};
    
    \item
    a scalar parameter of the form $\uu'\beta_0$, where $\uu$ is a known vector and $\betazero$ corresponds to the coefficients of a linear regression, discussed in Section~\ref{sec:lin_mod_exo}.
    Individual coefficients fall in that framework.
\end{itemize}

To build such confidence intervals, we only impose moment restrictions on our class of distributions.
In particular, we do not use any parametric assumption, be it in the expectation or in the linear regression case.
In the latter setup, our moment conditions encompass very general regression designs: regressors are only assumed to be orthogonal to error terms, and heteroskedasticity is allowed.
Those moment conditions are necessary to avoid the impossibility results of \cite{bahadur1956} and enable us to leverage the asymptotic normality of the sample mean/OLS estimator.
That property is crucial in our construction.
Indeed, our confidence intervals (CIs) are built from asymptotically negligible modifications of the standard intervals based on the Central Limit Theorem (CLT).
Those modifications rely on bounds on Edgeworth expansions (see, e.g., \cite{esseen1945,cramer1962}, and the introduction in \cite{derumigny2022explicitBE} for additional references) to control the distance of the distribution of a standardized empirical mean to its Gaussian limit and obtain finite-sample guarantees (NAV part). 
Uniformly over the class delineated by our moment conditions, our CIs further coincide asymptotically with the standard ones based on the CLT and are thus uniformly asymptotically exact (AE part).
Besides, that coincidence implies that our intervals are asymptotically of minimal width among NAVAE ones according to the efficiency criterion of \cite{romano2000}.

\medskip

Our confidence intervals for the expectation are related to the ones proposed by \cite{romano2000} and \cite{austern2022efficient}.
Both papers primarily focus on efficiency instead of uniform asymptotic exactness.
Furthermore, they assume that the random variables are bounded or of bounded variation around their expectation.
In contrast, the confidence intervals we propose are NAVAE uniformly over a set of distributions that includes distributions with unbounded support (even after recentering at the expectation).
The recent paper by \cite{waudby2024estimating} proposes new confidence intervals that also rely on the boundedness assumption.
\cite{hall1995} construct confidence intervals that are asymptotically exact uniformly over a certain infinite-dimensional class of distributions, but their CIs are not shown to be non-asymptotically valid. 

\medskip

In the linear regression setting, \cite{gossner2013finite} build two non-asymptotic inference methods, valid under bounded outcome variables and allowing for heteroskedastic errors.
There are several limitations, though, either theoretical or practical: the first method is not exact asymptotically, and neither the first nor the second yield closed-form confidence sets, which may entail cumbersome computations.
\cite{dHaultfoeuille2024robust} and \cite{pouliot2024} propose non-asymptotically exact confidence sets by inverting permutation-based tests. 
In~\cite{dHaultfoeuille2024robust}, non-asymptotic exactness relies on an independence assumption between error terms and the regressors of interest, conditional on the remaining regressors, while in~\cite{pouliot2024}, the imposed condition is that the joint distribution of regressors and errors is invariant to a permutation of the errors. 
There are two main limitations to these approaches: the maintained conditions restrict possible heteroskedasticity patterns, and, unlike ours, the proposed confidence sets are not of closed form.
On the other hand, these methods are shown to be asymptotically exact under assumptions close to ours.
In the statistics literature, \cite{kuchibhotla2024hulc} recently proposed a method to build NAVAE confidence sets on functionals in a broad class of statistical models.
While these results are very general and proved under mild moment assumptions on the data distribution, the proposed method is not efficient in linear regression models.
This means that it delivers confidence intervals that are not of the smallest possible width asymptotically.
In a general \(M\)-estimation framework, \cite{brunel2019nonasymptotic} derive non-asymptotic confidence intervals with closed-form expressions on individual components of the coefficients' vector.
The main drawback of their result is a lack of asymptotic exactness.
All in all, to the best of our knowledge, our procedure is the first to meet the following five criteria at the same time in linear regression: 
\begin{enumerate}
    \item[(i)] non-asymptotic validity,
    \item[(ii)] uniform asymptotic exactness,
    \item[(iii)] efficiency \textit{\`a la} \cite{romano2000},
    \item[(iv)] allowing for arbitrary heteroskedasticity,
    \item[(v)] having a closed-form expression.
\end{enumerate}

\medskip

The rest of the paper is organized as follows.
Section~\ref{sec:not_qual_measure} introduces some notation and a number of quality measures for confidence sets.
In this section, we also present and prove some generic impossibility/possibility results on constructing NAVAE confidence sets.
Section~\ref{sec:expectation_example} discusses how to construct NAVAE confidence intervals for an expectation and illustrates the major trade-offs and strategies in this simpler case.
Some additional material on this question is provided in Appendix~\ref{appendix:sec:illustration_expectation}.
Our CIs for linear regressions are defined and shown to be NAVAE in Section~\ref{sec:lin_mod_exo}, and we further study the rate of convergence of their coverage towards the nominal level in that section.
We discuss the practical implementation of our methods in Section~\ref{sec:practical_considerations_plugin} together with some simulations.
The rest of the proofs and useful intermediary lemmas are reported in Appendices~\ref{appendix:sec:proof_Section1}, \ref{appendix:sec:proofs_section3_expectation}, \ref{appendix:sec:proof_section4_OLS},
and~\ref{appendix:sec:additional_lemmas}.
All confidence intervals introduced in this paper are implemented in the open-source \texttt{R} package \texttt{NAVAECI} \cite{NAVAECI}.

\section{Notation and quality measures for confidence sets}
\label{sec:not_qual_measure}

\subsection{Notation}

For a random variable~$D$, we denote by $\Distributionde{D}$ its distribution and by \(\support(D)\) or \(\support(\Distributionde{D})\) its support.
Similarly, \(\Distributionde{D, \, U}\) denotes the joint distribution of a pair of random variables~\((D, U)\).
For a parameter $\theta$ associated with a given statistical model, \(\Distribution_\theta\) denotes a distribution indexed by that parameter.
Remember that \(\Distribution_{D, \, U} = \Distribution_D \otimes \Distribution_U\) means that $D$ and $U$ are independent.
For any set $\mathcal{D}$, $\mathcal{P}(\mathcal{D})$ denotes the set of all probability distributions on~$\mathcal{D}$.
For any real vector \(u = (u_1, \ldots, u_d)\), \(\norme{u} := (u_1^2 + \ldots + u_d^2)^{1/2}\) denotes its Euclidean or \(\ell_2\)-norm.
For matrices, we consider the operator norm induced by the Euclidean norm: for any real matrix~\(M\), \(\norme{M}\) denotes its spectral norm (also known as the operator 2-norm), namely the square root of the largest eigenvalue of \(M'M\), with \(M'\) the transpose of~\(M\).
We also denote by \(\minvp(M)\) (respectively \(\maxvp(M)\)) the smallest (resp. largest) eigenvalue of~\(M\), \(\trace(M)\) its trace, and \(M^\dagger\) the Moore–Penrose pseudo-inverse of a square matrix~\(M\).
\(\operatorvec(\cdot)\) denotes the vectorization of a matrix, that is, if \(M\) is a \(m \times n\) matrix, \(\operatorvec(M)\) is the \(mn \times 1\) column vector obtained by stacking the columns of the matrix~\(M\) on top of one another.
For any positive integer~\(p\), \(\mathbb{I}_p\) denotes the \(p \times p\) identity matrix.
For any distribution $\Distribution$ and real number ${\tau \in (0,1)}$, $q_\Distribution(\tau)$ denotes the quantile at order~$\tau$ of the distribution.
Since we remain in an independent and identically distributed (\iid{}) setup throughout the article, we sometimes drop the subscript~\(i\) of random variables to lighten notations. 
In other words, if \(D_1, \ldots, D_n\) are \(n\) \iid{} random variables, \(D\) without subscript denotes a generic random variable with the same distribution.
Moreover, for a statistical model $(P_\theta)_{\theta \in \Theta}$ we denote by \(\Probsousthetan\) (respectively \(\Expsoustheta\), $\Varsoustheta$, and so on) the probability (respectively expectation, variance, etc.) with respect to the joint distribution~\(P_\theta^{\otimes n}\).

\subsection{Quality measures for confidence sets}
\label{ssec:qual_measure}

We start by formally defining several attributes of confidence sets that characterize their quality.
We do so in a general framework that encompasses linear models.
Let $(\mathcal{X}, \mathcal{B})$ be a measurable space, and assume that we observe ${n \in \INTST}$ \iid{} $\mathcal{X}$-valued random elements $(\xi_i)_{i=1}^n$ from a common probability space $(\Omega, \mathcal{A}, P_\theta)$, where $P_\theta$ is a distribution from a given statistical model $(P_\theta)_{\theta \in \Theta}$.

\begin{rem}
    In most applications $\mathcal{X} = \Rb^d$ for some $d \in \INTST$ and
    in many cases, the parameter space $\Theta$ can be written as a product space $\Theta = \Rb^p \times \Theta_2$,
    where $\Theta_2$ is a topological space and~\({p \in \INTST}\).
    This arises, for instance, in semi-parametric models, which are ubiquitous in econometrics. 
    A model is said to be semi-parametric when the distribution of the data is characterized by both a finite-dimensional parameter $\theta_1$, which is the one of interest, and an infinite-dimensional ``nuisance'' parameter $\theta_2$, which may belong, for example, to some set of probability distributions.
    In linear regression models, $\Theta_2$ is a non-parametric set of joint distributions for regressors and error terms.
    In such a case, the target $T(\theta_1, \theta_2)$ usually only depends on $\theta_1$, for example, if we want to estimate one of the coefficients of a linear regression.
\end{rem}

We seek to construct a confidence set for a real-valued function $T(\cdot)$ of the parameter~$\theta$.
Loosely speaking, a confidence set is a subset of $\Rb$ computable from the data that aims to contain \(T(\theta)\) with prescribed probability \({1 - \alpha}\), known as its confidence or nominal level, for some user-chosen \({\alpha \in (0,1)}\).
Formally, we can define a method $\CS(\, \cdot \,)$ of constructing confidence sets in several equivalent ways:
\begin{enumerate}
    \item for every $n \in \INTST$, 
    $\CS(\, \cdot \,)$ is a function from $(0, 1) \times \mathcal{X}^n$
    to the set of Borel subsets $\mathcal{B}(\Rb)$;

    \item $\CS(\, \cdot \,)$ is a function from $(0, 1) \times \bigsqcup_{n = 1}^{+\infty} \mathcal{X}^n$
    to $\mathcal{B}(\Rb)$;

    \item $\CS(\, \cdot \,)$ is a function from $(0, 1) \times \INTST \times \mathcal{X}^{\INTST}$
    to $\mathcal{B}(\Rb)$ where
    $\CS(1 - \alpha, n, (\xi_i)_{i \in \INTST})$ only depends on the first $n$-th entries of $(\xi_i)_{i \in \INTST}$;
\end{enumerate}
such that
$\{\omega \in \Omega: \, T(\theta) \in
\CS(1 - \alpha, (\xi_i(\omega))_{i=1}^n)\}$
is measurable for every $\alpha \in (0, 1)$, \(n \in \INTST\), and $\theta \in \Theta$.
This is the minimal requirement to be able to define the coverage probabilities
$\Probsousthetan\big(\text{CS}(1 - \alpha, (\xi_i)_{i=1}^n) \ni T(\theta) \big)$.
For a given nominal level, we then write a confidence set as $\CS(1 - \alpha, (\xi_i)_{i=1}^n)$, simplified to $\CSalphan$.
For brevity, we also use the notation $\CSalphan$ to denote the sequence of confidence sets
\((\CS(1 - \alpha, (\xi_i)_{i=1}^n))_{n \in \INTST}\).
Confidence intervals (CIs) can be seen as a particular case of confidence sets (CSs) that are intervals.

\medskip

Several criteria exist to assess the quality of $\CSalphan$ for a given $\alpha$.
$\CSalphan$ is said to be \emph{asymptotically valid pointwise over $\Theta$ at level $1 - \alpha$} if
\begin{equation}
\label{eq:pointwise_valid_asymp}
    \forall \theta \in \Theta, \,\liminf_{n \to +\infty}
    \Probsousthetan \big(\CSalphan \ni T(\theta)\big)
    \geq 1-\alpha.
\end{equation}
$\CSalphan$ is said to be \emph{asymptotically exact pointwise over $\Theta$ at level $1 - \alpha$}
if
\begin{equation}
\label{eq:pointwise_exact_asymp}
    \forall \theta \in \Theta, \lim_{n \to +\infty}
    \Probsousthetan \big(\CSalphan \ni T(\theta)\big)
    = 1-\alpha.
\end{equation}

A stronger asymptotic criterion exists:
$\CSalphan$ is said to be \emph{asymptotically valid uniformly over $\Theta$ at level $1 - \alpha$} if
\begin{equation}
\label{eq:unif_valid_asymp}
    \liminf_{n \to +\infty} \inf_{\theta \in \Theta} 
    \Probsousthetan \big(\CSalphan \ni T(\theta) \big)
    \geq 1-\alpha,
\end{equation}
and $\CSalphan$ is said to be \emph{asymptotically exact uniformly over $\Theta$ at level $1 - \alpha$} if
\begin{equation}
\label{eq:unif_exact_asymp}
    \lim_{n\to+\infty} \, \sup_{\theta\in\Theta} \bigg|
    \Probsousthetan \big(\CSalphan \ni T(\theta) \big) - (1-\alpha) 
    \bigg| = 0.
\end{equation}
We say that $\CSalphan$ is \emph{non-asymptotically valid over $\Theta$ at level $1 - \alpha$} if
\begin{equation}
\label{eq:definition_non-asymptotic_validity}
    \forall n \geq 1, \,
    \forall \theta\in\Theta, \,
    \Probsousthetan\big(\CSalphan \ni T(\theta) \big)
    \geq 1-\alpha.
\end{equation}
This property evolves into \emph{non-asymptotic exactness over $\Theta$ at level $1 - \alpha$} if the following stronger condition holds
\begin{equation}
\label{eq:definition_non-asymptotic_exactness}
   \forall n\geq 1, \,
   \forall \theta\in\Theta, \,
   \Probsousthetan \big(\CSalphan \ni T(\theta) \big)
   = 1-\alpha.
\end{equation}
For any of those definitions, when the level $1 - \alpha$ is not specified, it means that the method of constructing confidence sets satisfies the corresponding property for all \(\alpha \in (0,1)\).
For instance, we say that $\CS(\, \cdot \,)$ is \emph{asymptotically exact pointwise over $\Theta$} if \(\CSalphan\) is so at all levels~\(1 - \alpha\).

\medskip

These definitions follow usual conventions and enable us to compare the quality of competing CSs along several dimensions.
Note that these concepts are not exclusive: Figure~\ref{tab:summary_quality_measures} summarizes the implications between all those quality measures.
For instance, exactness always implies validity.
On the other hand, CSs that are valid but not exact are called \textit{conservative} since their coverage probability is strictly larger than the targeted nominal level.

\begin{figure}[H]
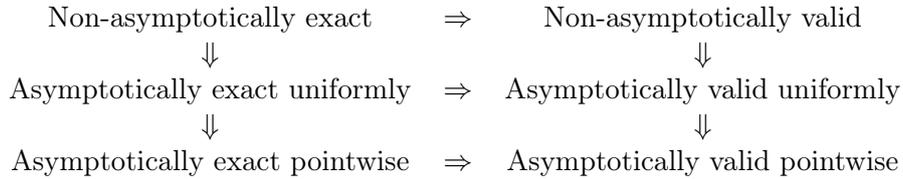

    \centering
    \begin{tabular}{ccc}
        Non-asymptotically exact & $\Rightarrow$ &
        Non-asymptotically valid \\
        $\Downarrow$ &  & $\Downarrow$ \\
        Asymptotically exact uniformly  & $\Rightarrow$ &
        Asymptotically valid uniformly \\
        $\Downarrow$ &  & $\Downarrow$ \\
        Asymptotically exact pointwise  & $\Rightarrow$ &
        Asymptotically valid pointwise
    \end{tabular}
    \caption{Relationships between the different quality measures of a confidence set.}
    \label{tab:summary_quality_measures}
\end{figure}

Unlike non-asymptotic properties, asymptotic ones can be further characterized by their pointwise or uniform nature.
Asymptotic uniform validity relates to the notion of ``honesty.''
Compared to pointwise guarantees, it can be argued as more reliable regarding CSs' finite-sample performance (\cite{ArmstrongKolesarQE2020SimpleHonnestCI}).

\medskip

Property~\eqref{eq:pointwise_exact_asymp} (pointwise AE) is what applied econometricians implicitly have in mind when they rely on the asymptotic normality of an estimator to conduct inference.
This property is usually achievable over a non-parametric set $\Theta$.
In models where \eqref{eq:pointwise_exact_asymp} holds, it is often possible to define a large subset $\widetilde{\Theta} \subset \Theta$ on which \eqref{eq:unif_exact_asymp} (uniform AE) is verified; see e.g. \cite{kasy2019}.
Regarding finite-sample inference, Property~\eqref{eq:definition_non-asymptotic_validity} (the NAV part of NAVAE) has been shown to hold in many models.
These results are predominantly found in the mathematical statistics literature (see \cite{brunel2019nonasymptotic} for a recent illustration).
These are powerful findings.
Yet, the resulting CSs are usually asymptotically conservative, uniformly and even pointwise; in other words, they are not AE, hence not NAVAE confident sets.
Finally, the strongest notion \eqref{eq:definition_non-asymptotic_exactness}, namely non-asymptotic exactness (which, a fortiori, implies the NAVAE property uniformly over the statistical model), can be obtained at the cost of placing fairly strong restrictions on $\Distribution_{\theta}$, for instance by imposing that $\Distribution_{\theta}$ belong to a parametric family.
Overall, it appears challenging to build NAVAE confidence sets uniformly over a non-parametric set of distributions.
We continue this discussion in Appendix~\ref{appendix:sec:illustration_expectation}, focusing on inference on an expectation.

\medskip

The implications that are not displayed in Figure \ref{tab:summary_quality_measures} are not satisfied.
This is straightforward to see since there exist confidence sets that are, for example, asymptotically valid pointwise but not non-asymptotically valid.
Nevertheless, if we are given confidence sets that are asymptotically valid or exact \emph{uniformly} over some parameter set $\Theta$, it is always possible to construct confidence sets that are non-asymptotically valid over $\Theta$.
This result is presented in the following proposition.

\begin{prop}[Obtaining non-asymptotic validity from asymptotic uniform validity or exactnessn]
~ 
    \begin{enumerate}
        \item[(i)] 
        If there exists an asymptotically valid confidence set \(\CSalphan\) uniformly over $\Theta$ at level \(1 - \alpha \in (0,1)\),
        then, for every $\widetilde\alpha > \alpha$,
        there exists a non-asymptotically valid confidence set \(\widetilde{\CS}({1-\widetilde\alpha, n})\) over $\Theta$ at level~\(1-\widetilde{\alpha}\).
        Furthermore, if $\CS({1-\alpha, n})$ is almost surely different from~\(T(\Theta)\) for $n$ large enough, so is $\widetilde{\CS}({1-\widetilde\alpha, n})$.
        
        \item[(ii)]         
        If there exists a method $\CS(\, \cdot \,)$ of constructing confidence sets that is asymptotically exact uniformly over $\Theta$,
        then, there exists a method \(\widetilde{\CS}(\, \cdot \,)\) of constructing confidence sets that is both non-asymptotically valid and asymptotically exact (NAVAE) uniformly over $\Theta$.
    \end{enumerate}
    \label{prop:NAVAE_from_asymptoticUnif}
\end{prop}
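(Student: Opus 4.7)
The plan is to handle the two parts separately: part (i) by a simple sample-size truncation, and part (ii) by a more delicate joint perturbation of the nominal level and the sample-size threshold.

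For part (i), I would start from the hypothesis $\liminf_n \inf_{\theta \in \Theta} \Probsousthetan(\CSalphan \ni T(\theta)) \geq 1-\alpha$ together with $\widetilde{\alpha} - \alpha > 0$. By the very definition of the liminf, these combined yield an $N \in \INTST$ such that for all $n \geq N$ and every $\theta \in \Theta$, $\Probsousthetan(\CSalphan \ni T(\theta)) \geq 1-\widetilde{\alpha}$. I would then define $\widetilde{\CS}(1-\widetilde{\alpha},n) := T(\Theta)$ for $n < N$ (trivially containing the target) and $\widetilde{\CS}(1-\widetilde{\alpha},n) := \CS(1-\alpha, n)$ for $n \geq N$. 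Non-asymptotic validity at level $1-\widetilde\alpha$ is then immediate. For the last claim, if $\CS(1-\alpha,n) \neq T(\Theta)$ almost surely for $n \geq N'$, then $\widetilde{\CS}(1-\widetilde\alpha,n) \neq T(\Theta)$ almost surely for all $n \geq \max(N, N')$.

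For part (ii), the plan is to perturb the level rather than the sample size. Fix $\alpha \in (0,1)$ and consider the sequence $\beta_k := \alpha(1-1/k)$ for $k \geq 2$, so that $\beta_k \in (0,\alpha)$ and $\beta_k \uparrow \alpha$. Applying uniform asymptotic exactness at each level $1-\beta_k$ with tolerance $\alpha/k$ yields thresholds $N_k$, which I may take strictly increasing without loss of generality, such that for all $n \geq N_k$ and every $\theta \in \Theta$,
\[
\bigl|\Probsousthetan(\CS(1-\beta_k,n) \ni T(\theta)) - (1-\beta_k)\bigr| \leq \alpha/k.
\]
I would then define $k(n) := \max\{k \geq 2: N_k \leq n\}$ (and $k(n) := 1$ if the set is empty), and set $\widetilde{\CS}(1-\alpha,n) := \CS(1-\beta_{k(n)}, n)$ whenever $k(n) \geq 2$, and $T(\Theta)$ otherwise.

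The key balancing is that when $k(n) \geq 2$, the coverage is at least $1 - \beta_{k(n)} - \alpha/k(n) = 1 - \alpha$ by the choice $\beta_k = \alpha - \alpha/k$, delivering NAV at level $1-\alpha$. Since $N_k$ is strictly increasing, $k(n) \to \infty$ as $n \to \infty$, and a triangle-inequality bound gives
\[
\sup_{\theta \in \Theta} \bigl|\Probsousthetan(\widetilde{\CS}(1-\alpha,n) \ni T(\theta)) - (1-\alpha)\bigr| \leq |\beta_{k(n)} - \alpha| + \alpha/k(n) \leq 2\alpha/k(n) \xrightarrow[n\to\infty]{} 0,
\]
giving uniform asymptotic exactness at level $1-\alpha$. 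Repeating this construction for each $\alpha \in (0,1)$ yields the desired method $\widetilde{\CS}(\cdot)$. I expect the main subtlety to lie precisely in part (ii), in the joint tuning of the level perturbation $\beta_k$, the AE tolerance $\alpha/k$, and the sample-size threshold $N_k$: the slack gained from shrinking the nominal level must exactly absorb the finite-sample AE error, so that coverage stays above $1-\alpha$ for every $n$ while still converging to $1-\alpha$.
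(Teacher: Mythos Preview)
Your proposal is correct and essentially identical to the paper's proof: part (i) uses the same sample-size truncation at a threshold $N$ (the paper's $n^*$), and part (ii) uses exactly the same level perturbation $\beta_k = \alpha(1-1/k)$ (the paper's $\alpha_k$) with thresholds $N_k$ made strictly increasing, the same NAV computation $1-\beta_k-\alpha/k=1-\alpha$, and the same $2\alpha/k$ bound for uniform asymptotic exactness. The only differences are cosmetic --- the paper builds the increasing sequence $n'_k$ explicitly and defines $\widetilde{\CS}$ on intervals $[n'_k,n'_{k+1})$, whereas you invoke a ``WLOG strictly increasing'' and phrase it via $k(n)=\max\{k:N_k\le n\}$, which amounts to the same thing.
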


\begin{proof}[Proof of Proposition~\ref{prop:NAVAE_from_asymptoticUnif}]
    \textit{(i)}
    Let $\widetilde\alpha > \alpha$ and $f(n) := \inf_{\theta\in\Theta}
    \Probsousthetan \big(\CSalphan\ni T(\theta) \big)$, for any positive integer~\(n\).
    By definition, $\liminf_{n\to+\infty} f(n) \geq 1-\alpha$.
    As \(1 - \widetilde{\alpha} < 1 - \alpha\), there exist $n^* \in \INTST$ such that, for every $n \geq n^*$,
    $f(n)
    \geq 1 - \widetilde\alpha$.
    Then, we define $\widetilde{\CS} (1-\widetilde\alpha, n) := \CS(1-\alpha, n)$ for $n \geq n^*$ and $\widetilde{\CS}(1-\widetilde\alpha, n) := T(\Theta)$ for $n < n^*$.
    Note that $\widetilde{\CS}(1-\widetilde\alpha, n)$ is indeed non-asymptotically valid over~\(\Theta\) at level~\(1 - \widetilde{\alpha}\).
    The second part of \textit{(i)} is a direct consequence of the construction of $\widetilde{\CS}(1-\widetilde\alpha, n)$.

    \medskip
    
    \noindent
    \textit{(ii)}
    Let $f(\alpha, n) := \sup_{\theta\in\Theta} \left| \Probsousthetan\big(\CSalphan\ni T(\theta) \big) - (1-\alpha) \right|$ for any~\(n \in \INTST\) and \(\alpha \in (0,1)\).
    By definition, $\lim_{n\to+\infty} f(\alpha, n) = 0$ for every fixed $\alpha \in (0, 1)$.
    Fix \(\alpha \in (0,1)\).
    For every integer $k \geq 2$, let $\alpha_k := \alpha \times (1 - 1/k)$ and $n_k$ such that, for all $n \geq n_k$,
    $f(\alpha_k, n) \leq \alpha / k$.
    Then, we construct an increasing integer sequence \((n'_k)_{k \geq 2}\) by $n'_2 := n_2$ and $n'_k := 1 + \max(n'_{k-1}, n_k)$ recursively.
    Thus, we can define
    $\widetilde{\CS}(1-\alpha, n)
    := \CS(1-\alpha_k, n)$
    for $n$ such that $n'_k \leq n < n'_{k+1}$ and $\widetilde{\CS}(1-\alpha, n) := T(\Theta)$
    for $n$ such that $n < n_2$.
    We now check the properties of \(\widetilde{\CS}(1-\alpha, n)\).
    First, for \(n\) large enough,
    \begin{align*}
        \sup_{\theta\in\Theta} \Big| \Probsousthetan\big(\widetilde{\CS}(1-\alpha, n) \ni T(\theta) \big) &- (1-\alpha) \Big|
        = \sup_{\theta\in\Theta} \left| \Probsousthetan\big(\CS(1-\alpha_k, n) \ni T(\theta) \big) - (1-\alpha) \right| \\
        &\leq \sup_{\theta\in\Theta} \left| \Probsousthetan\big(\CS(1-\alpha_k, n) \ni T(\theta) \big) - (1-\alpha_k) \right| + \alpha / k \\
        &= f(\alpha_k, n) + \alpha / k
        \leq 2 \alpha / k,
    \end{align*}
    so $\widetilde{\CS}(1-\alpha, n)$ is asymptotically exact uniformly over $\Theta$ because \(k \to +\infty\) when \(n \to +\infty\).
    Second, $\inf_{\theta\in\Theta}
    \Probsousthetan \big(\widetilde{\CS}(1-\alpha, n) \ni T(\theta) \big)
    \geq 1 - \alpha_k - \alpha / k = 1 - \alpha$, and $\widetilde{\CS}(1-\alpha, n)$ is indeed non-asymptotically valid over $\Theta$.
    Those properties hold for every~\(\alpha \in 0,1)\), yielding the desired NAVAE property for the method \(\widetilde{\CS}(\,\cdot\,)\) of constructing confidence sets.
\end{proof}

\begin{rem}
    Inspecting the previous proof shows that a sufficient condition for the existence of a NAVAE confidence set uniformly over $\Theta$ at a given level $1-\alpha$ is the existence of a sequence $(\alpha_k)_{k \geq 1} \in (0, 1)$ such that (1) $\alpha_k \to \alpha$ as $k \to \infty$; (2) $\forall k \in \INTST, \, \alpha_k < \alpha$; (3) an asymptotically exact confidence set $\CS(1-\alpha_k, n)$ uniformly over $\Theta$ exists at each level $1 - \alpha_k$, \(k \in \INTST\).
\label{rem:better_NAVAE_from_asymptoticUnif}
\end{rem}

As a direct consequence of Proposition~\ref{prop:NAVAE_from_asymptoticUnif}(i), we can state the subsequent corollary.
\begin{cor}    
    If there exists a method $\CS(\, \cdot \,)$ of constructing confidence sets that is asymptotically valid uniformly over $\Theta$,
    then, there exists a method \(\widetilde{\CS}(\, \cdot \,)\) of constructing confidence sets that is
    non-asymptotically valid over $\Theta$.
\end{cor}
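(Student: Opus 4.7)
The plan is to invoke Proposition~\ref{prop:NAVAE_from_asymptoticUnif}(i) once for each nominal level at which the new method \(\widetilde{\CS}(\,\cdot\,)\) is required to be non-asymptotically valid. First, I would fix an arbitrary \(\alpha \in (0,1)\) and pick an auxiliary level \(\alpha' \in (0, \alpha)\), for instance \(\alpha' := \alpha / 2\). By assumption, the method \(\CS(\,\cdot\,)\) is asymptotically valid uniformly over \(\Theta\) at every nominal level, and in particular at level \(1 - \alpha'\). The hypothesis of Proposition~\ref{prop:NAVAE_from_asymptoticUnif}(i) is therefore satisfied with the symbol \(\alpha\) of that proposition replaced by \(\alpha'\). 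Since \(\alpha > \alpha'\), part (i) then delivers a confidence set \(\widetilde{\CS}(1 - \alpha, n)\) that is non-asymptotically valid over \(\Theta\) at level \(1 - \alpha\) in the sense of \eqref{eq:definition_non-asymptotic_validity}.

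Because this construction can be carried out for every \(\alpha \in (0,1)\) independently, collecting the resulting confidence sets defines a method \(\widetilde{\CS}(\,\cdot\,)\) that is non-asymptotically valid over \(\Theta\) at all levels, which is exactly the conclusion of the corollary. The only point worth noting --- it is not really an obstacle --- is the distinction between a single confidence set at a fixed level (which Proposition~\ref{prop:NAVAE_from_asymptoticUnif}(i) produces from an asymptotically valid CS at a strictly higher coverage) and a \emph{method} of constructing confidence sets, which, per the conventions of Section~\ref{ssec:qual_measure}, must operate at every \(\alpha \in (0,1)\). Once that distinction is kept in mind, the corollary follows immediately by a level-by-level application of part (i), with a strictly smaller auxiliary level \(\alpha'\) chosen for each target level \(1 - \alpha\). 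No additional uniform argument across \(\alpha\) is needed because the definition of non-asymptotic validity is itself level-wise.
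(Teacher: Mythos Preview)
Your proposal is correct and matches the paper's own reasoning: the corollary is stated there as a direct consequence of Proposition~\ref{prop:NAVAE_from_asymptoticUnif}(i), and your level-by-level application with an auxiliary $\alpha' = \alpha/2$ is precisely the intended argument.
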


Note that the impossibility claim made on Bernoulli distributions in Proposition~\ref{prop:impossibility_NAVAE_Bernoulli} can actually be strengthened with the help of Proposition~\ref{prop:NAVAE_from_asymptoticUnif}: not only NAVAE confidence sets do not exist over $\mathfrak{Ber}$, but even finding a method of constructing confidence sets that is asymptotically exact uniformly over $\mathfrak{Ber}$ is impossible.

\begin{cor}
    For any method~\(\CS(\, \cdot \,)\) of constructing confidence sets, the set of real numbers   
    \begin{equation*}
        \big\{
        \alpha \in (0, 1): \, (\CSalphan)_{n \geq 1}
        \textnormal{ is asymptotically exact uniformly over } \mathfrak{Ber}
        \textnormal{ at level } 1 - \alpha
        \big\}
    \end{equation*}
    has an empty interior.
    Consequently, there is no method of constructing confidence sets that is asymptotically exact uniformly over~\(\mathfrak{Ber}\).    
    \label{cor:impossibility_asympt_exact_Bernoulli_2}
\end{cor}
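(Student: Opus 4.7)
The plan is to argue by contradiction, leveraging Proposition~\ref{prop:impossibility_NAVAE_Bernoulli} (no NAVAE CS exists over $\mathfrak{Ber}$ at any single level) together with the sharpened version of Proposition~\ref{prop:NAVAE_from_asymptoticUnif}(ii) recorded in Remark~\ref{rem:better_NAVAE_from_asymptoticUnif}. Let me denote the set in question by
\begin{equation*}
    S := \big\{ \alpha \in (0,1) : (\CSalphan)_{n \geq 1} \text{ is asymptotically exact uniformly over } \mathfrak{Ber} \text{ at level } 1 - \alpha \big\}.
\end{equation*}

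Suppose, for contradiction, that $S$ has non-empty interior. Then I can pick some $\alpha^* \in (0,1)$ and $\delta > 0$ such that $(\alpha^* - \delta, \alpha^* + \delta) \subset S$. In particular, I may choose a sequence $(\alpha_k)_{k \geq 1}$ in $(\alpha^* - \delta, \alpha^*)$ with $\alpha_k < \alpha^*$ for every $k$ and $\alpha_k \to \alpha^*$ as $k \to \infty$. By construction, each $\alpha_k$ lies in $S$, so the method $\CS(\,\cdot\,)$ is asymptotically exact uniformly over $\mathfrak{Ber}$ at level $1 - \alpha_k$ for every $k$. Remark~\ref{rem:better_NAVAE_from_asymptoticUnif} then yields a method $\widetilde{\CS}(\,\cdot\,)$ that is both non-asymptotically valid and asymptotically exact uniformly over $\mathfrak{Ber}$ at level $1 - \alpha^*$, i.e., a NAVAE confidence set at that specific level. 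This directly contradicts Proposition~\ref{prop:impossibility_NAVAE_Bernoulli} applied at $\alpha = \alpha^*$.

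For the second assertion, I simply observe that if a method $\CS(\,\cdot\,)$ were asymptotically exact uniformly over $\mathfrak{Ber}$ (i.e., at every level), then $S = (0,1)$, which certainly has non-empty interior, contradicting what was just established.

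The only subtle point is to make sure that the diagonalization underlying Remark~\ref{rem:better_NAVAE_from_asymptoticUnif} can be run with \emph{any} sequence $\alpha_k \uparrow \alpha^*$, not only with $\alpha_k = \alpha^*(1 - 1/k)$ as in the proof of Proposition~\ref{prop:NAVAE_from_asymptoticUnif}(ii); this is immediate since the argument there only uses $\alpha_k < \alpha^*$, $\alpha_k \to \alpha^*$, and uniform asymptotic exactness at each level $1 - \alpha_k$. Thus the main (and only) obstacle is conceptual rather than computational: recognizing that a one-sided cluster point of $S$ from below is enough to trigger the NAVAE construction, so a non-empty interior suffices.
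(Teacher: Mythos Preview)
Your proof is correct and follows essentially the same approach as the paper: assume the set has non-empty interior, extract an interval, use Remark~\ref{rem:better_NAVAE_from_asymptoticUnif} to build a NAVAE confidence set at the right endpoint, and contradict Proposition~\ref{prop:impossibility_NAVAE_Bernoulli}. Your added remark about the diagonalization working for any sequence $\alpha_k \uparrow \alpha^*$ is a nice clarification but not strictly needed, since the interval already contains the specific sequence $\alpha^*(1-1/k)$ for $k$ large enough.
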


\begin{proof}
    If the interior of this set was not empty, we could find an
    interval $[\alpha_-, \alpha_+]$ included inside.
    Therefore, by Remark~\ref{rem:better_NAVAE_from_asymptoticUnif},
    we would obtain the existence of a NAVAE confidence set uniformly over~\(\mathfrak{Ber}\) at nominal level~\(1 - \alpha_+\).
    But this is not possible by Proposition~\ref{prop:impossibility_NAVAE_Bernoulli}.
\end{proof}

We conjecture that the set mentioned in Corollary~\ref{cor:impossibility_asympt_exact_Bernoulli_2} is empty: for any $\alpha \in (0, 1)$, there is no confidence set that is asymptotically exact uniformly over $\mathfrak{Ber}$ at level~\(1 - \alpha\).
Extending Corollary~\ref{cor:impossibility_asympt_exact_Bernoulli_2} in this direction would be non-trivial and is left for future research.

\medskip

The ``automatic'' construction of a NAVAE confidence set $\widetilde{\CS}(1-\alpha, n)$ as given in the proof of Proposition~\ref{prop:NAVAE_from_asymptoticUnif}(ii) is mainly interesting from a theoretical point-of-view since the approach is difficult to apply in practice as such (the quantities \(f(\alpha_k,n)\) are not available in general).
Still, if an upper bound \(g(\cdot, \cdot)\) on \(f(\cdot, \cdot)\) is available, then one can construct the suitably enlarged confidence sets introduced in the proof of Proposition~\ref{prop:NAVAE_from_asymptoticUnif}(ii), simply using \(g\) instead of~\(f\).
The set constructed from $g$ remains non-asymptotically valid, and it is asymptotically exact uniformly over $\Theta$ if and only if $\lim_{n \to +\infty} g(\alpha, n) = 0$ for all $\alpha$.
Inspired by this construction, we use bounds on the distance to the asymptotic normality for sample means/OLS estimators in the rest of the paper to suitably enlarge CLT-based confidence intervals.

\medskip

Among NAVAE CIs, it is sometimes possible to ask for more: in particular, we could aim for efficient intervals \textit{\`a la} 
\cite{romano2000} (see their Theorem~2.1(i) and Remark~3).
We briefly summarize their results using our notations.
A non-asymptotically valid confidence interval $[U_n, L_n]$ is said to be \textit{efficient} if its width is asymptotically minimal among all non-asymptotically valid confidence intervals.
Under some regularity conditions, any efficient confidence interval of a parameter $T(\theta)$ estimated by some suitable $\widehat{T(\theta)}$ must be of the form
\begin{equation}
\label{eq:def:efficient_CI}
    \widehat{T(\theta)}
    \pm \frac{\tau(\theta)}{\sqrt{n}} \QuantileGaussAt{1 - \alpha/2}
    + o_{P_\theta}(n^{-1/2})
\end{equation}
where $\tau(\theta)^2$ is the corresponding asymptotic variance.
In other words, an efficient CI must be an asymptotically negligible enlargement of a CLT-based confidence interval.
As shown in the next two sections, all the CIs we build are exactly of the form~\eqref{eq:def:efficient_CI}, which implies they are efficient in addition to being NAVAE.

\section{NAVAE confidence intervals for expectations}
\label{sec:expectation_example}

In this section, we are interested in presenting a simple case: 
conducting inference on the expectation of a real random variable that admits (at least) a non-zero finite variance, that is, with the formalization of the previous section,
\begin{equation*}
    \Theta 
    = \Rb \times \left\{P \in \mathcal{P}(\Rb) :
    \int x \, dP = 0, \, \int x^2 \, dP \in (0, +\infty) \right\}
    =: \Theta_1 \times \Theta_2,
\end{equation*}
where the parameter is the pair combining an expectation and 
the distribution of the corresponding centered variable (which is a nuisance parameter).
In this way, we can decompose any random variable $\xi$ following $P_\theta$ where $\theta = (\theta_1, P)$ by $\xi := \theta_1 + V$ where $V \sim P$.
Thus, with our formalization, $p=1$ and $T(\theta_1, P) = \theta_1$.
Note that, in this sense, $\Theta$ is in bijection with the set of univariate distributions with finite but non-zero variance, and we will use this identification in the following.
We assume the variance to be non-zero to remove pathological cases.

\medskip

In this canonical scenario, we illustrate the challenges of building NAVAE confidence intervals that ``converge'' asymptotically to standard CIs based on the CLT.
Henceforth, we fix a desired nominal level \(1 - \alpha \in (0,1)\).

\subsection{Setting and motivation}
\label{ssec:setting_and_motivation}

Let $\sigmadeuxnhat:= n^{-1} \sum_{i=1}^n(\xi_i-\overline{\xi}_n)^2$, with $\overline{\xi}_n := n^{-1} \sumiunn \xi_i$. We know by the Central Limit Theorem (CLT) and Slutsky's lemma that
\begin{equation}
\label{eq:definition_CI_CLT_for_expectation}
    \CIalphannumero{CLT} := 
    \left[
    \overline{\xi}_n \pm  \frac{\widehat{\sigma}}{\sqrt{n}} \QuantileGaussAt{1 - \alpha/2}
    \right]
\end{equation}
is asymptotically exact pointwise over $\Theta$ at level~\(1 - \alpha\).
Among practitioners, it is the most common way of conducting inference on an expectation and, therefore, a natural candidate on which to base a uniformly NAVAE confidence interval.

\medskip

Our goal is to propose an asymptotically negligible modification of \(\CIalphannumero{CLT}\) that is nonetheless sufficient to make the resulting confidence interval NAVAE over a large (i.e., non-parametric) subset of $\Theta$.
As suggested in Section~\ref{ssec:qual_measure}, an appealing idea consists in controlling non-asymptotically the distance between the unknown distribution of the sample mean and its Gaussian limiting distribution.
To do so, we leverage Berry-Esseen inequalities
(\cite{berry1941, esseen1942})
or Edgeworth expansions as these tools ``quantify'' the CLT (by controlling, for any sample size, some distance between the distribution of a normalized sum and the standard Gaussian distribution \(\mathcal{N}(0, 1)\)).
It is key to note that placing additional restrictions on $\Theta$ to reach our goal is not superfluous: indeed, we have $\mathfrak{Ber} \subset \Theta$ so that no method~\(\CS(\, \cdot \,)\) of constructing confidence sets can be  NAVAE uniformly over~$\Theta$.

\medskip

We now introduce some additional notation to develop these ideas.
Let $\sigma(\theta) := \sqrt{\Varsoustheta(\xi)}$,
${\lambda_3(\theta) := \Expsoustheta\big[(\xi-\Expsoustheta[\xi])^3/\sigma(\theta)^3\big]}$,
and $K_4(\theta) := \Expsoustheta\big[ (\xi-\Expsoustheta[\xi])^4/\sigma(\theta)^4\big]$ for any parameter~\(\theta \in \Theta\).
Finally, $S_n$ denotes the standardized sum ${\sum_{i=1}^n (\xi_i - \Expsoustheta[\xi]) \, / \, (\sigma(\theta) \sqrt{n})}$. 
Berry-Esseen (BE) inequalities control the uniform distance between the cumulative distribution function (c.d.f) of the empirical mean (properly centered and standardized) and the c.d.f~\(\Phi\) of its limit \(\Normale(0,1)\) distribution (below \(\varphi\) denotes the p.d.f of a standard Normal distribution), 
\begin{equation}
\label{eq:definition_DeltanB}
    \DeltanB(\theta) :=
    \sup_{x \in \Rb} \Big| \Probsousthetan(S_n \leq x) - \Phi(x) \Big|.
\end{equation}
Edgeworth expansions (EE) are refinements adjusting for the presence of non-asymptotic skewness.
They control the following uniform distance
\begin{equation}
\label{eq:definition_DeltanE}
    \DeltanE(\theta) := \sup_{x \in \Rb}
    \bigg| \Probsousthetan(S_n \leq x) - \Phi(x)
    - \frac{\lambda_{3}(\theta)}{6\sqrt{n}}(1-x^2)\varphi(x) \bigg|.
\end{equation}

\medskip

What kind of additional constraints can we place on the parameter set to derive an explicit bound $\delta_n$ on $\DeltanB$ or $\DeltanE$? 
Existing results require a control on some moments of the distribution, beyond moments of order two\footnote{
Recall that restricting to subsets $\widetilde{\Theta}$ of $\Theta$ that impose more than $\E_\theta[\xi^2]<+\infty$ is necessary to build NAVAE CSs that are uniform over $\widetilde{\Theta}$.
If one is willing to focus on non-parametric subsets $\widetilde{\Theta}$ of $\Theta$, the following is shown in \cite{romano2004}: 
$\CIalphannumero{CLT}$ is uniformly asymptotically exact over $\widetilde{\Theta}$ if and only if 
$\lim_{\lambda\to\infty}\sup_{\theta\in\widetilde{\Theta}}\Expsoustheta\!\left[\frac{(\xi - \Expsoustheta[\xi])^2}{\sigma(\theta)^2}\Indicator\!\left\{\frac{|\xi - \Expsoustheta[\xi]|}{\sigma(\theta)} > \lambda\right\}\right] = 0$ \((\ast)\).
Given  Proposition~\ref{prop:NAVAE_from_asymptoticUnif}, it is thus possible to derive NAVAE CSs on such parameter sets. 
This explains why NAVAE CSs can be constructed on $\ThetaBEE$,
since the previous condition~\((\ast)\) is satisfied for \(\widetilde{\Theta} = \ThetaBEE\).
}: the standardized $(2+\mu)$-th absolute moment for BE, the fourth one for EE.
For the sake of brevity, we only consider the smaller class of distributions \(\ThetaBEE:= \{ \theta \in \Theta : K_4(\theta) \leq K \}\) for some known positive constant~$K$, which allows to leverage the strength of BE and EE bounds at the same time.
Note that EE-based inequalities are more complex but yield (asymptotically) tighter bounds than BE; see Remark~\ref{rem:valid_delta_n} below and~\cite{derumigny2022explicitBE} for a detailed comparison between these inequalities.

\medskip

Equipped with a bound $\delta_n$ on $\DeltanB$ or $\DeltanE$, we can now explain how to construct NAVAE confidence intervals for \(\theta_1 = \Expsoustheta[\xi]\). 
It is enlightening to start with the simplified case of a distribution with known variance to give the main intuitions behind our construction.

\subsection{Known variance} 

To simplify exposition, let us assume that $\delta_n$ is a bound on $\DeltanB$ for a moment (we show below that the same result holds if \(\delta_n\) is a bound on \(\DeltanE\) instead).
Using the BE inequality and simple computations, we obtain the following: for any positive real number~\(x\),
\begin{equation}
\label{eq:bound_Phi_BE}
    \Probsousthetan\!\left( \frac{\sqrt{n}\left| \overline{\xi}_n - \theta_1 \right|}{\sigma(\theta)} > x \right) 
    \leq 
    2 \big( \Phi(-x) + \DeltanB(\theta)  \big)
    \leq 
    2 \big( \Phi(-x) + \delta_n  \big).    
\end{equation}

Then, for any given \(\alpha \in (0,1)\), setting \(x\) such that the right-hand side of Equation~\eqref{eq:bound_Phi_BE} is equal to~\(\alpha\) and considering the complementary event give
\begin{equation}
    \Probsousthetan\!\left( \frac{\sqrt{n}\left| \overline{\xi}_n - \theta_1 \right|}{\sigma(\theta)} 
    \leq 
    \QuantileGaussAt{1 - \frac{\alpha}{2} + \delta_n}
    \right) 
    \geq  1 - \alpha,
    \label{eq:bound_normal_quantile}
\end{equation}
whenever this ``modified Gaussian quantile'' is well-defined,
namely when $1 - \alpha/2 + \delta_n < 1$. 
When that condition is not met, we can still claim that $\theta_1 \in \Rb$ with probability one (therefore at least $1-\alpha$) by definition of $\Theta$.
Consequently, in the case of a known variance~\(\sigmaknown^2\), the confidence interval
\begin{equation*}
    \CIsigmaknown
    := \begin{cases} \left[
    \overline{\xi}_n \pm \dfrac{\sigmaknown}{\sqrt{n}}
    \QuantileGaussAt{1 - \dfrac{\alpha}{2} + \delta_n}
    \right] &
    \text{if } \delta_n < \dfrac{\alpha}{2}, \\
    \Rb & \text{else},
    \end{cases}
\end{equation*}
is non-asymptotically valid over
$\ThetaBEEsigmaknown := \{ \theta \in \ThetaBEE :
\Varsoustheta(\xi) = \sigmaknown^2 \}$.

\medskip

The first inequality in Equation~\eqref{eq:bound_Phi_BE} also holds when replacing $\DeltanB$ by $\DeltanE$ since the term $\frac{\lambda_{3}(\theta)}{6\sqrt{n}}(1-x^2)\varphi(x)$ is symmetric and vanishes.
Therefore, either the EE or the BE construction yields a non-asymptotically valid CI: a known bound \(\delta_n\) on the minimum of \(\DeltanB\) and \(\DeltanE\) is enough in the definition of~\(\CIsigmaknown\).
Proposition~\ref{prop:mean_CI_BE-Edg_sigmaknown} summarizes this discussion (see Appendix~\ref{proof:prop:mean_CI_BE-Edg_sigmaknown} for its proof).

\begin{prop}
    \label{prop:mean_CI_BE-Edg_sigmaknown}
    Let \(n \geq 1\) and
    $\delta_n \geq \sup_{\theta \in \ThetaBEE}
    \min\{ \DeltanE(\theta) , \DeltanB(\theta) \}$. Then,
    \newline $\CIsigmaknown$
    is non-asymptotically valid over $\ThetaBEEsigmaknown$
    at level~\(1 - \alpha\).
\end{prop}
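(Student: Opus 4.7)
The plan is to split the argument into two cases according to whether $\delta_n < \alpha/2$ holds. When $\delta_n \geq \alpha/2$, the interval $\CIsigmaknown$ equals $\Rb$ by construction, so since $\theta_1 \in \Theta_1 = \Rb$, coverage holds with probability one; this case requires no further work.

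In the substantive case $\delta_n < \alpha/2$, I would make rigorous the computation sketched around Equation~\eqref{eq:bound_Phi_BE}. Fix $\theta \in \ThetaBEEsigmaknown$ and any $x > 0$. Splitting the two-sided event $\{|S_n| > x\}$ into $\{S_n > x\}$ and $\{S_n < -x\}$ and applying either the Berry--Esseen definition~\eqref{eq:definition_DeltanB} or the Edgeworth definition~\eqref{eq:definition_DeltanE} to each piece separately, summing produces the two-sided tail bound
\begin{equation*}
    \Probsousthetan(|S_n| > x)
    \leq 2 \Phi(-x) + 2 \min\{\DeltanB(\theta), \DeltanE(\theta)\}
    \leq 2 \Phi(-x) + 2 \delta_n.
\end{equation*}
For the Edgeworth branch, the one non-trivial verification is that the first-order skewness corrections cancel when the two tails are added; this holds because $(1-x^2)\varphi(x)$ is even in $x$, so its contributions to $\Probsousthetan(S_n > x)$ (entering via a survival function) and to $\Probsousthetan(S_n < -x)$ (entering via a c.d.f.) come with opposite signs. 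I view this cancellation as the main -- and only mild -- obstacle in the proof; the Berry--Esseen branch is immediate since the correction terms are absent.

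Having the tail bound, the remaining step is to invert it. Pick $x$ so that $2 \Phi(-x) + 2 \delta_n = \alpha$: since $\delta_n < \alpha/2$, the value $\alpha/2 - \delta_n$ lies in $(0, 1/2)$, and by symmetry of $\Phi$ the solution is $x = \QuantileGaussAt{1 - \alpha/2 + \delta_n}$. Taking complements gives $\Probsousthetan(|S_n| \leq x) \geq 1 - \alpha$. Substituting back $S_n = \sqrt{n}(\overline{\xi}_n - \theta_1)/\sigma(\theta)$ and using $\sigma(\theta) = \sigmaknown$ on $\ThetaBEEsigmaknown$, the event $\{|S_n| \leq x\}$ rewrites exactly as $\{\theta_1 \in \CIsigmaknown\}$, yielding the desired coverage. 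Uniformity over $\ThetaBEEsigmaknown$ is automatic because the bound $\delta_n$ was chosen to dominate $\sup_{\theta \in \ThetaBEE} \min\{\DeltanB(\theta), \DeltanE(\theta)\}$ on the larger class $\ThetaBEE \supset \ThetaBEEsigmaknown$.
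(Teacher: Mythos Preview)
Your proposal is correct and follows essentially the same approach as the paper: the paper packages the two-sided tail bound $\Probsousthetan(|S_n|>x)\leq 2\{\Phi(-x)+\DeltanE\wedge\DeltanB\}$ into a separate lemma (Lemma~\ref{lem:edg_exp}(i)), but that lemma is proved precisely by the tail-splitting and parity-of-$\Edg$ argument you outline, and the subsequent inversion step is identical. The only difference is that you inline the lemma rather than cite it.
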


In Remark~\ref{rem:valid_delta_n}, we show 
that there exist bounds
\(\delta_n\) decreasing to~\(0\) when~\(n\) goes to infinity for our class $\ThetaBEE$.
Hence, for any~\(\alpha \in (0,1)\), for \(n\) large enough, there is enough information in the sample for our confidence interval 
\(\CIsigmaknown\) to be informative in the sense of being strictly included in the whole real line~\(\Rb\). 
Since $\delta_n$ is deterministic and converges to zero, we remark that, for any $\theta$ in $\Thetasigmaknown := \{ \theta \in \Theta :
\Varsoustheta(\xi) = \sigmaknown^2 \}$,
\begin{equation}
\label{eq:asymp_similarity_with_clt_ci}
    \dfrac{\sigmaknown}{\sqrt{n}}
    \QuantileGaussAt{1 - \dfrac{\alpha}{2} + \delta_n} = \dfrac{\sigmaknown}{\sqrt{n}}
    \QuantileGaussAt{1 - \dfrac{\alpha}{2}} + o(1/\sqrt{n}).
\end{equation}
Therefore, \(\CIsigmaknown\) behaves like a confidence interval based on the asymptotic normality of the sample mean and is thus pointwise asymptotically exact over $\Thetasigmaknown$.
In fact, our CI can be shown to be uniformly asymptotically exact over $\ThetaBEEsigmaknown$, as stated in Proposition~\ref{prop:mean_CI_BE-Edg_asymp_sigmaknown} and proved in Appendix~\ref{proof:prop:mean_CI_BE-Edg_asymp_sigmaknown}.
\begin{prop}
    \label{prop:mean_CI_BE-Edg_asymp_sigmaknown}
    
    Let $(\delta_n)$ be a sequence
    such that
    ${\delta_n \geq \sup_{\theta \in \ThetaBEE}
    \min\{ \DeltanE(\theta) , \DeltanB(\theta) \}}$
    and 
    $\delta_n \to 0$.
    Then
    \begin{itemize}
    \item[(\textit{i})]
    $\CIsigmaknown$ is asymptotically exact pointwise over $\Thetasigmaknown$ at level~\(1 - \alpha\);    
    \item[(\textit{ii})]
    $\CIsigmaknown$ is asymptotically exact uniformly over~\(\ThetaBEEsigmaknown\) at level~\(1 - \alpha\).
    \end{itemize}
\end{prop}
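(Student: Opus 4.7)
The plan is to treat the pointwise claim (i) and the uniform claim (ii) separately, since they rest on different tools: the classical CLT for (i) over the larger set $\Thetasigmaknown$, and the explicit Berry--Esseen/Edgeworth bounds underlying $\delta_n$ for (ii) over the smaller set $\ThetaBEEsigmaknown$. In both cases, the starting observation is that, since $\delta_n \to 0$, for $n$ large enough $\delta_n < \alpha/2$ and the confidence interval reduces to the informative bracket $[\overline{\xi}_n \pm \sigmaknown\, q_n/\sqrt{n}]$ with $q_n := \QuantileGaussAt{1-\alpha/2+\delta_n}$. By continuity of the Gaussian quantile function, $q_n \to \QuantileGaussAt{1-\alpha/2}$. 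Writing $S_n := \sqrt{n}(\overline{\xi}_n - \theta_1)/\sigmaknown$, which is well-defined on $\Thetasigmaknown$, the coverage probability takes the form $\Probsousthetan(\theta_1 \in \CIsigmaknown) = \Probsousthetan(S_n \leq q_n) - \Probsousthetan(S_n < -q_n)$, and this is what I would analyze.

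For (i), fix $\theta \in \Thetasigmaknown$. Then $\Varsoustheta(\xi) = \sigmaknown^2 \in (0, \infty)$, so the classical CLT yields $S_n \Rightarrow \Normale(0,1)$. Since $q_n$ converges to $\QuantileGaussAt{1-\alpha/2}$, a continuity point of $\Phi$, the portmanteau argument gives $\Probsousthetan(|S_n| \leq q_n) \to 2\Phi\big(\QuantileGaussAt{1-\alpha/2}\big) - 1 = 1-\alpha$, as desired.

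For (ii), fix $\theta \in \ThetaBEEsigmaknown$ and $n$ with $\delta_n < \alpha/2$. I would invoke whichever of the bounds $\DeltanB(\theta)$ or $\DeltanE(\theta)$ does not exceed $\delta_n$. Taking the Edgeworth case, \eqref{eq:definition_DeltanE} implies
\begin{equation*}
\Big| \Probsousthetan(S_n \leq q_n) - \Phi(q_n) - \tfrac{\lambda_3(\theta)}{6\sqrt{n}}(1 - q_n^2)\varphi(q_n) \Big| \leq \DeltanE(\theta),
\end{equation*}
and an analogous inequality at $-q_n$ for $\Probsousthetan(S_n < -q_n)$ (the strict-vs-non-strict distinction is harmless because both $\Phi$ and the correction are continuous, so the same bound transfers to the left-continuous CDF). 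Since $1-x^2$ and $\varphi(x)$ are even, the Edgeworth correction takes identical values at $q_n$ and $-q_n$ and cancels upon subtraction. This yields
\begin{equation*}
\big| \Probsousthetan(\theta_1 \in \CIsigmaknown) - (2\Phi(q_n)-1) \big| \leq 2\min\{\DeltanB(\theta), \DeltanE(\theta)\} \leq 2\delta_n.
\end{equation*}
Combined with $2\Phi(q_n)-1 = 1-\alpha+2\delta_n$, this gives $|\Probsousthetan(\theta_1 \in \CIsigmaknown) - (1-\alpha)| \leq 4\delta_n$, a bound independent of $\theta$, and the uniform conclusion follows as $n \to \infty$.

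The main subtlety I anticipate is the cancellation of the odd Edgeworth correction when passing from two one-sided inequalities to a two-sided coverage probability: without this cancellation, the error bound would contain an $n^{-1/2}$ skewness term whose supremum over $\ThetaBEEsigmaknown$ does not automatically vanish fast enough to be absorbed into $\delta_n$. This exact symmetry argument is already implicit in the proof of Proposition~\ref{prop:mean_CI_BE-Edg_sigmaknown}, and the remaining steps are essentially a repackaging of that construction together with the CLT.
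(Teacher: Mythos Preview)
Your proposal is correct and follows essentially the same approach as the paper. For (i) both use the CLT plus convergence of the modified quantile; for (ii) the paper invokes its Lemma~\ref{lemma:bound_cdf_alpha_Phi}, which packages precisely the symmetry-cancellation of the Edgeworth correction that you carry out inline, and your explicit computation $2\Phi(q_n)-1=1-\alpha+2\delta_n$ yields the clean bound $4\delta_n$ where the paper leaves $|2\Phi(-q_n)-\alpha|$ and appeals to continuity---a purely cosmetic difference.
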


Propositions~\ref{prop:mean_CI_BE-Edg_sigmaknown} and~\ref{prop:mean_CI_BE-Edg_asymp_sigmaknown}(ii) ensure $\CIsigmaknown$ is NAVAE uniformly over $\ThetaBEEsigmaknown$.
Combined with Equation~\eqref{eq:asymp_similarity_with_clt_ci},
this result implies that our confidence interval is of the form~\eqref{eq:def:efficient_CI} and therefore efficient.

\begin{rem}[Possible choices for $\delta_n$]
\label{rem:valid_delta_n}
    Proposition~\ref{prop:mean_CI_BE-Edg_sigmaknown} and subsequent results
    require a known bound~\(\delta_n\), on either \(\DeltanB(\theta)\) or \(\DeltanE(\theta)\).
    Furthermore, the smaller the bound \(\delta_n\), the shorter the resulting CI.
    Relying on \cite{shevtsova2013} and the inequality \(\E[|\xi|^3] \leq (\E[\xi^4])^{3/4}\),
    we deduce that \(\delta_{1,n} := 0.4690 K^{3/4} / \sqrt{n}\) is a valid bound on \(\DeltanB(\theta)\) over \(\ThetaBEE\).
    \cite{derumigny2022explicitBE} provides 
    \(\delta_{2,n} := 0.1995 (K^{3/4} + 1) / \sqrt{n} + O(n^{-1}) \) as a bound on \(\DeltanE(\theta)\) over \(\ThetaBEE\), with an explicit remainder term implemented in the \texttt{R} package \texttt{BoundEdgeworth} (\cite{BoundEdgeworth}).
    
    Consequently, Proposition~\ref{prop:mean_CI_BE-Edg_sigmaknown} holds with \(\delta_n := \min(\delta_{1,n}, \delta_{2,n})\).
    Although \(\delta_{1,n}\) and \(\delta_{2,n}\) decrease to~\(0\) as \(n\) goes to infinity at the same rate, \(\delta_{2,n}\) is asymptotically smaller than \(\delta_{1,n}\).
    For \(n\) large enough, our confidence intervals thus rely on an Edgeworth expansion.
    
    Under additional restrictions (that rule out discrete distributions),
    \cite{derumigny2022explicitBE} show that the quantity
    \(\delta_{3,n} := (0.195 K + 0.01465 K^{3/2}) / n + O(n^{-5/4})\)
    upper bounds~\(\DeltanE(\theta)\), allowing to build CIs that are one order of magnitude shorter than Berry-Esseen-based ones (using the previously mentioned package).
    See Remark~\ref{rem:no_plug_in_bounds} for discussion about the choice of $K$ in practice.
\end{rem}

\subsection{Unknown variance}
\label{ssec:unknown_variance_mean}

When the variance is unknown, a construction in the spirit of $\CIsigmaknown$ remains possible at the cost of controlling the variance estimation error. 
The process we use is twofold. 
First, the ratio between the ``oracle'' variance estimator \(\sigmazerocarrenhat := n^{-1} \sum_{i=1}^n (\xi_i - \theta_1)^2\) and its limit $\sigma^2(\theta)$ is tightly controlled from below using Theorem~2.19 in~\cite{pena2008self}.
That theorem allows us to write, for every $a > 1$ and $\theta \in \ThetaBEE$,
\begin{equation*}
    \Probsousthetan\!\left( \frac{\widehat{\sigma}_0^2}{\sigmadeux(\theta)} < \frac{1}{a} \right)
    \leq 
    \exp\!\left( -\frac{n(1-1/a)^2}
    {2K} \right)
    =: \, \nuVar(a).
\end{equation*}
From now on, we introduce a sequence \((a_n)\) of tuning parameters. 
Indeed, to build CIs with good asymptotic properties, we need $a_n$ to depend on $n$ as specified in Proposition~\ref{prop:mean_CI_BE-Edg_asymp_sigmaunknown}.
Second, we control \(\sqrt{n}(\overline{\xi}_n - \theta_1)/\sigma(\theta)\) similarly to what was done when building $\CIsigmaknown$.

\medskip

Combining the two steps, we obtain the following confidence interval \(\CIsigmanhat:=\)
\begin{equation*}
    \begin{cases}
        \left[ \overline{\xi}_n \pm
        \dfrac{\sigmanhat}{\sqrt{n}} \Cn
        \QuantileGauss\!\left({1 - \dfrac{\alpha}{2} + \delta_n +  \dfrac{\nuVar}{2}}\right)
        \right] &
        \text{if } 1 - \dfrac{\alpha}{2} + \delta_n + \dfrac{\nuVar}{2} < \Phi(\sqrt{n/a_n}), \\
        \Rb & \text{else},
    \end{cases} 
\end{equation*}
where 
\begin{equation*}
    \Cn := 
    \left(\frac{1}{a_n} - \frac{1}{n} \, \QuantileGauss\!\left( 1 - \frac{\alpha}{2}
    + \delta_n + \frac{\nuVar}{2} \right)^{\!2\,}\right)^{-1/2}.
\end{equation*}

Besides the data \((\xi_i)_{i=1}^n\) and the level~\(1 - \alpha\), \(\CIsigmanhat\) depends on three quantities: \(\delta_n\), \(K\), and the tuning parameter~\(a_n\).
\(\CIsigmanhat\) can be shown to be non-asymptotically valid over \(\ThetaBEE\).
Proposition~\ref{prop:mean_CI_BE-Edg_sigmaunknown} formalizes our finite-sample results on \(\CIsigmanhat\). 
Its proof can be found in Appendix~\ref{proof:prop:mean_CI_BE-Edg_sigmaunknown}.

\begin{prop}
    \label{prop:mean_CI_BE-Edg_sigmaunknown}
    Let \(n \geq 1\), $a_n \in (1,+\infty)$, and
    $\delta_n \geq \sup_{\theta \in \ThetaBEE}
    \min\{ \DeltanE(\theta) , \DeltanB(\theta) \}$.
    Then, $\CIsigmanhat$ is non-asymptotically valid over~\( \ThetaBEE\) at level~\(1 - \alpha\).
\end{prop}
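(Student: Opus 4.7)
The plan is to follow the two-step strategy outlined in Subsection~\ref{ssec:unknown_variance_mean}: separately control the fluctuation of $\overline{\xi}_n$ around $\theta_1$ through a BE/EE inequality, and the left tail of the oracle variance estimator $\sigmazerocarrenhat$ via the Peña-Lai-Shao bound already invoked in that subsection; then combine both by a union bound while accounting for the replacement of $\sigma(\theta)$ by the plug-in $\sigmanhat$.

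Fix $\theta \in \ThetaBEE$. If $1 - \alpha/2 + \delta_n + \nuVar/2 \geq \Phi(\sqrt{n/a_n})$, then $\CIsigmanhat = \Rb$ by definition and the coverage probability equals one. Otherwise, set $q_n := \QuantileGauss(1 - \alpha/2 + \delta_n + \nuVar/2)$; the condition becomes $0 < q_n < \sqrt{n/a_n}$, which ensures that $\Cn$ is well-defined and positive. Introduce the events $A := \{\sigmazerocarrenhat \geq \sigma(\theta)^2/a_n\}$ and $B := \{\sqrt{n}\,|\overline{\xi}_n - \theta_1|/\sigma(\theta) \leq q_n\}$. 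The Peña-Lai-Shao type inequality recalled in Subsection~\ref{ssec:unknown_variance_mean} yields $\Probsousthetan(A^c) \leq \nuVar$. Applying either the Berry-Esseen or the Edgeworth bound at $\pm q_n$ and summing (the Edgeworth correction terms at $+q_n$ and $-q_n$ cancel, as already used implicitly in~\eqref{eq:bound_Phi_BE}) yields $\Probsousthetan(|S_n| > q_n) \leq 2\Phi(-q_n) + 2\min\{\DeltanE(\theta), \DeltanB(\theta)\} \leq 2\Phi(-q_n) + 2\delta_n = \alpha - \nuVar$ by the definition of $q_n$; hence $\Probsousthetan(B^c) \leq \alpha - \nuVar$.

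It remains to show the event-wise inclusion $A \cap B \subseteq \{\theta_1 \in \CIsigmanhat\}$. The standard identity $\sigmadeuxnhat = \sigmazerocarrenhat - (\overline{\xi}_n - \theta_1)^2$, together with $A$ and $B$, gives $\sigmadeuxnhat \geq \sigma(\theta)^2(1/a_n - q_n^2/n) = \sigma(\theta)^2/\Cn^2$, i.e., $\sigmanhat\,\Cn \geq \sigma(\theta)$. On $B$ this entails $|\overline{\xi}_n - \theta_1| \leq \sigma(\theta)\,q_n/\sqrt{n} \leq \sigmanhat\,\Cn\,q_n/\sqrt{n}$, which is precisely $\theta_1 \in \CIsigmanhat$. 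A union bound then yields $\Probsousthetan(\theta_1 \in \CIsigmanhat) \geq 1 - \Probsousthetan(A^c) - \Probsousthetan(B^c) \geq 1 - \nuVar - (\alpha - \nuVar) = 1 - \alpha$, which is the desired non-asymptotic validity.

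The main subtlety (beyond routine bookkeeping) is that $\Cn$ is calibrated exactly so that the worst-case lower bound $\sigma(\theta)^2(1/a_n - q_n^2/n)$ for $\sigmadeuxnhat$ on $A \cap B$ matches $\sigma(\theta)^2/\Cn^2$; this is what allows the plug-in $\sigmanhat$ to absorb the loss $\sigma(\theta)$ without any further inflation of the Gaussian quantile beyond the $\delta_n + \nuVar/2$ offset. A secondary point is the cancellation of the Edgeworth correction at the symmetric endpoints $\pm q_n$, which is what makes the combined bound $\delta_n$ on $\min\{\DeltanE, \DeltanB\}$ enter the quantile offset with the same factor as in the known-variance Proposition~\ref{prop:mean_CI_BE-Edg_sigmaknown}.
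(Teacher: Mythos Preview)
Your proof is correct and follows essentially the same approach as the paper's: both split off the event $\{\sigmazerocarrenhat/\sigma(\theta)^2 \geq 1/a_n\}$ via the Pe\~na--Lai--Shao bound, control $\sqrt{n}|\overline{\xi}_n-\theta_1|/\sigma(\theta)$ by the BE/EE inequality at the same shifted quantile $q_n = y^*$, and then exploit the identity $\sigmadeuxnhat = \sigmazerocarrenhat - (\overline{\xi}_n-\theta_1)^2$ together with the calibration of $\Cn$. Your presentation via the direct event inclusion $A \cap B \subseteq \{\theta_1 \in \CIsigmanhat\}$ is slightly more streamlined than the paper's contrapositive decomposition (which intersects the complementary event with $\{T_n \leq y\}\cap\{\sigmazerocarrenhat/\sigma^2 \geq 1/a\}$ and then chooses $x = y/\sqrt{1/a - y^2/n}$ to make the residual probability vanish), but the two arguments are equivalent.
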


In line with $\CIsigmaknown$, $\CIsigmanhat$ can be shown to be asymptotically close to $\CIalphannumero{CLT}$. 
In fact, if $a_n$ is such that $a_n \to 1$ and $n(1-1/a_n)^2 \to +\infty$, we can see that for any positive value of~\(K\) and for any~\(\theta \in \Theta\), \(\nuVar \to 0\) as \(n \to +\infty\).
This implies
\begin{equation}
    \frac{\widehat{\sigma}}{\sqrt{n}} C_{n} 
    \QuantileGaussAtBig{1 - \frac{\alpha}{2}
    + \delta_n + \frac{\nuVar}{2} }
    = \frac{\sigma}{\sqrt{n}}
    \QuantileGaussAtBig{1 - \frac{\alpha}{2}}
    + o_{P_\theta} \big(1/\sqrt{n} \big),
    \label{eq:Taylor_asympt_point_exact_forTheMean}
\end{equation}
where, in this representation, the value~\(K\) is hidden in the \(o_{P_\theta} \big(1/\sqrt{n} \big)\) term.
Consequently, as is the case for \(\CIalphannumero{CLT}\), \(\CIsigmanhat\) is asymptotically exact pointwise over the whole parameter space~\(\Theta\). 
As stated in the next proposition, $\CIsigmanhat$ is also asymptotically exact uniformly over $\ThetaBEE$. 
The proof can be found in Section~\ref{proof:prop:mean_CI_BE-Edg_asymp_sigmaunknown}.

\begin{prop}
\label{prop:mean_CI_BE-Edg_asymp_sigmaunknown}
    
    Let $(\delta_n)$ be a sequence
    such that 
    ${\delta_n \geq \sup_{\theta \in \ThetaBEE}
    \min\{ \DeltanE(\theta) , \DeltanB(\theta) \}}$
    and 
    $\delta_n \to 0$.
    Assume that $b_n := a_n - 1$ is such that $b_n \to 0$ and \(b_n \sqrt{n} \to +\infty\).    
    Then
    \begin{itemize}
    
    \item[(\textit{i})]
    $\CIsigmanhat$ is asymptotically exact pointwise over $\Theta$ at level~\(1 - \alpha\);
    
    \item[(\textit{ii})]
    $\CIsigmanhat$ is asymptotically exact uniformly over~\(\ThetaBEE\) at level~\(1 - \alpha\).

    \end{itemize}
    
\end{prop}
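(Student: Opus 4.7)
The plan is to prove (i) and (ii) separately: (i) by a straightforward application of Slutsky's lemma, and (ii) via two-sided uniform control, combining the non-asymptotic validity from Proposition~\ref{prop:mean_CI_BE-Edg_sigmaunknown} on one side with a uniform fourth-moment argument on the variance estimator on the other. I first collect a few preliminaries that do not depend on \(\theta\). By assumption, \(\delta_n \to 0\); because \(b_n \to 0\) and \(b_n \sqrt{n} \to +\infty\), we have \(n(1-1/a_n)^2 \sim n b_n^2 \to +\infty\), so \(\nuVar \to 0\). Consequently, the quantile \(Q_n := \QuantileGaussAt{1 - \alpha/2 + \delta_n + \nuVar/2}\) converges to \(\QuantileGaussAt{1 - \alpha/2}\), the scaling factor \(\Cn\) converges to \(1\), and \(\Phi(\sqrt{n/a_n}) \to 1\). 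Since none of these quantities depend on \(\theta\), the condition \(1 - \alpha/2 + \delta_n + \nuVar/2 < \Phi(\sqrt{n/a_n})\) defining the non-trivial branch of \(\CIsigmanhat\) holds for all \(n\) large enough, uniformly in \(\theta\).

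For part~(i), fix \(\theta \in \Theta\) and set \(S_n := \sqrt{n}(\overline{\xi}_n - \theta_1)/\sigma(\theta)\). Coverage equals \(\Probsousthetan(|S_n| \leq (\sigmanhat/\sigma(\theta))\, \Cn \, Q_n)\). The CLT gives \(S_n \convD Z \sim \Normale(0,1)\), the LLN gives \(\sigmanhat/\sigma(\theta) \convP 1\), and the preliminaries above yield \((\sigmanhat/\sigma(\theta)) \, \Cn \, Q_n \convP \QuantileGaussAt{1 - \alpha/2}\); this is a probabilistic restatement of Equation~\eqref{eq:Taylor_asympt_point_exact_forTheMean}. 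Slutsky's lemma then delivers \(\Probsousthetan(\CIsigmanhat \ni \theta_1) \to \Prob(|Z| \leq \QuantileGaussAt{1 - \alpha/2}) = 1 - \alpha\).

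For part~(ii), the lower bound \(\inf_{\theta \in \ThetaBEE} \Probsousthetan(\CIsigmanhat \ni \theta_1) \geq 1 - \alpha\) is immediate from the non-asymptotic validity of Proposition~\ref{prop:mean_CI_BE-Edg_sigmaunknown}. To obtain the matching upper bound, pick a deterministic sequence \(\rho_n \downarrow 0\) with \(n \rho_n^2 \to +\infty\) (e.g., \(\rho_n = n^{-1/4}\)) and decompose
\begin{equation*}
\Probsousthetan\!\left(|S_n| \leq \frac{\sigmanhat}{\sigma(\theta)} \Cn Q_n\right)
\leq \Probsousthetan\!\left(|S_n| \leq \sqrt{1+\rho_n}\, \Cn Q_n\right)
+ \Probsousthetan\!\left(\sigmadeuxnhat > (1 + \rho_n)\, \sigmadeux(\theta)\right).
\end{equation*}
For the first term, the Berry--Esseen/Edgeworth bound gives \(\Probsousthetan(|S_n| \leq y) \leq 2\Phi(y) - 1 + 2\delta_n\) for every \(y > 0\) and \(\theta \in \ThetaBEE\); since \(\sqrt{1+\rho_n}\, \Cn Q_n\) is deterministic and tends to \(\QuantileGaussAt{1 - \alpha/2}\), continuity of \(\Phi\) yields a uniform bound of \(1 - \alpha + o(1)\). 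For the second term, use \(\sigmadeuxnhat \leq T_n := n^{-1}\sum_{i=1}^n (\xi_i - \theta_1)^2\), where \(\Expsoustheta[T_n] = \sigmadeux(\theta)\) and \(\Varsoustheta(T_n) \leq \Expsoustheta[(\xi - \theta_1)^4]/n = K_4(\theta)\, \sigmadeux(\theta)^2/n \leq K\, \sigmadeux(\theta)^2/n\) on \(\ThetaBEE\). Chebyshev's inequality then yields \(\Probsousthetan(\sigmadeuxnhat > (1+\rho_n)\sigmadeux(\theta)) \leq K/(n\rho_n^2) \to 0\) uniformly in \(\theta\). Combining gives \(\sup_{\theta \in \ThetaBEE} \Probsousthetan(\CIsigmanhat \ni \theta_1) \leq 1 - \alpha + o(1)\), which together with the lower bound establishes~(ii).

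The main obstacle is part~(ii): by design, \(\CIsigmanhat\) only controls \(\sigmadeuxnhat/\sigmadeux(\theta)\) from below through the Peña--Giné-type bound \(\nuVar\), which is what makes non-asymptotic validity possible but does not preclude asymptotic overcoverage. A complementary, uniform upper control of \(\sigmadeuxnhat/\sigmadeux(\theta)\) is therefore the crux, and the kurtosis restriction baked into \(\ThetaBEE\) makes a single Chebyshev step enough. The remaining arguments are routine Slutsky-type manipulations.
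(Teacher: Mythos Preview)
Your proof is correct and follows essentially the same route as the paper: part~(i) is identical (CLT plus Slutsky), and part~(ii) combines the non-asymptotic validity for the lower bound with a Chebyshev control of the variance estimator (via the kurtosis bound on \(\ThetaBEE\)) together with the Berry--Esseen/Edgeworth bound on the standardized sum for the upper bound. The only noteworthy difference is that the paper works with the oracle variance \(\sigmazerocarrenhat\) and the exact identity \(\sigmadeuxnhat/\sigmazerocarrenhat = 1 - T_n^2/n\) (Lemma~\ref{lem:deterministic_equality}) before applying Chebyshev to \(|\sigmazerocarrenhat/\sigmadeux(\theta)-1|\), whereas you bypass that identity via the one-sided inequality \(\sigmadeuxnhat \leq \sigmazerocarrenhat\); your version is slightly more direct and loses nothing.
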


Propositions~\ref{prop:mean_CI_BE-Edg_sigmaunknown},~\ref{prop:mean_CI_BE-Edg_asymp_sigmaunknown}(ii), and Equation~\eqref{eq:Taylor_asympt_point_exact_forTheMean} ensure $\CIsigmanhat$ is NAVAE uniformly over $\ThetaBEE$ and efficient since it is of the form~\eqref{eq:def:efficient_CI}.

\medskip

Similar to \(\CIsigmaknown\), our interval might be non-informative, namely equal to~\(\Rb\).
The tuning parameter \(a_n\) must satisfy the constraint
\begin{equation}
    1 - \dfrac{\alpha}{2} + \delta_n + \dfrac{\nuVar(a_n)}{2} < \Phi(\sqrt{n/a_n}).
\label{eq:constraint_a_n}
\end{equation}
for the interval to be informative.
%
%
For $n$ small, there might not be any value $a_n \in (
1, +\infty)$ for which the constraint holds. 
However, it will be the case for \(n\) large enough as shown in Proposition~\ref{prop:open_interval_a} (proved in Appendix~\ref{proof:prop:open_interval_a}).
Note that as soon as there is one solution, we know that the set of possible $a_n$ forms an open interval.

\begin{prop}
    Let $\alpha \in (0, 1/2)$, $K > 0$.
    \begin{enumerate}
        \item[(i)] Let $n, \delta_n > 0$.
        The subset of values of $a_n$ of $(1, +\infty)$
        satisfying the constraint \eqref{eq:constraint_a_n}
        is an open interval $I_n$ (potentially empty).
        If $I_n$ is not empty, then it must be of the form $(a_{1,n}, a_{2,n})$, with $1 < a_{1,n}$ and $a_{2,n} < +\infty$.

        If moreover $(\delta_n)$ is a decreasing sequence, then the sequence $I_n$ is increasing (with respect to the inclusion order).
        Furthermore $(a_{1,n})$ and $(a_{2,n})$ are respectively decreasing and increasing sequences, with respective limits $1$ and $+\infty$.

        \item[(ii)] Let $a_n = 1 + b_n \in (1, +\infty)$ with $b_n \to 0$ and \(b_n \sqrt{n} \to +\infty\).
        Assume that for $n$ large enough,
        $1 - \dfrac{\alpha}{2} + \delta_n$ is bounded above by a constant strictly smaller than $1$.
        Then, for $n$ large enough, the constraint \eqref{eq:constraint_a_n} is satisfied.
    \end{enumerate}
    
\label{prop:open_interval_a}
\end{prop}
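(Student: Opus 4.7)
My plan is to reduce Part~(i) to studying the slack function
\[\psi(a) := \Phi(\sqrt{n/a}) - \nuVar(a)/2 - (1 - \alpha/2 + \delta_n), \qquad a \in (1, +\infty),\]
so that $I_n = \{a > 1 : \psi(a) > 0\}$ is simply a superlevel set of a continuous function. Evaluating the limits at the endpoints (using $\nuVar(a) \to 1$ as $a \to 1^+$ and $\Phi(\sqrt{n/a}) \to \Phi(0) = 1/2$ as $a \to +\infty$) immediately yields $\psi(1^+) < 0$ and $\psi(+\infty) < 0$, since $\alpha < 1/2$ forces $1 - \alpha/2 + \delta_n > 1/2$. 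Therefore any non-empty component of $I_n$ is bounded away from both $1$ and $+\infty$; what remains is to establish connectedness.

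The main obstacle is proving that $\psi$ is unimodal. I would perform the change of variable $\beta := 1 - 1/a \in (0, 1)$ and compute that the sign of $\psi'(a)$ coincides with that of $R(\beta) - 1$, where
\[R(\beta) := \frac{\beta\sqrt{2\pi n(1-\beta)}}{K}\exp\!\Big(\frac{n(1-\beta)}{2} - \frac{n\beta^2}{2K}\Big).\]
A direct second-derivative computation yields $(\log R)''(\beta) = -1/\beta^2 - 1/[2(1-\beta)^2] - n/K < 0$, so $\log R$ is strictly concave on $(0,1)$ and, since $R(\beta) \to 0$ at both endpoints, the level set $\{R > 1\}$ is either empty or an open subinterval $(\beta_-, \beta_+) \subset (0, 1)$. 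In the former case $\psi$ is monotonically decreasing on $(1, +\infty)$ so $I_n = \emptyset$; in the latter case $\psi$ is decreasing, then increasing, then decreasing, and combined with the endpoint sign analysis the set $\{\psi > 0\}$ is either empty or a single open interval around the local maximum of $\psi$. Reverting the change of variable gives $I_n = (a_{1,n}, a_{2,n})$ with $1 < a_{1,n} \leq a_{2,n} < +\infty$.

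The monotonicity of $I_n$ in $n$ under a decreasing $(\delta_n)$ is a direct pointwise verification: for fixed $a > 1$ and $n < n'$, the LHS of~\eqref{eq:constraint_a_n} weakly decreases ($\delta_{n'} \leq \delta_n$, and $\nuVar(a)$ is decreasing in $n$ since $(1-1/a)^2 > 0$) while the RHS weakly increases (by monotonicity of $\Phi$). For the limits of $(a_{1,n})$ and $(a_{2,n})$ (which additionally uses $\delta_n \to 0$, implicit in the context of Remark~\ref{rem:valid_delta_n}), it suffices to exhibit sequences in $I_n$ with the right asymptotics: taking $a_n = 1 + n^{-1/3}$ and invoking Part~(ii) with $b_n = n^{-1/3}$ gives $a_{1,n} \to 1$, while $a_n = \sqrt{n}$ yields $\Phi(\sqrt{n/a_n}) = \Phi(n^{1/4}) \to 1$ and $\nuVar(a_n) \to 0$ by direct computation, so $a_n \in I_n$ eventually and $a_{2,n} \to +\infty$.

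Finally, Part~(ii) is a short asymptotic calculation: with $a_n = 1 + b_n$, the assumption $b_n \to 0$ gives $(1 - 1/a_n)^2 = b_n^2(1+o(1))$, and combined with $b_n\sqrt{n} \to +\infty$ this yields $\nuVar(a_n) = \exp\!\big(-n b_n^2(1+o(1))/(2K)\big) \to 0$; simultaneously $n/a_n \to +\infty$ gives $\Phi(\sqrt{n/a_n}) \to 1$. Under the standing hypothesis that $1 - \alpha/2 + \delta_n$ is eventually bounded above by some $c < 1$, constraint~\eqref{eq:constraint_a_n} thus reduces to $c + o(1) < 1 - o(1)$ for $n$ large, which indeed holds.
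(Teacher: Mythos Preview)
Your proof is correct. The overall architecture matches the paper's: define the slack function, check its sign at the two endpoints of $(1,+\infty)$, establish a unimodality-type structure to conclude the superlevel set is a single interval, then argue monotonicity in $n$ pointwise. However, the core technical step --- showing the derivative of the slack function changes sign at most twice --- is handled quite differently. The paper substitutes $x = 1/a$, reduces the sign of $g_{n,K}'$ to a comparison $h_{n,K}(x) \lessgtr \text{const}$, and then analyzes $h_{n,K}'$ through a \emph{cubic polynomial}, using the intermediate value theorem to locate its unique root in $(0,1)$; this takes two nested lemmas. Your route via $\beta = 1 - 1/a$ collapses this to a single observation: the ratio $R(\beta)$ governing the sign of $\psi'$ is log-concave, by a one-line second-derivative computation. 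This is cleaner and more conceptual. For the limits of $a_{1,n}$ and $a_{2,n}$, the paper shows every fixed $a$ eventually lies in $I_n$, whereas you exhibit explicit test sequences (and correctly flag that this step tacitly needs $\delta_n$ eventually below $\alpha/2$, which the paper's statement leaves implicit); both arguments are equivalent in strength. Part~(ii) is identical in both proofs.
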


\begin{rem}
\label{rem:optimized_choice_an}
    When \(I_n\) is not empty, a natural choice for \(a_n\) is to minimize the width of our confidence interval, which can be done numerically.     
    Such an automatic selection rule for \(a_n\) is appealing as it avoids having to choose manually a value for this parameter and yields the shortest CI.
    Furthermore, for a fixed bound \(K\) (as opposed to plug-in, see Section~\ref{ssec:plug_in_discussion}), that optimal \(a_n\) is not data-dependent.
    Table~\ref{tab:coverage_expectation_case} in Section~\ref{subsec:simu_expectation} compares our optimized choice to the ad hoc alternative \(a_n = 1 + n^{-1/5}\), which satisfies the rate requirement of Proposition~\ref{prop:mean_CI_BE-Edg_asymp_sigmaunknown}.
    We find that these competing choices lead to comparable inference results in simulations.    
\end{rem}

In the simple case of an expectation, we have thus managed to construct closed-form and non-randomized NAVAE confidence intervals uniformly over a non-parametric class of distributions delineated by moment conditions only.
As highlighted in the explanatory case of a known variance, the key behind that construction is to enlarge \(\CIalphannumero{CLT}\) properly through asymptotically negligible modifications based on Berry-Esseen inequalities or Edgeworth expansions.
In Section~\ref{sec:lin_mod_exo}, we move on to linear regressions.
Following the same principle, 
we derive confidence intervals that enjoy the same appealing theoretical properties.


\section{NAVAE confidence intervals for linear regressions' coefficients}
\label{sec:lin_mod_exo}

\subsection{Model formulation}

For the sake of completeness and precision, we first state the statistical model and its assumptions.

\begin{hyp}[Linear model]
\label{hyp:basic_linear_model}
We observe $n$ independent replications
$(X_1,Y_1), \dots, $ $(X_n,Y_n)$
following the distribution $\Distribution_{X,Y},$
where $X$ is an explanatory random vector of dimension $p$
and $Y$ is an outcome real random variable 
such that, for some real random variable~\(\eps\) and vector~\(\betazero\): 
\begin{align}
\label{eq:basic_linear_model}    
    &Y = X' \betazero + \eps
    \quad \textnormal{and} \quad
    \Distributionde{X,\eps} \in \setDistribXeps,
    \textnormal{ where } \\
    &\setDistribXeps := \Big\{ 
    \Distributionde{X,\eps} \in \mathcal{P}(\Rb^{p+1}) :
    \E[X\eps] = 0,
    \,
    \E\big[\normesmall{X}^4\big] < +\infty,
    \,
    \minvp(\E[XX']) > 0,
    \, \nonumber \\
    &\hspace{4.6cm}
    \minvp\!\left(\E\!\left[XX'\varepsilon^2\right]\right) > 0,
    \,
    \maxvp\!\left(\E\!\left[XX'\varepsilon^2\right]\right) < +\infty
    \Big\}.
    \nonumber
\end{align}
\end{hyp}

\noindent The parameter set of the associated statistical model is \(\Theta := \left\{ \theta = (\betazero, \Distribution_{X,\eps}) \in \Rb^p \times \setDistribXeps \right\} \).
In this model, $\Distributionde{\theta}$ denotes a distribution of $(X, Y)$ indexed by $\theta \in \Theta$.
For brevity, we implicitly omit the index $\theta$ in the expectation operator~$\E$.
In what follows, we consider several subsets of $\Theta$ characterized by additional restrictions that enable us to build several confidence sets.
We will investigate and compare their properties in line with the discussion conducted in Section~\ref{sec:expectation_example}.

\medskip 

Assumption~\ref{hyp:basic_linear_model} sets a basic linear regression model.
\(\Theta\) is indeed the largest parameter set compatible with usual economic assumptions and minimal statistical conditions.
The condition \(\E[X\eps] = 0\) corresponds to the (weak) exogeneity of covariates, i.e., the orthogonality condition of the linear projection of~\(Y\) on~\(X\).
It is implied by the (strong) exogeneity assumption \(\E[\eps | X] = 0\), but does not require the conditional expectation of \(Y\) given \(X\) to be linear.
The other moment conditions allow for heteroskedasticity while ensuring the asymptotic normality of the Ordinary Least Squares (OLS) estimator of $\betazero$:
\begin{equation*}
    \betahat := 
    \Bigg( \frac{1}{n} \sumiunn X_i X_i' \Bigg)^{\!\!\dagger}
    \Bigg( \frac{1}{n} \sumiunn X_i Y_i \Bigg).
\end{equation*}
More precisely, under Assumption~\ref{hyp:basic_linear_model}, a finite second-order moment, \(\E\big[ \normesmall{X}^2 \big] < +\infty\), is sufficient for that result.
However, a finite fourth-order moment is necessary to consistently estimate the asymptotic variance of~\(\betahat\) by its empirical counterpart and perform inference on~\(\betazero\) in practice.
That is why we incorporate this additional requirement directly into our basic statistical model.
Besides, remark that the condition \( \minvp(\E(XX')) > 0 \) is equivalent to the invertibility of the matrix \( \E[XX'] \).
Thus, under Assumption~\ref{hyp:basic_linear_model}, \((n^{-1} \sumiunn X_i X_i')^{\dagger} = (n^{-1} \sumiunn X_i X_i')^{-1}\) with probability approaching one as the sample size~\(n\) goes to infinity.

\medskip 

For a given known vector~$\uu$ of $\Rb^p$, our goal is to build a confidence interval for a linear functional of the form $\uu' \betazero$.
It encompasses CIs for each individual component of $\betazero$ (taking for $\uu$ the canonical vectors) and also differences of coefficients that appear when investigating the relative impact of two covariates.
We consider henceforth an arbitrary vector $\uu \in \Rb^p \setminus \{0_{\Rb^p}\}$.

\subsection{Properties of the usual confidence interval}

As mentioned in the introduction, the standard way to proceed is to construct a CI centered at the estimator $\uu' \betahat$ relying on the asymptotic normality of the OLS estimator:
\begin{equation}
    \CIAsympu := \left[ \uu' \betahat
    \pm \frac{\QuantileGaussAt{1 - \alpha/2}}{\sqrt{n}}
    \sqrt{\uu' \Vhat \uu} \right],
    \label{def:CIAsympu}
\end{equation}
where 
\begin{equation*}
    \Vhat := \left(\frac{1}{n}\sum_{i=1}^nX_iX_i'\right)^{\!\!\dagger}
    \left(\frac{1}{n}\sum_{i=1}^nX_iX_i'\,\widehat{\eps}_i^{\,2}\right)
    \left(\frac{1}{n}\sum_{i=1}^nX_iX_i'\right)^{\!\!\dagger}
\end{equation*}
is the standard estimator of the asymptotic variance
\(\AsymptoticVarbetahat := \E[XX']^{-1} \E[XX'\eps^2] \E[XX']^{-1}\)
of $\betahat$, and
\(\widehat{\eps}_i := Y_i-X_i'\widehat{\beta}\) is the residual for the $i$-th observation.


\medskip

The pros and cons of $\CIAsympu$ are well-understood and very close to those of $\CIalphannumero{CLT}$ that were detailed in Section~\ref{sec:expectation_example}.
By applications of the Law of Large Numbers, the CLT, and Slutsky's lemma, $\CIAsympu$ is known to be asymptotically exact pointwise over~$\Theta$.
Besides, following \cite{kasy2019}, $\CIAsympu$ can be strengthened to be asymptotically exact uniformly at level~\(1 - \alpha\) over
\begin{equation*}
    \ThetaKasy := \left\{\theta\in\Theta:
    \minvp\big( \E[XX'] \big) \geq m, \,
    \E\big[ \normesmall{X}^4 \big] \leq M, \,
    m \leq \E \big[ \normesmall{X\eps}^4 \big] \leq M \right\}, 
\end{equation*}
with $0 < m \leq M < +\infty$.

\medskip

Furthermore, $\CIAsympu$ becomes non-asymptotically exact at level~\(1 - \alpha\) over 
\begin{equation*}
    \ThetaGauss := \big\{\theta \in \Theta : \Distribution_{X,\eps} = \Distribution_X \otimes \Distribution_\eps, \Distribution_\eps = \Normale(0,\sigmadeux), \sigmadeux \in \Rstarplus \},
\end{equation*}
provided $\Vhat$ is replaced with the (homoscedastic) estimator \(
(n-p)^{-1}
\sum_{i=1}^n \widehat{\eps}_i^{\,2} \left(
n^{-1}
\sum_{i=1}^n X_i X_i'\right)^{\dagger}\)
and the quantile $\QuantileGaussAtsmall{1 - \alpha/2}$
with the 
quantile of a Student distribution with ${n - p}$ degrees of freedom.

\medskip

However, assuming that the true distribution belongs to $\ThetaGauss$
-- so as to achieve such finite-sample properties --
is often considered too restrictive in practice:
Gaussian error terms impede skewed or heavier-tail shocks,
and independence between $\eps$ and $X$ rules out heteroskedasticity.
In what follows, we propose NAVAE CIs without relying on such independence or parametric assumptions.
This corresponds to the same objectives as in Section~\ref{sec:expectation_example} now for the case of linear regressions; they will be met using the same tools (Berry-Esseen-type inequalities or bounds on Edgeworth expansions) in this different setup.

\subsection{Presentation of our confidence interval}
\label{ssec:fin_ols_ci_construction}

\subsubsection{Intuitions and assumptions}
\label{subsubsec:intuition_assumptions_for_our_OLS}

Remark that
\begin{equation*}
    \uu' \betahat
    = \uu' \betazero
    + \uu' \bigg(\frac{1}{n} \sumiunn X_i X_i' \bigg)^{\!\!\dagger} \, \frac{1}{n}  \sumiunn X_i \eps_i,
\end{equation*}
but the second term in the previous sum cannot be written as an empirical mean of \iid{} variables, rendering it harder to analyze theoretically.
Under Assumption~\ref{hyp:basic_linear_model}, the quantity \((n^{-1} \sumiunn X_i X_i')^{\dagger}\) converges in probability to \(\E[XX']^{-1}\).
Therefore, we obtain the following linearization
\begin{equation}
\label{eq:linearization_estimator}
    \uu' \widehat{\beta} - \uu' \betazero
    \approx \frac{1}{n} \sumiunn
    \underbrace{\uu' \, \E[XX']^{-1} X_i \eps_i}_{\displaystyle =: \xi_i},
\end{equation}
and the error of the estimator can thus be approximated by an average of $n$ \iid{} random variables distributed as
$\xi := \uu' \, \E[XX']^{-1} X \eps$.

\medskip 

Like in Section~\ref{sec:expectation_example}, our CIs cannot be non-asymptotically valid on $\Theta$ itself because this space is simply too large.
Therefore, we now introduce the subset $\ThetaBEE$ of $\Theta$ described in the following Assumption~\ref{hyp:subset_exo_non-asymptotic_validity} in order to obtain non-asymptotic guarantees.  
To state our conditions, we introduce the rotated version of $X$, defined as $\widetilde{X}:= \E(XX')^{-1/2} X$.

\begin{hyp}[Bounds on DGP]
\label{hyp:subset_exo_non-asymptotic_validity}
    Let $\minlambdaEXXt$, $\majorantEXXprimeconcentration$, \(\majorantEnormeXepsquatre\), and \(\majorantKurtxi\) be some positive constants.
    We define $\ThetaBEE$ to be the set of parameters $\theta = (\betazero, \Distributionde{X, \eps}) \in \Theta$ 
    such that the joint distribution $\Distributionde{X, \eps}$ satisfies:
    \begin{enumerate}
        \item[(i)] \( \minvp(\E[XX']) \geq \minlambdaEXXt \);
        \item[(ii)] \(\E\!\left[ \normebig{
        \operatorvec \big(\widetilde{X}\widetilde{X}' - \mathbb{I}_p \big) }^2
        \right] \leq \majorantEXXprimeconcentration\);
        \item[(iii)] \(\E\!\left[ \normebig{\widetilde{X}\eps}^4 \right]
        \leq \majorantEnormeXepsquatre\);
        \item[(iv)] \( \E\!\left[\xi^4\right] / \, \E\!\left[\xi^2\right]^2
        \leq \majorantKurtxi\).
    \end{enumerate}
\end{hyp}

Assumption~\ref{hyp:subset_exo_non-asymptotic_validity} defines a broad non-parametric class of distributions delineated by the different constants \(\minlambdaEXXt\), \(\majorantEXXprimeconcentration\), \(\majorantEnormeXepsquatre\), and \(\majorantKurtxi\). 
These constants appear explicitly in the construction of our CIs, as did the constant \(K\) in the previous section for the case of an expectation.
The user needs to specify their values in practice, which should be done with care.
We elaborate on these choices in Section~\ref{sec:practical_considerations_plugin}.
Relying on explicit constants may seem restrictive compared to standard asymptotic inference.
However, outside of some specific parametric models, using such types of bounds or other known quantities (like the median-bias of the estimator in \cite{kuchibhotla2024hulc}) is unavoidable to obtain non-asymptotic properties.

\medskip

Overall, the different parts of Assumption~\ref{hyp:subset_exo_non-asymptotic_validity} strengthen the moment conditions of the basic linear model~$\Theta$.
Part~(i) rules out \(\E[XX']\) matrices arbitrarily close to being singular, an unfavorable situation in which $\betazero$ is not identified.
Part~(ii) helps control the concentration of \( n^{-1} \sumiunn X_i X_i' \) and ensures, together with part~(i), that it is invertible with large probability for every (large enough) sample size.
Part~(iii) complements (ii) to control the linearization \eqref{eq:linearization_estimator} of the OLS estimator.
Part~(iii) is implied by \ref{hyp:subset_exo_non-asymptotic_validity}(i) and the simpler (but stricter) constraint
\(\E\!\left[ \normebig{X \eps}^4 \right]
\leq C\) for some constant $C > 0$.
Part~(iv) allows to bound the Edgeworth expansion of the distribution of $n^{-1} \sum_{i = 1}^n \xi_i$ and enables to derive a tight control on the distance between $n^{-1} \sum_{i = 1}^n \xi_i^2$ and its expectation, which happens to be the asymptotic variance associated with $\sqrt{n}u'(\betahat-\betazero)$.

\medskip

Parts~(i) to~(iii) of the previous assumption are critical in the construction of our CI as they enable us to ensure that with large probability 
\begin{equation*}
    \frac{\sqrt{n}\uu' (\betahat - \betazero)}{\sqrt{\uu' \Vhat \uu}} \approx \frac{n^{-1/2}\sumiunn\xi_i}{\sqrt{n^{-1}\sumiunn \xi_i^2}},
\end{equation*}
i.e., the centered and scaled OLS estimator gets ``close'' to a self-normalized sum of centered \textit{i.i.d.} random variables on a large-probability event. 
More precisely, they first guarantee that the linearization
\begin{equation*}
    \sqrt{n} \uu' ( \betahat - \betazero ) \in
    \left[ \frac{1}{\sqrt{n}} \sumiunn \xi_i \pm \RnLin(\gamma) \right]
\end{equation*}
holds with probability at least \(1-2\gamma\), for any \(\gamma > 0\), where
\begin{align*}
    \RnLin(\gamma)
    &:= \sqrt{2} \|u\| \minlambdaEXXt^{-1/2}
    \frac{\widetilde{\gamma}}{1 - \widetilde{\gamma}}
    \left(\frac{\majorantEnormeXepsquatre}{\gamma}\right)^{1/4}
\end{align*}
is an explicit bound on the linearization error term
and \(\widetilde{\gamma}
:= \sqrt{\majorantEXXprimeconcentration/(n\gamma)}\).
Second, with probability at least \(1-2\gamma\) too, they enable us to prove that
\begin{equation*}
    \left| \uu' \Vhat \uu - n^{-1}\sumiunn \xi_i^2 \right| \leq \|\uu\|^2\RnVar(\gamma),
\end{equation*}
where
\begin{align*}
    \RnVar(\gamma)
    := & \;\;
     \frac{2}{n\minlambdaEXXt^3} \left( \frac{\widetilde{\gamma}}{1-\widetilde{\gamma}} + 1 \right)  ^2 \sqrt{\frac{\majorantEnormeXepsquatre}{\gamma}}
    \times 
    \frac{1}{n}\sum_{i=1}^n\|X_i\|^4 \\
    & + 
    \frac{2\sqrt{2}}{\minlambdaEXXt^{5/2} \sqrt{n}} \left( \frac{\widetilde{\gamma}}{1-\widetilde{\gamma}} + 1 \right)
    \left( \frac{\majorantEnormeXepsquatre}{\gamma} \right)^{1/4}
    \times 
    \frac{1}{n}\sum_{i=1}^n\|X_i\|^3\,|\epshat_i| \\
    &+ \frac{\majorantEXXprimeconcentration/(n\gamma)}{\minlambdaEXXt^{2}(1-\widetilde{\gamma})^2} \times 
    \frac{1}{n}\sum_{i=1}^n\|X_i\epshat_i\|^2 \\
    &+ \frac{2\widetilde{\gamma}}{\minlambdaEXXt (1-\widetilde{\gamma})} \times \norme{\frac{1}{n}\sum_{i=1}^nX_iX_i'S^{\dagger}\epshat_i^2},
\end{align*}
and \(S\) is a shorthand notation for $n^{-1} \sum_{i=1}^n X_i X_i'$.

\medskip

Remark that \(n^{-1/2}\sumiunn\xi_i\,\big/ \sqrt{n^{-1}\sumiunn \xi_i^2}\) is the ratio of a sample mean (centered at the true expectation, here equal to 0) and the square-root of its corresponding oracle variance (see Section~\ref{ssec:unknown_variance_mean} for a definition). This quantity can thus be managed using the method introduced in Section~\ref{ssec:unknown_variance_mean}, which relies on Berry-Esseen or Edgeworth-type controls and an exponential deviation inequality between $n^{-1}\sumiunn \xi_i^2$ and its limit.
This is where Assumption~\ref{hyp:subset_exo_non-asymptotic_validity}.(iv) comes into play,
with \(\majorantKurtxi\) playing the same role as~\(K\) in the case of an expectation.

\subsubsection{Formal definition}
\label{subsubsec:formal_definition_our_OLS}

For any tuning parameters $\omega_n \in (0, 1)$ and $a_n \in (1, +\infty)$,
we define the ``modified Gaussian quantile''
\begin{equation*}
    \QnEdg := \sqrt{a_n} \, q_{\mathcal{N}(0,1)}
    \big(1 - \alpha/2 + \nuEdg \big) +  \nuApprox ,
\end{equation*}
which depend on some perturbation terms $\nuEdg$ and $\nuApprox$ defined as
\begin{align*}
    \nuEdg &:= \frac{\omega_n \alpha
    + \exp\!\big(-n(1-1/a_n)^2/(2 \majorantKurtxi)\big)}{2}
    + \delta_n, \\    
    \nuApprox &:= \frac{\RnLin(\omega_n\alpha/2)}{\sqrt{\uu' \Vhat \uu
    + \|u\|^2 \RnVar(\omega_n\alpha/2)}}.
\end{align*}
where $\delta_n \geq
\sup_{\theta \in \ThetaBEE}
\min \! \left\{
\DeltanB(\theta) \, , \, \DeltanE(\theta)
\right\}$.
This condition is similar to the one used in Section~\ref{sec:expectation_example} in the framework of an expectation.
Remember that \(\DeltanB(\theta)\) and \(\DeltanE(\theta)\)
are defined in Equations~\eqref{eq:definition_DeltanB} 
and~\eqref{eq:definition_DeltanE} with \(\xi\) here equal to \(\uu' \, \E[XX']^{-1} X \eps\).
Therefore, the choices of \(\delta_n\) introduced in Remark~\ref{rem:valid_delta_n} can be used in the current setting as well, replacing \(\majorantKurtxi\) with \(K\).

\medskip

To ensure an informative CI is feasible, we impose that $n$ be larger than \\
$n_0 := \max \{n \in \INTST: 
n \leq 2 \majorantEXXprimeconcentration / (\omega_n \alpha)
\text{ or } \nuEdg \geq \alpha / 2 \}$.
All in all, our confidence interval is centered at $\uu' \betahat$
(for $n > n_0$) and defined by
\begin{align*}
    \CIEdgu
    &:= \text{CI}_\uu^{\text{Edg}}(1-\alpha, n, \delta_n,\omega_n, a_n,
    \majorantKurtxi, \majorantEXXprimeconcentration, \majorantEnormeXepsquatre, \minlambdaEXXt)
    \\ &
    := 
    \begin{cases}
        \Rb \hspace{2cm} & \text{ if } n \leq n_0, \\
        \left[ \uu' \widehat{\beta}
        \pm \dfrac{\QnEdg}{\sqrt{n}} \sqrt{u' \Vhat u
        + \|u\|^2 \RnVar(\omega_n\alpha/2)} \right] & \text{ else.} \\
    \end{cases}
\end{align*}
Note that $\CIEdgu$ depends on two tuning parameters,
\(\omega_n\) and \(a_n\),
the quantity $\delta_n$, and the four bounds $\majorantKurtxi, \majorantEXXprimeconcentration, \majorantEnormeXepsquatre, \minlambdaEXXt$ that delineate the set~\(\ThetaBEE\) and appear in Assumption~\ref{hyp:subset_exo_non-asymptotic_validity}.
We do not indicate this dependence to lighten notations.
This interval is similar to \(\CIAsympu\) in Equation~\eqref{def:CIAsympu} with the addition of $\norme{\uu}^2\RnVar(\omega\alpha/2)$ in the variance term and the modified Gaussian quantile $\QnEdg$ which depends on $\nuEdg$ and $\nuApprox$.
The term $\nuApprox$ is a random quantity\footnote{
Strictly speaking, the denominator of \(\nuApprox\) could be null in some pathological situations; nevertheless, this does not impact our confidence interval since this term simplifies.
} since $\RnVar$ and $\Vhat$ depend on the sample.
Therefore, unlike \(\QuantileGaussAt{1-\alpha/2}\), $\QnEdg$ is a random quantity, introducing another randomness source into the width of~$\CIEdgu$.

\subsection{Properties of our confidence interval}

In this section, we state several results on the asymptotic and non-asymptotic properties of our confidence interval $\CIEdgu$.
We start by a result on the non-asymptotic validity,
%
%
proved in Section~\ref{proof:thm:exogenous_case_non-asymptotic_validity}.

\begin{thm}
\label{thm:exogenous_case_non-asymptotic_validity}
    Let \(n \geq 1\), 
    $\delta_n \geq \sup_{\theta \in \ThetaBEE}
    \min\{ \DeltanE(\theta) , \DeltanB(\theta) \}$,
    $a_n \in (1, +\infty)$,
    and \(\omega_n \in (0, 1)\). 
    Then, \(\CIEdgu\) is non-asymptotically valid over~\(\ThetaBEE\) at level \(1-\alpha\).
\end{thm}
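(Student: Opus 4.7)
The plan is to reduce the event \(\{\uu'\betazero \in \CIEdgu\}\) to the intersection of four high-probability events whose complementary probabilities can be bounded separately and then combined via a union bound calibrated so that the total error budget equals exactly \(\alpha\). The case \(n \leq n_0\) is trivial since \(\CIEdgu = \Rb\) by construction, so I henceforth focus on \(n > n_0\).

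Set \(\gamma := \omega_n \alpha / 2\), \(\sigma^2 := \E[\xi^2]\) with \(\xi := \uu' \E[XX']^{-1} X \eps\), and \(S_n := (\sigma \sqrt{n})^{-1} \sumiunn \xi_i\); I would introduce the four events
\begin{align*}
    \mathcal{L} &:= \Big\{ \sqrt{n} \big| \uu'(\betahat - \betazero) - n^{-1} \sumiunn \xi_i \big| \leq \RnLin(\gamma) \Big\}, \\
    \mathcal{V} &:= \Big\{ \big| \uu' \Vhat \uu - n^{-1} \sumiunn \xi_i^2 \big| \leq \|\uu\|^2 \RnVar(\gamma) \Big\}, \\
    \mathcal{O} &:= \Big\{ n^{-1} \sumiunn \xi_i^2 \geq \sigma^2 / a_n \Big\}, \\
    \mathcal{N} &:= \Big\{ |S_n| \leq \QuantileGaussAt{1 - \alpha/2 + \nuEdg} \Big\}.
\end{align*}
The algebraic heart of the argument is the deterministic inclusion \(\mathcal{L} \cap \mathcal{V} \cap \mathcal{O} \cap \mathcal{N} \subseteq \{\uu'\betazero \in \CIEdgu\}\): \(\mathcal{L}\) gives \(\sqrt{n} |\uu'(\betahat - \betazero)| \leq \sigma |S_n| + \RnLin(\gamma)\); on \(\mathcal{V} \cap \mathcal{O}\) one has \(\sigma \leq \sqrt{a_n(\uu'\Vhat\uu + \|\uu\|^2 \RnVar(\gamma))}\); \(\mathcal{N}\) controls \(|S_n|\); and unfolding the definitions of \(\QnEdg\) and \(\nuApprox\) turns the resulting inequality into exactly the CI containment condition.

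The probabilistic step bounds the complementary probabilities. I would first observe that \(\mathcal{L}\) and \(\mathcal{V}\) are both implied by a single ``good'' event \(\mathcal{G}\) on which \(\|\widetilde S - \Id_p\| \leq \widetilde\gamma := \sqrt{\majorantEXXprimeconcentration/(n\gamma)}\) and \(\|n^{-1}\sumiunn \widetilde X_i \eps_i\| \leq (\majorantEnormeXepsquatre/\gamma)^{1/4}\), where \(\widetilde X := \E[XX']^{-1/2} X\); a union bound over two Markov inequalities leveraging Assumption~\ref{hyp:subset_exo_non-asymptotic_validity}(ii)-(iii) yields \(\Probsousthetan(\mathcal{G}^c) \leq 2\gamma = \omega_n\alpha\), and \(n > n_0\) ensures \(\widetilde\gamma < 1\), so that \(\RnLin\) and \(\RnVar\) are well defined. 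For \(\mathcal{O}\), Theorem~2.19 of~\cite{pena2008self} applied to the i.i.d.\ sequence \((\xi_i^2)_i\) under Assumption~\ref{hyp:subset_exo_non-asymptotic_validity}(iv) gives \(\Probsousthetan(\mathcal{O}^c) \leq \exp(-n(1-1/a_n)^2/(2\majorantKurtxi)) = \nuVar(a_n)\). For \(\mathcal{N}\), the Berry-Esseen inequality (or the Edgeworth variant, whose polynomial correction is symmetric in \(x\) and cancels in two-sided probabilities) yields \(\Probsousthetan(|S_n| > x) \leq 2(\Phi(-x) + \delta_n)\); taking \(x = \QuantileGaussAt{1-\alpha/2+\nuEdg}\), which is well-defined because \(n > n_0\) enforces \(\nuEdg < \alpha/2\), and unfolding the definition of \(\nuEdg\) gives \(\Probsousthetan(\mathcal{N}^c) \leq \alpha - \omega_n\alpha - \nuVar(a_n)\). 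Summing the three complementary probabilities telescopes exactly to \(\alpha\).

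The principal obstacle is not the probabilistic assembly above but the algebraic derivation of the explicit bounds \(\RnLin\) and \(\RnVar\) themselves: one has to propagate the rotated deviations \(\|\widetilde S - \Id_p\|\) and \(\|n^{-1}\sumiunn \widetilde X_i \eps_i\|\) through the pseudo-inverse \(S^{\dagger}\) and through the residual-dependent quadratic form defining \(\Vhat\). This requires a Neumann-type expansion of \(S^{\dagger}\) around \(\E[XX']^{-1}\) (available precisely because \(\widetilde\gamma < 1\)) together with careful bookkeeping of the cross-terms mixing \(\widehat\eps_i\) with \(\eps_i\); these steps account for the somewhat intricate four-term structure of \(\RnVar\) displayed in Section~\ref{subsubsec:intuition_assumptions_for_our_OLS}.
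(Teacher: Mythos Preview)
Your proof plan is correct and follows essentially the same route as the paper. The only cosmetic difference is that the paper bundles your events $\mathcal{O}$ and $\mathcal{N}$ into a single self-normalized bound (its Lemma~\ref{lem:edg_exp}(ii), applied to $\sqrt{n}\,|\MeanXi|/\sqrt{\uu'\Voracle\uu}$), whereas you unpack that lemma into its two ingredients; the resulting error budget and the deterministic inclusion on the good event are identical.
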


This theorem shows it is possible to build a CI that is non-asymptotically valid over a large class of data-generating processes (here $\ThetaBEE$) without imposing independence between $X$ and $\eps$, nor parametric restrictions on~$\eps$. 
What is the behavior of our CI as $n$ goes to infinity? 
Under appropriate restrictions on $a_n$, $\omega_n$ and $\delta_n$, it is shown in Section~\ref{proof:thm:exogenous_case_asymptotic_pointwise_exactness} that, for any positive constants
\(\majorantKurtxi\), \(\majorantEXXprimeconcentration\), \(\majorantEnormeXepsquatre\), and \(\minlambdaEXXt\), and for any \(\theta \in \Theta\), 
\begin{align}
    \CIEdgu = \left[ \uu' \widehat{\beta}
    \pm \frac{\QuantileGaussAt{1-\alpha/2}+o_{P_\theta}(1)}{\sqrt{n}} \sqrt{u' \Vhat u
    + o_{P_\theta}(1)} \right],
    \label{eq:decomp_CI_fin_oP}
\end{align}
with the values
$\majorantKurtxi, \majorantEXXprimeconcentration, \majorantEnormeXepsquatre, \minlambdaEXXt$
hidden in the $o_{P_\theta}(1)$ terms in this representation.
In other words, our interval coincides at the limit \(n \to \infty\) with the standard interval obtained from the CLT and therefore is asymptotically exact pointwise over the whole parameter set~\(\Theta\).
Theorem~\ref{thm:exogenous_case_asymptotic_pointwise_exactness} (proved in Section~\ref{proof:thm:exogenous_case_asymptotic_pointwise_exactness}) formalizes the latter result.

\begin{thm}
\label{thm:exogenous_case_asymptotic_pointwise_exactness}
    Let $(\delta_n)$ be such that 
    ${\delta_n \geq \sup_{\theta \in \ThetaBEE}
    \min\{ \DeltanE(\theta) , \DeltanB(\theta) \}}$
    and 
    $\delta_n \to 0$.
    Assume that \(\omega_n \to 0\),
    \(\omega_n n^{2/3} \to +\infty\),
    and that \(b_n := a_n - 1\)
    is such that
    \(b_n \to 0\) and \(b_n \sqrt{n} \to +\infty\).
    Then, for every $(\majorantKurtxi, \majorantEXXprimeconcentration, \majorantEnormeXepsquatre, \minlambdaEXXt) \in (0, +\infty)^4$, $\CIEdgu$ is asymptotically exact pointwise over $\Theta$ at level \(1-\alpha\).
\end{thm}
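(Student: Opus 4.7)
The plan is to verify the decomposition in Equation~\eqref{eq:decomp_CI_fin_oP} and then conclude by a standard Slutsky-type argument, since the resulting expression is asymptotically equivalent to \(\CIAsympu\), which is already known to be asymptotically exact pointwise over~\(\Theta\). Fix \(\theta \in \Theta\); in what follows \(o_{P_\theta}(1)\) and \(O_{P_\theta}(1)\) are taken under \(\Probsousthetan\).

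The first step is to control \(\nuEdg\). The assumption \(\omega_n \to 0\) together with \(\delta_n \to 0\) handles the deterministic pieces, and \(b_n \sqrt{n} \to +\infty\) with \(b_n \to 0\) gives \(n(1-1/a_n)^2 = n b_n^2/(1+b_n)^2 \to +\infty\), so \(\exp(-n(1-1/a_n)^2/(2\majorantKurtxi)) \to 0\). Hence \(\nuEdg \to 0\). By continuity of the Gaussian quantile at \(1-\alpha/2\) and \(a_n \to 1\), this yields \(\sqrt{a_n}\, q_{\Normale(0,1)}(1-\alpha/2 + \nuEdg) \to q_{\Normale(0,1)}(1-\alpha/2)\).

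The second step is to show \(\RnLin(\omega_n\alpha/2) = o_{P_\theta}(1)\) and \(\RnVar(\omega_n\alpha/2) = o_{P_\theta}(1)\), so that \(\nuApprox \to 0\) in probability (using that \(u'\Vhat u \to u' V u > 0\) in probability by the law of large numbers under Assumption~\ref{hyp:basic_linear_model}). Writing \(\widetilde{\gamma}_n := \sqrt{2\majorantEXXprimeconcentration/(n\omega_n\alpha)}\), the condition \(\omega_n n^{2/3} \to +\infty\) yields \(n\omega_n \to +\infty\) so \(\widetilde{\gamma}_n \to 0\). Then
\[
\RnLin(\omega_n\alpha/2) = O\!\left(\widetilde{\gamma}_n \cdot \omega_n^{-1/4}\right) = O\!\left(n^{-1/2}\omega_n^{-3/4}\right),
\]
which tends to zero \emph{precisely} because \(\omega_n n^{2/3} \to +\infty\) is equivalent to \(n^{1/2}\omega_n^{3/4} \to +\infty\). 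For \(\RnVar(\omega_n\alpha/2)\), I analyze the four summands in its definition: under Assumption~\ref{hyp:basic_linear_model}, the empirical averages \(n^{-1}\sum \|X_i\|^4\), \(n^{-1}\sum \|X_i\|^3|\epshat_i|\), \(n^{-1}\sum \|X_i\epshat_i\|^2\), and \(\|n^{-1}\sum X_iX_i' S^\dagger \epshat_i^{\,2}\|\) are each \(O_{P_\theta}(1)\) (via the law of large numbers, Cauchy--Schwarz, and consistency of \(\betahat\)), so the four terms are respectively \(O_{P_\theta}(n^{-1}\omega_n^{-1/2})\), \(O_{P_\theta}(n^{-1/2}\omega_n^{-1/4})\), \(O_{P_\theta}((n\omega_n)^{-1})\), and \(O_{P_\theta}((n\omega_n)^{-1/2})\); each is \(o_{P_\theta}(1)\) under \(\omega_n n^{2/3} \to +\infty\). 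Combining these facts gives \(\QnEdg \to q_{\Normale(0,1)}(1-\alpha/2)\) in \(\Probsousthetan\)-probability and \(\|u\|^2\RnVar(\omega_n\alpha/2) = o_{P_\theta}(1)\), which establishes \eqref{eq:decomp_CI_fin_oP} and, in passing, implies that for \(n\) large enough the condition \(n > n_0\) is eventually satisfied with probability one (since \(\nuEdg < \alpha/2\) for \(n\) large and \(n > 2\majorantEXXprimeconcentration/(\omega_n\alpha)\) follows from \(n\omega_n \to +\infty\)).

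For the final step, I combine \eqref{eq:decomp_CI_fin_oP} with the CLT \(\sqrt{n}\,u'(\betahat - \betazero)/\sqrt{u'Vu} \Rightarrow \Normale(0,1)\) and the consistency \(u'\Vhat u \to u' V u > 0\) (both valid under Assumption~\ref{hyp:basic_linear_model}). Slutsky's lemma yields
\[
\Probsousthetan\!\left(u'\betazero \in \CIEdgu\right) \longrightarrow \Prob\!\left(|Z| \leq q_{\Normale(0,1)}(1-\alpha/2)\right) = 1 - \alpha,
\]
where \(Z \sim \Normale(0,1)\), which is the claim. The main obstacle is the bookkeeping of the rate in the second step: the choice \(\omega_n n^{2/3} \to +\infty\) is sharp for the \(\RnLin\) term through its \(\omega_n^{-3/4} n^{-1/2}\) scaling, and one must verify that it simultaneously dominates every summand of \(\RnVar\); the other pieces are routine consequences of the moment conditions of Assumption~\ref{hyp:basic_linear_model} and the tuning assumptions \(a_n \to 1\), \(b_n\sqrt{n} \to +\infty\), \(\delta_n \to 0\).
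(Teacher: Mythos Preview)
Your proposal is correct and follows essentially the same approach as the paper: first show that the interval exits the \(\Rb\) regime for large \(n\) by proving \(\nuEdg \to 0\) and \(n\omega_n \to +\infty\), then establish \(\RnLin(\omega_n\alpha/2) \to 0\) and \(\RnVar(\omega_n\alpha/2) = o_{P_\theta}(1)\) (using that \(\omega_n n^{2/3} \to +\infty\) is equivalent to \(n^{1/2}\omega_n^{3/4} \to +\infty\)), and conclude via the CLT and Slutsky's lemma. Your rate analysis of the four summands of \(\RnVar\) is slightly more explicit than the paper's, which simply asserts the convergence under \(n\omega_n^{3/2} \to +\infty\), but the argument is otherwise identical.
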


To obtain uniform asymptotic exactness, we place ourselves on a subset of \(\ThetaBEE\) defined in the following Assumption~\ref{hyp:subset_exo_unif_asymptotic_exactness} while choosing $a_n$, $\omega_n$ and $\delta_n$ as in Theorem~\ref{thm:exogenous_case_asymptotic_pointwise_exactness}.
The result is stated in Proposition~\ref{prop:exogenous_case_asymptotic_uniform_exactness} below.

\begin{hyp}[Bounds on DGP (continued)]
\label{hyp:subset_exo_unif_asymptotic_exactness}
    Let \(\minlambdaEXXtepssquare > 0\), $\rho \geq 0$, and \(\majorantEnormeXquatreunplusrho > 0\) be some constants.    
    We define $\ThetaBEEstrict$ to be the set of parameters
    \(\theta = (\betazero, \Distributionde{X, \eps}) \in \ThetaBEE\) such that the joint distribution $\Distributionde{X, \eps}$ satisfies:
    \begin{enumerate}
        \item[(\textit{i})]
        \(\minvp(\E[XX'\varepsilon^2]) \geq \minlambdaEXXtepssquare\);    
        \item[(\textit{ii})]
        \(\E\!\left[ \| X \|^{4(1+\rho)}\right] \leq \majorantEnormeXquatreunplusrho\).
    \end{enumerate}
\end{hyp}

Assumption~\ref{hyp:subset_exo_unif_asymptotic_exactness}(i) implies in particular a bound on the kurtosis of $\majorantKurtxi$ on $\ThetaBEEstrict$.
Therefore, this gives a potentially stricter bound on $\majorantKurtxi$ than what was previously assumed on $\ThetaBEE$.

\begin{prop}
\label{prop:exogenous_case_asymptotic_uniform_exactness}
    Assume that the sequences $(a_n)$, $(\omega_n)$ and $(\delta_n)$ satisfy the conditions of Theorem~\ref{thm:exogenous_case_asymptotic_pointwise_exactness}.
    Then, $\CIEdgu$ is asymptotically exact uniformly over $\ThetaBEEstrict$ at level \(1-\alpha\).
\end{prop}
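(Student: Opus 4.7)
My plan is to upgrade the pointwise argument underpinning Theorem~\ref{thm:exogenous_case_asymptotic_pointwise_exactness} to a uniform one over \(\ThetaBEEstrict\). For \(n\) large enough so that \(n > n_0\) and \(\uu'\Vhat\uu + \norme{\uu}^2\RnVar(\omega_n\alpha/2) > 0\), I would start from the identity
\begin{equation*}
    \Probsousthetan\!\big(\uu'\betazero \in \CIEdgu\big)
    = \Probsousthetan\!\left( \frac{\sqrt{n}\,|\uu'(\betahat - \betazero)|}{\sqrt{\uu'\AsymptoticVarbetahat\uu}} \leq \QnEdg \, \sqrt{\frac{\uu' \Vhat \uu + \norme{\uu}^2 \RnVar(\omega_n\alpha/2)}{\uu'\AsymptoticVarbetahat\uu}} \right),
\end{equation*}
where \(\AsymptoticVarbetahat\) denotes the asymptotic variance of \(\betahat\). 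The goal is then to establish three uniform convergences over \(\ThetaBEEstrict\) and combine them via an \(\varepsilon/3\) argument: (a)~uniform convergence in distribution of \(T_n := \sqrt{n}\,\uu'(\betahat-\betazero)/\sqrt{\uu'\AsymptoticVarbetahat\uu}\) to \(\Normale(0,1)\); (b)~uniform convergence in probability of \(\QnEdg\) to \(\QuantileGaussAt{1-\alpha/2}\); (c)~uniform convergence in probability of the ratio under the square root to \(1\).

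For (a), I would exploit the linearization recalled in Section~\ref{subsubsec:intuition_assumptions_for_our_OLS}: with probability at least \(1-2\gamma_n\) for an appropriately vanishing \(\gamma_n\), \(T_n\) differs from the standardized i.i.d.\ sum \(n^{-1/2}\sum_i \xi_i/\sqrt{\Varsoustheta(\xi)}\) by an error of order \(\RnLin(\gamma_n)/\sqrt{\uu'\AsymptoticVarbetahat\uu}\). The denominator here is bounded below uniformly over \(\ThetaBEEstrict\): by Assumption~\ref{hyp:subset_exo_unif_asymptotic_exactness}(i), one has \(\uu'\AsymptoticVarbetahat\uu \geq \norme{\uu}^2\minlambdaEXXtepssquare/\maxvp(\E[XX'])^2\), and \(\maxvp(\E[XX'])\) is uniformly bounded above using \(\maxvp(\E[XX']) \leq \E[\norme{X}^2] \leq \E[\norme{X}^{4(1+\rho)}]^{1/(2(1+\rho))}\) together with Assumption~\ref{hyp:subset_exo_unif_asymptotic_exactness}(ii). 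The i.i.d.\ piece is then uniformly close to \(\Normale(0,1)\) thanks to the Berry-Esseen/Edgeworth bound \(\delta_n \to 0\), which by definition holds uniformly over \(\ThetaBEE \supseteq \ThetaBEEstrict\).

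For (b), \(\QnEdg = \sqrt{a_n}\,\QuantileGauss(1-\alpha/2 + \nuEdg) + \nuApprox\). The quantity \(\nuEdg\) is the sum of three deterministic terms (\(\delta_n\), \(\omega_n\alpha/2\) and an exponential term in the fixed constant \(\majorantKurtxi\)), each vanishing uniformly in \(\theta\). The numerator \(\RnLin(\omega_n\alpha/2)\) of \(\nuApprox\) is deterministic and vanishes, while its denominator is lower bounded by \(\sqrt{\uu'\Vhat\uu}\), itself uniformly bounded away from zero with high probability by~(c). Continuity of \(\QuantileGauss\) at \(1-\alpha/2\) then yields uniform convergence. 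For (c), I would decompose \(\uu'\Vhat\uu = \uu'\AsymptoticVarbetahat\uu + (\uu'\Vhat\uu - \uu'\AsymptoticVarbetahat\uu)\) and apply Markov-type inequalities to the difference uniformly over \(\ThetaBEEstrict\). This is where Assumption~\ref{hyp:subset_exo_unif_asymptotic_exactness}(ii) is key: the bound \(\E[\norme{X}^{4(1+\rho)}] \leq \majorantEnormeXquatreunplusrho\) controls the moments of the empirical averages entering \(\Vhat\) and \(\RnVar\), uniformly in \(\theta\). Combined with the uniform lower bound on \(\uu'\AsymptoticVarbetahat\uu\), this yields \(\uu'\Vhat\uu/\uu'\AsymptoticVarbetahat\uu \to 1\) uniformly. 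The same moment control, paired with the vanishing deterministic prefactors in each of the four terms of \(\RnVar\), ensures that \(\norme{\uu}^2\RnVar(\omega_n\alpha/2)/(\uu'\AsymptoticVarbetahat\uu) \to 0\) uniformly in probability.

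The main obstacle will be the bookkeeping in~(c). Several empirical moments appearing in \(\RnVar\) involve residuals \(\epshat_i\) rather than true errors \(\eps_i\), so one must replace \(\epshat_i\) by \(\eps_i\) up to a uniformly negligible correction, using the linearization from~(a). Making sure that each of the four terms of \(\RnVar\) genuinely vanishes \emph{in probability uniformly}---rather than merely being \(O_{P_\theta}(1)\) uniformly---requires carefully matching the rates of the deterministic prefactors (which involve \(\omega_n\), \(n\), and the fixed constants of Assumption~\ref{hyp:subset_exo_non-asymptotic_validity}) with the uniform moment bounds afforded by Assumption~\ref{hyp:subset_exo_unif_asymptotic_exactness}. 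This is the most delicate step and the one where the extra restrictions imposed by \(\ThetaBEEstrict\) over \(\ThetaBEE\) are genuinely used.
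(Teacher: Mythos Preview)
Your proposal is correct and follows essentially the same route as the paper: Proposition~\ref{prop:exogenous_case_asymptotic_uniform_exactness} is obtained there as an immediate corollary of Theorem~\ref{thm:exogenous_case_asymptotic_uniform_exactness}, whose proof proceeds exactly along the lines you sketch (restrict to a high-probability event, linearize $\uu'(\betahat-\betazero)$ to reduce to the i.i.d.\ sum $\MeanXi$, deterministically upper-bound $\RnVar$ on that event via the moment constraints of Assumption~\ref{hyp:subset_exo_unif_asymptotic_exactness}---including the $\epshat_i$-to-$\eps_i$ replacement you anticipate---use the uniform lower bound on $\uu'\AsymptoticVarbetahat\uu$ from Assumption~\ref{hyp:subset_exo_unif_asymptotic_exactness}(i)--(ii), and close with the uniform Berry--Esseen/Edgeworth control $\delta_n$). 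The only substantive difference is that the paper tracks explicit rates throughout, whereas your outline is content with qualitative uniform $o(1)$ statements.
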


Combining Theorem~\ref{thm:exogenous_case_non-asymptotic_validity}, Proposition~\ref{prop:exogenous_case_asymptotic_uniform_exactness}, and Equation~\eqref{eq:decomp_CI_fin_oP} guarantees that $\CIEdgu$ be NAVAE uniformly over $\ThetaBEEstrict$ and efficient (since it is of the form~\eqref{eq:def:efficient_CI}). 

\medskip

To achieve asymptotic uniform exactness, we restrict the parameter space from $\ThetaBEE$ (where non-asymptotic validity holds) to a smaller subset, $\ThetaBEEstrict$, by strengthening two moment conditions.
Part~(\textit{i}) requires that \(\minvp(\E[XX'\eps^2])\) is not only positive, but equal to or larger than the constant~\(\minlambdaEXXtepssquare\).
Part~(\textit{ii}) reinforces \(\E[\norme{X}^4] < +\infty\) in two directions.
First, it specifies an explicit upper bound on moments of $P_X$.
Second, it enables (if \(\rho > 0\)) to consider higher order moments, which yields faster rates (see Theorem~\ref{thm:exogenous_case_asymptotic_uniform_exactness}).
Overall, \(\ThetaBEEstrict \subsetneq \ThetaBEE \subsetneq \Theta\), and the properties of our interval under those different assumptions are summarized in Table~\ref{tab:properties_our_ci} below.

\begin{table}[htb]
    \vspace{0.3em}
    \renewcommand{\arraystretch}{1.3}
    \centering
    \begin{tabular}{l|lll}
        \textit{Property of \(\CIEdgu\) at level~\(1 - \alpha\) over} & \(\ThetaBEEstrict\) & \(\ThetaBEE\) & \(\Theta\) \\
        \hline
        Asymptotic pointwise exactness \eqref{eq:pointwise_exact_asymp} -- Theorem~\ref{thm:exogenous_case_asymptotic_pointwise_exactness} & yes & yes & yes
        \\
        Non-asymptotic validity \eqref{eq:definition_non-asymptotic_validity} -- Theorem~\ref{thm:exogenous_case_non-asymptotic_validity} & yes & yes & no
        \\         
        Asymptotic uniform exactness \eqref{eq:unif_exact_asymp} -- Proposition~\ref{prop:exogenous_case_asymptotic_uniform_exactness} & yes & no & no
    \end{tabular}
    \caption{Properties of our confidence interval \(\CIEdgu\) over different parameter spaces.
    They require adequate choices of the tuning parameters $a_n$, $\omega_n$, and $\delta_n$ as specified in the formal statement of those results.
    Remember that exactness implies validity.}
    \label{tab:properties_our_ci}
\end{table}

Theorem~\ref{thm:exogenous_case_asymptotic_pointwise_exactness} and Proposition~\ref{prop:exogenous_case_asymptotic_uniform_exactness} specify adequate rates for the choice of \(a_n\) and \(\omega_n\), although they do not provide explicit values for those tuning parameters.
As in the case of the expectation, we could consider minimizing the width of the interval (when it is informative) to choose them.
However, the situation is more intricate in the OLS case\footnote{Since the width of \(\CIEdgu\) is stochastic, and may not even admit an expectation.}.
Therefore, we follow another path here by focusing on the coverage probabilities.

\medskip

Proposition~\ref{prop:exogenous_case_asymptotic_uniform_exactness} is actually a consequence of the following theorem that gives a precise bound on the worst-case distance between the coverage probability of our CI and its nominal level.
It quantifies the error of the uniform asymptotic exactness property.

\begin{thm}[Non-asymptotic bound on uniform exactness with rates]
\label{thm:exogenous_case_asymptotic_uniform_exactness}
    Assume that the sequences $(a_n)$, $(\omega_n)$ and $(\delta_n)$ satisfy the conditions of Theorem~\ref{thm:exogenous_case_asymptotic_pointwise_exactness}.
    Then, there exist $C>0$ and $n_* > 1$ (both depending on $\alpha$) such that,
    for every $n \geq n_*$,
    we have
    \begin{align*}
        \sup_{\theta \in \ThetaBEEstrict}
        \Probsousthetan \Big( \uu' \betazero \in \CIEdgu \Big)
        &\leq 1 - \alpha
        + C\left\{\sqrt{b_n} + \nuEdg + \mathfrak{e}_n \right\},
    \end{align*}
    where $\mathfrak{e}_n :=$
    \begin{align*}
        \left\{
        \begin{array}{ll}
            \dfrac{1}{n^{1/4} \omega_n^{3/8}}
            & \mbox{if } \rho = 0, \\[1.5em]
            \min\!\left\{ \! \dfrac{1}{n^{1/4} \omega_n^{3/8}} 
            \, , \, 
            \dfrac{1}{(n\omega_n)^{1/4}}
            \! + \! \dfrac{1}{\sqrt{n}\omega_n^{3/4}}
            \! + \! \ln(n)\left(
            \dfrac{\Indicator\{\rho \leq 1\}}{n^{\rho}}
            \! + \! \dfrac{\Indicator\{\rho > 1\}}{n^{(1+\rho)/2}} 
            \right) \! \right\}
            & \mbox{if } \rho \in (0 , + \infty), \\[1.5em]
            \dfrac{1}{(n\omega_n)^{1/4}} + \dfrac{1}{\sqrt{n}\omega_n^{3/4}}
            & \mbox{if } \rho = + \infty.
        \end{array}
        \right.
    \end{align*}
\end{thm}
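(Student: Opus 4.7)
The plan is to give a quantitative version of the coverage-approximation argument underlying Proposition~\ref{prop:exogenous_case_asymptotic_uniform_exactness} by tracking explicit upper bounds on each error source. Since Theorem~\ref{thm:exogenous_case_non-asymptotic_validity} already yields the lower bound $\Probsousthetan(\uu'\betazero \in \CIEdgu) \geq 1 - \alpha$, only the upper direction requires attention. The starting point is the decomposition used in Section~\ref{subsubsec:intuition_assumptions_for_our_OLS}: on an event of probability at least $1 - 2\omega_n\alpha$, the coverage event reduces to a statement about the self-normalized sum of the \iid{} variables $\xi_i := \uu' \, \E[XX']^{-1} X_i \eps_i$; on that restricted event the situation is parallel to the one for a mean treated in Section~\ref{ssec:unknown_variance_mean}, so the analysis can reuse the Edgeworth bound $\delta_n$ together with the Peña inequality from~\cite{pena2008self}.

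\medskip

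Concretely, I would first unfold $\CIEdgu$ and apply the linearization and variance-concentration bounds of Section~\ref{subsubsec:intuition_assumptions_for_our_OLS} to obtain
\[
\Probsousthetan\!\left(\uu'\betazero \in \CIEdgu\right)
\leq
\Probsousthetan\!\left(
\bigg| n^{-1/2} \sum_{i=1}^n \xi_i \bigg|
\leq \sqrt{a_n}\, \QuantileGaussAt{1-\tfrac{\alpha}{2} + \nuEdg}\, \widehat{W}_n
\right) + 2\omega_n\alpha,
\]
where $\widehat{W}_n := \sqrt{\uu'\Vhat\uu + \|u\|^2\RnVar(\omega_n\alpha/2)}$, after using the definition of $\nuApprox$ to absorb the $\RnLin(\omega_n\alpha/2)$ contribution. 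On the additional event of probability at least $1 - \exp(-n(1-1/a_n)^2/(2\majorantKurtxi))$ where $n^{-1}\sum_{i=1}^n \xi_i^2 \geq \V(\xi)/a_n$, one lower-bounds $\widehat{W}_n$ by $\V(\xi)^{1/2}/\sqrt{a_n}$ up to the $\RnVar$ perturbation, which reduces the right-hand side to $\Probsousthetan(|S_n| \leq \QuantileGaussAt{1-\alpha/2+\nuEdg}) + \text{(tail)}$ with $S_n := n^{-1/2}\sum_{i=1}^n \xi_i/\V(\xi)^{1/2}$. The Edgeworth bound $\delta_n$ then replaces the distribution of $S_n$ by $\Normale(0,1)$ up to an additive $2\delta_n$, and a Taylor expansion of $\sqrt{a_n}\, \QuantileGaussAt{1-\alpha/2+\nuEdg}$ around $\QuantileGaussAt{1-\alpha/2}$ yields the $\sqrt{b_n} + \nuEdg$ contribution via the mean-value theorem applied to the Gaussian quantile, which is smooth at $1 - \alpha/2 \in (0,1)$.

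\medskip

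The main obstacle is producing the $\rho$-dependent rate $\mathfrak{e}_n$, which arises from a uniform-over-$\ThetaBEEstrict$ control of $\RnLin(\omega_n\alpha/2)$ and $\RnVar(\omega_n\alpha/2)$. Both quantities carry explicit $\gamma$-dependence of order $\gamma^{-1/4}$ (in $\RnLin$) and $\gamma^{-1}$ (in $\RnVar$) with $\gamma = \omega_n\alpha/2$, and after conversion into a shift of the Gaussian quantile this is responsible for the $\omega_n^{-3/8}$ and $\omega_n^{-3/4}$ powers appearing in $\mathfrak{e}_n$. In addition, the four empirical averages entering $\RnVar$ --- $n^{-1}\sum_{i=1}^n\|X_i\|^4$, $n^{-1}\sum_{i=1}^n\|X_i\|^3|\epshat_i|$, $n^{-1}\sum_{i=1}^n\|X_i\epshat_i\|^2$, and $\|n^{-1}\sum_{i=1}^n X_iX_i' S^\dagger \epshat_i^2\|$ --- have to be upper-bounded in probability uniformly in $\theta$, which requires Markov-type inequalities whose sharpness depends on the available moments on $X$. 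When $\rho = 0$, only Assumption~\ref{hyp:subset_exo_non-asymptotic_validity}(iii) is usable and a direct Markov inequality delivers the first rate $1/(n^{1/4}\omega_n^{3/8})$. When $\rho \in (0,+\infty)$, one truncates $\|X\|$ at a carefully chosen level and applies Markov of order $4(1+\rho)$ on the complement: the logarithmic factor in the second rate comes from optimizing over the truncation threshold, and the transition at $\rho = 1$ reflects whether the deviation term or the truncation bias dominates. When $\rho = +\infty$, no truncation is needed and the sharpest rate follows from direct concentration of $n^{-1}\sum_{i=1}^n\|X_i\|^4$. The replacement $\epshat_i \leftrightarrow \eps_i$ inside these empirical averages, achieved once more through the linearization, together with the careful balance of these three regimes, is where the bulk of the technical work will lie; the remaining steps reduce to the bookkeeping sketched above.
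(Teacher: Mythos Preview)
Your proposal contains a direction error at the crucial step. After the linearization you correctly arrive at an inequality of the form
\[
\Probsousthetan(\uu'\betazero \in \CIEdgu)
\leq
\Probsousthetan\!\Big(\big|n^{-1/2}\textstyle\sum_i \xi_i\big| \leq \sqrt{a_n}\,q_{\mathcal N(0,1)}(1-\alpha/2+\nuEdg)\,\widehat W_n\Big) + 2\omega_n\alpha,
\]
but to bound the right-hand side \emph{from above} you must \emph{upper}-bound the random half-width, i.e.\ upper-bound $\widehat W_n$. You instead invoke the Pe\~na lower-tail inequality to get $\widehat W_n \geq \V(\xi)^{1/2}/\sqrt{a_n}$ on a high-probability event; plugging a lower bound for $\widehat W_n$ into the display only yields $\Probsousthetan(|S_n|\leq q)$ as a \emph{lower} bound for the probability on the right, which is useless here. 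The Pe\~na inequality is one-sided (it controls $\{\hat\sigma_0^2/\sigma^2 < 1/a\}$) and plays no role in this direction of the argument; the paper does not use it at all in the proof of Theorem~\ref{thm:exogenous_case_asymptotic_uniform_exactness}.

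What the paper does instead is to introduce a second high-probability event $\mathcal B$ on which the relevant empirical moments---in particular $n^{-1}\sum_i \xi_i^2$ and $n^{-1}\sum_i\|X_i\|^4$---are close to their expectations in a \emph{two-sided} sense, via Markov/Chebyshev and a Marcinkiewicz--Zygmund inequality (this is where the $\ln(n)\,n^{-\rho}$ term and the $\rho\leq 1$ versus $\rho>1$ dichotomy originate, not from a truncation argument). On $\mathcal A\cap\mathcal B$ one then obtains a \emph{deterministic} upper bound $\widehat W_n^2/(\uu'V\uu)\leq 1 + C(n\omega_n)^{-1/2}$ (resp.\ $1+C n^{-1/2}\omega_n^{-3/4}$ when $\rho=0$), which combined with the linearization bound on $\RnLin$ produces a deterministic inflation $\widetilde\mu_n$ of the Gaussian quantile. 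Only at that point does the Berry--Esseen/Edgeworth bound $\delta_n$ enter, applied to the \emph{fully standardized} sum $\sqrt n\,\overline\xi_n/\sqrt{\uu'V\uu}$ against the deterministic threshold $q_{\mathcal N(0,1)}(1-\alpha/2)+\widetilde\mu_n$. Your sketch has the right list of ingredients ($\RnLin$, $\RnVar$, the four empirical averages, the $\omega_n^{-3/8}$ and $\omega_n^{-3/4}$ powers) but the logical flow must be reorganised around an upper bound for $\widehat W_n$, not a lower one.
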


Combining this result with the non-asymptotic validity of our confidence interval (Theorem~\ref{thm:exogenous_case_non-asymptotic_validity}), we directly obtain that
\begin{align*}
    \sup_{\theta \in \ThetaBEEstrict}
    \left|\Probsousthetan \Big( \uu' \betazero \in \CIEdgu \Big) - (1 - \alpha)\right|
    &\leq C\left\{\sqrt{b_n} + \nuEdg + \mathfrak{e}_n \right\},
\end{align*}
for the same constant $C > 0$ and $n \geq n_*$.
We now specialize Theorem~\ref{thm:exogenous_case_asymptotic_uniform_exactness} to obtain the best rates for explicit values of the tuning parameters of the form $\omega_n = n^{-a}$ and $b_n = n^{-b}$.

\begin{prop}
\label{prop:exogenous_case_exact_rates}
    Assume that the sequence $(\delta_n)$ satisfies the conditions of Theorem~\ref{thm:exogenous_case_asymptotic_pointwise_exactness}.
    Let $\rho \in [0, +\infty]$.
    If $\omega_n = n^{-r(\rho)}$ and
    $b_n = n^{- b}$ for any $b \in [2/5, 1/2)$,
    then
    \begin{align*}
        \sup_{\theta \in \ThetaBEEstrict}
        \Probsousthetan \Big( \uu' \betazero \in \CIEdgu \Big)
        &\leq 1 - \alpha
        + C
        n^{- r(\rho)},
    \end{align*}
    for some constant $C > 0$, where
    \begin{align*}
        r(\rho) := \frac{2}{11} \Indicator\{\rho < 2/11\}
        + \rho \Indicator\{2/11 \leq \rho \leq 1/5\}
        + \frac{1}{5} \Indicator\{1/5 < \rho\}.
    \end{align*}
\end{prop}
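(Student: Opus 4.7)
The plan is to specialize Theorem~\ref{thm:exogenous_case_asymptotic_uniform_exactness}, which gives
\begin{equation*}
    \sup_{\theta \in \ThetaBEEstrict} \Probsousthetan(\uu'\betazero \in \CIEdgu) \leq 1 - \alpha + C\big\{\sqrt{b_n} + \nuEdg + \mathfrak{e}_n\big\},
\end{equation*}
and to substitute \(\omega_n = n^{-r(\rho)}\) and \(b_n = n^{-b}\) with \(b \in [2/5, 1/2)\), verifying that each of the three summands in braces is \(O(n^{-r(\rho)})\). The first two summands are handled directly; the third requires a case analysis on~\(\rho\).

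For the first two summands, since \(b \geq 2/5\) and \(r(\rho) \leq 1/5\) everywhere, \(\sqrt{b_n} = n^{-b/2} \leq n^{-1/5} \leq n^{-r(\rho)}\). Splitting \(\nuEdg = \tfrac{1}{2}\omega_n\alpha + \tfrac{1}{2}\exp(-n(1-1/a_n)^2/(2\majorantKurtxi)) + \delta_n\), the first piece equals \((\alpha/2) n^{-r(\rho)}\); using \(1 - 1/a_n = b_n/(1 + b_n) \sim n^{-b}\), the exponent of the exponential term is of order \(n^{1 - 2b}\), which diverges since \(b < 1/2\), making that term super-polynomially small; and \(\delta_n = O(n^{-1/2})\) by Remark~\ref{rem:valid_delta_n}, which is \(o(n^{-r(\rho)})\) because \(r(\rho) \leq 1/5 < 1/2\). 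Hence \(\nuEdg = O(n^{-r(\rho)})\).

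The heart of the argument is controlling \(\mathfrak{e}_n\), which reduces to an exponent-optimization exercise. For \(\rho \in [0, 2/11]\), bounding \(\mathfrak{e}_n\) by the first argument of the \(\min\) yields \(n^{-1/4 + 3r(\rho)/8} = n^{-2/11}\) at \(r(\rho) = 2/11\), the balance equation \(\tfrac{1}{4} - \tfrac{3a}{8} = a\) being solved by \(a = 2/11\). For \(\rho \in (1/5, +\infty]\) with \(r(\rho) = 1/5\), using the second argument of the \(\min\) (or its analog at \(\rho = +\infty\)), the leading contribution \((n\omega_n)^{-1/4} = n^{-1/5}\) solves the balance \((1-a)/4 = a\), while \(n^{-1/2}\omega_n^{-3/4} = n^{-7/20}\) and the logarithmic term \(\ln(n) n^{-\min\{\rho, (1+\rho)/2\}}\) both decay strictly faster than \(n^{-1/5}\) since \(\rho > 1/5\). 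In the intermediate regime \(\rho \in [2/11, 1/5]\) with \(r(\rho) = \rho\), the second argument of the \(\min\) yields \((n\omega_n)^{-1/4} = n^{-(1-\rho)/4} \leq n^{-\rho}\) (with equality at \(\rho = 1/5\)), the sub-leading term \(n^{-1/2+3\rho/4}\) is even smaller, and the logarithmic remainder \(\ln(n) n^{-\rho}\) is absorbed into the constant~\(C\). The breakpoints \(2/11\) and \(1/5\) are precisely those \(\rho\) at which the two arguments of the \(\min\) achieve the same polynomial rate, which is what defines the piecewise form of~\(r(\rho)\). The main technical obstacle is this intermediate regime, where neither argument of the \(\min\) delivers \(n^{-\rho}\) exactly without an additional logarithmic factor, so careful absorption of the \(\ln n\) slack into the constant \(C\) is required; the other regimes reduce to elementary polynomial balance computations at the two breakpoints.
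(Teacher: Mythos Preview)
Your approach matches the paper's: invoke Theorem~\ref{thm:exogenous_case_asymptotic_uniform_exactness}, substitute $\omega_n = n^{-r(\rho)}$ and $b_n = n^{-b}$, and bound each of $\sqrt{b_n}$, $\nuEdg$, $\mathfrak{e}_n$ by an exponent case analysis on~$\rho$, balancing at the breakpoints $2/11$ and $1/5$. Your treatment of $\sqrt{b_n}$, $\nuEdg$, and the two extreme regimes $\rho \leq 2/11$ (via the first argument of the $\min$) and $\rho > 1/5$ (via the second) is correct and parallels the paper exactly.

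One point deserves a flag. In the intermediate regime $\rho \in (2/11, 1/5]$, your assertion that ``$\ln(n)\,n^{-\rho}$ is absorbed into the constant~$C$'' is not literally correct: a factor growing with~$n$ cannot be absorbed into a fixed constant. With $\omega_n = n^{-\rho}$ neither argument of the $\min$ in $\mathfrak{e}_n$ is $O(n^{-\rho})$: the first equals $n^{-1/4+3\rho/8}$, which decays strictly slower than $n^{-\rho}$ once $\rho > 2/11$, and the second is $\asymp \ln(n)\,n^{-\rho}$. The paper's own proof makes the same slip---it records the infimum $n^{-\widetilde\rho}$ over the open piece $a < \widetilde\rho$ and then declares it attained at $a = \widetilde\rho$, where in fact the $\ln(n)\,n^{-\widetilde\rho}$ formula applies---so your sketch reproduces the paper's argument faithfully, including this minor imprecision on the narrow range $(2/11,\,1/5]$.
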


This proposition is proved in Section~\ref{proof:prop:exogenous_case_exact_rates}.
Interestingly, additional constraints on the moments of $\| X \|$ from $4$ to $4 \times (1 + 2/11) = 52 / 11 \approx 4.72$ does not seem to improve the rate.
Between the $4.72$-th moment and the $4 \times (1 + 1/5) = 24 / 5 = 4.75$-th moment, the rate $r(\rho)$ is the identity function.
Beyond the $4.75$-th moment, the rate is fixed and not improved by boundedness of additional moments.
Overall, the effect of $\rho$ is quite mild since with four moments the rate is close to $n^{-0.1819}$ whereas it attains $n^{-0.20}$ when all moments are bounded.

\section{Practical considerations and simulation study}
\label{sec:practical_considerations_plugin}

\texttt{R} code for our simulations is available at\newline  \url{https://github.com/AlexisDerumigny/Reproducibility-NAVAE_CI_LinModels}.

\subsection{Plug-in}
\label{ssec:plug_in_discussion}

Be it for expectations or the coefficients of a linear regression, our assumptions impose lower or upper bounds on moments (or functions thereof) of the distribution \(\Distributionde{\xi}\) (Section~\ref{sec:expectation_example}) and \(\Distribution_{X, \, \eps}\) (Section~\ref{sec:lin_mod_exo}).
Remember that those bounds are required to compute our CIs.
While a priori choices for those bounds are natural in specific cases (see Remark~\ref{rem:no_plug_in_bounds} below), it often remains difficult to form intuition about their values or how to choose them.
Instead, a natural idea is to replace these (unknown) bounds with estimates of the corresponding moments.
In the case of expectations, the confidence interval depends only on one bound~\(K\) (see Section~\ref{ssec:setting_and_motivation}), on the kurtosis of the distribution of the observations, which can be approximated by
\begin{itemize}
    \item \(\widehat{K} := n^{-1}
    \sum_{i=1}^n (\xi_i - \MeanXi)^4 \, / \,
    \big[n^{-1} \sum_{i=1}^n (\xi_i - \MeanXi)^2\big]^2 \).
\end{itemize}
For linear regressions, several bounds are involved (see Assumption~\ref{hyp:subset_exo_non-asymptotic_validity}) that can be approximated by
\begin{itemize}
    \item \(\widehat\minlambdaEXXt 
    := \minvp(n^{-1} \sum_{i=1}^n X_i X_i'))\),
    
    \item \(\widehat\majorantEXXprimeconcentration
    := n^{-1} \sumiunn 
    \Big\| \operatorvec
    \Big( \widehat{\widetilde{X}_i} \widehat{\widetilde{X}'_i}
    - \mathbb{I}_p \Big)
    \Big\|^2\) with
    \(\widehat{\widetilde{X}_i}
    := \left(\big( n^{-1} \sum_{j=1}^{n} X_j X_j' \big)^{\dagger}\right)^{1/2} X_i\),
    
    \item \(\widehat\majorantEnormeXepsquatre
    := n^{-1} \sum_{i=1}^n 
    \Big\|\widehat{\widetilde{X}_i} \epshat_i \Big\|^4\)
    with \(\widehat{\eps}_i := Y_i-X_i'\widehat{\beta}\),
        
    \item \(\widehat \majorantKurtxi
    := n^{-1} \sum_{i=1}^n \widehat{\xi_i}^4 \,/\, \Big( n^{-1} \sum_{i=1}^n \widehat{\xi_i}^2 \Big)^2\)
    with \(\widehat\xi_i
    := \uu'\big( n^{-1} \sum_{j=1}^{n} X_j X_j' \big)^{\dagger}
    X_i \epshat_i\).  
\end{itemize}

Using plug-in estimates rather than deterministic bounds has one major drawback from a theoretical point of view: the resulting CI is no longer non-asymptotically valid.
Nevertheless, in both cases (Sections~\ref{sec:expectation_example} and~\ref{sec:lin_mod_exo}), it is still pointwise asymptotically exact over \(\ThetaBEE\).
More generally, with a similar reasoning as that of Theorem~\ref{thm:exogenous_case_asymptotic_pointwise_exactness} (where the choice of the bounds \((\majorantKurtxi, \majorantEXXprimeconcentration, \majorantEnormeXepsquatre, \minlambdaEXXt)\) does not matter), plug-in CIs remain asymptotically exact pointwise as soon as the previous plug-in approximations have finite non-zero limits in probability. 
In a sense, the use of plug-in in our approach can be compared to the approximation error faced in practice when one uses an inference procedure based on simulations such as that of \cite{cherno2009} or \cite{diciccio2017robust}. 
To further control the impact of sampling uncertainty when using a plug-in version of our interval, a practical possibility would be to multiply the plug-in estimates by \((1 + M/\sqrt{n})\) for some positive constant~\(M\).

\begin{rem}[Some possibilities to overcome plug-in]
\label{rem:no_plug_in_bounds}
Choosing reasonable values for \(K\) and \(\majorantKurtxi\) without resorting to a plug-in strategy turns out to be possible. 
A large class of univariate distributions exhibits a bound of at most $9$ on the kurtosis: Normal, Laplace, asymmetric Laplace, Logistic, Uniform, Student with at least five degrees of freedom, two-point symmetric mixtures of Normals, Gumbel, hyperbolic secant, and skewed Normal. 
This class includes both symmetric and asymmetric distributions, some of which only have a few number of finite moments (Student distributions with few degrees of freedom).
We investigate the impact of the choice \(K = 9\) for expectations (respectively, \(\majorantKurtxi = 9\) for linear regressions) as an alternative to the plug-in approach in the following simulation section.
%
\end{rem}

\subsection{Simulations for Section~\ref{sec:expectation_example}: inference on an expectation}
\label{subsec:simu_expectation}

\subsubsection{Framework}

This section presents some simulation results on our confidence interval for an expectation.
We consider an i.i.d. sample from an Exponential distribution with expectation set to~\(1\), the targeted parameter~\(\theta_1\) here.
Compared to Normal distributions, remember that Exponential distributions are skewed (with a skewness coefficient equal to~\(2\)) and display fatter tails with a kurtosis of~\(9\).
In that sense, we consider the boundary case (while remaining correctly specified) when setting a bound~\(K = 9\) on the kurtosis as discussed in Remark~\ref{rem:no_plug_in_bounds}.
We present the results for that choice \(K = 9\) and for a plug-in version of our CI using the empirical kurtosis~\(\widehat{K}\) instead.

\medskip

In addition to the bound~\(K\), \(\CIsigmaknown\) depends on \(\delta_n\) and \(\CIsigmanhat\) depends additionally on \(a_n\).
We follow Remark~\ref{rem:valid_delta_n} by choosing \(\delta_n\) as the minimum between \(\delta_{1,n}\) proposed by \cite{shevtsova2013} and \(\delta_{2,n}\) by \cite{derumigny2022explicitBE}.
We thus do not take advantage of the continuity of the Exponential distribution (see \(\delta_{3,n}\) and the comparisons made in \cite{derumigny2022explicitBE} for further details).
We set \(a_n = 1 + n^{-1/5}\) satisfying the requirements of Proposition~\ref{prop:mean_CI_BE-Edg_asymp_sigmaunknown} for the asymptotic exactness of our CI.
Choosing that tuning parameter is a tradeoff between exiting the \(\Rb\) regime earlier
(meaning, for smaller~\(n\) or smaller~\(\alpha\))
and the precision of our CI (meaning, its width).
For instance, with a smaller power of \(n\), say, \(a_n = 1 + n^{-1/10}\), the minimal \(\alpha\) that exits the \(\Rb\) regime with \(K = 9\) and \(n = 1,000\) is \(15.6\)\% instead of 26.1\% (see Table~\ref{tab:minimal_level_to_exit_R_regime} below), but the resulting CI's width is larger.

\subsubsection{About the minimum level \texorpdfstring{$\alphamin$}{alpha min}}

We focus here on the case of an unknown variance.
Table~\ref{tab:minimal_level_to_exit_R_regime} reports $\alphamin$, defined as the minimal \(\alpha\) for which our CI is informative, that is, satisfies \(\CIsigmanhat \subsetneq \Rb\).
Note that $\alphamin$ depends on $K$, $a_n$ and $\delta_n$.
For instance, for \(K = 9\), \(a_n = 1 + n^{-1/5}\) and \(\delta_n\) chosen as above, a sample with \(n = 5,000\) observations allows to compute an informative
\(\CIsigmanhat\) at level~90\% but not at~95\%
(since \(\alphamin \approx 7\)\%).
Intuitively, given the tools we use to build our CI and the values of \(K\), \(a_n\), and \(\delta_n\), a 95\% nominal level is too strong a requirement to guarantee non-asymptotic validity for that sample size.
With \(K = \widehat{K}\), the condition to exit the \(\Rb\) regime becomes random and so does $\alphamin$.
We report the mean and median of \(\alphamin\) obtained over \(M = 20,000\) Monte-Carlo repetitions.
As expected, for large enough sample sizes, using \(K = 9\) or \(K = \widehat{K}\) leads to indistinguishable results as \(\widehat{K}\) converges to the kurtosis of an Exponential (equal to~\(9\)).

\begin{table}[ht]
    \centering
    \begin{tabular}{c| c c c c c c c}
        &
        \(n = 500\) & 
        \(n = 1\)k & 
        \(n = 5\)k & 
        \(n = 10\)k & 
        \(n = 50\)k & 
        \(n = 100\)k & 
        \(n = 200\)k
        \\
        \hline
        $\alphamin(9)$ &        
        0.466 & 0.261 & 0.0703 & 0.0488 & 0.0200 & 0.0111 & 0.00647 \\
        $\E[\alphamin(\widehat{K})]$ &
        0.423 & 0.249 & 0.0702 & 0.0486 & 0.0200 & 0.0111 & 0.00647 \\
        $\text{Median}[\alphamin(\widehat{K})]$ &
        0.393 & 0.229 & 0.0685 & 0.0482 & 0.0199 & 0.0111 & 0.00647
    \end{tabular}   
    \caption{Minimal level \(\alphamin(K)\) that exits the \(\mathbb{R}\) regime of \(\CIsigmanhat\),
    with the choice \(a_n = 1 + n^{-1/5}\) for different sample sizes and different values of the bound $K$ (fixed to $9$ or data-driven).
    } 
    \label{tab:minimal_level_to_exit_R_regime}
\end{table}

\subsubsection{Coverage performance}

We now focus on the choice \(K = 9\) and consider coverage performance for \(\alpha = 0.10\). We present the results by comparing our CIs with the classical one derived from the Central Limit Theorem and Slutsky's lemma, \(\CIalphannumero{CLT}\).
Table~\ref{tab:coverage_expectation_case} reports the empirical coverage over \(M = 20,000\) Monte-Carlo repetitions.
As expected from Propositions~\ref{prop:mean_CI_BE-Edg_sigmaunknown} and~\ref{prop:mean_CI_BE-Edg_asymp_sigmaunknown}, the coverage of our CI is always greater than the nominal level \(1 - \alpha\), 90\% here, and decreases to the nominal level when the sample size increases.

\medskip

In the unknown variance case, we compare two versions of our CI: one with the a priori choice \(a_n = 1 + n^{-1/5}\) -- which satisfies the requirements of Proposition~\ref{prop:mean_CI_BE-Edg_asymp_sigmaunknown} -- and one with the optimized choice of \(a_n\) discussed in Remark~\ref{rem:optimized_choice_an}.
The latter choice logically reduces coverage closer to the targeted nominal level, although the improvement remains limited.

\medskip

Finally, the third row presents the coverage of \(\CIsigmaknown\).
As expected, the coverage of this oracle CI lies in between those of the standard CLT-based CI and of our feasible one since \(\CIsigmaknown\) bypasses the additional step of replacing the variance by a consistent estimator.

\begin{table}[ht]
    \centering
    \begin{tabular}{c| c c c c c}
        Confidence interval &
        \(n = 5\)k & 
        \(n = 10\)k & 
        \(n = 50\)k & 
        \(n = 100\)k & 
        \(n = 200\)k
        \\
        \hline        
        \(\CIsigmanhat\), \(a_n = 1 + n^{-1/5}\) & 
        0.981 & 0.965 & 0.934 & 0.927 & 0.920        
        \\
        \(\CIsigmanhat\), optimized \(a_n \) &
        0.981 & 0.963 & 0.927 & 0.918 & 0.910        
        \\
        \(\CIsigmaknown\) & 
        0.970 & 0.952 & 0.919 & 0.911 & 0.905        
        \\
        \(\CIalphannumero{CLT}\) &
        0.901 & 0.903 & 0.899 & 0.900 & 0.899                    
    \end{tabular}   
    \caption{Coverage (approximated using Monte Carlo simulations) for different sample sizes, with the choice \(K = 9\) for the bound on the kurtosis for our CIs and \(\alpha = 0.10\).} 
    \label{tab:coverage_expectation_case}
\end{table}

\subsubsection{Width of the confidence intervals}

Figure~\ref{fig:width_ourCI_versus_CLTCI_expectation_case} compares the confidence intervals \(\CIsigmaknown\), \(\CIsigmanhat\) and \(\CIalphannumero{CLT}\) regarding their width using the bound \(K = 9\) on the kurtosis and the optimized tuning parameter \(a_n\) when the variance is unknown.

\medskip

Panel~(a) reports the average width of the intervals as a function of the sample size~\(n\) over \(M = 20,000\) Monte-Carlo repetitions (the absolute widths of the latter two CIs are data-dependent, hence stochastic, through \(\widehat{\sigma}\)).

\medskip

Panel~(b) shows the relative width of our CIs with respect to the usual CLT-based CI.
When the variance is assumed to be known, we also use that information for the CLT-based interval and compare \(\CIsigmaknown\) to the analogue of \(\CIalphannumero{CLT}\) with a known variance, replacing \(\sigmaknown\) with \(\sigmanhat\) in Equation~\eqref{eq:definition_CI_CLT_for_expectation}.
The ratio of their widths (blue line in Panel~(b)) is equal to \(\QuantileGaussAt{1 - \alpha/2 + \delta_n} \, / \; \QuantileGaussAt{1 - \alpha/2}\) and is not data-dependent (only \(n\) and $K$ matter, through \(\delta_n\)).
When the variance is unknown, the width ratio between \(\CIsigmanhat\) and \(\CIalphannumero{CLT}\) (red line) is equal to \(C_n \QuantileGaussAt{1 - \alpha/2 + \delta_n + \nuVar/2} \, / \; \QuantileGaussAt{1 - \alpha/2}\), and is also not data-dependent as \(\sigmanhat\) cancels out.

\medskip

The convergence to~\(1\) of the relative widths illustrates that our CI coincides in the limit with the classical one obtained from the CLT.
For information, the inflexion point (at \(n = \) 38,707) corresponds to the switch from \(\delta_{1, n}\) to \(\delta_{2, n}\), that is, from Berry-Esseen inequalities to Edgeworth Expansions to obtain \(\delta_n\) (see Remark~\ref{rem:valid_delta_n}).

\begin{figure}[ht]
     \centering
     \begin{subfigure}[b]{0.45\textwidth}
         \centering
         \includegraphics[width=1\textwidth]{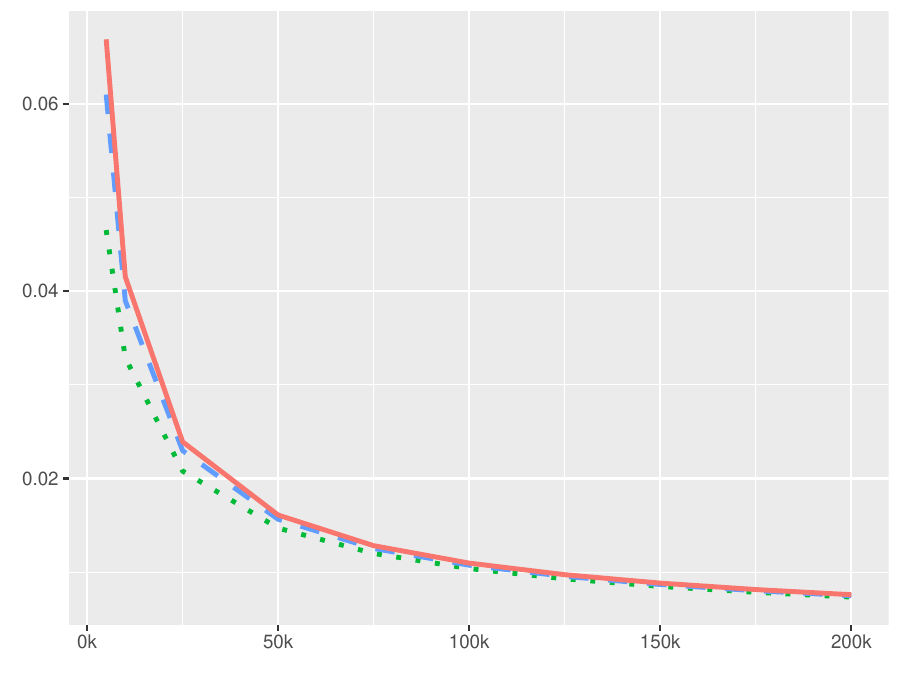}
         \caption{{\footnotesize Absolute width of \(\CIsigmanhat\) with optimized \(a_n\) (solid red), \(\CIsigmaknown\) (dashed blue), and \(\CIalphannumero{CLT}\) (dotted green).}}
         \label{fig:subfig:absolute_width_expectation_case}
     \end{subfigure}
     \hspace{0.05\textwidth}
     \begin{subfigure}[b]{0.45\textwidth}
         \centering
         \includegraphics[width=1\textwidth]{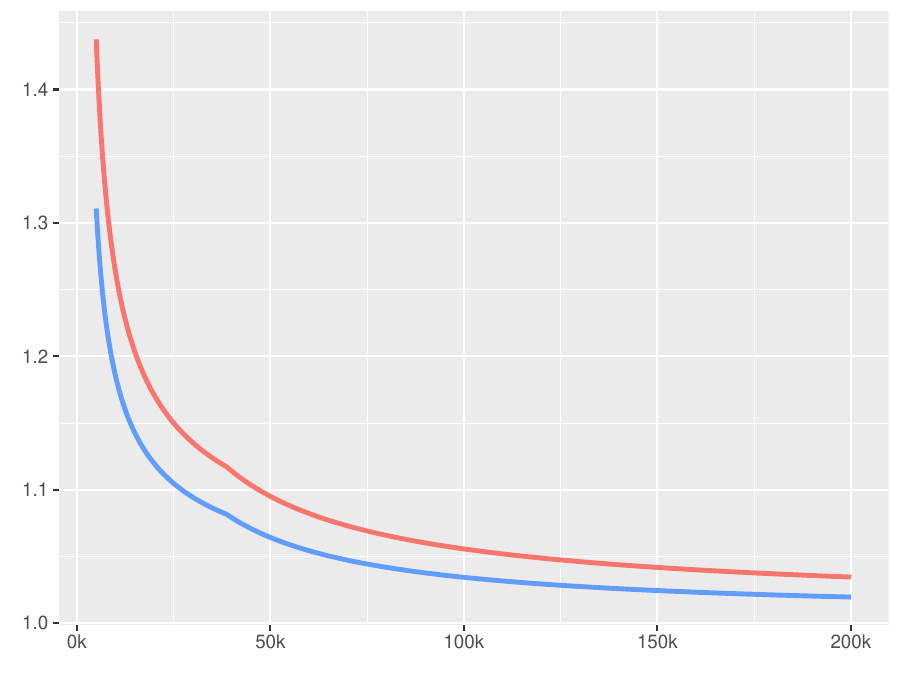}         
         \caption{{\footnotesize Relative width of \(\CIsigmanhat\) with optimized \(a_n\) (red) and \(\CIsigmaknown\) (blue),
         each with respect to \(\CIalphannumero{CLT}\).}}
         \label{fig:subfig:relative_width_expectation_case}
     \end{subfigure}
    \caption{Width as a function of the sample size, for the choices \(K = 9\) and \(\alpha = 0.10\).}  
    \label{fig:width_ourCI_versus_CLTCI_expectation_case}         
\end{figure}

\subsection{Simulations for Section~\ref{sec:lin_mod_exo}: inference on linear regression' coefficients}

We now consider a simulation study on linear regressions. 
We generate i.i.d. samples from the model
\(Y = 2 + 1 X_1 - 3 X_2 + \eps\) where \((X_1, X_2)\) follows a centered bivariate Normal distribution, with respective variances 1 and 2 and a correlation set to 0.5, and \(\eps\) drawn from a Gumbel distribution 
parameterized such that the error terms' expectation is null and their conditional variance is equal to \((X_1 + X_2)^2\), implying heteroskedasticity.
Gumbel distributions are skewed and have heavier tails than Normal ones.
We perform inference on \(\betazeroindice{2}\), that is, we choose \(\uu = (0,1,0)'\).

\medskip

For \(\delta_n\), we choose again the minimum between \(\delta_{1, n}\) and \(\delta_{2, n}\) (see Remark~\ref{rem:valid_delta_n}).
While our CIs for expectations require only one bound \(K\) on the kurtosis, several bounds are needed to compute \(\CIEdgu\). 
As explained in Section~\ref{ssec:plug_in_discussion}, while the choice \(\majorantKurtxi = 9\) is sensible for the kurtosis of the influence function, fixing values for \(\minlambdaEXXt\), \(\majorantEXXprimeconcentration\), and \(\majorantEnormeXepsquatre\) is not straightforward. In the simulations, we stick to the choice \(\majorantKurtxi=9\) and resort to plug-in versions of the other bounds. For the tuning parameters, we set \(b_n = 20 \times n^{-2/5}\) and \(\omega_n = n^{-1/5}\) which complies with the assumptions underlying Propositions~\ref{prop:exogenous_case_asymptotic_uniform_exactness} and~\ref{prop:exogenous_case_exact_rates}.

\medskip

Remember that our interval is informative (not equal to~\(\Rb\)) when the sample size is such that \(n > 2 \majorantEXXprimeconcentration / (\omega_n \alpha)\) and \(\nuEdg < \alpha / 2\).
With fixed bounds, both conditions are non-stochastic.
In our simulations, the former becomes stochastic as we use the plug-in \(\widehat{\majorantEXXprimeconcentration}\).
However, in our setting and for \(\alpha = 0.10\), only the latter condition, \(\nuEdg < \alpha / 2\), happens to be binding and is satisfied for \(n \geq\) 3,656.

\medskip

Figure~\ref{fig:width_ourCI_versus_CLTCI_OLS_case} reports the absolute and relative width of \(\CIEdgu\) with respect to \(\CIAsympu\) for \(n\) ranging from 5,000 to 200,000; more precisely, we report their average over \(M =\) 20,000 Monte-Carlo repetitions.
Our CI remains far wider than the one based on the CLT, even for large sample sizes.
This can stem from two possible sources: either a lack of tightness of our intervals or a substantial gap between the shortest possible NAVAE CI and \(\CIAsympu\) over the class of distributions we consider.
Finding the shortest non-asymptotically valid confidence interval is a hard task and this question is left for future research.

\begin{figure}[ht]
    \centering
     \begin{subfigure}[b]{0.45\textwidth}
         \centering
         \includegraphics[width=1\textwidth]{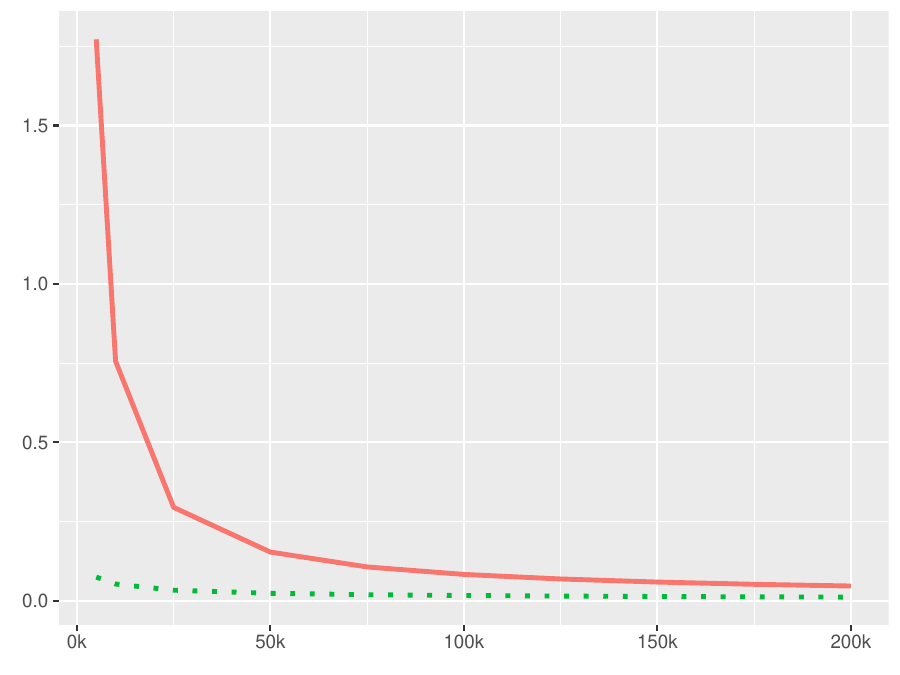}
         \caption{{\footnotesize Absolute width of \(\CIEdgu\) with \(a_n = 1 + 20 \times n^{-2/5}\) and \(\omega_n = n^{-1/5}\) (solid red) and \(\CIAsympu\) (dotted green).}}
         \label{fig:subfig:absolute_width_ols_case}
     \end{subfigure}
     \hspace{0.05\textwidth}
     \begin{subfigure}[b]{0.45\textwidth}
         \centering
         \includegraphics[width=1\textwidth]{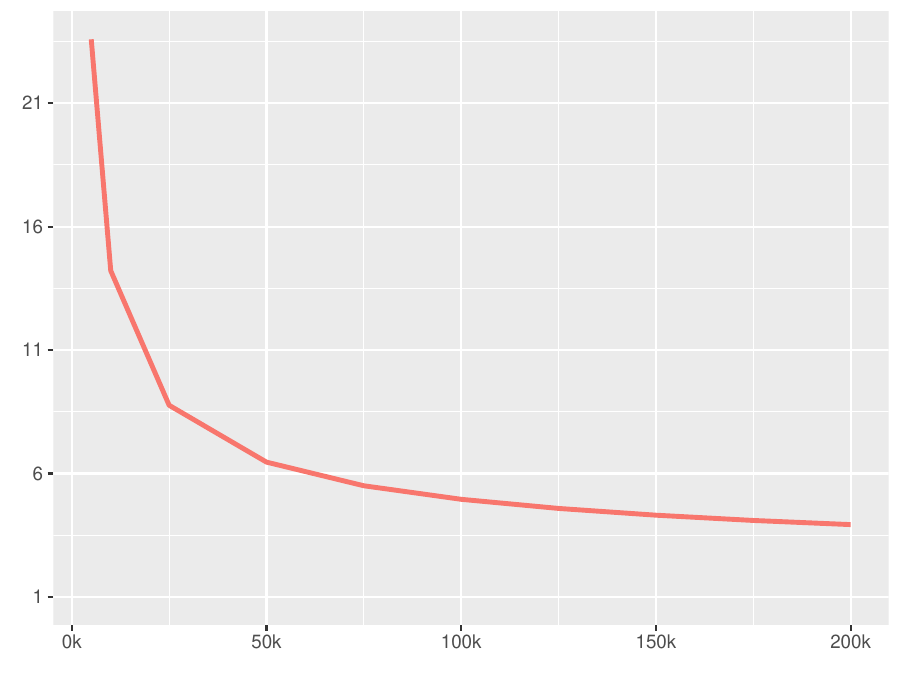}         
         \caption{{\footnotesize Relative width of \(\CIEdgu\) with \(a_n = 1 + 20 \times n^{-2/5}\) and \(\omega_n = n^{-1/5}\) (red) with respect to \(\CIAsympu\).}}
         \label{fig:subfig:relative_width_ols_case}
     \end{subfigure}
    \caption{Width as a function of the sample size, for the choices \(K = 9\) and \(\alpha = 0.10\).}  
    \label{fig:width_ourCI_versus_CLTCI_OLS_case}         
\end{figure}

\bibliography{mainV7_ArXiv}

\newpage
\appendix

\section{Existing (non-NAVAE) confidence intervals for an expectation}
\label{appendix:sec:illustration_expectation}

This appendix considers the setting of Section~\ref{sec:expectation_example}:
conducting inference on the expectation of a real random variable that admits (at least) a non-zero finite second-order moment.
It reviews classical confidence sets and their corresponding properties in light of the NAVAE target.

\medskip 

Let $\sigmadeuxnhat:= n^{-1} \sum_{i=1}^n(\xi_i-\overline{\xi}_n)^2$, with $\overline{\xi}_n := n^{-1} \sumiunn \xi_i$. 
From the Central Limit Theorem (CLT) and Slutsky's lemma,
\[
\CSalphannumero{CLT} := \left[\overline{\xi}_n\pm\frac{q_{\mathcal{N}(0,1)}(1-\alpha/2)\sqrt{\sigmadeuxnhat}}{\sqrt{n}}\right]
\]
is asymptotically exact pointwise (property~\eqref{eq:pointwise_exact_asymp}) over 
$\Theta$, that is, over distributions with a non-zero finite second-order moment.
Following \cite{kasy2019} (in particular its Proposition~1), $\CSalphannumero{CLT}$ is also asymptotically exact uniformly (property~\eqref{eq:unif_exact_asymp}) over the subset
\[
\ThetaKasy:=\left\{\theta\in\Theta:\Varsoustheta(\xi)\geq m,\Expsousthetan[\xi^4]\leq M\right\}
,
\]
with $m > 0$, $M<+\infty$.
That uniform property holds on a large subset of $\Theta$ insofar as \(\ThetaKasy\) is nonparametric.
However, \(\CSalphannumero{CLT}\) is not non-asymptotically valid over~\(\Theta\) (neither over~\(\ThetaKasy\)).
It is thus not NAVAE over~\(\Theta\).
In fact, as explained in Section~\ref{sec:expectation_example}, some restrictions on~\(\Theta\) are necessary to obtain NAVAE confidence sets.

\medskip 

Imposing parametric distributional restrictions is a first possibility.
Up to using the quantiles of a Student instead of a standard Normal distribution, it is well-known that this simple modification of $\CSalphannumero{CLT}$, denoted by
\[
\CSalphannumero{St} :=
\left[ \overline{\xi}_n \pm \frac{q_{\textnormal{Student}(n-1)}(1-\alpha/2)\sqrt{(n / (n-1))\sigmadeuxnhat}}{\sqrt{n}}\right],
\]
is non-asymptotically exact (Property~\eqref{eq:definition_non-asymptotic_exactness}) over
\[
\ThetaGauss := \left\{ \theta\in\Theta : \Distributionde{\theta} = \Normale(\theta_1, \sigmadeux), \theta_1 \in \Rb, \sigmadeux \in \Rstarplus \right\}
,
\]
the set of Gaussian distributions.
Consequently, \(\CSalphannumero{St}\) is a fortiori NAVAE over~\(\ThetaGauss\).
Besides, it is also asymptotically exact pointwise over \(\Theta\), as \(\CSalphannumero{CLT}\).

\medskip 

Another way to non-asymptotic guarantees dispenses with parametric restrictions at the expense of known bounds on some moments of \(\Distributionde{\xi}\).
Let \(\ThetaBC:=\left\{\theta\in\Theta: \Varsoustheta(\xi)\leq M\right\}\),
for some \({M < +\infty}\).
An alternative CS for the expectation $\theta_1$ is
\[
\CSalphannumero{BC}:=\left[\overline{\xi}_n \pm 
\frac{\sqrt{M}}{\sqrt{\alpha n}} \right].
\]
Using the Bienaymé-Chebyshev inequality, it can be shown that $\CSalphannumero{BC}$ is non-asymptotically valid (Property~\eqref{eq:definition_non-asymptotic_validity}) over~\(\ThetaBC\).
Note that \(\CSalphannumero{BC}\) requires a known upper bound on the variance.
On the other hand, compared to \(\ThetaGauss\), \(\ThetaBC\) can be deemed a large subset of \(\Theta\) since it is infinite-dimensional.

\medskip

However, this CS has one major drawback: it is asymptotically conservative, even pointwise, over \(\ThetaBC\).
To see this, one can simply remark that for every $\alpha\in(0,1),$ $1/\sqrt{\alpha}>q_{\mathcal{N}(0,1)}(1-\alpha/2)$.
This implies that $1/\sqrt{\alpha}=q_{\mathcal{N}(0,1)}(1-\beta/2)$ for some $\beta<\alpha$.
Thus, for any \(\theta \in \ThetaBC\), the probability that \(\CSalphannumero{BC}\) contains \(\theta_1\) has a limit at least \(1 - \beta > 1 - \alpha\) when $n$ goes to infinity.
\(\CSalphannumero{BC}\) is thus not NAVAE over~\(\ThetaBC\).

\medskip

$\CSalphannumero{BC}$ is a basic instance of a CS constructed thanks to a concentration inequality.
There exist many different concentration inequalities (see \cite{boucheron2013concentration} for an in-depth exposition) relying on alternative restrictions $\Thetatilde$ on the distribution of the data, each yielding a CS that is non-asymptotically valid over the relevant $\Thetatilde$.

For another example, thanks to the Hoeffding inequality,
\[
\CSHoeff := \left[ \overline{D}_n \pm \frac{(b-a)}{2} \frac{ \sqrt{2 \ln(2 / \alpha)}}{\sqrt{n}} \right]
\]
is non-asymptotically valid over
\( \Theta_{\mathrm{Hoeff}}
:= \left\{ \theta \in \Theta :
\mathrm{support}(\Distributionde{\theta})
\subseteq [a,b] \right\} \),
for some real numbers \( -\infty < a < b < +\infty\).

\medskip
    
In general, CSs based on concentration inequalities display the same suboptimal asymptotic behavior as $\CSalphannumero{BC}$.
In the case of \(\CSHoeff\), we remark that for every \(\alpha \in (0,1)\),
\(\sqrt{2\ln(2/\alpha)} > \QuantileGaussAt{1 - \alpha/2}\).
As \(\CSalphannumero{BC}\), $\CSHoeff$ thus cannot be exact asymptotically, even pointwise.

\medskip

The simple case of a scalar expectation illustrates the difficulty of constructing confidence sets with both non-asymptotic guarantees and asymptotic exactness on large subsets of the parameter space $\Theta$.
As \(\CSalphannumero{St}\) illustrates, non-asymptotic exactness can be achieved under parametric restrictions.
In contrast, provided known bounds on some moments, concentration inequalities yield non-asymptotic validity on large nonparametric subsets \(\Thetatilde\), but the resulting intervals are less precise: they are not exact, even asymptotically.



\begin{table}[htb]
    \vspace{0.2em}
    \renewcommand{\arraystretch}{1.3}
    \centering
    \begin{tabular}{l|lll}
        \textit{Property} & \textit{Asympt. point.} & \textit{Asympt. unif.} & \textit{Non-asympt.} \\
        \hline
        $\CSalphannumero{CLT}$
        & \textbf{exact} over $\Theta$
        & \textbf{exact} over $\ThetaKasy$
        & -\\
        $\CSalphannumero{St}$
        & \textbf{exact} over $\Theta$
        & \textbf{exact} over $\ThetaKasy$
        & \textbf{exact} over $\ThetaGauss$ \\
        $\CSalphannumero{BC}$
        & valid over $\ThetaBC$
        & valid over $\ThetaBC$
        & valid over $\ThetaBC$ \\
        $\CIsigmaknown$
        & \textbf{exact} over $\Thetasigmaknown$
        & \textbf{exact} over $\ThetaBEEsigmaknown$
        & valid over $\ThetaBEEsigmaknown$ \\
        $\CIsigmanhat$
        & \textbf{exact} over $\Theta$
        & \textbf{exact} over $\ThetaBEE$
        & valid over $\ThetaBEE$ \\
    \end{tabular}
    \caption{Quality measures of the considered confidence intervals over different parameter sets.
    Note that exact intervals are always valid, but the reverse implication does not hold.
    We remind the reader of the following notation:
    }
    
    \vspace{-1.7em}
    
    \begin{align*}
    \begin{array}{ll}
        \ThetaGauss := \left\{ \theta\in\Theta :
        \Distributionde{\theta} = \Normale(\theta_1, \sigmadeux), 
        \theta_1 \in \Rb, \sigmadeux \in \Rstarplus \right\},
        &
        \ThetaBC := \left\{\theta\in\Theta: \Varsoustheta(\xi)\leq M\right\}, \\
        \ThetaKasy \; := \left\{\theta\in\Theta:\Varsoustheta(\xi)\geq m,
        \Expsousthetan[\xi^4]\leq M\right\},
        &
        \ThetaBEE := \{ \theta \in \Theta : K_4(\theta) \leq K \}, \\
        \ThetaBEEsigmaknown \!
        := \{ \theta \in \Theta :
        \Varsoustheta(\xi) = \sigmaknown^2,
        K_4(\theta) \leq K \},
        & 
        \Thetasigmaknown \!
        := \{ \theta \in \Theta :
        \Varsoustheta(\xi) = \sigmaknown^2 \},
        \\
    \end{array}
    \end{align*}

    \begin{flushleft}
        and $\Theta$ is the set of all univariate distributions with finite variance.
    \end{flushleft}
        
    \label{tab:results_CI_expect}
\end{table}

\newpage

\section{Proof of results in Section~1}
\label{appendix:sec:proof_Section1}

\subsection{Proof of Proposition~\ref{prop:impossibility_NAVAE_Bernoulli}}
\label{proof:prop:impossibility_NAVAE_Bernoulli}

\begin{proof}
Fix \(\alpha \in (0,1)\).
We follow an indirect proof: we assume that there exists such a confidence set and show a contradiction.
For any sample $(\xi_1, \dots, \xi_n)$, \(n \in \INTST\), we denote that confidence set by $\CS(\xi_1, \dots, \xi_n)$.

\paragraph{First step.}
We derive a necessary condition for that confidence set to be non-asymptotically valid over~\(\mathfrak{Ber}\), which is the first part of being NAVAE.
Let \(n \in \INTST\), and define 
\begin{equation*}
    q_n := \max \Big( \big( (1 + \alpha) / 2 \big)^{1/n} \, , \, 
    \big( 1 - \alpha / 2 \big)^{1/n} \Big)
    , \qquad
    p_n := 1 - q_n,
\end{equation*}
and ${P}_{p_n}$  the Bernoulli distribution with parameter/expectation~\(p_n\).
Remember that we consider i.i.d observations and that, for brevity, for any parameter~\(\theta \in \Theta\), \(\mathbb{P}_{\theta}\) denotes the probability with respect to the joint distribution \(P_\theta^{\otimes n}\), where \(\Theta = (0, 1)\) in the current statistical model of Bernoulli distributions.
Consequently,
\(
    \mathbb{P}_{p_n}(\xi_1 = \dots = \xi_n = 0) = (1 - p_n)^n = q_n^n
\).

\medskip 

Imagine that $p_n \notin \CS(\underbrace{0, \dots, 0}_{\text{\(n\) zeros}})$. 
If so, 
\begin{align*}
    \mathbb{P}_{p_n}(p_n \notin \CS(\xi_1, \dots, \xi_n))
    & 
    \geq \mathbb{P}_{p_n}(\{p_n \notin \CS(\xi_1, \dots, \xi_n)\} \cap \{\xi_1 = \dots = \xi_n = 0\}) \\
    &
    = \mathbb{P}_{p_n}(\xi_1 = \dots = \xi_n = 0)
    = q_n^n \geq (1 + \alpha) / 2 > \alpha.
\end{align*}
Thus, the CS we consider, \(\CS(\xi_1, \dots, \xi_n)\), would not be non-asymptotically valid (NAV) over~\(\mathfrak{Ber}\).

\medskip 

Besides, the previous reasoning is valid for any~\(n \in \INTST\).
In other words, a necessary condition for \(\CS(\xi_1, \dots, \xi_n)\) to be NAV over~\(\mathfrak{Ber}\) is 
\begin{equation}
\label{eq:temp_nonexistence_navae_cs_over_Ber}
    \forall n \in \INTST,
    p_n \in \CS(\underbrace{0, \dots, 0}_{\text{\(n\) zeros}}),
\end{equation}
with \(p_n\) as just defined above.

\paragraph{Second step}
We show that \eqref{eq:temp_nonexistence_navae_cs_over_Ber} is in contradiction with \(\CS(\xi_1, \dots, \xi_n)\) being asymptotically exact uniformly over~\(\mathfrak{Ber}\), the second part of the NAVAE property. 

\medskip 

Let \(n \in \INTST\).
Because, by assumption, the CS we consider is NAVAE, it is in particular non-asymptotically valid.
From the first step, we thus have \(p_n \in \CS(0, \ldots, 0)\).
Therefore,
\begin{align*}
    \mathbb{P}_{p_n}(p_n \in \CS(\xi_1, \dots, \xi_n))
    & 
    \geq \mathbb{P}_{p_n}(\{p_n \in \CS(\xi_1, \dots, \xi_n)\} \cap \{\xi_1 = \dots = \xi_n = 0\}) \\
    &
    = \mathbb{P}_{p_n}(\xi_1 = \dots = \xi_n = 0)
    = q_n^n 
    \geq 1 - \frac{\alpha}{2}.
\end{align*}
Consequently, 
\begin{equation*}
    \big| 
    \mathbb{P}_{p_n}(p_n \in \CS(\xi_1, \dots, \xi_n))
    -
    (1 - \alpha)
    \big|
    \geq 
    \frac{\alpha}{2},
\end{equation*}
and, a fortiori,
\begin{equation}
\label{eq:temp2_nonexistence_navae_cs_over_Ber}
    \sup_{\theta \in \Theta}
    \big|
    \mathbb{P}_\theta(\theta \in \CS(\xi_1, \dots, \xi_n))
    -
    (1 - \alpha)
    \big|
    \geq 
    \frac{\alpha}{2}.
\end{equation}

\medskip 

Finally, remark that inequality~\eqref{eq:temp2_nonexistence_navae_cs_over_Ber} holds for any~\(n \in \INTST\).
Thus, it is impossible to have
\begin{equation*}
    \lim_{n \to +\infty}
    \sup_{\theta \in \Theta}
    \big|
    \mathbb{P}_\theta(\theta \in \CS(\xi_1, \dots, \xi_n))
    -
    (1 - \alpha)
    \big|
    = 0,
\end{equation*}
that is, \(\CS(\xi_1, \dots, \xi_n)\) is not asymptotically exact uniformly over~\(\mathfrak{Ber}\), hence not NAVAE over~\(\mathfrak{Ber}\), which yields the contradiction.

\end{proof}




\section{Proof of results in Section~\ref{sec:expectation_example}}
\label{appendix:sec:proofs_section3_expectation}

\subsection{Proof of Proposition~\ref{prop:mean_CI_BE-Edg_sigmaknown}}
\label{proof:prop:mean_CI_BE-Edg_sigmaknown}


\noindent
Let $n \geq 1$, $\alpha \in (0,1)$ and $\theta \in \ThetaBEEsigmaknown$. 
Assume that \(\delta_n \geq \alpha/2\). Then by definition,
\(\CIsigmaknown = \Rb\).
Therefore,
\begin{align*}
    \Probsousthetan \Big( \theta_1 \in \CIsigmaknown \Big) = 1 > 1 - \alpha,
\end{align*}
which finishes the proof in this case.
We now assume that \(\delta_n < \alpha/2\).
Therefore $\CIsigmaknown$ can be written as
\begin{equation*}
    \CIsigmaknown = \left[
    \overline{\xi}_n \pm \dfrac{\sigmaknown}{\sqrt{n}}
    \QuantileGaussAt{1 - \dfrac{\alpha}{2} + \delta_n}
    \right].
\end{equation*}
In this case, to show that
\begin{equation}
    \label{eq:nonasymp_edg_sigma_known}
    \Probsousthetan \Big( \theta_1 \in \CIsigmaknown \Big) \geq 1 - \alpha
\end{equation}
we first resort to Lemma~\ref{lem:edg_exp}(i) which ensures that for every $x>0$
\begin{align*}
    \Probsousthetan \Big( \sqrt{n}
    \big| \overline{\xi}_n - \theta_1 \big| / \sigmaknown > x \Big)
    \leq 2 \big\{ \Phi(-x) + \DeltanE \wedge \DeltanB \big\}
    \leq 2 \big\{ \Phi(-x) + \delta_n \big\}.
\end{align*}
Solving in $x$ the equation $2\big\{ \Phi(-x) + \delta_n \big\} = \alpha$ yields $x = \QuantileGaussAt{1 - \alpha/2 + \delta_n}$. Using this value of $x$ we obtain
\begin{align*}
    \Probsousthetan\left( \sqrt{n}\left| \overline{\xi}_n - \theta_1 \right|/\sigmaknown > \QuantileGaussAt{1 - \dfrac{\alpha}{2} + \delta_n} \right)
    \leq \alpha.
\end{align*}
This finishes the proof of~\eqref{eq:nonasymp_edg_sigma_known}. 

\bigskip

\subsection{Proof of Proposition~\ref{prop:mean_CI_BE-Edg_asymp_sigmaknown}}
\label{proof:prop:mean_CI_BE-Edg_asymp_sigmaknown}

\noindent
\textbf{Proof of the pointwise result for \(\CIsigmaknown\).} Our goal is to show
\begin{equation}
    \label{eq:pointwise_conv_sigma_known}
    \lim_{n\to+\infty}\left| \Probsousthetan\Big( \theta_1 \in \CIsigmaknown \Big) - (1-\alpha) \right| = 0 \quad \forall \theta\in\Thetasigmaknown.
\end{equation}

We consider a fixed but arbitrary $\theta\in\Thetasigmaknown$ for the rest of the proof. Since $\delta_n$ is deterministic and decreases to 0 by assumption, there exists $\underline{n}_\theta$ such that for every $n \geq \underline{n}_\theta$ the following holds almost surely
\begin{equation*}
    \CIsigmaknown = \left[
    \overline{\xi}_n \pm \dfrac{\sigmaknown}{\sqrt{n}}
    \QuantileGaussAtBig{1 - \dfrac{\alpha}{2} + \delta_n}
    \right].
\end{equation*}
As a result, Equation~\eqref{eq:pointwise_conv_sigma_known} is equivalent to
\begin{equation*}
    \lim_{n\to+\infty}\left| \Probsousthetan\left( \frac{\sqrt{n}\left| \overline{\xi}_n - \theta_1 \right|}{\sigmaknown} 
    \leq 
    \QuantileGaussAtBig{1 - \frac{\alpha}{2} + \delta_n} \right) - (1-\alpha) \right| = 0,
\end{equation*}
or
\begin{equation*}
    \lim_{n\to+\infty}\left| \Probsousthetan\left( \left|q_n\frac{\sqrt{n} (\overline{\xi}_n - \theta_1)}{\sigmaknown}\right|
    \leq 
    \QuantileGaussAtBig{1 - \frac{\alpha}{2}}\right) - (1-\alpha) \right| = 0,
\end{equation*}
with $q_n := \QuantileGaussAtBig{1 - \frac{\alpha}{2}} / \QuantileGaussAtBig{1 - \frac{\alpha}{2} +\delta_n}$.

\medskip

To conclude it is enough to show that
\begin{equation}
    \label{eq:weak_conv_mean_known_variance}
    \left| q_n\frac{\sqrt{n} (\overline{\xi}_n - \theta_1)}{\sigmaknown} \right| \convD |U|, \, \text{with } U\sim\mathcal{N}(0,1).
\end{equation}
By continuity of $\QuantileGauss$ and the definition of $\delta_n$, $q_n$ tends to 1 deterministically. By the CLT, $\sqrt{n} (\overline{\xi}_n - \theta_1)/\sigmaknown$ tends to the $\mathcal{N}(0,1)$ distribution. By Slutsky's theorem, $q_n\sqrt{n} (\overline{\xi}_n - \theta_1)/\sigmaknown$ tends to the $\mathcal{N}(0,1)$ distribution as well. The absolute value function is continuous on $\Rb$, hence by the continuous mapping in distribution we can conclude that Equation~\eqref{eq:weak_conv_mean_known_variance} is valid.

\bigskip

\noindent
\textbf{Proof of the uniform result for \(\CIsigmaknown\).}
Our goal is to show
\begin{equation}
    \label{eq:unif_conv_sigma_known}
    \lim_{n\to+\infty}\sup_{\theta\in\ThetaBEEsigmaknown}\left| \Probsousthetan\Big( \theta_1 \in \CIsigmaknown \Big) - (1-\alpha) \right| = 0.
\end{equation}
By assumption, $\delta_n \to 0$, and therefore
for every $\alpha\in(0,1)$, $\delta_n < \alpha / 2$ for $n$ large enough.
For such values of $n$,
$\CIsigmaknown$ takes the form
\begin{equation*}
    \CIsigmaknown = \left[
    \overline{\xi}_n \pm \dfrac{\sigmaknown}{\sqrt{n}}
    \QuantileGaussAt{1 - \dfrac{\alpha}{2} + \delta_n}
    \right].
\end{equation*}
Therefore, for every $\theta \in \ThetaBEEsigmaknown$,
\begin{equation*}
    \Probsousthetan\Big( \theta_1 \in \CIsigmaknown \Big) = \Probsousthetan\left( \sqrt{n}\left| \overline{\xi}_n - \theta_1 \right|/\sigmaknown \leq \QuantileGaussAt{1 - \dfrac{\alpha}{2} + \delta_n} \right).
\end{equation*}
It is thus sufficient to prove the following
\begin{equation*}
    \lim_{n\to+\infty}\sup_{\theta\in\ThetaBEEsigmaknown}\left| \Probsousthetan\left( \sqrt{n}\left| \overline{\xi}_n - \theta_1 \right|/\sigmaknown \leq \QuantileGaussAt{1 - \dfrac{\alpha}{2} + \delta_n} \right) - (1-\alpha) \right| = 0,    
\end{equation*}
or equivalently
\begin{equation}
    \label{eq:unif_conv_sigma_known_bis}
    \lim_{n\to+\infty}\sup_{\theta\in\ThetaBEEsigmaknown}
    \left| \Probsousthetan\left( \sqrt{n}\left| \overline{\xi}_n - \theta_1 \right|/\sigmaknown > \QuantileGaussAt{1 - \dfrac{\alpha}{2} + \delta_n} \right) - \alpha \right| = 0.    
\end{equation}

We now apply Lemma~\ref{lemma:bound_cdf_alpha_Phi}
with $A = \sqrt{n}\left(\overline{\xi}_n - \theta_1 \right) / \sigmaknown$, $\lambda = \lambdatroisn$
and $x = \QuantileGaussAt{1 - \dfrac{\alpha}{2} + \delta_n}$.
Remark that $\|F - \Phi \|_\infty = \DeltanB$ and $\|F - \Phi - \Edg\| = \DeltanE$ in that case, where $F$ is the distribution of~\(A\).
Therefore, for any $\theta \in \ThetaBEEsigmaknown$,
\begin{align*}
    &\left| \Probsousthetan\left( \sqrt{n}\left| \overline{\xi}_n - \theta_1 \right|/\sigmaknown > \QuantileGaussAt{1 - \dfrac{\alpha}{2} + \delta_n} \right) - \alpha \right| \\
    &\hspace{10em} \leq 2 \DeltanB \wedge \DeltanE
    + \left| 2 \Phi\left(-\QuantileGaussAt{1 - \dfrac{\alpha}{2} + \delta_n} \right) - \alpha \right| \\
    &\hspace{10em} \leq 2 \delta_n + \left| 2 \Phi\left(-\QuantileGaussAt{1 - \dfrac{\alpha}{2} + \delta_n} \right) - \alpha \right|.
\end{align*}
As $n \to +\infty$, the first term in the previous bound -- i.e. $\delta_n$ -- tends to $0$ uniformly in $\theta \in \ThetaBEEsigmaknown$ by definition of $\delta_n$.
The second term also tends to $0$ by continuity of $\Phi$ and $\QuantileGauss$, and because $\delta_n \to 0$ by assumption.
This concludes the proof of \eqref{eq:unif_conv_sigma_known_bis}.

\subsection{Proof of Proposition~\ref{prop:open_interval_a}}
\label{proof:prop:open_interval_a}

\begin{proof}
We start by proving the first part of (i).
Let
\begin{align*}
    g_{n,K}(a)
    &:= 1 - \dfrac{\alpha}{2} + \delta_n + \dfrac{\nuVar}{2} - \Phi(\sqrt{n/a}) \\
    &= 1 - \dfrac{\alpha}{2} + \delta_n
    + \dfrac{\exp\!\Big( -\frac{n(1-1/a)^2}
    {2K} \Big)}{2}
    - \int_{-\infty}^{\sqrt{n/a}}
    \frac{1}{\sqrt{2\pi}} e^{-x^2/2} dx.
\end{align*}
For $a \to 1$, we get
\begin{align*}
    \lim_{a \to 1} g_{n,K}(a)
    &= 1 - \dfrac{\alpha}{2} + \delta_n
    + \dfrac{1}{2}
    - \int_{-\infty}^{\sqrt{n}}
    \frac{1}{\sqrt{2\pi}} e^{-x^2/2} dx
    \geq \dfrac{1 - \alpha}{2} + \delta_n > 0.
\end{align*}
For $a \to +\infty$, we get
\begin{align*}
    \lim_{a \to +\infty} g_{n,K}(a)
    &= 1 - \dfrac{\alpha}{2} + \delta_n
    + \dfrac{e^{-n/2K}}{2}
    - \int_{-\infty}^{0} \frac{1}{\sqrt{2\pi}} e^{-x^2/2} dx \\
    &= \dfrac{1 - \alpha}{2} + \delta_n
    + \dfrac{e^{-n/2K}}{2} > 0.
\end{align*}
Since the limit at the two sides of $(1, +\infty)$ are both positive, we now study its derivative.
\begin{align*}
    g_{n,K}'(a)
    &= \dfrac{-n}{4K} \dfrac{d((1-1/a)^2)}{da}
    \exp\!\left( -\frac{n(1-1/a)^2}{2K} \right)
    - \frac{1}{\sqrt{2\pi}}
    \frac{d(\sqrt{n/a})}{da}
    e^{-n/2a} \\
    &= \dfrac{-n}{4K} \times 2(1-1/a)(1/a^2) \times
    \exp\!\left( -\frac{n(1-1/a)^2}{2K} \right)
    - \frac{1}{\sqrt{2\pi}} \times
    \frac{\sqrt{n}}{2} \times
    \frac{-1}{a \sqrt{a}} \times
    e^{-n/2a} \\
    &= \dfrac{-n}{2K} \times \frac{1 - 1/a}{a^2} \times
    \exp\!\left( -\frac{n(1-1/a)^2}{2K} \right)
    + \frac{1}{\sqrt{2\pi}} \times
    \frac{\sqrt{n}}{2} \times
    \frac{1}{a \sqrt{a}} \times
    e^{-n/2a} \\
    &= \dfrac{-n}{2K} \times
    \frac{a - 1}{a^3} \times
    \exp\!\left( -\frac{n(1-1/a)^2}{2K} \right)
    + \sqrt{\frac{n}{8\pi}}
    \frac{1}{a \sqrt{a}} \times
    e^{-n/2a}.
\end{align*}
So, we have
\begin{align*}
    \lim_{a \to 1} g_{n,K}'(a)
    = 0 + \sqrt{\frac{n}{8\pi}} e^{-n/2} > 0
\end{align*}
and
\begin{align*}
    \lim_{a \to +\infty} g_{n,K}'(a)
    = \dfrac{-n}{2K} \times
    \frac{1 - 0}{+ \infty} \times
    \exp\!\left( -\frac{n(1-0)^2}{2K} \right)
    + \sqrt{\frac{n}{8\pi}}
    \frac{1}{+\infty} \times
    e^{0}
    = 0
\end{align*}

\begin{lemma}
    For any $n, K > 0$, the set of $a$ such that $g_{n,K}'(a) \leq 0$ is either empty or a closed interval of $(0, +\infty)$.
\label{lemma:unique-solution_gprime}
\end{lemma}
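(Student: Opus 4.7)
My plan is to analyze $g'_{n,K}$ on $(0,1]$ and on $(1,+\infty)$ separately, then to use a change of variables that reveals a strict log-concavity. First, observing that the first summand of $g'_{n,K}(a)$ carries the factor $-(a-1)$ while the second summand is always strictly positive, one sees directly that $g'_{n,K}(a)>0$ for every $a\in(0,1]$. The set in question is thus contained in $(1,+\infty)$, and it is closed in $(0,+\infty)$ by continuity of $g'_{n,K}$.

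It remains to show that on $(1,+\infty)$ the set $\{a:g'_{n,K}(a)\leq 0\}$ is an interval. For $a>1$, write $g'_{n,K}(a)=u(a)+v(a)$ with $u(a)<0$ and $v(a)>0$, and set $r(a):=-u(a)/v(a)$, so that $g'_{n,K}(a)\leq 0$ if and only if $r(a)\geq 1$. A direct simplification yields
\begin{equation*}
r(a)=\frac{\sqrt{2\pi n}}{K}\cdot\frac{a-1}{a^{3/2}}\cdot\exp\!\left(\frac{n}{2a}-\frac{n(1-1/a)^2}{2K}\right).
\end{equation*}

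The key step is the substitution $b=1-1/a$, which defines a strictly increasing $C^\infty$ bijection from $(1,+\infty)$ onto $(0,1)$ and hence maps intervals onto intervals. A short calculation then gives, up to an additive constant independent of $b$,
\begin{equation*}
\ln r(b)=\ln b+\tfrac{1}{2}\ln(1-b)+\tfrac{n(1-b)}{2}-\tfrac{nb^2}{2K}.
\end{equation*}
The two logarithmic terms are concave on $(0,1)$, the term $n(1-b)/2$ is affine, and $-nb^2/(2K)$ is strictly concave; hence $\ln r$ is strictly concave in $b$. Its super-level set $\{b\in(0,1):\ln r(b)\geq 0\}$ is therefore either empty or a closed sub-interval of $(0,1)$, and pulling back via the change of variables finishes the proof.

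The main difficulty I foresee is identifying the ``right'' change of variables. Working directly in $a$, the second derivative of $\ln r$ is not globally sign-definite---it turns positive for large $a$---so no direct monotonicity or concavity argument in $a$ is available, and a crude zero-counting of $g'_{n,K}$ would not suffice. The substitution $b=1-1/a$ is natural because it linearizes the exponent $n(1-1/a)^2/(2K)$ into a clean quadratic $nb^2/(2K)$, which is precisely what makes the log-concavity transparent.
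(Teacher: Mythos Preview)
Your proof is correct and takes a genuinely different route from the paper. Both arguments start the same way: rule out $a\le 1$, rewrite the sign condition as a super-level set of a ratio, and make a monotone change of variable. The paper uses $x=1/a$ and arrives at an equivalent function $h_{n,K}(x)=\sqrt{x}(1-x)\exp\!\big(-\tfrac{n}{2K}(x-\tfrac{K+2}{2})^2\big)$ (which equals your $r$ up to a multiplicative constant, under $x=1-b$). Where the approaches diverge is in establishing that the super-level set is an interval: the paper differentiates $h_{n,K}$, factors out a positive term, and is left with a cubic $p(x)=2K-6Kx-2nx(1-x)(2x-(K+2))$; it then invokes a separate auxiliary lemma (root-counting via the intermediate value theorem and the sign of the leading coefficient) to show $p$ has exactly one zero in $(0,1)$, hence $h_{n,K}$ is unimodal. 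Your argument bypasses all of this: in the variable $b=1-1/a$ the exponent becomes quadratic and the prefactor becomes $b\sqrt{1-b}$, so $\ln r$ is visibly a sum of (strictly) concave pieces, and the interval conclusion is immediate from convexity of super-level sets of concave functions. Your route is shorter and avoids the auxiliary polynomial lemma entirely; the paper's route is more hands-on but requires the extra root-counting step. A small remark: your concavity argument in fact yields that $h_{n,K}$ is log-concave, which is strictly stronger than the unimodality the paper extracts from its cubic analysis.
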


We now use Lemma~\ref{lemma:unique-solution_gprime} and distinguish both cases:

\begin{itemize}
    \item In the first case, for every $a \in (0, +\infty)$, $g_{n,K}'(a) > 0$.
    Therefore, $g_{n,K}$ is increasing and 
    $\{a \in (0, +\infty): \, g_{n,K}(a) < 0\} = \emptyset$.

    \item In the second case, there exists $(a_-, a_+)$ such that
    $g_{n,K}$ is increasing on $(1, a_-)$, decreasing on $(a_-, a_+)$,
    and then increasing on $(a_+, +\infty)$.
    
    This means that for all $a \in (1, a_
    -)$, $g_{n,K}(a) > \lim_{a \to 1} g_{n,K}(a) > 0$.

    Therefore, if $g_{n,K}$ ever takes negative values, it must do so on the interval $(a_-, +\infty)$, and since on this interval $g_{n,K}$ is decreasing, then increasing, we know that the set $\{a \in (0, +\infty): \, g_{n,K}(a) < 0\}$ is an open interval (potentially empty).
\end{itemize}

This finishes the proof of the first statement of Proposition~\ref{prop:open_interval_a}(i).
Note that
$1 - \dfrac{\alpha}{2} + \delta_n + \dfrac{\nuVar(a)}{2}$
is decreasing in $n$ and that
$\Phi(\sqrt{n/a})$ is increasing in $n$,
so the set of values of $a_n$ that satisfies the constraint~\eqref{eq:constraint_a_n} is increasing.
Furthermore, for every fixed $a$, we have
\begin{align*}
    1 - \dfrac{\alpha}{2} + \delta_n + \dfrac{\nuVar(a)}{2}
    < \Phi(\sqrt{n/a}),
\end{align*}
for $n$ large enough.
This shows that every value $a \in (1, +\infty)$ eventually satisfies the constraint~\eqref{eq:constraint_a_n}, finishing the proof of (i).

\medskip

We now prove (ii).
We have
\begin{align*}
    \nuVar(a_n)
    &= \exp\!\left( - \frac{n(1-1/a_n)^2}{2K} \right) \\
    &= \exp\!\left( - \frac{n(1-1/(1 + b_n))^2}{2K} \right) \\
    &= \exp\!\left( - \frac{n b_n^2}{2K (1 + b_n)^2} \right)
    \to 0,
\end{align*}
as $n \to \infty$, since by assumption
\(b_n \sqrt{n} \to +\infty\).
On the other side, we have
\begin{align*}
    \Phi(\sqrt{n/a_n}) = \Phi(\sqrt{n/(1 + b_n}) \to 1.
\end{align*}
Therefore for $n$ large enough, the constraint \eqref{eq:constraint_a_n} is satisfied, since the left-hand side tends to a limit strictly smaller than $1$ and the right-hand side tends to $1$.

\end{proof}

\begin{proof}[Proof of Lemma~\ref{lemma:unique-solution_gprime}]
We have $g_{n,K}'(a) \geq 0$ if and only if
\begin{align*}
    \dfrac{n}{2K} \times
    \frac{a - 1}{a^3} \times
    \exp\!\left( -\frac{n(1-1/a)^2}{2K} \right)
    \leq \sqrt{\frac{n}{8\pi}}
    \frac{1}{a \sqrt{a}} \times
    e^{-n/2a}
\end{align*}
if and only if
\begin{align*}
    \frac{a - 1}{a \sqrt{a}} \times
    \exp\!\left( -\frac{n(1-1/a)^2}{2K}
    + \frac{n}{2a}\right)
    \leq \sqrt{\frac{K^2}{2\pi n}}
\end{align*}
if and only if
\begin{align*}
    \frac{a - 1}{a \sqrt{a}} \times
    \exp\!\left( \frac{-n}{2K}
    \bigg(1 - \frac{2}{a} + \frac{1}{a^2} - \frac{K}{a} \bigg) \right)
    \leq \sqrt{\frac{K^2}{2\pi n}}
\end{align*}
if and only if
\begin{align*}
    \frac{a - 1}{a \sqrt{a}} \times
    \exp\!\left( \frac{-n}{2K}
    \bigg(1 - \frac{K + 2}{a} + \frac{1}{a^2} \bigg) \right)
    \leq \sqrt{\frac{K^2}{2\pi n}}
\end{align*}
if and only if
\begin{align*}
    \frac{a - 1}{a \sqrt{a}} \times
    \exp\!\left( \frac{-n}{2K}
    \bigg( \Big(\frac{1}{a} - \frac{K + 2}{2}\Big)^2
    + \frac{4 - (K+2)^2}{4} \bigg) \right)
    \leq \sqrt{\frac{K^2}{2\pi n}}
\end{align*}
if and only if
\begin{align*}
    \frac{a - 1}{a \sqrt{a}} \times
    \exp\!\left( \frac{-n}{2K}
    \bigg( \Big(\frac{1}{a} - \frac{K + 2}{2}\Big)^2
    + \frac{- K^2 - 4K}{4} \bigg) \right)
    \leq \sqrt{\frac{K^2}{2\pi n}}
\end{align*}
if and only if
\begin{align*}
    \frac{a - 1}{a \sqrt{a}} \times
    \exp\!\left( \frac{-n}{2K}
    \times \Big(\frac{1}{a} - \frac{K + 2}{2}\Big)^2
    + \frac{n(K + 4)}{8} \right)
    \leq \sqrt{\frac{K^2}{2\pi n}}
\end{align*}
if and only if
\begin{align*}
    \frac{a - 1}{a \sqrt{a}} \times
    \exp\!\left( \frac{-n}{2K}
    \times \Big(\frac{1}{a} - \frac{K + 2}{2}\Big)^2 \right)
    \leq \sqrt{\frac{K^2}{2\pi n}} e^{-n(K + 4) / 8}
\end{align*}
Let $x = 1/a \in (0, 1)$.
So
\begin{align*}
    \frac{a - 1}{a \sqrt{a}}
    = x \sqrt{x} (1/x - 1)
    = \sqrt{x} (1 - x)
    = \sqrt{x} - x \sqrt{x}
\end{align*}
We can rewrite the previous inequality as 
\begin{align*}
    h_{n,K}(x)
    \leq \sqrt{\frac{K^2}{2\pi n}} e^{-n(K + 4) / 8},
\end{align*}
where
\begin{align*}
    h_{n,K}(x)
    := \sqrt{x} (1 - x) \times
    \exp\!\left( \frac{-n}{2K}
    \times \Big(x - \frac{K + 2}{2}\Big)^2 
    \right)
\end{align*}
Note that
\begin{align*}
    h_{n,K}(0) = h_{n,K}(1) = 0.
\end{align*}
We have
\begin{align*}
    h_{n,K}'(x)
    &= \frac{1}{2 \sqrt{x}} (1 - x) \times
    \exp\!\left( \frac{-n}{2K}
    \times \Big(x - \frac{K + 2}{2}\Big)^2 
    \right) \\
    &- \sqrt{x} \times
    \exp\!\left( \frac{-n}{2K}
    \times \Big(x - \frac{K + 2}{2}\Big)^2 
    \right) \\
    &+ \sqrt{x} (1 - x) \times
    \frac{-n}{2K}
    (2x - (K + 2))
    \exp\!\left( \frac{-n}{2K}
    \times \Big(x - \frac{K + 2}{2}\Big)^2 
    \right) \\
    &= \left(\frac{1}{2 \sqrt{x}} (1 - x) - \sqrt{x}
    + \sqrt{x} (1 - x) \times
    \frac{-n}{2K}
    (2x - (K + 2))
    \right)
    \exp\!\left( \frac{-n}{2K}
    \times \Big(x - \frac{K + 2}{2}\Big)^2 
    \right) \\
    &= \frac{1 - x - 2x
    + 2x(1 - x) \frac{-n}{2K} (2x - (K + 2))}{2 \sqrt{x}}
    \exp\!\left( \frac{-n}{2K}
    \times \Big(x - \frac{K + 2}{2}\Big)^2 
    \right) \\
    &= \frac{2K - 6Kx
    - 2n x(1 - x) (2x - (K + 2))}{2 \sqrt{x}}
    \exp\!\left( \frac{-n}{2K}
    \times \Big(x - \frac{K + 2}{2}\Big)^2 
    \right).
\end{align*}
Let us define the polynomial $p$ by
$p(x) := 2K - 6Kx
- 2n x(1 - x) (2x - (K + 2))$.
\begin{lemma}
    There exists $x^*$ such that for all $x \in (0, x^*)$, $p(x) > 0$ and for all $x \in (x^*, 1)$, $p(x) < 0$.
\label{lemma:study_p}
\end{lemma}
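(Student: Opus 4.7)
My plan is to view $p$ as a cubic polynomial in $x$ and to deduce its sign pattern on $(0,1)$ from a short analysis of its derivative. First, I would expand the product in the definition of $p$ to obtain
\begin{equation*}
    p(x) = 4nx^3 - 2n(K+4)x^2 + \bigl(2n(K+2) - 6K\bigr)x + 2K,
\end{equation*}
which makes it immediate that $p(0) = 2K > 0$, and a brief simplification yields $p(1) = -4K < 0$. The intermediate value theorem then already guarantees at least one root of $p$ in $(0,1)$; the real work is to rule out any further sign change.

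Second, I would compute
\begin{equation*}
    p'(x) = 12nx^2 - 4n(K+4)x + \bigl(2n(K+2) - 6K\bigr),
\end{equation*}
which is a quadratic with positive leading coefficient and thus has at most two real roots $r_1 \leq r_2$. A direct evaluation gives $p'(1) = -2K(n+3) < 0$, which forces $1$ to lie strictly between $r_1$ and $r_2$. In particular $r_2 > 1$, so $p'$ is negative on $\bigl(\max(0, r_1), 1\bigr]$ and $p$ is strictly decreasing there.

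Third, I would split according to the sign of $p'(0) = 2n(K+2) - 6K$. If $p'(0) \leq 0$, then $0$ also lies in $[r_1, r_2]$, so $p$ is strictly decreasing on $[0,1]$ and the unique sign change in $(0,1)$ follows from $p(0) > 0 > p(1)$. If instead $p'(0) > 0$, then $r_1 \in (0,1)$, $p$ is strictly increasing on $[0, r_1]$ (hence remains strictly positive there, since $p(0) = 2K > 0$) and then strictly decreasing on $[r_1, 1]$, going from the strictly positive value $p(r_1)$ to the strictly negative value $p(1) = -4K$. In either case there is a unique $x^* \in (0, 1)$ such that $p > 0$ on $(0, x^*)$ and $p < 0$ on $(x^*, 1)$, which is the desired claim.

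I do not anticipate any serious obstacle: the whole argument reduces to an elementary monotonicity analysis once the cubic is expanded. The only point that requires a minimum of care is the case split on the location of the smaller critical point $r_1$ relative to $(0,1)$, but the three sign evaluations $p(0) > 0$, $p(1) < 0$, $p'(1) < 0$ are enough to treat both cases uniformly in $n$ and $K$.
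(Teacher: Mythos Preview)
Your argument is correct. The expansion, the evaluations $p(0)=2K>0$, $p(1)=-4K<0$, and $p'(1)=-2K(n+3)<0$ are all accurate, and the case split on the sign of $p'(0)$ cleanly handles the location of the smaller critical point. Each branch indeed yields a unique sign change on $(0,1)$.

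Your route differs from the paper's. The paper does not touch the derivative: it simply observes that $p(0)>0$, $p(1)<0$, and that the leading coefficient of the cubic is positive, so $\lim_{x\to-\infty}p(x)=-\infty$ and $\lim_{x\to+\infty}p(x)=+\infty$. By the intermediate value theorem, $p$ has at least one root in $(-\infty,0)$ and at least one in $(1,+\infty)$; since a cubic has at most three real roots, there is exactly one in $(0,1)$, which forces the claimed sign pattern. This global root-counting argument is shorter and avoids any computation of $p'$ or any case distinction. Your monotonicity approach, on the other hand, is more local and would still work in settings where the behaviour of $p$ outside $[0,1]$ is not readily available; here it simply costs a little more algebra.
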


Therefore,
for all $x \in (0, x^*)$,
$h_{n,K}'(x) > 0$ and
for all $x \in (x^*, 1)$,
$h_{n,K}'(x) < 0$.
Therefore,
$h_{n,K}$ is increasing on the interval
$(0, x^*)$ and decreasing on the interval $(x^*, 1)$.

This shows that the set of $x$ such that 
\begin{align*}
    h_{n,K}(x)
    \geq \sqrt{\frac{K^2}{2\pi n}} e^{-n(K + 4) / 8},
\end{align*}
is either empty or a closed interval (which does not contain the boundary values $0$ and $1$).
This finishes the proof as claimed.
\end{proof}

\begin{proof}[Proof of Lemma~\ref{lemma:study_p}]

Remember that 
\begin{align*}
    p(x) := 2K - 6Kx
    - 2n x(1 - x) (2x - (K + 2)).
\end{align*}
So $p(0) = 2K$
and $p(1) = 2K - 6K = -4K < 0$.
We know that the dominant coefficient of $x$ in $p$ is positive, so
$\lim_{x \to -\infty} p(x) = -\infty$
and $\lim_{x \to +\infty} p(x) = +\infty$.

So, by the intermediate value theorem, $p$ has (at least) a root between $- \infty$ and $0$;
$p$ has also (at least) a root between $1$ and $=\infty$.
Since $p$ is a polynomial of order 3, we know that $p$ has at most $3$ roots, and combining the previous statement with the intermediate value theorem, we get that it has exactly one root in the interval $(0, 1)$.

We have therefore shown that there exists $x^*$ such that for all $x \in (0, x^*)$, $p(x) > 0$ and for all $x \in (x^*, 1)$, $p(x) < 0$.
\end{proof}

\subsection{Proof of Proposition~\ref{prop:mean_CI_BE-Edg_sigmaunknown}}
\label{proof:prop:mean_CI_BE-Edg_sigmaunknown}

Let $n \geq 1$, $\alpha \in (0,1)$ and $\theta \in \ThetaBEE$. In what follows, we write $a$ instead of $a_n$. 
Arguing as for \(\CIsigmaknown\), we remark that when
$1 - \dfrac{\alpha}{2} + \delta_n + \dfrac{\nuVar}{2} \geq \Phi(\sqrt{n/a})$,
\begin{align*}
    \Probsousthetan \Big( \theta_1 \in \CIsigmanhat \Big) = 1 > 1 - \alpha.
\end{align*}
We now assume that $1 - \dfrac{\alpha}{2} + \delta_n + \dfrac{\nuVar}{2} < \Phi(\sqrt{n/a})$. We then have
\begin{align*}
    \CIsigmanhat = \left[ \overline{\xi}_n
    \pm \dfrac{\sigmanhat}{\sqrt{n}} \Cn
    \QuantileGaussAt{1 - \dfrac{\alpha}{2} + \delta_n + \dfrac{\nuVar}{2}}
    \right].
\end{align*}
In this case, to show that
\begin{equation}
\label{eq:nonasymp_edg_sigma_unknown_1}
    \Probsousthetan\Big( \theta_1 \in \CIsigmanhat \Big) \geq 1 - \alpha
\end{equation}
we wish to prove that for $x^* := C_n\QuantileGaussAt{1 - \dfrac{\alpha}{2} + \delta_n + \dfrac{\nuVar}{2}}$,
\begin{equation*}
    \Probsousthetan\left( \sqrt{n}\left| \overline{\xi}_n - \theta_1 \right| > \sigmanhat x^* \right) \leq \alpha.
\end{equation*}
By definition (and using the fact that $\sigma(\theta)>0$ for every $\theta \in \ThetaBEE$), we have almost surely 
\begin{equation*}
    \frac{\sigmanhat}{\sigma(\theta)} = \sqrt{\frac{\frac{1}{n}\sum_{i=1}^n(\xi_i-\theta_1)^2}{\sigma(\theta)^2}-\frac{1}{n}\left(\frac{\sqrt{n}(\MeanXi-\theta_1)}{\sigma(\theta)}\right)^2}.
\end{equation*}
Letting $T_n := \sqrt{n}|\MeanXi-\theta_1|/\sigma(\theta)$ and $\sigmanhat_0 := \sqrt{\frac{1}{n}\sum_{i=1}^n(\xi_i-\theta_1)^2}$, we can thus write for every $x>0$
\begin{equation}
\label{eq:nonasymp_edg_sigma_unknown_2}
    \Probsousthetan\left( \sqrt{n}\left| \overline{\xi}_n - \theta_1 \right| > \sigmanhat x \right) = \Probsousthetan\left( T_n > x \sqrt{\frac{\sigmanhat^2_0}{\sigma(\theta)^2} - \frac{T_n^2}{n}} \right).
\end{equation}
We now need to control $T_n$ from above and $\sigmanhat^2_0/\sigma(\theta)^2$ from below. We use Theorem 2.19 in~\cite{pena2008self} which allows us to write for every $a > 1$
\begin{equation}
\label{eq:nonasymp_edg_sigma_unknown_3}
    \Prob\left( \frac{\widehat{\sigma}_0^2}{\sigma(\theta)^2} < \frac{1}{a} \right)
    \leq \exp\left( -\frac{n(1-1/a)^2}
    {2\majorantKurtxi} \right).
\end{equation}
We also resort to Lemma~\ref{lem:edg_exp}(i) which ensures that for every $y>0$,
\begin{align}
\label{eq:nonasymp_edg_sigma_unknown_4}
    \Probsousthetan\left( T_n > y \right)
        & \leq 2\Big\{ \Phi\big(-y\big) + \DeltanE \wedge \DeltanB \Big\}.
\end{align}
Combining~\eqref{eq:nonasymp_edg_sigma_unknown_2},~\eqref{eq:nonasymp_edg_sigma_unknown_3} and~\eqref{eq:nonasymp_edg_sigma_unknown_4}, we remark that for every $x>0$ and every $a>1$ and $y>0$ such that $1/a - y^2/n > 0$
\begin{align*}
    \Probsousthetan\left( \sqrt{n}\left| \overline{\xi}_n - \theta_1 \right| > \sigmanhat x \right) & \leq \Probsousthetan\left( \left\{ T_n > x\sqrt{\frac{\sigmanhat^2_0}{\sigma(\theta)^2} - \frac{T_n^2}{n}} \right\} \cap \left\{ T_n \leq y \right\} \cap \left\{ \frac{\widehat{\sigma}_0^2}{\sigma(\theta)^2} \geq \frac{1}{a} \right\} \right) \\
    & + \Probsousthetan\left( T_n > y \right) + \Probsousthetan\left( \frac{\widehat{\sigma}_0^2}{\sigma(\theta)^2} < \frac{1}{a}\right) \\
    & \leq \Probsousthetan\left( y > x\sqrt{\frac{1}{a} - \frac{y^2}{n}} \right) + 2\Big\{ \Phi\big(-y\big) + \DeltanE \wedge \DeltanB \Big\} + \exp\left( -\frac{n(1-1/a)^2}
    {2\majorantKurtxi} \right) \\
    &\leq \Probsousthetan\left( y > x\sqrt{\frac{1}{a} - \frac{y^2}{n}} \right) + 2\Big\{ \Phi\big(-y\big) + \delta_n \Big\} + \exp\left( -\frac{n(1-1/a)^2}
    {2\majorantKurtxi} \right).
\end{align*}
We can pick $x = y/\sqrt{\frac{1}{a} - \frac{y^2}{n}}$, which makes the probability on the right-hand side of the last equation equal to 0. There remains to find $y>0$ such that
\begin{equation*}
    2\Big\{ \Phi\big(-y\big) + \delta_n \Big\} + \exp\left( -\frac{n(1-1/a)^2}
    {2\majorantKurtxi} \right) = \alpha.
\end{equation*}
We obtain
\begin{equation*}
    y^* = \QuantileGaussAt{1 - \alpha/2 + \delta_n + \nuVar/2}.
\end{equation*}
We remark that under the maintained condition $1 - \dfrac{\alpha}{2} + \delta_n + \dfrac{\nuVar}{2} < \Phi(\sqrt{n/a})$, $y^*$ is well-defined and satisfies $1/a - (y^*)^2/n > 0$. Setting $x^* := y^*/\sqrt{\frac{1}{a} - \frac{(y^*)^2}{n}}$, we conclude
\begin{equation*}
    \Probsousthetan\left( \sqrt{n}\left| \overline{\xi}_n - \theta_1 \right| > \sigmanhat x^* \right) \leq \alpha
\end{equation*}
which is what we want. $\Box$

\subsection{Proof of Proposition~\ref{prop:mean_CI_BE-Edg_asymp_sigmaunknown}}
\label{proof:prop:mean_CI_BE-Edg_asymp_sigmaunknown}

\noindent
\textbf{Proof of the pointwise result for \(\CIsigmanhat\).} Our goal is to show
\begin{equation}
    \label{eq:pointwise_conv_sigma_hat}
    \lim_{n\to+\infty}\left| \Probsousthetan\Big( \theta_1 \in \CIsigmanhat \Big) - (1-\alpha) \right| = 0 \quad \forall \theta\in\Theta.
\end{equation}

We consider a fixed but arbitrary $\theta\in\Theta$ for the rest of the proof. Given the assumptions on $\delta_n$ and $a_n = 1 + b_n$, $\delta_n + \nuVar/2$ decreases to 0 deterministically.
As a result, there exists $\underline{n}_\theta$ such that for every $n \geq \underline{n}_\theta$ the following holds almost surely
\begin{equation*}
    \CIsigmanhat = \left[ \overline{\xi}_n \pm
        \dfrac{\sigmanhat}{\sqrt{n}} \Cn
        \QuantileGaussAt{1 - \dfrac{\alpha}{2} + \delta_n +  \dfrac{\nuVar}{2}}
        \right].
\end{equation*}

Given this observation, we can write that Equation~\eqref{eq:pointwise_conv_sigma_hat} is equivalent to
\begin{equation*}
    \lim_{n\to+\infty}\left| \Probsousthetan\left( \frac{\sqrt{n}\left| \overline{\xi}_n - \theta_1 \right|}{\sigmanhat} 
    \leq 
    C_n\QuantileGaussAtBig{1 - \frac{\alpha}{2} + \delta_n + \dfrac{\nuVar}{2} } \right) - (1-\alpha) \right| = 0,
\end{equation*}
or
\begin{equation*}
    \lim_{n\to+\infty}\left| \Probsousthetan\left( \left|q_n\frac{\sqrt{n} (\overline{\xi}_n - \theta_1)}{\sigmanhat}\right|
    \leq 
    \QuantileGaussAtBig{1 - \frac{\alpha}{2}}\right) - (1-\alpha) \right| = 0,
\end{equation*}
with $q_n := \QuantileGaussAtBig{1 - \frac{\alpha}{2}} / \Big(C_n\QuantileGaussAtBig{1 - \frac{\alpha}{2} +\delta_n + \dfrac{\nuVar}{2}}\Big)$.

\medskip

The rest of the proof is straightforward combining Slutsky's theorem, $\sigma(\theta)^2>0$, the CLT and the continuous mapping in distribution applied to the absolute value function (as done for \(\CIsigmaknown\)).

\bigskip

\noindent
\textbf{Proof of the uniform result for \(\CIsigmanhat\).}
Given the conditions imposed on $\delta_n$ and $a_n$, the condition
\begin{equation*}
    1 - \dfrac{\alpha}{2} + \delta_n + \dfrac{\nuVar}{2} < \Phi(\sqrt{n/a_n})
\end{equation*}
is satisfied for every $n \geq n_0$, for some $n_0$ that is valid for every $\theta\in\ThetaBEE$. As a result, we can claim that for every $\alpha\in(0,1)$ and every $n$ large enough, uniformly in $\theta\in\ThetaBEE$,
\begin{equation*}
    \Probsousthetan\Big( \theta_1 \in \CIsigmanhat \Big) = \Probsousthetan\left( \sqrt{n}\left| \overline{\xi}_n - \theta_1 \right| \leq \sigmanhat C_n\QuantileGaussAtBig{1 - \dfrac{\alpha}{2} + \delta_n + \dfrac{\nuVar}{2}} \right).
\end{equation*}
Proposition~\ref{prop:mean_CI_BE-Edg_sigmaunknown} also yields that for every $\alpha \in (0,1)$ and $n \geq n_0$ (uniformly in $\theta \in \ThetaBEE$)
\begin{equation*}
    \Probsousthetan\left( \sqrt{n}\left| \overline{\xi}_n - \theta_1 \right| \leq \sigmanhat C_n\QuantileGaussAtBig{1 - \dfrac{\alpha}{2} + \delta_n + \dfrac{\nuVar}{2}} \right) \geq 1-\alpha.
\end{equation*}

It is thus sufficient to prove the following
\begin{equation}
    \label{eq:unif_conv_sigma_unknown}
    \limsup_{n\to+\infty}\sup_{\theta\in\ThetaBEE} \Probsousthetan\left( \sqrt{n}\left| \overline{\xi}_n - \theta_1 \right|
    \leq \sigmanhat x_n \right) \leq 1 - \alpha,
\end{equation}
where $x_n := C_n\QuantileGaussAtBig{1 - \dfrac{\alpha}{2}
+ \delta_n + \dfrac{\nuVar}{2}}$. Let $\sigmanhat_0 := \sqrt{\frac{1}{n}\sum_{i=1}^n(\xi_i-\theta_1)^2}$. We can write
\begin{align}
    \label{eq:prob_up_bound_asymp_unif_exact_1}
    \Probsousthetan\left( \sqrt{n}\left| \overline{\xi}_n - \theta_1 \right|
    \leq \sigmanhat x_n \right) & \leq \Probsousthetan\left( \underbrace{ \left\{\sqrt{n}\left| \overline{\xi}_n - \theta_1 \right|
    \leq \sigmanhat x_n \right\} \cap \left\{ \left|\frac{\sigmazeronhat^2}{\sigma(\theta)^2} - 1\right| \leq \frac{\sqrt{K-1}}{n^{1/4}} \right\} }_{=: \mathcal{A}} \right) \nonumber \\
    & + \Probsousthetan\left( \left|\frac{\sigmazeronhat^2}{\sigma(\theta)^2} - 1\right| > \frac{\sqrt{K-1}}{n^{1/4}} \right).
\end{align}
As soon as $n > (K-1)^2$, $\sqrt{K-1}/n^{1/4} <1$, which implies that $\sigmazeronhat^2/\sigma^2$ and then $\sigmazeronhat^2$ is strictly positive on $\mathcal{A}$. As a result, $T_n := \sqrt{n}\left| \overline{\xi}_n - \theta_1 \right|/\sigmazeronhat$ is well-defined on $\mathcal{A}$ and Lemma~\ref{lem:deterministic_equality} ensures on the same event that
\begin{align*}
    \left\{ \sqrt{n}\left| \overline{\xi}_n - \theta_1 \right| \leq \sigmanhat x_n\right\} & = \left\{ \sqrt{n}\left| \overline{\xi}_n - \theta_1 \right|/\sigmazeronhat \leq (\sigmanhat/\sigmazeronhat) x_n\right\} \\
    & = \left\{ \left| T_n \right| \leq \sqrt{1-\frac{T_n^2}{n}} x_n\right\} \\
    & = \left\{ |T_n| \leq x_n\left(1+\frac{x_n^2}{n}\right)^{-1/2} \right\} \\
    & = \left\{ \sqrt{n}\left| \overline{\xi}_n - \theta_1 \right| / \sigma(\theta) \leq \frac{\sigmazeronhat}{\sigma(\theta)}x_n\left(1+\frac{x_n^2}{n}\right)^{-1/2} \right\}.
\end{align*}
Using the fact that $\sigmazeronhat^2/\sigma(\theta)^2 \leq 1 + \sqrt{K-1}/n^{1/4}$ on $\mathcal{A}$ together with~\eqref{eq:prob_up_bound_asymp_unif_exact_1}, we can write
\begin{align}
    \label{eq:prob_up_bound_asymp_unif_exact_2}
    \Probsousthetan\left( \sqrt{n}\left| \overline{\xi}_n - \theta_1 \right|
    \leq \sigmanhat x_n \right) & \leq \Probsousthetan\left( \sqrt{n}\left| \overline{\xi}_n - \theta_1 \right| / \sigma(\theta) \leq \left(1 + \frac{\sqrt{K-1}}{n^{1/4}} \right) x_n/\sqrt{(1+x_n^2/n)} \right) \nonumber \\
    & + \Probsousthetan\left( \left|\frac{\sigmazeronhat^2}{\sigma(\theta)^2} - 1\right| > \frac{\sqrt{K-1}}{n^{1/4}} \right).
\end{align}
Let $\widetilde{x}_n := \left(1 + \sqrt{K-1}/n^{1/4} \right) x_n/\sqrt{(1+x_n^2/n)}$. Application of Lemma~\ref{lemma:bound_cdf_alpha_Phi} with $A := \sqrt{n}(\overline{\xi}_n - \theta_1)$ and $\lambda := \E[(\xi-\theta_1)^3]/\sigma(\theta)^3$ ensures
\begin{align}
    \label{eq:be_edg_ineq}
    \left| \Probsousthetan\left( \sqrt{n}\left| \overline{\xi}_n - \theta_1 \right| / \sigma(\theta) \leq \widetilde{x}_n \right) - (1-\alpha) \right| & \leq 2 \left\{ \DeltanE \wedge \DeltanB \right\} + \left| 2\Phi(-\widetilde{x}_n) - \alpha \right| \nonumber \\
    & \leq 2\delta_n + \left| 2\Phi(-\widetilde{x}_n) - \alpha \right|.
\end{align}
Combining~\eqref{eq:prob_up_bound_asymp_unif_exact_2} and~\eqref{eq:be_edg_ineq}, and resorting to Markov's inequality, we arrive at
\begin{align*}
    \Probsousthetan\left( \sqrt{n}\left| \overline{\xi}_n - \theta_1 \right|
    \leq \sigmanhat x_n \right) & \leq 1 - \alpha + 2 \delta_n + \left| 2\Phi(-\widetilde{x}_n) - \alpha \right| + \Probsousthetan\left( \left|\frac{\sigmazeronhat^2}{\sigma(\theta)^2} - 1\right| > \frac{\sqrt{K-1}}{n^{1/4}} \right) \\
    & \leq 1 - \alpha + 2 \delta_n + \left| 2\Phi(-\widetilde{x}_n) - \alpha \right| + \frac{\sqrt{n}}{K-1}\times \frac{\Varsoustheta\left[(\xi-\theta_1)^2\right]}{\sigma(\theta)^4n} \\
    & \leq 1 - \alpha + 2 \delta_n + \left| 2\Phi(-\widetilde{x}_n) - \alpha \right| + n^{-1/2}.
\end{align*}
By definition of $\widetilde{x}_n$ and continuity of $\Phi(\cdot)$, the term $\left| 2\Phi(-\widetilde{x}_n) - \alpha \right|$ converges to 0 (and does not depend on any specific $n$). $\delta_n$ goes to zero by assumption as well. Taking the supremum over $\theta \in \ThetaBEE$ on both sides of the previous equation and taking $\limsup$ in $n$ on both sides again yields~\eqref{eq:unif_conv_sigma_unknown} and allows us to conclude. $\Box$


\section{Proof of results in Section~\ref{sec:lin_mod_exo}}
\label{appendix:sec:proof_section4_OLS}

\subsection{Proof of Theorem~\ref{thm:exogenous_case_asymptotic_pointwise_exactness} (pointwise asymptotic exactness)}
\label{proof:thm:exogenous_case_asymptotic_pointwise_exactness}

Let $\theta$ be some arbitrary but fixed element in $\Theta$. The proof is divided in two steps.

\paragraph{Step 1.}
First, we show that the relevant regime is the Edgeworth one asymptotically (as opposed to the \(\Rb\) regime).

We start by proving that \(\nuEdg\) tends to~0.
Because, by assumption, \(a_n = 1 + b_n\) with \(b_n = o(1)\) and \(b_n \sqrt{n} \to +\infty\), we have 
\begin{equation*}
    \sqrt{n} \left(1 - \frac{1}{a_n}\right) \to +\infty.
\end{equation*}
Furthermore, we also assume that $\omega_n \to 0$.
Combining those two limits, we obtain 
\begin{equation}
\label{eq:proof_exo_pointwise_asymptotic_exactness_nuExp_to_0}
\frac{\omega_n \alpha
    + \exp\!\big(-n(1-1/a_n)^2/(2 \majorantKurtxi)\big)}{2} 
    \to 0.
\end{equation}
We have $\delta_n = o(1)$ by assumption as well. 
From this and \eqref{eq:proof_exo_pointwise_asymptotic_exactness_nuExp_to_0}, we conclude that
\begin{equation*}
     \nuEdg := \frac{\omega_n \alpha
    + \exp\!\big(-n(1-1/a_n)^2/(2 \majorantKurtxi)\big)}{2}
    + \delta_n
    \to
    0.
\end{equation*}

\medskip

Note that the former limit is uniform in~\(\theta\) since $\nuEdg$ depends only on $n$, $\alpha$, and the constants in Assumption~\ref{hyp:subset_exo_non-asymptotic_validity}.
Therefore, for $n$ large enough, \(\nuEdg < \alpha/2\) and \(n > 2 \majorantEXXprimeconcentration / (\omega_n \alpha)\), and we have to be in the ``Edg'' regime irrespective of the distribution of the data, meaning that
\begin{equation*}
    \Probsousthetan\Big(
    \uu' \betazero \in \CIEdgu
    \Big)
    = \Probsousthetan\!\left(\uu' \betazero \in
    \left[ \uu' \widehat{\beta}
    \pm \dfrac{\QnEdg}{\sqrt{n}} \sqrt{u' \Vhat u
    + \|u\|^2 \RnVar(\omega_n\alpha/2)} \right]
    \right)
\end{equation*}

\paragraph{Step 2.}
We now prove that the right-hand side of the previous equation converges to~\(1 -\alpha\) when \(n\) goes to~\(+\infty\).
To do so, we show that
\begin{equation}
\label{eq:convN01_for_pointwise_asymptotic_exactness_of_Edg_regime}
    \frac{\QuantileGaussAt{1 - \alpha/2}}{\QnEdg} 
    \frac{\sqrt{n} \uu' (\betahat - \betazero)}{\sqrt{\uu' \Vhat \uu
    + \|u\|^2 \RnVar(\omega_n \alpha / 2)}}
    \convD \Normale(0,1).
\end{equation}

\medskip 

Given the definition of $\Theta$, classical results (see \cite{vanderVaart2000}, Chapters 2, 3 and 5 for instance) allow us to claim that
\begin{equation}
\label{eq:convN01_for_Slutsky}
    \frac{\sqrt{n} \uu' (\betahat - \betazero)}{\sqrt{\uu' \Vhat \uu}}
    \convD
    \Normale(0,1).
\end{equation}

\medskip

Under the assumptions $n\omega_n^{3/2} \to +\infty$, $\E[\|X\eps\|^2]<+\infty$ and $\E[\|X\|^4]<+\infty$, we have 
\begin{equation*}
    \RnVar(\omega_n \alpha / 2) \convP 0 
    \quad \text{and} \quad \RnLin(\omega_n\alpha/2) \convP 0.
\end{equation*}
In addition, we also have \(\uu' \Vhat \uu \convP \uu' V \uu > 0\).
Those three results imply that \(\nuApprox \convP 0\).

\medskip

By the convergence of \(\RnVar(\omega_n \alpha / 2)\) and the Continuous Mapping Theorem, we also have
\begin{equation}
\label{eq:ratio_sqrt_uuVhatuu_for_Slutsky}
    \frac{\sqrt{\uu' \Vhat \uu + \|u\|^2 \RnVar(\omega_n \alpha / 2)}}{\sqrt{\uu' \Vhat \uu}}
    \convP 
    1.
\end{equation}

\medskip 

From Step~1, we know that \(\nuEdg \to 0\).
Combined with the assumption \(a_n = 1 + o(1)\), we obtain
\begin{equation}
\label{eq:LnEdg_convP_QuantileGaussian_for_Slutsky}
    \QnEdg \convP \QuantileGaussAt{1 - \alpha/2}.
\end{equation}

\medskip

Equations~\eqref{eq:convN01_for_Slutsky}, \eqref{eq:ratio_sqrt_uuVhatuu_for_Slutsky} and~\eqref{eq:LnEdg_convP_QuantileGaussian_for_Slutsky} and Slutsky's lemma give the desired result of Equation~\eqref{eq:convN01_for_pointwise_asymptotic_exactness_of_Edg_regime}.
The latter implies that the probability
\begin{equation*}
\Probsousthetan\!\left( 
- \QuantileGaussAt{1-\alpha/2}
\leq 
\frac{\QuantileGaussAt{1 - \alpha/2}}{\QnEdg} 
    \frac{\sqrt{n} \uu' (\betahat - \betazero)}{\sqrt{\uu' \Vhat \uu + \|u\|^2 \RnVar(\omega_n\alpha/2)}}
\leq 
\QuantileGaussAt{1-\alpha/2}
\right)
\end{equation*}
converges to~\(1 - \alpha\).
Finally, the event considered in this probability is equivalent to
\begin{equation*}
    \left\{ - \frac{\QnEdg }{\sqrt{n}}
    \leq 
    \frac{\uu' (\betahat - \betazero)}{\sqrt{\uu' \Vhat \uu + \|u\|^2 \RnVar(\omega_n\alpha/2)}}
    \leq 
    \frac{\QnEdg }{\sqrt{n}} \right\},
\end{equation*}
also equivalent to the event \(\{ \uu'\betazero \in \CIEdgu\}\),
which concludes the proof.
\(\Box\)

\subsection{Proof of Theorem~\ref{thm:exogenous_case_non-asymptotic_validity} (non-asymptotic validity)}
\label{proof:thm:exogenous_case_non-asymptotic_validity}

For fixed \(\alpha\), \(n\), \(\omega\) and \(a\),
we remark that exactly one case out of the two that intervene in definition of \(\CIEdgu\) arises.
Furthermore, the conditions defining these cases are deterministic.
As a consequence, we can consider each case separately and check that the coverage of \(\uu' \betazero\) is at least~\(1 - \alpha\).

For the first regime, when \(\CIEdgu = \Rb\), it is obvious.
Otherwise, we have 
\begin{equation*}
    \CIEdgu = \bigg[ \uu' \widehat{\beta}
    \pm \frac{\QnEdg}{\sqrt{n}} \sqrt{u' \Vhat u
    + \|u\|^2 \RnVar(\omega\alpha/2)} \bigg],
\end{equation*}
and the coverage is guaranteed by Lemma~\ref{lem:OLS_Edg} stated and proven below.
$\Box$

\begin{lemma}
\label{lem:OLS_Edg}
    For every \(\alpha \in (0,1)\), $a > 1$, $n \geq 1$ and $\omega \in (0,1)$, 
    if \(n \omega > 2\majorantEXXprimeconcentration / \alpha \) and \(\nuEdg < \alpha/2\),
    we have, for every $\theta \in \ThetaBEE$,
    \begin{align*}
        \Probsousthetan
        \Big( |\uu' \widehat{\beta} - \uu' \beta_{0}|
        \leq \QnEdg n^{-1/2} \sqrt{u' \Vhat u + \norme{\uu}^2\RnVar(\omega\alpha/2)} \Big) 
        \geq 1-\alpha.
    \end{align*}
\end{lemma}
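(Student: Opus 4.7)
The strategy is to reduce the OLS coverage event to a self-normalized sum event involving the \iid{} influence variables $\xi_i := \uu' \E[XX']^{-1} X_i \eps_i$, and then to reuse the analysis of Section~\ref{ssec:unknown_variance_mean} on the expectation of a random variable with unknown variance. With the choice $\gamma := \omega \alpha / 2$, the first step is to introduce the ``good event'' $E$ on which both the linearization bound $|\sqrt{n}\,\uu'(\betahat - \betazero) - n^{-1/2}\sumiunn \xi_i| \leq \RnLin(\gamma)$ and the variance approximation $|\uu' \Vhat \uu - n^{-1}\sumiunn \xi_i^2| \leq \|\uu\|^2 \RnVar(\gamma)$ simultaneously hold. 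The hypothesis $n\omega > 2\majorantEXXprimeconcentration/\alpha$ ensures $\widetilde{\gamma} = \sqrt{\majorantEXXprimeconcentration/(n\gamma)} < 1$, so both $\RnLin(\gamma)$ and $\RnVar(\gamma)$ are finite and positive. A careful ``common event'' union bound over the two underlying concentration inequalities for $n^{-1}\sumiunn X_iX_i'$ (via Assumption~\ref{hyp:subset_exo_non-asymptotic_validity}(i)--(ii)) and for $n^{-1}\sumiunn X_i\eps_i$ (via Assumption~\ref{hyp:subset_exo_non-asymptotic_validity}(iii)) should then deliver $\Probsousthetan(E^c) \leq \omega\alpha$.

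Next, set $S_\xi := n^{-1/2}\sumiunn \xi_i$, $\widehat{\sigma}_\xi^2 := n^{-1}\sumiunn \xi_i^2$, $\sigma_\xi^2 := \E[\xi^2]$, and $q := \QuantileGaussAt{1 - \alpha/2 + \nuEdg}$, which is well-defined thanks to the assumption $\nuEdg < \alpha/2$. Unfolding the definition of $\QnEdg$ and multiplying by $\sqrt{\uu'\Vhat\uu + \|\uu\|^2\RnVar(\gamma)}$ yields the identity
\begin{equation*}
\QnEdg \sqrt{\uu' \Vhat \uu + \|\uu\|^2 \RnVar(\gamma)}
= \sqrt{a}\, q\, \sqrt{\uu' \Vhat \uu + \|\uu\|^2 \RnVar(\gamma)} + \RnLin(\gamma),
\end{equation*}
where $\nuApprox$ was designed precisely so that the two $\RnLin(\gamma)$ contributions align. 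On $E$, the triangle inequality together with the linearization bound then ensures that any violation of the coverage event forces $|S_\xi| > \sqrt{a}\,q\,\sqrt{\uu'\Vhat\uu + \|\uu\|^2\RnVar(\gamma)}$, and the variance bound gives $\uu'\Vhat\uu + \|\uu\|^2\RnVar(\gamma) \geq \widehat{\sigma}_\xi^2$. The non-coverage event restricted to $E$ is thus contained in the self-normalized event $\{|S_\xi/\widehat{\sigma}_\xi| > \sqrt{a}\, q\}$.

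The last step controls that self-normalized event exactly as in the proof of Proposition~\ref{prop:mean_CI_BE-Edg_sigmaunknown}. Splitting according to whether $\widehat{\sigma}_\xi^2 \geq \sigma_\xi^2/a$, Theorem~2.19 of \cite{pena2008self} (whose moment hypothesis is supplied by Assumption~\ref{hyp:subset_exo_non-asymptotic_validity}(iv)) bounds the complementary event by $\exp(-n(1-1/a)^2/(2\majorantKurtxi))$, while on $\{\widehat{\sigma}_\xi^2 \geq \sigma_\xi^2/a\}$ the event $\{|S_\xi/\widehat{\sigma}_\xi| > \sqrt{a}\,q\}$ implies $\{|S_\xi/\sigma_\xi| > q\}$. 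The latter is the tail of a standardized \iid{} sum with kurtosis at most $\majorantKurtxi$, handled by Lemma~\ref{lem:edg_exp}(i) via the minimum of the Berry--Esseen and Edgeworth bounds, yielding $\Probsousthetan(|S_\xi/\sigma_\xi| > q) \leq 2\Phi(-q) + 2\delta_n = \alpha - 2\nuEdg + 2\delta_n$. Summing these three pieces telescopes by the very definition of $\nuEdg$ into $\alpha - \omega\alpha$, which combined with $\Probsousthetan(E^c) \leq \omega\alpha$ gives the desired bound~$\alpha$.

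The main obstacle will be establishing the first step at the sharp constant. A naive accounting in which each of the two bounds defining $E$ is stated on its own event of probability $1 - 2\gamma = 1 - \omega\alpha$ would force $\Probsousthetan(E^c) \leq 2\omega\alpha$ and blow up the final bound to $\alpha(1+\omega)$. The careful route is to verify that both the linearization and the variance bounds can be derived from only two underlying tail events---one on $\|n^{-1}\sumiunn X_iX_i' - \E[XX']\|$ and one on $\|n^{-1}\sumiunn X_i\eps_i\|$---each of probability at least $1 - \gamma$ by Markov's inequality applied with Assumption~\ref{hyp:subset_exo_non-asymptotic_validity}(ii) and (iii) respectively, so that the shared good event $E$ still satisfies $\Probsousthetan(E^c) \leq 2\gamma = \omega\alpha$.
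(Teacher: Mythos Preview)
Your proposal is correct and follows essentially the same route as the paper's proof: both construct a single high-probability event $\mathcal{A}$ (your $E$) as the intersection of two concentration events---one on $n^{-1}\sumiunn \widetilde{X}_i\widetilde{X}_i'$ and one on $n^{-1}\sumiunn \widetilde{X}_i\eps_i$---with $\Probsousthetan(\mathcal{A}^c)\leq 2\gamma=\omega\alpha$, derive both the linearization and variance bounds on that common event, reduce to a self-normalized sum, and then apply the Berry--Esseen/Edgeworth bound together with the de~la~Pe\~na et~al.\ inequality before telescoping via the definition of $\nuEdg$. The only cosmetic differences are that the paper packages your last step as a single call to Lemma~\ref{lem:edg_exp}(ii) (rather than (i) plus the variance-ratio split), and that the paper introduces a free parameter $y$ and solves for $y^*$ a posteriori whereas you plug in $q=\QuantileGaussAt{1-\alpha/2+\nuEdg}$ from the start; your anticipation of the ``shared good event'' subtlety is exactly the point of the paper's Step~1a.
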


\noindent
\textit{Proof of Lemma~\ref{lem:OLS_Edg}.} Let us define $\gamma := \omega \alpha / 2$.
We want to show for every $\theta\in\ThetaBEE$
\begin{equation}
\label{eq:OLS_final_concentration_inequality}
    \Probsousthetan
    \Bigg( |\uu' \widehat{\beta} - \uu' \beta_{0}| >  \QnEdg n^{-1/2} \sqrt{u' \Vhat u + \|u\|^2 \RnVar(\gamma)} \Bigg) \leq \alpha.
\end{equation}

\medskip

The proof is divided in three steps. 
In the first two ones, we derive two key intermediary inequalities that hold with high probability.
In the final one, we combine those building bricks to obtain our result.


\paragraph{Step 1. Control of variance and linearization.} 

In this first step, we determine a high probability event (Step~1a.) on which we are able to control the residual term coming from the linearization of \(\uu' \betahat\) (Step~1b.) and upper bound the oracle variance by a feasible quantity (Step~1c.). In this first step, we resort to a number of shorthands to clarify exposition: we let $\widetilde{X} := \E[XX']^{-1/2}X$, $S := n^{-1} \sum_{i=1}^n X_i X_i'$ and $\widetilde{S} := n^{-1} \sum_{i=1}^n \widetilde{X}_i \widetilde{X}_i'$.

\paragraph{Step 1a. Finding an event $\eventA$ of high probability on which Steps 1b and 1c hold.}

Combining Assumption~\ref{hyp:subset_exo_non-asymptotic_validity}~(ii) and Lemma~\ref{lem:concentr_sq_mat} with $A_i := \widetilde{X}_i \widetilde{X}'_i$, for $i=1,\dots,n$, we obtain
\begin{equation}
    \Probsousthetan\Bigg( \underbrace{\norme{\frac{1}{n} \sum_{i=1}^n \widetilde{X}_i \widetilde{X}_i' - \Id_p}
    \leq \sqrt{\frac{\majorantEXXprimeconcentration}{n\gamma}}}_{=: \, \eventA_1} \Bigg)
    \geq 1 - \gamma.
    \label{eq:event_A1}
\end{equation}
On the event~\(\eventA_1\), thanks to Lemma~\ref{lem:lower_bound_mineigen_sym_mat}, we have
\begin{equation*}
    \minvp(\widetilde{S})
    \geq 1 - \sqrt{\frac{\majorantEXXprimeconcentration}{n\gamma}}.
\end{equation*}
Thanks to the constraint on $\omega$, which ensures $n \omega > 2 \majorantEXXprimeconcentration / \alpha$, we get on~\(\eventA_1\)
\begin{equation}
    \minvp(\widetilde{S})
    \geq 
    1 - \sqrt{\frac{\majorantEXXprimeconcentration}{n\gamma}}
    >
    0,
    \label{eq:event_A1_ter}
\end{equation}
which implies that $\widetilde{S}$ is invertible. We also remark that $S$ can be rewritten as $\E[XX']^{1/2}\widetilde{S}\E[XX']^{1/2}$. This, invertibility of $\widetilde{S}$ and Assumption~\ref{hyp:subset_exo_non-asymptotic_validity}.(i) ensure $S$ is invertible as well on $\mathcal{A}_1$. The estimator $\widehat{\beta}$ is well-defined on the same event.

\medskip 

In parallel, Assumption~\ref{hyp:subset_exo_non-asymptotic_validity}~(iii) and  Lemma~\ref{lem:concentr_inf_function} provide the following inequality
\begin{equation}
    \Probsousthetan\bigg( \underbrace{\norme{\frac{1}{n}\sum_{i=1}^n \widetilde{X}_i\eps_i}
    \leq \sqrt{\frac{2}{n}}
    \left( \frac{\majorantEnormeXepsquatre}{\gamma} \right)^{1/4}}_{=: \, \eventA_2} \bigg)
    \geq 1-\gamma.
    \label{eq:event_A2}
\end{equation}

\medskip 

Finally, we define $\eventA := \eventA_1 \cap \eventA_2$, which satisfies $\Prob(\eventA) \geq 1 - 2 \gamma$ thanks to  Equations~\eqref{eq:event_A1} and~\eqref{eq:event_A2} and the law of total probability.
Note that on $\eventA$, the estimator $\widehat{\beta}$ is well-defined as well.

\paragraph{Step 1b. Linearization.}
The goal in this step is to formalize the approximation
$\uu' \widehat{\beta} \approx \uu' \beta_{0} + \MeanXi$ where $\MeanXi$ has the following expression
\begin{align*}
    \MeanXi 
    := \frac{1}{n} \sumiunn \xi_i
    = \frac{1}{n} \sum_{i=1}^n
    \uu' \E[XX']^{-1} X_i\eps_i.
\end{align*}
%
%
On the event $\eventA$, we wish to show that
\begin{equation}
    \sqrt{n} \, \big|\uu' \widehat{\beta} - \uu' \beta_{0} - \MeanXi \big|
    \leq 
    \frac{\sqrt{2}\|u\|\minlambdaEXXt^{-1/2}}{1-\sqrt{\majorantEXXprimeconcentration/(n\gamma)}}\sqrt{\frac{\majorantEXXprimeconcentration}{n\gamma}}\left(\frac{\majorantEnormeXepsquatre}{\gamma}\right)^{1/4}
    =: \RnLin(\gamma).
    \label{eq:linearization_error}
\end{equation}

\medskip

By definition of $\betahat$ and $\widetilde{X}_i$, we can write
\begin{equation*}
    \betahat - \betazero = S^{-1}\frac{1}{n}\sum_{i=1}^nX_i\eps_i = \E[XX']^{-1/2}\widetilde{S}^{-1}\frac{1}{n}\sum_{i=1}^n\widetilde{X}_i\eps_i.
\end{equation*}

This and the identity $A^{-1}-\Id_p = A^{-1}(\Id_p-A)$ yield
\begin{align}
    \norme{ \widehat{\beta} - \beta_{0} - \frac{1}{n} \sum_{i=1}^n
    \E[XX']^{-1} X_i\eps_i }
    & = \norme{\E[XX']^{-1/2}\left(\widetilde{S}^{-1}-\Id_p\right)\frac{1}{n}\sum_{i=1}^n\widetilde{X}_i\eps_i} \nonumber \\
    & = \norme{\E[XX']^{-1/2}\widetilde{S}^{-1}\left(\Id_p - \widetilde{S}\right)\frac{1}{n}\sum_{i=1}^n\widetilde{X}_i\eps_i}.
    \label{eq:bound_beta_lin_1}
\end{align}

From Equation~\eqref{eq:bound_beta_lin_1}, we obtain using the properties of the operator norm (in particular $||A^{-1}|| = \minvp(A)^{-1}$), Assumption~\ref{hyp:subset_exo_non-asymptotic_validity}(i) and the definition of the event $\mathcal{A}$, 
\begin{align}
    \norme{ \widehat{\beta} - \beta_{0} - \frac{1}{n} \sum_{i=1}^n
    \E[XX']^{-1} X_i\eps_i } & \leq \norme{\E[XX']^{-1/2}} \norme{\widetilde{S}^{-1}} \norme{ \widetilde{S} - \Id_p } \norme{ \frac{1}{n}\sum_{i=1}^n\widetilde{X}_i\eps_i } \nonumber \\
    & \leq \frac{\minlambdaEXXt^{-1/2}}{1-\sqrt{\majorantEXXprimeconcentration/(n\gamma)}}\sqrt{\frac{\majorantEXXprimeconcentration}{n\gamma}}\sqrt{\frac{2}{n}}\left(\frac{\majorantEnormeXepsquatre}{\gamma}\right)^{1/4}.
    \label{eq:bound_beta_lin_2}
\end{align}
To get~\eqref{eq:linearization_error}, we remark that by the Cauchy-Schwarz inequality
\begin{align*}
    \sqrt{n} \, \big|\uu' \widehat{\beta} - \uu' \beta_{0} - \MeanXi \big| \leq \sqrt{n} \|u\| \, \norme{ \widehat{\beta} - \beta_{0} - \frac{1}{n} \sum_{i=1}^n
    \E[XX']^{-1} X_i\eps_i }
\end{align*}
and apply~\eqref{eq:bound_beta_lin_2}.

\medskip

As a byproduct of~\eqref{eq:linearization_error}, we obtain a concentration inequality on $\norme{\betahat - \beta_0}$ valid on $\mathcal{A}$ which proves useful in Step 1.c. By the triangle and Cauchy-Schwarz's inequalities and the definition of $\mathcal{A}$, we get
\begin{align}
    \norme{\betahat - \beta_0} 
    & \leq \norme{ \widehat{\beta} - \beta_{0} - \frac{1}{n} \sum_{i=1}^n
    \E[XX']^{-1} X_i\eps_i } + \norme{ \E[XX']^{-1/2} \frac{1}{n} \sum_{i=1}^n
     \widetilde{X}_i\eps_i } \nonumber \\
     & \leq \norme{ \widehat{\beta} - \beta_{0} - \frac{1}{n} \sum_{i=1}^n
    \E[XX']^{-1} X_i\eps_i } + \norme{ \E[XX']^{-1/2} } \, \norme{ \frac{1}{n} \sum_{i=1}^n
     \widetilde{X}_i\eps_i } \nonumber \\
     & \leq \minlambdaEXXt^{-1/2} \left\{ \frac{\sqrt{\majorantEXXprimeconcentration/(n\gamma)}}{1-\sqrt{\majorantEXXprimeconcentration/(n\gamma)}} + 1 \right\} \sqrt{\frac{2}{n}}\left(\frac{\majorantEnormeXepsquatre}{\gamma}\right)^{1/4}.
     \label{eq:concentr_ineq_beta}
\end{align}
 


\paragraph{Step 1c. Bound on the distance to the oracle variance $\uu'\Voracle\uu$.}
In this step, we still reason on the event $\eventA$ on which we prove
\begin{align}
    |\uu'\Voracle\uu - \uu' \Vhat \uu| \leq  \|u\|^2 \RnVar(\gamma)
    \label{eq:bound_Vjoracle}
\end{align}
for some $\RnVar(\gamma)$ to be specified later and
\begin{align*}
    \Voracle := \E[XX']^{-1}\left(\frac{1}{n} \sum_{i=1}^nX_iX_i' \eps_i^2 \right)\E[XX']^{-1}.
\end{align*}
Note that 
$\uu' \Voracle \uu = \frac{1}{n}\sum_{i=1}^n\langle \uu , \E[XX']^{-1}X_i\eps_i \rangle^2.$
Adding and subtracting $\epshat_i$ and then expanding the square yields
$\uu' \Voracle \uu = V_1 + V_2 + V_3,$
where
\begin{align*}
    V_1 &:= \frac{1}{n}\sum_{i=1}^n\langle \uu , \E[XX']^{-1}X_i\epshat_i \rangle^2 \\
    V_2 &:= \frac{1}{n}\sum_{i=1}^n\langle \uu , \E[XX']^{-1}X_i(\eps_i-\epshat_i) \rangle^2 \\
    V_3 &:= \frac{2}{n}\sum_{i=1}^n\langle \uu , \E[XX']^{-1}X_i\epshat_i \rangle\langle \uu , \E[XX']^{-1}X_i(\eps_i-\epshat_i) \rangle.
\end{align*}
By Cauchy-Schwarz and using \(\eps_i - \epshat_i = X_i'(\betahat - \betazero)\), 
as well as Equation~\eqref{eq:concentr_ineq_beta} and Assumption~\ref{hyp:subset_exo_non-asymptotic_validity}~(i),
we have
\begin{align*}
    V_2 &\leq \|\uu\|^2 \|\E[XX']^{-1}\|^2 \|\widehat{\beta}-\beta_0\|^2
    \times 
    \frac{1}{n}\sum_{i=1}^n\|X_iX_i'\|^2 \\
    & \leq \|\uu\|^2 \frac{1}{\minlambdaEXXt^2} \left( \minlambdaEXXt^{-1/2} \left\{ \frac{\sqrt{\majorantEXXprimeconcentration/(n\gamma)}}{1-\sqrt{\majorantEXXprimeconcentration/(n\gamma)}} + 1 \right\} \sqrt{\frac{2}{n}}\left(\frac{\majorantEnormeXepsquatre}{\gamma}\right)^{1/4} \right)^2 
    \times 
    \frac{1}{n}\sum_{i=1}^n\|X_i\|^4 \\
    & \leq 
    \frac{2\|\uu\|^2}{n\minlambdaEXXt^3} \left( \frac{\sqrt{\majorantEXXprimeconcentration/(n\gamma)}}{1-\sqrt{\majorantEXXprimeconcentration/(n\gamma)}} + 1 \right)  ^2 \sqrt{\frac{\majorantEnormeXepsquatre}{\gamma}}
    \times 
    \frac{1}{n}\sum_{i=1}^n\|X_i\|^4
\end{align*}
and
\begin{align*}
    V_3 & \leq 2\|\uu\|^2 \|\E[XX']^{-1}\|^2 \|\widehat{\beta}-\beta_0\|
    \times 
    \frac{1}{n}\sum_{i=1}^n\|X_iX_i'\|\,\|X_i\epshat_i\| \\
    & \leq \frac{2\|\uu\|^2}{\minlambdaEXXt^{5/2}} 
    \left( \frac{\sqrt{\majorantEXXprimeconcentration/(n\gamma)}}{1-\sqrt{\majorantEXXprimeconcentration/(n\gamma)}} + 1 \right)\sqrt{\frac{2}{n}}\left( \frac{\majorantEnormeXepsquatre}{\gamma} \right)^{1/4}
    \times 
    \frac{1}{n}\sum_{i=1}^n\|X_i\|^2\,\|X_i\epshat_i\| \\
    & = 
    \frac{2\sqrt{2}\|\uu\|^2}{\minlambdaEXXt^{5/2} \sqrt{n}} \left( \frac{\sqrt{\majorantEXXprimeconcentration/(n\gamma)}}{1-\sqrt{\majorantEXXprimeconcentration/(n\gamma)}} + 1 \right)
    \left( \frac{\majorantEnormeXepsquatre}{\gamma} \right)^{1/4}
    \times 
    \frac{1}{n}\sum_{i=1}^n\|X_i\|^3\,|\epshat_i|.
\end{align*}

\medskip

Let us now focus on $V_1$.
We define $H_n := \E[XX']^{-1}-\left(\frac{1}{n}\sum_{i=1}^nX_iX_i'\right)^{-1}$.
Adding and substracting $(n^{-1} \sumiunn X_i X_i')^{-1}$ and expanding the square, we get
\begin{align*}
    V_1 &= \frac{1}{n}\sum_{i=1}^n\langle \uu , \E[XX']^{-1}X_i\epshat_i \rangle^2 = \uu'\Vhat\uu + V_4 + V_5,
\end{align*}
where
\begin{align*}
    V_4 &:= \frac{1}{n}\sum_{i=1}^n\langle \uu , H_n X_i\epshat_i \rangle^2, \\
    V_5 &:= \frac{2}{n}\sum_{i=1}^n\langle \uu , H_n X_i\epshat_i \rangle
    \bigg\langle \uu , \Big(\frac{1}{n}\sum_{j=1}^n X_j X_j'\Big)^{-1} X_i\epshat_i \bigg\rangle.
\end{align*}
We remark that 
\begin{equation*}
    H_n = \E[XX']^{-1/2}\left(\Id_p-\widetilde{S}^{-1}\right)\E[XX']^{-1/2} = \E[XX']^{-1/2}\left(\widetilde{S}-\Id_p\right)\widetilde{S}^{-1}\E[XX']^{-1/2},
\end{equation*}
so that by definition of $\eventA$ we get
\begin{align*}
    \|H_n\| \leq \frac{\minlambdaEXXt^{-1}\sqrt{\majorantEXXprimeconcentration/(n\gamma)}}{1-\sqrt{\majorantEXXprimeconcentration/(n\gamma)}}.
\end{align*}
Therefore,
\begin{align*}
    V_4 &\leq \|\uu\|^2\|H_n\|^2 \times
    \frac{1}{n}\sum_{i=1}^n\|X_i\epshat_i\|^2
    \leq \frac{\|\uu\|^2\minlambdaEXXt^{-2}\majorantEXXprimeconcentration/(n\gamma)}{(1-\sqrt{\majorantEXXprimeconcentration/(n\gamma)})^2} \times 
    \frac{1}{n}\sum_{i=1}^n\|X_i\epshat_i\|^2,
\end{align*}
and
\begin{align*}
    |V_5| & = \left|2\uu'H_n\frac{1}{n}\sum_{i=1}^nX_iX_i'\epshat_i^2S^{-1}\uu\right| \\
    & \leq 2\|\uu\|^2 \|H_n\| \times
    \norme{\frac{1}{n}\sum_{i=1}^nX_iX_i'S^{-1}\epshat_i^2} 
    \leq \frac{2\|\uu\|^2\minlambdaEXXt^{-1}\sqrt{\majorantEXXprimeconcentration/(n\gamma)}}{1-\sqrt{\majorantEXXprimeconcentration/(n\gamma)}} \times \norme{\frac{1}{n}\sum_{i=1}^nX_iX_i'S^{-1}\epshat_i^2}.
\end{align*}

\medskip

Finally, we obtain Equation~\eqref{eq:bound_Vjoracle}, for the choice of $\RnVar(\gamma)$ given by
\begin{align*}
    \RnVar(\gamma)
    := & \;\;
     \frac{2}{n\minlambdaEXXt^3} \left( \frac{\sqrt{\majorantEXXprimeconcentration/(n\gamma)}}{1-\sqrt{\majorantEXXprimeconcentration/(n\gamma)}} + 1 \right)  ^2 \sqrt{\frac{\majorantEnormeXepsquatre}{\gamma}}
    \times 
    \frac{1}{n}\sum_{i=1}^n\|X_i\|^4 \\
    & + 
    \frac{2\sqrt{2}}{\minlambdaEXXt^{5/2} \sqrt{n}} \left( \frac{\sqrt{\majorantEXXprimeconcentration/(n\gamma)}}{1-\sqrt{\majorantEXXprimeconcentration/(n\gamma)}} + 1 \right)
    \left( \frac{\majorantEnormeXepsquatre}{\gamma} \right)^{1/4}
    \times 
    \frac{1}{n}\sum_{i=1}^n\|X_i\|^3\,|\epshat_i| \\
    &+ \frac{\minlambdaEXXt^{-2}\majorantEXXprimeconcentration/(n\gamma)}{(1-\sqrt{\majorantEXXprimeconcentration/(n\gamma)})^2} \times 
    \frac{1}{n}\sum_{i=1}^n\|X_i\epshat_i\|^2 \\
    &+ \frac{2\minlambdaEXXt^{-1}\sqrt{\majorantEXXprimeconcentration/(n\gamma)}}{1-\sqrt{\majorantEXXprimeconcentration/(n\gamma)}} \times \norme{\frac{1}{n}\sum_{i=1}^nX_iX_i'S^{\dagger}\epshat_i^2}.
\end{align*}

\paragraph{Step 2. Control of the self-normalized sum $\MeanXi / \sqrt{\uu' \Voracle \uu}$.}

Applying Lemma~\ref{lem:edg_exp}~(ii) with $\delta_n \geq \sup_{\theta \in \ThetaBEE}
    \min\{ \DeltanE(\theta) , \DeltanB(\theta) \}$ by definition,
we have
\begin{align}
    \Probsousthetan\left( \underbrace{ |\MeanXi| > yn^{-1/2}\sqrt{\uu' \Voracle \uu} }_{=: \, \eventB} \right)
    \leq 2 \Bigg( \Phi \bigg(- \frac{y}{\sqrt{a}} \bigg) + \delta_n \Bigg)
    + \exp\!\left( -\frac{n(1-1/a)^2}{2 \majorantKurtxi} \right)
    \label{eq:control_self_norm}
\end{align}
for any given value $y>0$.

\paragraph{Step 3. Combining the previous results.}

Recall we want to show Equation~\eqref{eq:OLS_final_concentration_inequality}.
We denote by \(\eventC\) the corresponding event
\begin{equation*}
    \eventC
    := \left\{ |\uu' \widehat{\beta} - \uu' \beta_{0}| > \sqrt{u' \Vhat u + \|u\|^2 \RnVar(\gamma)}\QnEdg n^{-1/2}
    \right\}.
\end{equation*}

\medskip

We can use the fact that $\gamma = \omega\alpha/2$, the definition of $\eventA$ and $\eventB$ and the law of total probabilities  to write 
\begin{align}
    \Probsousthetan ( \eventC )
    & \leq \Probsousthetan ( \eventC \cap \eventA \cap \eventB )
    + \Probsousthetan ( \eventA^c ) + \Probsousthetan ( \eventB^c ) \nonumber\\
    & \leq \Probsousthetan ( \eventC \cap \eventA \cap \eventB )
    + \omega\alpha + 2 \Bigg( \Phi \bigg(- \frac{y}{\sqrt{a}} \bigg) + \delta_n \Bigg)
    + \exp\!\left( -\frac{n(1-1/a)^2}{2 \majorantKurtxi} \right).
    \label{eq:prob_control_event_C}
\end{align}
Then, combining Equations~\eqref{eq:linearization_error},~\eqref{eq:bound_Vjoracle} and~\eqref{eq:control_self_norm}, we get on $\eventC \cap \eventA \cap \eventB$,
\begin{align*}
    \sqrt{u' \Vhat u + \|u\|^2 \RnVar(\gamma)}\QnEdg / \sqrt{n} & \leq |\uu' \widehat{\beta} - \uu' \beta_{0}| \\
    & \leq |\MeanXi| + \RnLin(\gamma) / \sqrt{n} \\
    & \leq y\sqrt{\uu'\Voracle\uu}/\sqrt{n} + \RnLin(\gamma) / \sqrt{n} \\
    & \leq y\sqrt{\uu' \Vhat \uu + \|u\|^2 \RnVar(\gamma)}/\sqrt{n} + \RnLin(\gamma) / \sqrt{n}.   
\end{align*}
This and Equation~\eqref{eq:prob_control_event_C} ensure that
\begin{align}
    \Probsousthetan ( \eventC ) & \leq \Probsousthetan
    \left( \frac{\sqrt{u' \Vhat u + \|u\|^2 \RnVar(\gamma)}\QnEdg }{ \sqrt{n} } \leq \frac{ y\sqrt{\uu' \Vhat \uu + \|u\|^2 \RnVar(\gamma)} }{ \sqrt{n} } + \frac{ \RnLin(\gamma) }{ \sqrt{n} } \right) \nonumber \\
    & \quad + \omega\alpha + 2 \Bigg( \Phi \bigg(- \frac{y}{\sqrt{a}} \bigg) + \delta_n \Bigg)
    + \exp\!\left( -\frac{n(1-1/a)^2}{2 \majorantKurtxi} \right).
\end{align}

\noindent Our goal is to choose $y$ such that
\begin{align*}
    2 \Bigg( \Phi \bigg(- \frac{y}{\sqrt{a}} \bigg) + \delta_n \Bigg)
    + \exp\!\left( -\frac{n(1-1/a)^2}{2\majorantKurtxi} \right)
    = (1 - \omega)\alpha.
\end{align*}
Solving this equation, we find that the solution is
\begin{align*}
    y^* & = \sqrt{a} \, q_{\mathcal{N}(0,1)}
    \bigg(1 - \frac{(1-\omega)\alpha
    - \exp\!\big(-n(1-1/a)^2/(2\majorantKurtxi)\big)}{2} + \delta_n \bigg) \\
    & = \sqrt{a} \QuantileGaussAt{1 - \alpha/2 + \nuEdg}
\end{align*}
whenever \(\nuEdg < \alpha/2\).
Therefore, we get
\begin{align*}
    \Probsousthetan ( \eventC )
    &\leq \Probsousthetan
    \Bigg(y^*\sqrt{\uu' \Vhat \uu
    + \|u\|^2 \RnVar(\gamma)}/\sqrt{n} + \RnLin(\gamma)/\sqrt{n} > \QnEdg\sqrt{\uu' \Vhat \uu
    + \|u\|^2 \RnVar(\gamma)} /\sqrt{n} \Bigg) \\
    & \quad + \omega\alpha
    + (1 - \omega) \alpha \\
    & = \alpha,
\end{align*}
since the equality 
\begin{equation*}
    y^*\sqrt{\uu' \Vhat \uu
    + \|u\|^2 \RnVar(\gamma)}/\sqrt{n} + \RnLin(\gamma)/\sqrt{n} = \QnEdg\sqrt{\uu' \Vhat \uu
    + \|u\|^2 \RnVar(\gamma)} /\sqrt{n}
\end{equation*}
holds (so that the probability appearing on the right-hand side of the final inequality just above is zero). $\Box$



\subsection{Proof of Theorem~\ref{thm:exogenous_case_asymptotic_uniform_exactness} (non-asymptotic bound on the uniform exactness with rates)}

\subsubsection{Proof when \texorpdfstring{$\rho>0$}{rho > 0}}

By Theorem~\ref{thm:exogenous_case_non-asymptotic_validity}, we can write for every $n \geq 1$ and every $\theta \in \ThetaBEE$ (and thus every $\theta \in \ThetaBEEstrict$),
\begin{equation}
    \label{eq:target_unif_exact}
    \Probsousthetan\Big( \uu' \betazero \in \CIEdgu \Big) \geq 1-\alpha.
\end{equation}
By assumption, $\omega_n$ and $b_n$ are chosen independent from the true value $\theta$, and $\omega_n \to 0$, $n\omega_n^{3/2} \to +\infty$ and $b_n\sqrt{n} \to +\infty$. Recalling the definition of $n_0 := \max \{n \in \INTST: 
n \leq 2 \majorantEXXprimeconcentration / (\omega_n \alpha)
\text{ or } \nuEdg \geq \alpha / 2 \}$ (first given in Section~\ref{subsubsec:formal_definition_our_OLS}), we remark that $n_0$ is finite and independent from $\theta$. By definition of $\CIEdgu$, we can write for every $n > n_0$ and every $\theta \in \ThetaBEEstrict$ that the following holds $\Probsousthetan$-almost surely
\begin{equation*}
    \CIEdgu = \left[ \uu' \widehat{\beta}
        \pm \dfrac{\QnEdg}{\sqrt{n}} \sqrt{u' \Vhat u
        + \|u\|^2 \RnVar(\omega_n\alpha/2)} \right].
\end{equation*}

\medskip

\paragraph{Step 1. Construction of a suitable high-probability event.} We assume $n > n_0$ from now on. Recall the definition of $\mathcal{A}$ in the proof of Lemma~\ref{lem:OLS_Edg}. We have
\begin{align}
    \label{eq:intermed_unif_exact_1}
    \Probsousthetan\Big( \uu' \betazero \in & \,\CIEdgu \Big) 
    \\ & \leq \Probsousthetan\Big( \left\{\uu' \betazero \in \CIEdgu\right\} \cap \mathcal{A} \Big) + \omega_n\alpha \nonumber \\
    & = \Probsousthetan\left( \left\{ \left|u'(\betahat-\betazero)\right| \leq \frac{\QnEdg}{\sqrt{n}} \sqrt{u' \Vhat u
    + \|u\|^2 \RnVar(\omega_n\alpha/2)} \right\} \cap \mathcal{A} \right) + \omega_n\alpha.
\end{align}
Let $M_1 := 2(C_\rho \majorantEnormeXquatreunplusrho)^{1/(1+\rho)}$ and $M_2 := \majorantEnormeXquatreunplusrho^{1/{(1+\rho)}}\majorantEnormeXepsquatre$ with $C_\rho$ the constant given in Lemma~\ref{lem:marcinkiewicz_zygmund}. We define
\begin{align*}
    \mathcal{B} & := \left\{ \left| \frac{1}{n}\sum_{i=1}^n||X_i||^4 -  \E[||X||^4] \right| \leq \frac{M_1}{\ln(n)^{1/(1+\rho)}}\right\} \cap \left\{\left|\frac{1}{n}\sum_{i=1}^n||X_i\varepsilon_i||^2 - \E\left[||X\varepsilon||^2\right]\right|\leq \sqrt{\frac{M_2}{\ln (n)}}\right\} \\
    & \cap \left\{\left|\frac{1}{n}\sum_{i=1}^n\left(u'\E[XX']^{-1}X_i\varepsilon_i\right)^2-\E\left[\left(u'\E[XX']^{-1}X\varepsilon\right)^2\right]\right| \leq \frac{\|\uu\|^2\minlambdaEXXt^{-1}\sqrt{\majorantEnormeXepsquatre}}{\sqrt{n\omega_n\alpha}}\right\}.
\end{align*}
By the law of total probabilities, Markov's inequality, Assumption~\ref{hyp:subset_exo_unif_asymptotic_exactness} and Lemmas~\ref{lem:up_bound_moments} and~\ref{lem:marcinkiewicz_zygmund}, we can write
\begin{align}
    \label{eq:intermed_unif_exact_2}
    & \Probsousthetan\left( \left\{ \left|u'(\betahat-\betazero)\right| \leq \frac{\QnEdg}{\sqrt{n}} \sqrt{u' \Vhat u
    + \|u\|^2 \RnVar(\omega_n\alpha/2)} \right\} \cap \mathcal{A} \right) \nonumber \\
    & \leq \Probsousthetan\left( \left\{ \frac{\sqrt{n}\left|u'(\betahat-\betazero)\right|}{\sqrt{u'Vu}} \leq \QnEdg\sqrt{\frac{u' \Vhat u
    + \|u\|^2 \RnVar(\omega_n\alpha/2)}{u'Vu}} \right\} \cap \mathcal{A} \cap \mathcal{B} \right) + \Probsousthetan(\mathcal{B}^c)
    \nonumber \displaybreak[0] \\
    & \leq \Probsousthetan\left( \left\{ \frac{\sqrt{n}\left|u'(\betahat-\betazero)\right|}{\sqrt{u'Vu}} \leq \QnEdg\sqrt{\frac{u' \Vhat u
    + \|u\|^2 \RnVar(\omega_n\alpha/2)}{u'Vu}} \right\} \cap \mathcal{A} \cap \mathcal{B} \right) \nonumber \\
    & + \Probsousthetan\left( \left| \frac{1}{n}\sum_{i=1}^n||X_i||^4 -  \E[||X||^4] \right| > \frac{M_1}{\ln(n)^{1/(1+\rho)}} \right) \nonumber \\
    & + \Probsousthetan\left( \left|\frac{1}{n}\sum_{i=1}^n||X_i\varepsilon_i||^2 - \E\left[||X\varepsilon||^2\right]\right| > \sqrt{\frac{M_2}{\ln (n)}} \right) \nonumber \\
    & + \Probsousthetan\left( \left|\frac{1}{n}\sum_{i=1}^n\left(u'\E[XX']^{-1}X_i\varepsilon_i\right)^2-\E\left[\left(u'\E[XX']^{-1}X\varepsilon\right)^2\right]\right| \leq \frac{\|\uu\|^2\minlambdaEXXt^{-1}\sqrt{\majorantEnormeXepsquatre}}{\sqrt{n\omega_n\alpha}} \right) \nonumber \displaybreak[0] \\
    & \leq \Probsousthetan\left( \left\{ \frac{\sqrt{n}\left|u'(\betahat-\betazero)\right|}{\sqrt{u'Vu}} \leq \QnEdg\sqrt{\frac{u' \Vhat u
    + \|u\|^2 \RnVar(\omega_n\alpha/2)}{u'Vu}} \right\} \cap \mathcal{A} \cap \mathcal{B} \right) \nonumber \\
    & + \ln(n)\left( n^{-\rho}\Indicator\{\rho \leq 1\} + n^{-(1+\rho)/2}\Indicator\{\rho > 1\} + n^{-1} \right) + \omega_n\alpha.
\end{align}

\paragraph{Step 2. Control of the distance to the influence function.} Let
\begin{align*}
    \mathcal{C}
    := \left\{ \frac{\sqrt{n}\left|u'(\betahat-\betazero)\right|}{\sqrt{u'Vu}}
    \leq \QnEdg\sqrt{\frac{u' \Vhat u
    + \|u\|^2 \RnVar(\omega_n\alpha/2)}{u'Vu}} \right\} \cap \mathcal{A} \cap \mathcal{B}.
\end{align*}
Our goal is now to prove there exists a decreasing positive deterministic sequence $\mu_n$ (given below) such that the following holds on $\mathcal{C}$
\begin{equation}
    \label{eq:intermed_unif_exact_3}
    \frac{\sqrt{n}\left|\MeanXi\right|}{\sqrt{u'Vu}} \leq \QuantileGaussAt{1-\alpha/2} + \mu_n.
\end{equation}
We start by lower bounding $\sqrt{n}\left|u'(\betahat-\betazero)\right|/\sqrt{u'Vu}$.
By Equation~\eqref{eq:linearization_error}, we have that, on $\mathcal{A}$,
\begin{align*}
    \sqrt{n}\left|u'(\betahat-\betazero) - \MeanXi\right| \leq \RnLin(\omega_n\alpha/2).
\end{align*}
This and the reverse triangle inequality ensure
\begin{equation*}
    \frac{\sqrt{n}\left|u'(\betahat-\betazero)\right|}{\sqrt{u'Vu}} \geq \frac{\sqrt{n}\left|\MeanXi\right|}{\sqrt{u'Vu}} - \frac{\RnLin(\omega_n\alpha/2)}{\sqrt{u'Vu}}.
\end{equation*}
Assumptions~\ref{hyp:subset_exo_non-asymptotic_validity} and~\ref{hyp:subset_exo_unif_asymptotic_exactness} allow us to write that
\begin{equation*}
    \maxvp\left(\E[XX']\right) = ||\E[XX']|| \leq \E\left[||X||^{4(1+\rho)}\right]^{\frac{1}{2(1+\rho)}} \leq \majorantEnormeXquatreunplusrho^{\frac{1}{2(1+\rho)}},    
\end{equation*}
and $u'Vu \geq ||u||^2\minlambdaEXXt \majorantEnormeXquatreunplusrho^{-\frac{1}{1+\rho}}$, so that
\begin{equation}
    \label{eq:intermed_unif_exact_4}
    \frac{\sqrt{n}\left|u'(\betahat-\betazero)\right|}{\sqrt{u'Vu}} \geq \frac{\sqrt{n}\left|\MeanXi\right|}{\sqrt{u'Vu}} - \frac{\majorantEnormeXquatreunplusrho^{\frac{1}{2(1+\rho)}}\RnLin(\omega_n\alpha/2)}{||u||\minlambdaEXXt^{1/2}}.
\end{equation} 
We also need to exhibit an upper bound on $\QnEdg \sqrt{\frac{u' \Vhat u
+ \|u\|^2 \RnVar(\omega_n\alpha/2)}{u'Vu}}$.
We can write
\begin{align}
    & \QnEdg \sqrt{\frac{u' \Vhat u
    + \|u\|^2 \RnVar(\omega_n\alpha/2)}{u'Vu}} \nonumber \\
    & \leq \left\{ \QuantileGaussAt{1-\alpha/2} + \left|\sqrt{a_n} \, q_{\mathcal{N}(0,1)}
    \big(1 - \alpha/2 + \nuEdg \big) - \QuantileGaussAt{1-\alpha/2} \right| \right\}
    \nonumber \\
    & \hspace{5cm} \times \sqrt{\frac{u' \Vhat u
    + \|u\|^2 \RnVar(\omega_n\alpha/2)}{u'Vu}} 
    +  \frac{\RnLin(\omega_n\alpha/2)}{\sqrt{u'Vu}} \nonumber \displaybreak[0] \\
    & \leq \left\{ \QuantileGaussAt{1-\alpha/2} + \left|\sqrt{a_n} \, q_{\mathcal{N}(0,1)}
    \big(1 - \alpha/2 + \nuEdg \big) - \QuantileGaussAt{1-\alpha/2} \right| \right\}  \nonumber \\
    & \hspace{5cm} \times \sqrt{\frac{u' \Vhat u
    + \|u\|^2 \RnVar(\omega_n\alpha/2)}{u'Vu}}
    + \frac{\majorantEnormeXquatreunplusrho^{\frac{1}{2(1+\rho)}} \RnLin(\omega_n\alpha/2)}{||u||\minlambdaEXXt^{1/2}}.
\label{eq:intermed_unif_exact_5}
\end{align}
Turning to
$\sqrt{(u' \Vhat u
+ \|u\|^2 \RnVar(\omega_n\alpha/2))/(u'Vu)}$,
we note that on $\mathcal{C}$,
\begin{align}
    \sqrt{\frac{u' \Vhat u
    + \|u\|^2 \RnVar(\omega_n\alpha/2)}{u'Vu}} & \leq \sqrt{\frac{|u'( \Vhat - \Voracle) u| + u'\Voracle u
    + \|u\|^2 \RnVar(\omega_n\alpha/2)}{u'Vu}} \nonumber \\
    & \leq \sqrt{1 + (u'Vu)^{-1}\left(\frac{\|\uu\|^2\minlambdaEXXt^{-1}\sqrt{\majorantEnormeXepsquatre}}{\sqrt{n\omega_n\alpha}} + 2\|u\|^2 \RnVar(\omega_n\alpha/2) \right)} \nonumber \\   
    & \leq \sqrt{1 + \frac{\majorantEnormeXquatreunplusrho^{1/(1+\rho)}}{||u||^2\minlambdaEXXt }\left(\frac{\|\uu\|^2\minlambdaEXXt^{-1}\sqrt{\majorantEnormeXepsquatre}}{\sqrt{n\omega_n\alpha}} + 2\|u\|^2 \RnVar(\omega_n\alpha/2) \right)}.
    \label{eq:up_bound_ratio_var}
\end{align}
The term $\RnVar(\omega_n\alpha/2)$ is stochastic and has to be upper bounded deterministically. Given the definition of $\RnVar(\omega_n\alpha/2)$, we can write $\RnVar(\omega_n\alpha/2) = D_1 + D_2 + D_3 + D_4$, with
\begin{align*}
    & D_1 := \frac{2}{n\minlambdaEXXt^3} \left( \frac{\sqrt{2\majorantEXXprimeconcentration/(n\omega_n\alpha)}}{1-\sqrt{2\majorantEXXprimeconcentration/(n\omega_n\alpha)}} + 1 \right)  ^2 \sqrt{\frac{2\majorantEnormeXepsquatre}{\omega_n\alpha}}
    \times 
    \frac{1}{n}\sum_{i=1}^n\|X_i\|^4 \\
    & D_2 :=  \frac{2\sqrt{2}}{\minlambdaEXXt^{5/2} \sqrt{n}} \left( \frac{\sqrt{2\majorantEXXprimeconcentration/(n\omega_n\alpha)}}{1-\sqrt{2\majorantEXXprimeconcentration/(n\omega_n\alpha)}} + 1 \right)
    \left( \frac{2\majorantEnormeXepsquatre}{\omega_n\alpha} \right)^{1/4}
    \times 
    \frac{1}{n}\sum_{i=1}^n\|X_i\|^3\,|\epshat_i| \\
    & D_3 := \frac{2\minlambdaEXXt^{-2}\majorantEXXprimeconcentration/(n\omega_n\alpha)}{(1-\sqrt{2\majorantEXXprimeconcentration/(n\omega_n\alpha)})^2} \times 
    \frac{1}{n}\sum_{i=1}^n\|X_i\epshat_i\|^2 \\
    & D_4 :=\frac{2\minlambdaEXXt^{-1}\sqrt{2\majorantEXXprimeconcentration/(n\omega_n\alpha)}}{1-\sqrt{2\majorantEXXprimeconcentration/(n\omega_n\alpha)}} \times \norme{\frac{1}{n}\sum_{i=1}^nX_iX_i'S^{-1}\epshat_i^2}.
\end{align*}
On $\mathcal{C}$, it is straightforward to write (using the fact that $\E[||X||^4] \leq \majorantEnormeXquatreunplusrho^{1/(1+\rho)}$ under Assumption~\ref{hyp:subset_exo_unif_asymptotic_exactness}.(ii))
\begin{equation}
    \label{eq:up_bound_D1}
    D_1 \leq \overline{D}_1 := \frac{2}{n\minlambdaEXXt^3} \left( \frac{\sqrt{2\majorantEXXprimeconcentration/(n\omega_n\alpha)}}{1-\sqrt{2\majorantEXXprimeconcentration/(n\omega_n\alpha)}} + 1 \right)  ^2 \sqrt{\frac{2\majorantEnormeXepsquatre}{\omega_n\alpha}}
    \times 
    \left(\frac{M_1}{\ln(n)^{1/(1+\rho)}} + \majorantEnormeXquatreunplusrho^{1/(1+\rho)} \right).
\end{equation}
To control $D_2$, we use the fact that $\epshat_i = X_i'(\betahat-\beta_0)+\varepsilon_i$. Combined with the triangle and Cauchy-Schwarz inequalities, we obtain
\begin{align}
    \label{eq:up_bound_D2_1}
    D_2 \leq & \frac{2\sqrt{2}}{\minlambdaEXXt^{5/2} \sqrt{n}} \left( \frac{\sqrt{2\majorantEXXprimeconcentration/(n\omega_n\alpha)}}{1-\sqrt{2\majorantEXXprimeconcentration/(n\omega_n\alpha)}} + 1 \right)
    \left( \frac{2\majorantEnormeXepsquatre}{\omega_n\alpha} \right)^{1/4}  \nonumber \\
    & \times \left\{ \norme{\betahat-\beta_0}\frac{1}{n}\sum_{i=1}^n\|X_i\|^4 + \sqrt{\frac{1}{n}\sum_{i=1}^n\|X_i\|^4}\sqrt{\frac{1}{n}\sum_{i=1}^n\|X_i\varepsilon_i\|^2} \right\}.
\end{align}
On $\mathcal{C}$, the proof of Lemma~\ref{lem:OLS_Edg} ensures
\begin{equation}
    \label{eq:up_bound_D2_2}
    \norme{\betahat-\beta_0} \leq \minlambdaEXXt^{-1/2} \left\{ \frac{\sqrt{2\majorantEXXprimeconcentration/(n\omega_n\alpha)}}{1-\sqrt{2\majorantEXXprimeconcentration/(n\omega_n\alpha)}} + 1 \right\} \sqrt{\frac{2}{n}}\left(\frac{2\majorantEnormeXepsquatre}{\omega_n\alpha}\right)^{1/4}.
\end{equation}
Equations~\eqref{eq:up_bound_D2_1} and~\eqref{eq:up_bound_D2_2}, $\E[||X\varepsilon||^2] \leq \sqrt{\E[||X\varepsilon||^4]}$, Lemma~\ref{lem:up_bound_moments} and the definition of $\mathcal{C}$ yield
\begin{align}
    \label{eq:up_bound_D2}
    D_2 \leq \overline{D}_2 & := \frac{2\sqrt{2}}{\minlambdaEXXt^{5/2} \sqrt{n}} \left( \frac{\sqrt{2\majorantEXXprimeconcentration/(n\omega_n\alpha)}}{1-\sqrt{2\majorantEXXprimeconcentration/(n\omega_n\alpha)}} + 1 \right)
    \left( \frac{2\majorantEnormeXepsquatre}{\omega_n\alpha} \right)^{1/4} \nonumber \\
    & \times \Bigg\{ \minlambdaEXXt^{-1/2} \left\{ \frac{\sqrt{2\majorantEXXprimeconcentration/(n\omega_n\alpha)}}{1-\sqrt{2\majorantEXXprimeconcentration/(n\omega_n\alpha)}} + 1 \right\} \sqrt{\frac{2}{n}}\left(\frac{2\majorantEnormeXepsquatre}{\omega_n\alpha}\right)^{1/4}\left( \frac{M_1}{\ln(n)^{1/(1+\rho)}} + \majorantEnormeXquatreunplusrho^{1/(1+\rho)}\right) \nonumber \\
    & \qquad + M_2^{1/4}\sqrt{1+\ln(n)^{-1/2}}\sqrt{ \frac{M_1}{\ln(n)^{1/(1+\rho)}} + \majorantEnormeXquatreunplusrho^{1/(1+\rho)}} \Bigg\}.
\end{align}
As for $D_3$, a convexity argument and the definition of $\mathcal{C}$ and~\eqref{eq:up_bound_D2_2} allow us to write
\begin{align}
    \label{eq:up_bound_D3}
    D_3
    &\leq \frac{4\minlambdaEXXt^{-2}\majorantEXXprimeconcentration/(n\omega_n\alpha)}{(1-\sqrt{2\majorantEXXprimeconcentration/(n\omega_n\alpha)})^2} \times 
    \left\{ \norme{\betahat-\beta_0}^2\frac{1}{n}\sum_{i=1}^n\|X_i\|^4 + \frac{1}{n}\sum_{i=1}^n\|X_i\varepsilon_i\|^2 \right\} \nonumber \\
    & \leq \frac{4\minlambdaEXXt^{-2}\majorantEXXprimeconcentration/(n\omega_n\alpha)}{(1-\sqrt{2\majorantEXXprimeconcentration/(n\omega_n\alpha)})^2} \nonumber \\
    & \qquad \times 
    \left\{ \minlambdaEXXt^{-1} \left\{ \frac{\sqrt{2\majorantEXXprimeconcentration/(n\omega_n\alpha)}}{1-\sqrt{2\majorantEXXprimeconcentration/(n\omega_n\alpha)}} + 1 \right\}^2\frac{2}{n}\left(\frac{2\majorantEnormeXepsquatre}{\omega_n\alpha}\right)^{1/2}\frac{1}{n}\sum_{i=1}^n\|X_i\|^4 + \frac{1}{n}\sum_{i=1}^n\|X_i\varepsilon_i\|^2 \right\} \nonumber \\
    &\leq \overline{D}_3
    := \frac{4\minlambdaEXXt^{-2} \majorantEXXprimeconcentration
    / (n\omega_n\alpha)}{ (1-\sqrt{2\majorantEXXprimeconcentration
    /(n\omega_n\alpha)} )^2 } \nonumber \\
    & \qquad \times 
    \Bigg\{ \minlambdaEXXt^{-1} \left\{ \frac{\sqrt{2\majorantEXXprimeconcentration/(n\omega_n\alpha)}}{1-\sqrt{2\majorantEXXprimeconcentration/(n\omega_n\alpha)}} + 1 \right\}^2\frac{2}{n}\left(\frac{2\majorantEnormeXepsquatre}{\omega_n\alpha}\right)^{1/2}\left( \frac{M_1}{\ln(n)^{1/(1+\rho)}} + \majorantEnormeXquatreunplusrho^{1/(1+\rho)}\right) \nonumber \\
    & \qquad \qquad + \sqrt{M_2}(1+\ln(n)^{-1/2}) \Bigg\}.
\end{align}
The term $D_4$ satisfies
\begin{align}
    \label{eq:up_bound_D4_1}
    D_4 & \leq \frac{2\minlambdaEXXt^{-1}\sqrt{2\majorantEXXprimeconcentration/(n\omega_n\alpha)}}{1-\sqrt{2\majorantEXXprimeconcentration/(n\omega_n\alpha)}} \times \norme{S^{-1}}\frac{1}{n}\sum_{i=1}^n\norme{X_i\epshat_i}^2 \nonumber \\
    & \leq \frac{4\minlambdaEXXt^{-1}\sqrt{2\majorantEXXprimeconcentration/(n\omega_n\alpha)}}{1-\sqrt{2\majorantEXXprimeconcentration/(n\omega_n\alpha)}} \times \norme{S^{-1}}\left\{\norme{\betahat-\beta_0}^2\frac{1}{n}\sum_{i=1}^n\norme{X_i}^4 + \frac{1}{n}\sum_{i=1}^n\norme{X_i\varepsilon_i}^2 \right\}.
\end{align}
Recall the definition of $\widetilde{S}$ in the proof of Lemma~\ref{lem:OLS_Edg}: $\widetilde{S} = n^{-1}\sum_{i=1}^n\widetilde{X}_i\widetilde{X}_i'$.
We can write 
\begin{equation}
    \label{eq:up_bound_D4_2}
    \norme{S^{-1}} = \norme{(\E[XX']^{1/2}\widetilde{S}\E[XX']^{1/2})^{-1}} \leq \minlambdaEXXt^{-1}\norme{\widetilde{S}^{-1}} = \minlambdaEXXt^{-1}\minvp(\widetilde{S})^{-1}.
\end{equation}
In the proof of Lemma~\ref{lem:OLS_Edg}, it is also shown that on $\mathcal{A}$ (and therefore on $\mathcal{C}$) whenever $n \omega > 2 \majorantEXXprimeconcentration / \alpha$, we have
\begin{equation*}
    \minvp(\widetilde{S})
    \geq 1 - \sqrt{\frac{2\majorantEXXprimeconcentration}{n\omega_n\alpha}}
    > 0.
\end{equation*}
By definition, $n \omega > 2 \majorantEXXprimeconcentration / \alpha$ for every $n \geq n_0$. Since the quantity $2\majorantEXXprimeconcentration/(n\omega_n\alpha)$ goes to 0 as $n$ goes to infinity (because of the rate restrictions on $\omega_n$), we can further claim there exists $n_1 \geq n_2$ such that for every $n \geq n_1$ and every $\theta \in \ThetaBEEstrict$, 
\begin{equation}
    \label{eq:up_bound_D4_3}
    \minvp(\widetilde{S}) \geq \frac{1}{2}
\end{equation}
on $\mathcal{C}$. Combining~\eqref{eq:up_bound_D2_2},~\eqref{eq:up_bound_D4_1},~\eqref{eq:up_bound_D4_2} and~\eqref{eq:up_bound_D4_3}, we can claim
\begin{align}
    \label{eq:up_bound_D4}
    D_4 & \leq \overline{D}_4 := \frac{8\minlambdaEXXt^{-2}\sqrt{2\majorantEXXprimeconcentration/(n\omega_n\alpha)}}{1-\sqrt{2\majorantEXXprimeconcentration/(n\omega_n\alpha)}} \nonumber \\
    & \qquad \times 
    \Bigg\{ \minlambdaEXXt^{-1} \left\{ \frac{\sqrt{2\majorantEXXprimeconcentration/(n\omega_n\alpha)}}{1-\sqrt{2\majorantEXXprimeconcentration/(n\omega_n\alpha)}} + 1 \right\}^2\frac{2}{n}\left(\frac{2\majorantEnormeXepsquatre}{\omega_n\alpha}\right)^{1/2}\left( \frac{M_1}{\ln(n)^{1/(1+\rho)}} + \majorantEnormeXquatreunplusrho^{1/(1+\rho)}\right) \nonumber \\
    & \qquad \qquad + \sqrt{M_2}(1+\ln(n)^{-1/2}) \Bigg\}.
\end{align}
We can now collect~\eqref{eq:up_bound_D1},~\eqref{eq:up_bound_D2},~\eqref{eq:up_bound_D3} and~\eqref{eq:up_bound_D4} and claim
\begin{equation*}
    \RnVar(\omega_n\alpha/2) \leq \overline{D}_1 + \overline{D}_2 + \overline{D}_3 + \overline{D}_4.
\end{equation*}
Under the rate restrictions $n\omega_n^{3/2} \to +\infty$ and $\omega_n \to 0$ as $n$ goes to infinity (which imply $n\omega_n \to +\infty$ in particular), we can exhibit a constant $C_1$ that depends only on $\alpha$ and $\ThetaBEEstrict$ and $n_2 \geq n_1$ such that for every $n \geq n_2$ on $\mathcal{C}$
\begin{equation*}
    \RnVar(\omega_n\alpha/2) \leq C_1(n\omega_n)^{-1/2}.
\end{equation*}
Starting from~\eqref{eq:up_bound_ratio_var}, we can also exhibit a constant $C_2 \geq C_1$ that depends on the same quantities as $C_1$ such that for every $n \geq n_2$ on $\mathcal{C}$
\begin{align}
    \label{eq:intermed_unif_exact_6}
    \sqrt{\frac{u' \Vhat u
    + \|u\|^2 \RnVar(\omega_n\alpha/2)}{u'Vu}} & \leq \sqrt{1 + \frac{\majorantEnormeXquatreunplusrho^{1/(1+\rho)}}{||u||^2\minlambdaEXXt }\left(\frac{\|\uu\|^2\minlambdaEXXt^{-1}\sqrt{\majorantEnormeXepsquatre}}{\sqrt{n\omega_n\alpha}} + 2\|u\|^2 \RnVar(\omega_n\alpha/2) \right) } \nonumber \\
    & \leq \sqrt{1 + C_2(n\omega_n)^{-1/2} }.
\end{align}

\medskip

With~\eqref{eq:intermed_unif_exact_4},~\eqref{eq:intermed_unif_exact_5} and~\eqref{eq:intermed_unif_exact_6} in hand, we conclude that for every $n \geq n_2$ and every $\theta \in \ThetaBEEstrict$, \eqref{eq:intermed_unif_exact_3} holds on $\mathcal{C}$ with
\begin{align}
    \label{eq:value_mu_n}
    \mu_n & = \QuantileGaussAt{1-\alpha/2}(n\omega_n)^{-1/4}\sqrt{C_2} + \frac{2\majorantEnormeXquatreunplusrho^{1/2(1+\rho)}\RnLin(\omega_n\alpha/2)}{\sqrt{||u||^2\minlambdaEXXt }} \nonumber \\
    & + \left|\sqrt{a_n} \, q_{\mathcal{N}(0,1)}
    \big(1 - \alpha/2 + \nuEdg \big) - \QuantileGaussAt{1-\alpha/2} \right| \times \sqrt{1+C_2(n\omega_n)^{-1/2}}.
\end{align}
To shed light on the rate at which $\mu_n$ goes to zero, we want to expand $$\left|\sqrt{a_n} \, q_{\mathcal{N}(0,1)}
    \big(1 - \alpha/2 + \nuEdg \big) - \QuantileGaussAt{1-\alpha/2} \right|.$$
There exists $n_3 \geq n_2$ such that for every $n\geq n_3$, $\nuEdg \leq \alpha/4$. On the interval $[0,\alpha/4]$, the map $x \mapsto \QuantileGaussAt{1-\alpha/2+x}$ is continuously differentiable with derivative bounded by $K_\alpha := \sup_{x\in [0,\alpha/4]}\varphi^{-1}\left(\QuantileGaussAt{1-\alpha/2+x}\right)$. We thus have that $\mu_n$ is upper bounded by
\begin{align*}
    \widetilde{\mu}_n & := \QuantileGaussAt{1-\alpha/2}(n\omega_n)^{-1/4}\sqrt{C_2} + \frac{2\majorantEnormeXquatreunplusrho^{1/2(1+\rho)}\RnLin(\omega_n\alpha/2)}{\sqrt{||u||^2\minlambdaEXXt }} \\
    & + \left((\sqrt{a_n}-1)\QuantileGaussAt{1-\alpha/2} + \sqrt{a_n}K_\alpha\nuEdg\right) \times \sqrt{1+C_2(n\omega_n)^{-1/2}}.
\end{align*}
We can exhibit $C_3$ and $n_4$ such that for every $n \geq n_4$ (we remark that $\RnLin(\omega_n\alpha/2) = O(n^{-1/2}\omega_n^{-3/4})$),
\begin{equation}
    \label{eq:upper_bound_mu_n}
    \widetilde{\mu}_n \leq C_3\left(\sqrt{b_n} + \nuEdg + \frac{1}{(n\omega_n)^{1/4}} + \frac{1}{\sqrt{n}\omega_n^{3/4}}\right).
\end{equation}

\paragraph{Step 3. Control of the distance to the asymptotic normality and conclusion.} Equations~\eqref{eq:intermed_unif_exact_1},~\eqref{eq:intermed_unif_exact_2},~\eqref{eq:value_mu_n} and~\eqref{eq:upper_bound_mu_n} enable us to write for every $n \geq n_4$ and every $\theta \in \ThetaBEEstrict$
\begin{align}
    \label{eq:intermed_unif_exact_7}
    & \Probsousthetan\Big( \uu' \betazero \in \CIEdgu \Big) \nonumber \\
    & \leq \Probsousthetan\left( 
    \frac{\sqrt{n}\left|\MeanXi\right|}{\sqrt{u'Vu}} \leq \QuantileGaussAt{1-\alpha/2} + \widetilde{\mu}_n \right) + \ln(n)\left( n^{-\rho}\Indicator\{\rho \leq 1\} + n^{-(1+\rho)/2}\Indicator\{\rho > 1\} + 2n^{-1} \right) \nonumber \\
    & \leq \left| \Probsousthetan\left( 
    \frac{\sqrt{n}\left|\MeanXi\right|}{\sqrt{u'Vu}} > \QuantileGaussAt{1-\alpha/2} + \widetilde{\mu}_n \right) - \alpha \right| \nonumber \\
    & + 1 - \alpha + \ln(n)\left( n^{-\rho}\Indicator\{\rho \leq 1\} + n^{-(1+\rho)/2}\Indicator\{\rho > 1\} + n^{-1} \right) + 2\omega_n\alpha.
\end{align}
We apply Lemma~\ref{lemma:bound_cdf_alpha_Phi}
with $A = \sqrt{n}\MeanXi / \sqrt{u' V u}$
and $x = \QuantileGaussAt{1 - \dfrac{\alpha}{2} + \delta_n} + \widetilde{\mu}_n$.
Therefore, for any $\theta \in \ThetaBEEstrict$,
\begin{align}
    \label{eq:intermed_unif_exact_8}
    \left| \Probsousthetan\left( 
    \frac{\sqrt{n}\left|\MeanXi\right|}{\sqrt{u'Vu}} > \QuantileGaussAt{1-\alpha/2} + \widetilde{\mu}_n \right) - \alpha \right|
    & \leq  2 \min\{\DeltanB,\DeltanE\} \nonumber \\
    & + \left| 2 \Phi\left(-\QuantileGaussAt{1-\alpha/2} - \widetilde{\mu}_n \right) - \alpha \right|.
\end{align}
Combining~\eqref{eq:target_unif_exact},~\eqref{eq:intermed_unif_exact_1},~\eqref{eq:intermed_unif_exact_7} and~\eqref{eq:intermed_unif_exact_8}, we can write for every $n \geq n_4$ and every $\theta \in \ThetaBEEstrict$
\begin{align*}
    1 - \alpha \leq \Probsousthetan\Big( \uu' \betazero \in \CIEdgu \Big)
    & \leq 1 - \alpha \\
    & + 2\left(\omega_n\alpha + \min\{\DeltanB,\DeltanE\}\right) \\
    & + \left| 2 \Phi\left(-\QuantileGaussAt{1-\alpha/2} - \widetilde{\mu}_n \right) - \alpha \right| \\
    & + \ln(n)\left( n^{-\rho}\Indicator\{\rho \leq 1\} + n^{-(1+\rho)/2}\Indicator\{\rho > 1\} + n^{-1} \right).
\end{align*}
Therefore,
\begin{align*}
    & \left| \Probsousthetan\Big( \uu' \betazero \in \CIEdgu \Big) - (1-\alpha) \right| \\
    & \leq 2\left(\omega_n\alpha + \min\{\DeltanB,\DeltanE\}\right) 
    + \left| 2 \Phi\left(-\QuantileGaussAt{1-\alpha/2} - \widetilde{\mu}_n \right) - \alpha \right| \\
    & + \ln(n)\left( n^{-\rho}\Indicator\{\rho \leq 1\} + n^{-(1+\rho)/2}\Indicator\{\rho > 1\} + n^{-1} \right).
\end{align*}
The term $\min\{\DeltanB,\DeltanE\}$ is bounded by $\delta_n$ by assumption. Using a first-order Taylor expansion and the boundedness of the density of the $\mathcal{N}(0,1)$ distribution (denoted by $\varphi$), the third term on the right-hand side is upper-bounded by 
\begin{equation*}
  2\widetilde{\mu}_n||\varphi||_\infty \leq 2C_3||\varphi||_\infty\left(\sqrt{b_n} + \nuEdg + (n\omega_n)^{-1/4} + n^{-1/2}\omega_n^{-3/4}\right).  
\end{equation*}

All in all, we get for every $n \geq n_4$ and $\theta\in\ThetaBEEstrict$
\begin{align*}
    & \left| \Probsousthetan\Big( \uu' \betazero \in \CIEdgu \Big) - (1-\alpha) \right| \\
    & \leq 2\left(\omega_n\alpha + \delta_n\right)
    + 2C_3||\varphi||_\infty\left(\sqrt{b_n} + \nuEdg + \frac{1}{(n\omega_n)^{1/4}} + \frac{1}{\sqrt{n}\omega_n^{3/4}}\right) \\
    & + \ln(n)\left( n^{-\rho}\Indicator\{\rho \leq 1\} + n^{-(1+\rho)/2}\Indicator\{\rho > 1\} + n^{-1} \right).    
\end{align*}
By definition of $\nuEdg$ (and because $\omega_n\to 0$), there exists $C_4$ such that
\begin{align*}
    & 2\left(\omega_n\alpha + \delta_n\right)
    + 2C_3||\varphi||_\infty\left(\sqrt{b_n} + \nuEdg + \frac{1}{(n\omega_n)^{1/4}} + \frac{1}{\sqrt{n}\omega_n^{3/4}}\right) + \frac{\ln(n)}{n} \\
    &\leq C_4\left\{\sqrt{b_n} + \nuEdg + \frac{1}{(n\omega_n)^{1/4}} + \frac{1}{\sqrt{n}\omega_n^{3/4}}\right\}.
\end{align*}
Since the bound with $\rho=0$ (proved just below) is still valid when $\rho > 0$, we can take the minimum of the two bounds as our final control on 
\begin{equation*}
    \left| \Probsousthetan\Big( \uu' \betazero \in \CIEdgu \Big) - (1-\alpha) \right|
\end{equation*}
when $\rho >0$. This concludes the proof, setting $n^*$ equal to $n_4$. $\Box$

\subsubsection{Proof when \texorpdfstring{$\rho=0$}{rho = 0}} The only change is in the definition of the probabilistic event $\mathcal{B}$ which becomes
\begin{align*}
    \mathcal{B} & := \left\{ \frac{1}{n}\sum_{i=1}^n||X_i||^4 \leq \frac{\majorantEnormeXquatreunplusrho}{\omega_n\alpha}\right\} \cap \left\{\left|\frac{1}{n}\sum_{i=1}^n||X_i\varepsilon_i||^2 - \E\left[||X\varepsilon||^2\right]\right|\leq \sqrt{\frac{M_2}{\ln (n)}}\right\} \\
    & \cap \left\{\left|\frac{1}{n}\sum_{i=1}^n\left(u'\E[XX']^{-1}X_i\varepsilon_i\right)^2-\E\left[\left(u'\E[XX']^{-1}X\varepsilon\right)^2\right]\right| \leq \frac{\|\uu\|^2\minlambdaEXXt^{-1}\sqrt{\majorantEnormeXepsquatre}}{\sqrt{n\omega_n\alpha}}\right\}.    
\end{align*}
This modification implies several changes further down the proof. For instance, Equation~\eqref{eq:intermed_unif_exact_2} is replaced with
\begin{align*}
    & \Probsousthetan\left( \left\{ \left|u'(\betahat-\betazero)\right| \leq \frac{\QnEdg}{\sqrt{n}} \sqrt{u' \Vhat u
    + \|u\|^2 \RnVar(\omega_n\alpha/2)} \right\} \cap \mathcal{A} \right) \nonumber \\
    & \leq \Probsousthetan\left( \left\{ \frac{\sqrt{n}\left|u'(\betahat-\betazero)\right|}{\sqrt{u'Vu}} \leq \QnEdg\sqrt{\frac{u' \Vhat u
    + \|u\|^2 \RnVar(\omega_n\alpha/2)}{u'Vu}} \right\} \cap \mathcal{A} \cap \mathcal{B} \right) \nonumber \\
    & + \frac{\ln(n)}{n} + 2\omega_n\alpha.
\end{align*}
When upper-bounding $\RnVar(\omega_n\alpha/2)$, we get
\begin{equation*}
    \RnVar(\omega_n\alpha/2) \leq C_1 n^{-1/2}\omega_n^{-3/4} 
\end{equation*}
for some positive constant $C_1$ that need not coincide with $C_1$ in the proof with $\rho>0$. The condition $n\omega^{3/2} \to +\infty$ ensures $\RnVar(\omega_n\alpha/2) = o(1)$. 
We then change Equation~\eqref{eq:upper_bound_mu_n} to
\begin{equation*}
    \widetilde{\mu}_n \leq C_3\left(\sqrt{b_n} + \nuEdg + n^{-1/4}\omega_n^{-3/8}\right).
\end{equation*}
We also have for every $\theta \in \ThetaBEEstrict$
\begin{align*}
    & \left| \Probsousthetan\Big( \uu' \betazero \in \CIEdgu \Big) - (1-\alpha) \right| \\
    & \leq 3\omega_n\alpha + 2\delta_n
    + 2C_3||\varphi||_\infty\left(\sqrt{b_n} + \nuEdg + n^{-1/4}\omega_n^{-3/8}\right) + \frac{\ln(n)}{n}.  
\end{align*}
Finally we can write that there exists $C_4>0$ such that 
\begin{align*}
    & 3\omega_n\alpha + 2\delta_n
    + 2C_3||\varphi||_\infty\left(\sqrt{b_n} + \nuEdg + n^{-1/4}\omega_n^{-3/8}\right) + \frac{\ln(n)}{n} \\
    & \leq 
    C_4\left\{\sqrt{b_n} + \nuEdg + n^{-1/4}\omega_n^{-3/8}\right\},
\end{align*}
which concludes the proof. $\Box$

\subsubsection{Proof when \texorpdfstring{$\rho=+\infty$}{rho = infty}}

There are two notable changes in the proof (compared to the case $\rho>0$). First, the definition of the probabilistic event $\mathcal{B}$ is modified and becomes
\begin{align*}
    \mathcal{B} & := \left\{\left|\frac{1}{n}\sum_{i=1}^n||X_i\varepsilon_i||^2 - \E\left[||X\varepsilon||^2\right]\right|\leq \sqrt{\frac{M_2}{\ln (n)}}\right\} \\
    & \cap \left\{\left|\frac{1}{n}\sum_{i=1}^n\left(u'\E[XX']^{-1}X_i\varepsilon_i\right)^2-\E\left[\left(u'\E[XX']^{-1}X\varepsilon\right)^2\right]\right| \leq \frac{\|\uu\|^2\minlambdaEXXt^{-1}\sqrt{\majorantEnormeXepsquatre}}{\sqrt{n\omega_n\alpha}}\right\}.    
\end{align*}

With this modification, Equation~\eqref{eq:intermed_unif_exact_2} is replaced with
\begin{align*}
    & \Probsousthetan\left( \left\{ \left|u'(\betahat-\betazero)\right| \leq \frac{\QnEdg}{\sqrt{n}} \sqrt{u' \Vhat u
    + \|u\|^2 \RnVar(\omega_n\alpha/2)} \right\} \cap \mathcal{A} \right) \nonumber \\
    & \leq \Probsousthetan\left( \left\{ \frac{\sqrt{n}\left|u'(\betahat-\betazero)\right|}{\sqrt{u'Vu}} \leq \QnEdg\sqrt{\frac{u' \Vhat u
    + \|u\|^2 \RnVar(\omega_n\alpha/2)}{u'Vu}} \right\} \cap \mathcal{A} \cap \mathcal{B} \right) \nonumber \\
    & + \frac{\ln(n)}{n} + \omega_n\alpha.
\end{align*}

\medskip

Second, the upper bound on $\maxvp\left(\E[XX']\right)$ is modified as well and we have
\begin{equation*}
    \maxvp\left(\E[XX']\right) = ||\E[XX']|| \leq \E\left[||X||^2\right] \leq \majorantEnormeXquatreunplusrho^2. 
\end{equation*}
On the other hand, the bounds on $\RnVar(\omega_n\alpha/2)$ and $\widetilde{\mu}_n$ remain of the same order so that we get
\begin{equation*}
    \RnVar(\omega_n\alpha/2) \leq C_1 (n\omega_n)^{-1/2},
\end{equation*}
and
\begin{equation*}
    \widetilde{\mu}_n \leq C_3\left(\sqrt{b_n} + \nuEdg + \frac{1}{(n\omega_n)^{1/4}} + \frac{1}{\sqrt{n}\omega_n^{3/4}}\right),
\end{equation*}
for some positive constants $C_1$ and $C_3$ that need not coincide with $C_1$ and $C_3$ in the proof with $\rho>0$. The condition $n\omega^{3/2} \to +\infty$ ensures $\RnVar(\omega_n\alpha/2) = o(1)$. 

\medskip

We also have for every $\theta \in \ThetaBEEstrict$
\begin{align*}
    & \left| \Probsousthetan\Big( \uu' \betazero \in \CIEdgu \Big) - (1-\alpha) \right| \\
    & \leq 2(\omega_n\alpha + \delta_n)
    + 2C_3||\varphi||_\infty\left(\sqrt{b_n} + \nuEdg + (n\omega_n)^{-1/4} + n^{-1/2}\omega_n^{-3/4}\right) + \frac{\ln(n)}{n}.  
\end{align*}
Finally we can write that there exists $C_4>0$ such that 
\begin{align*}
    & 2(\omega_n\alpha + \delta_n)
    + 2C_3||\varphi||_\infty\left(\sqrt{b_n} + \nuEdg + (n\omega_n)^{-1/4} + n^{-1/2}\omega_n^{-3/4}\right) + \frac{\ln(n)}{n} \\
    & \leq
    C_4\left\{\sqrt{b_n} + \nuEdg + (n\omega_n)^{-1/4} + n^{-1/2}\omega_n^{-3/4}\right\},
\end{align*}
which concludes the proof. $\Box$

\subsection{Proof of Proposition~\ref{prop:exogenous_case_exact_rates}}
\label{proof:prop:exogenous_case_exact_rates}

By Theorem~\ref{thm:exogenous_case_asymptotic_uniform_exactness}, we can write
\begin{equation*}
    \sup_{\theta \in \ThetaBEEstrict}
        \Probsousthetan \Big( \uu' \betazero \in \CIEdgu \Big)
        \leq 1 - \alpha
        + C\left\{\sqrt{b_n} + \nuEdg + \mathfrak{e}_n \right\}.
\end{equation*}
Remember that we consider tuning parameters of the form
$\omega_n = n^{-a}$ and $b_n = n^{-b}$ for some positive reals $a$ and $b$ to be chosen in a smart way.
Indeed, our objective is the following: given~\(\rho\) (a parameter of the regularity of our class of distributions), we seek \(a\) and \(b\) to obtain the optimal (i.e. tending to~0 as fast as possible) rate for the quantity \(\sqrt{b_n} + \nuEdg + \mathfrak{e}_n\).
To do so, we obtain asymptotic equivalents of those terms as negative powers of~\(n\), and then optimize in~\(a\) and~\(b\).

\paragraph{Constraints on \(a\) and \(b\).}

Note that the rate restrictions imposed on $\omega_n$ and $b_n$ have an impact on the range of admissible values for $a$ and $b$.
The constraints on $a$ and $b$ are the following.
First, $\omega_n$ and $b_n$ both tend to $0$, so $a$ and $b$ must be positive.
Second, \(n^{1 - 3a/2} = n \omega_n^{3/2} \to +\infty\), so $1 - 3a/2 > 0$,
meaning that
$a < 2/3$.
Third, \(n^{1/2 - b} = b_n \sqrt{n} \to +\infty\), so $1/2 - b > 0$,
meaning that $b < 1/2$.

\paragraph{Bound on \(\sqrt{b_n} + \nuEdg + \mathfrak{e}_n\) decomposed between terms dependent on~\(a\) and those on~\(b\).}

By Remark \ref{rem:valid_delta_n}, we can choose $\delta_n \lesssim 1/\sqrt{n}$. Therefore
\begin{align*}
    \nuEdg
    &\lesssim \omega_n
    + \exp\!\big(-n(1-1/a_n)^2/(2 \majorantKurtxi)\big)
    + 1/\sqrt{n} \\
    &\lesssim n^{-a}
    + \exp\!\big(- n (2 \majorantKurtxi)^{-1}
    (1-1/(1+b_n))^2\big)
    + 1/\sqrt{n} \\
    &\lesssim n^{-a}
    + \exp\!\bigg(-n (2 \majorantKurtxi)^{-1}
    \Big(1 - \frac{1}{1+n^{-b}} \Big)^2 \bigg)
    + 1/\sqrt{n} \\
    &\lesssim n^{-a}
    + \exp\!\Big(-n (2 \majorantKurtxi)^{-1}
    (n^{-2b} + O(n^{-4b}) \Big)
    + 1/\sqrt{n}.
\end{align*}

In the mean time, $\mathfrak{e}_n$ depends on $\omega_n$ (hence on $a$) but not on $b_n$ (hence not on $b$). Thus, we obtain as a bound for our quantity of interest
\begin{equation*}
    \sqrt{b_n} + \nuEdg + \mathfrak{e}_n
    \lesssim
    1 / \sqrt{n}
    +
    \underbrace{n^{-a} + \mathfrak{e}_n}_{
    \displaystyle =: \widetilde{\mathfrak{a}}_n}
    + 
    \underbrace{\exp\!\Big(-n (2 \majorantKurtxi)^{-1}
    (n^{-2b} + O(n^{-4b}) \Big)
    + n^{-b/2}}_{\displaystyle =: \widetilde{\mathfrak{b}}_n},
\end{equation*}
where \(\widetilde{\mathfrak{a}}_n\) depends on the tuning parameter~\(a\) but not on b, and the reverse for \(\widetilde{\mathfrak{b}}_n\).
We thus analyze those terms separately.

\paragraph{Study of $\widetilde{\mathfrak{a}}_n
:= n^{-a} + \mathfrak{e}_n$ and choice of~\(a\).}

As stated in Theorem~\ref{thm:exogenous_case_asymptotic_uniform_exactness}, the expression of \(\mathfrak{e}_n\) depends on the value of~\(\rho\).
Therefore, we distinguish the following three cases.

\medskip

\textbf{Case \(\rho = 0\).}
The rate of convergence of $\widetilde{\mathfrak{a}}_n$ is then
$n^{-a} + n^{-1/4 + 3a/8}$.
In that situation, the best choice for \(a\) is such that 
$-a = -1/4 + 3a/8$
that is,
$a = 2/11$.
For that choice, \(\widetilde{\mathfrak{a}} \lesssim n^{-2/11}\).

\medskip

\textbf{Case \(\rho = \infty\).}
When \(\rho = +\infty\), the rate of convergence of \(\widetilde{\mathfrak{a}}_n\) is \(n^{-a} + n^{(a - 1)/4} + n^{3a/4 - 1/2}\).
As in the case \(\rho = 0\), we want to choose the tuning parameter \(a\) to obtain the fastest rate of convergence.
For that, it is enough to consider the situations where we equalize two out of the three exponents, namely
\begin{itemize}

\item 
\(-a = (a-1)/4 \iff a = 1/5\), which yields 
\(\widetilde{\mathfrak{a}}_n = n^{-1/5} + n^{-1/5} + n^{-7/20} \lesssim n^{-1/5}\).

\item
\(-a = 3a/4 - 1/2 \iff a = 2/7\), which yields
\(\widetilde{\mathfrak{a}}_n = n^{-2/7} + n^{-5/28} + n^{-2/7} \lesssim n^{-5/28}
\)

\item
\((a-1)/4 = 3a/4 - 1/2 \iff a = 1/2\), which yields
\(\widetilde{\mathfrak{a}}_n = n^{-1/2} + n^{-1/8} + n^{-1/8} \lesssim n^{-1/8}
\).

\end{itemize}
Overall, the fastest rate is attained at \(a = 1/5\) and, in that case, we have
\(\widetilde{\mathfrak{a}}_n \lesssim n^{-1/5}\).

\medskip

\textbf{Case \(\rho \in (0,\infty)\).}
Remember that, in this case, we have 
\begin{equation*}
    \mathfrak{e}_n
    =
    \min\left\{ n^{-1/4}\omega_n^{-3/8} \, , \, 
            \frac{1}{(n\omega_n)^{1/4}}
            + \frac{1}{\sqrt{n}\omega_n^{3/4}}
            + \ln(n)\left(
            \frac{\Indicator\{\rho \leq 1\}}{n^{\rho}}
            + \frac{\Indicator\{\rho > 1\}}{n^{(1+\rho)/2}} 
            \right) \right\}    
\end{equation*}
We introduce the shortcut notation $\widetilde{\rho} := 
\rho \Indicator\{\rho \leq 1\} 
+ \Indicator\{\rho > 1\} (1+\rho)/2$.
Using that notation and~\(\omega_n = n^{-b}\), we can write
\begin{align*}
    \widetilde{\mathfrak{a}}_n
    &\asymp \min \{
    n^{-a} + n^{-1/4 + 3a/8} \, , \,
    n^{-a} + n^{-1/4 + a/4} + n^{-1/2 + 3a/4}
    + \ln(n) n^{- \widetilde{\rho}} \}.
\end{align*}

As in the previous two cases, we want to choose~\(a\) so as obtain the fastest rate.
The additional layer of complexity compared to the case \(\rho = \infty\) is the minimum, but we follow the same logic: for both quantities within the minimum, we compute the intersection points of the lines of the exponents of~\(n\) that depends on~\(a\).
Doing so, we obtain:
\begin{itemize}
    \item $-a < -1/4 + 3a/8$
    if and only if $1/4 < a(1 + 3/8) = 11 a/8$    
    if and only if $a > 2/11$;
    
    \item $-a < -1/4 + a/4$
    if and only if $1/4 < 5a/4$
    if and only if $a > 1/5$;

    \item $-1/4 + a/4 < -1/2 + 3a/4$
    if and only if $1/4 < 2a/4$
    if and only if $a > 1/2$.
\end{itemize}

Therefore,
\begin{align*}
    \widetilde{\mathfrak{a}}_n \asymp \left\{
    \begin{array}{ll}
        \min \{
        n^{-a} \, , \,
        n^{-a}
        + \ln(n) n^{- \widetilde{\rho}} \}
        & \mbox{if } a \in (0, 2/11], \\ 
        \min \{
        n^{-1/4 + 3a/8} \, , \,
        n^{-a}
        + \ln(n) n^{- \widetilde{\rho}} \}
        & \mbox{if } a \in [2/11, 1/5], \\ 
        \min \{
        n^{-1/4 + 3a/8} \, , \,
        n^{-1/4 + a/4}
        + \ln(n) n^{- \widetilde{\rho}} \}
        & \mbox{if } a \in [1/5, 1/2], \\ 
        \min \{
        n^{-1/4 + 3a/8} \, , \,
        n^{-1/2 + 3a/4}
        + \ln(n) n^{- \widetilde{\rho}} \}
        & \mbox{if } a \in [1/2, 2/3).
    \end{array}
    \right.
\end{align*}

Then, for each possibility, we distinguish two cases depending on whether the term in $\widetilde{\rho}$ is dominant or not in the sum.
We can thus write
\begin{align*}
    \widetilde{\mathfrak{a}}_n \asymp \left\{
    \begin{array}{ll}
        \min \{
        n^{-a} \, , \,
        n^{-a} \}
        & \mbox{if } a \in (0, 2/11]
        \mbox{ and } \widetilde{\rho} > a, \\ 
        \min \{
        n^{-a} \, , \,
        \ln(n) n^{- \widetilde{\rho}} \}
        & \mbox{if } a \in (0, 2/11]
        \mbox{ and } \widetilde{\rho} \leq a, \\ 
        \min \{
        n^{-1/4 + 3a/8} \, , \,
        n^{-a} \}
        & \mbox{if } a \in [2/11, 1/5]
        \mbox{ and } \widetilde{\rho} > a, \\ 
        \min \{
        n^{-1/4 + 3a/8} \, , \,
        \ln(n) n^{- \widetilde{\rho}} \}
        & \mbox{if } a \in [2/11, 1/5]
        \mbox{ and } \widetilde{\rho} \leq a, \\ 
        \min \{
        n^{-1/4 + 3a/8} \, , \,
        n^{-1/4 + a/4} \}
        & \mbox{if } a \in [1/5, 1/2]
        \mbox{ and } \widetilde{\rho} > 1/4 - a/4, \\ 
        \min \{
        n^{-1/4 + 3a/8} \, , \,
        \ln(n) n^{- \widetilde{\rho}} \}
        & \mbox{if } a \in [1/5, 1/2]
        \mbox{ and } \widetilde{\rho} \leq 1/4 - a/4, \\ 
        \min \{
        n^{-1/4 + 3a/8} \, , \,
        n^{-1/2 + 3a/4} \}
        & \mbox{if } a \in [1/2, 2/3)
        \mbox{ and } \widetilde{\rho} > 1/2 - 3a/4, \\ 
        \min \{
        n^{-1/4 + 3a/8} \, , \,
        \ln(n) n^{- \widetilde{\rho}} \}
        & \mbox{if } a \in [1/2, 2/3)
        \mbox{ and } \widetilde{\rho} \leq 1/2 - 3a/4. \\ 
    \end{array}
    \right.
\end{align*}

We now simplify these cases, that is, in each case, we determine the minimum as a function of \(a\) and \(\widetilde{\rho}\).
\begin{itemize}
    \item  In the first case, it is straightforward to see that the minimum is $n^{-a}$.
    \item  In the second case, the minimum is $n^{-a}$ too.
    \item  In the third case, $a > 2/11$, so the minimum is $n^{-a}$.
    \item  For the fourth case, we have
    $-1/4 + 3a/8 \leq - \widetilde{\rho}$
    if and only if
    $\widetilde{\rho} \leq 1/4 - 3a/8$.
    Therefore, we divide this case depending on the form of the main term.
    \item  For the fifth case, we have
    $-1/4 + 3a/8 > -1/4 + a/4$
    if and only if
    $3a/8 > a/4$, which is always satisfied.
    \item  For the sixth case, we have
    $-1/4 + 3a/8 \leq - \widetilde{\rho}$
    if and only if
    $\widetilde{\rho} \leq 1/4 - 3a/8$.
    Therefore, we divide this case depending on the form of the main term.
    \item  For the seventh term, we have
    $-1/4 + 3a/8 < -1/2 + 3a/4$
    if and only if
    $1/4 < 3a/8$
    if and only if
    $2/3 < a$.
    \item For the eighth term, we have
    $-1/4 + 3a/8 < - \widetilde{\rho}$
    if and only if
    $\widetilde{\rho} < 1/4 - 3a/8$.
    Note that, for $a < 2/3$, we always have
    $1/4 - 3a/8 < 1/2 - 3a/4$.
\end{itemize}

In the end, we thus obtain:  
\begin{align*}
    \!\!\!\!\!\!\!\!\!\!\!\!\!\!\!\!\!\!\!\!\!
    \widetilde{\mathfrak{a}}_n
    \asymp
    f(\widetilde{\rho}, a) :=
    \left\{
    \begin{array}{ll}
        n^{-a}
        & \mbox{if } a \in (0, 2/11], \\ 
        n^{-a}
        & \mbox{if } a \in [2/11, 1/5]
        \mbox{ and } \widetilde{\rho} > a, \\ 
        \ln(n) n^{- \widetilde{\rho}}
        & \mbox{if } a \in [2/11, 1/5]
        \mbox{ and } 1/4 - 3a/8 < \widetilde{\rho} \leq a 
        \textcolor{black}{
        \mbox{ i.e. }
        a > \max(2/3 - 8\widetilde{\rho}/3 , \widetilde{\rho})
        }
        \\ 
        n^{-1/4 + 3a/8}
        & \mbox{if } a \in [2/11, 1/5]
        %
        \mbox{ and }
        \widetilde{\rho} \leq 1/4 - 3a/8
        \textcolor{black}{
        \mbox{ i.e. }
        a \leq 2/3 - 8\widetilde{\rho}/3 
        }
        \\ 
        n^{-1/4 + a/4}
        & \mbox{if } a \in [1/5, 1/2]
        \mbox{ and } \widetilde{\rho} > 1/4 - a/4, 
        \textcolor{black}{
        \mbox{ i.e. }
        a > 1 - 4\widetilde{\rho}
        }
        \\  
        \ln(n) n^{- \widetilde{\rho}}
        & \mbox{if } a \in [1/5, 1/2]
        \mbox{ and } 1/4 - 3a/8 < \widetilde{\rho} \leq 1/4 - a/4
        \textcolor{black}{
        \mbox{ i.e. }
        2/3 - 8\widetilde{\rho}/3 
        <
        a
        \leq 
        1 - 4\widetilde{\rho}
        }
        \\  
        n^{-1/4 + 3a/8} 
        & \mbox{if } a \in [1/5, 1/2]
        \mbox{ and } \widetilde{\rho} \leq 1/4 - 3a/8
        \textcolor{black}{
        \mbox{ i.e. }
        a \leq 2/3 - 8\widetilde{\rho}/3
        }
        \\ 
        n^{-1/4 + 3a/8}
        & \mbox{if } a \in [1/2, 2/3] 
        \mbox{ and } \widetilde{\rho} > 1/2 - 3a/4, \textcolor{black}{\mbox{ i.e. } a > 2/3 - 4\widetilde{\rho}/3}\\ 
        %
        %
        \ln(n) n^{- \widetilde{\rho}}
        & \mbox{if } a \in [1/2, 2/3)
        \mbox{ and } 1/4 - 3a/8 \leq \widetilde{\rho} \leq 1/2 - 3a/4, \textcolor{black}{\mbox{ i.e. } 2/3 - 8\widetilde{\rho}/3 \leq a \leq 2/3 -4\widetilde{\rho}/3 
        }
        \\ 
        n^{-1/4 + 3a/8}
        & \mbox{if } a \in [1/2, 2/3)
        \mbox{ and } 
        \widetilde{\rho} < 1/4 - 3a/8
        \textcolor{black}{
        \mbox{ i.e. }
        a < 2/3 - 8\widetilde{\rho}/3
        }
    \end{array}
    \right.
\end{align*}
Those regimes are summarized in Figure~\ref{fig:error_regimes} below.
\begin{figure}[H]
    \centering
    \includegraphics[width=0.7\linewidth]{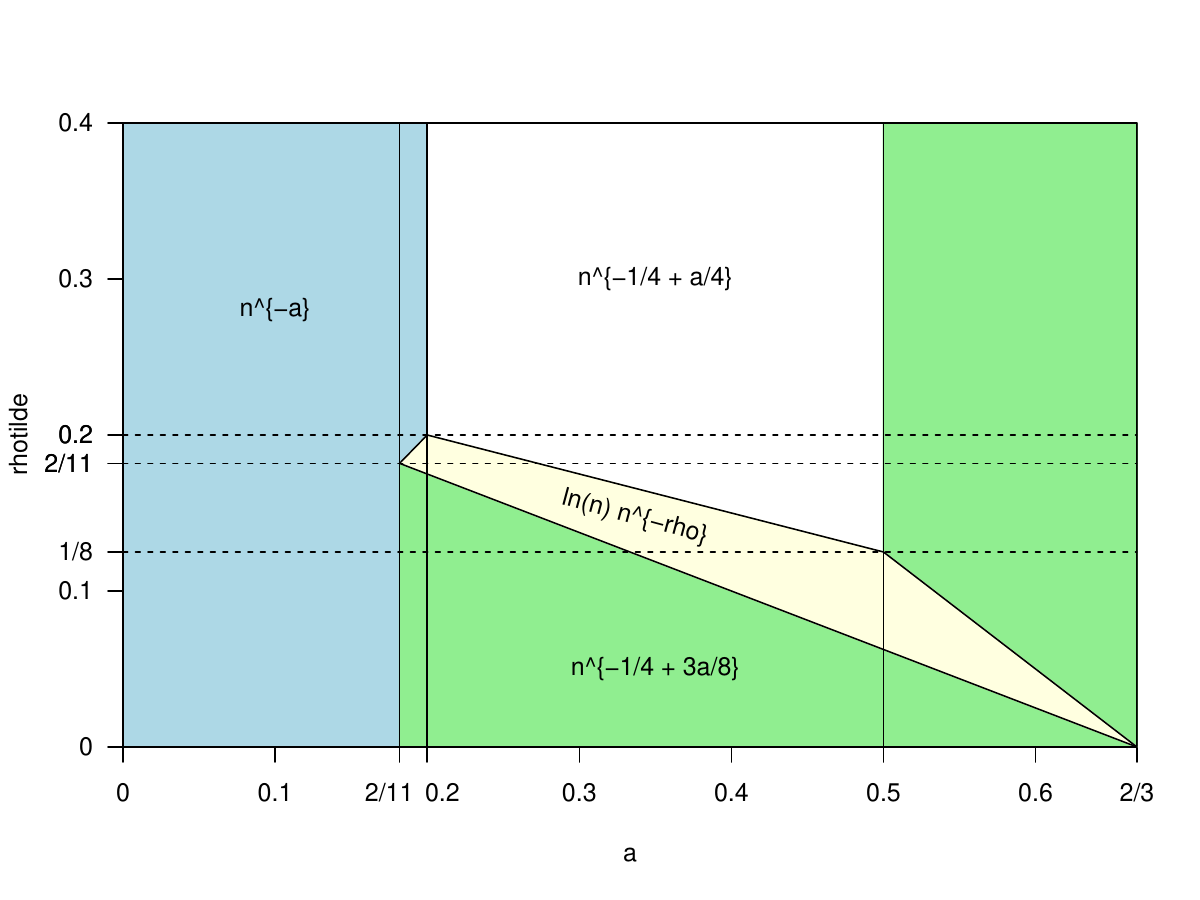}
    \caption{Regimes of $\widetilde{\mathfrak{a}}_n$ as a function of \(a\) and \(\widetilde{\rho}\). 
    Regions of the same color indicate the same expression for the rate.}
    \label{fig:error_regimes}
\end{figure}

For each regime, it remains to pick the value of~\(a\) that yields the fastest rate, that is, given \(\widetilde{\rho}\), to minimize the above-defined function \(a \mapsto f(\widetilde{\rho}, a)\).
We distinguish different cases depending on the value of \(\widetilde{\rho}\).

\begin{itemize}

\item 
For $\widetilde{\rho} > 1/5$, the function does not depend on $\widetilde{\rho}$ anymore, and is just equal to
\begin{align*}
    f(\widetilde{\rho},a) = 
    \left\{
    \begin{array}{ll}
        n^{-a} & \mbox{if } 0 < a \leq 0.2 \\
        n^{-1/4 + a/4} & \mbox{if } 0.2 \leq a \leq 0.5 \\
        n^{-1/4 + 3a/8} & \mbox{if } 0.5 \leq a \leq 2/3.
    \end{array}
    \right.
\end{align*}
This is equivalent to maximizing in $a$ the function
\begin{align*}
    a \mapsto 
    \left\{
    \begin{array}{ll}
        a & \mbox{if } 0 < a \leq 0.2 \\
        1/4 - a/4 & \mbox{if } 0.2 \leq a \leq 0.5 \\
        1/4 - 3a/8 & \mbox{if } 0.5 \leq a \leq 2/3.
    \end{array}
    \right.
\end{align*}
We study separately the three intervals.
The maximums are respectively
$0.2$,
$1/4 - (1/5)/4 = 1/5 = 0.2$
and $1/4 - (3/2)/8 = 1/4 - 3/16 = 1/16 = 0.0625$.
Therefore the optimal value is $a = 0.2$, giving us the rate $n^{-1/5}$.

\medskip

We now consider the other cases when \(\widetilde{\rho} \leq 1/5\).
For these cases, remark that \(\rho = \widetilde{\rho}\) since, by the definition of \(\widetilde{\rho}\), we have \(\widetilde{\rho} = \rho\) for any~\(\rho \leq 1\).
In what follows, we could thus replace \(\widetilde{\rho}\) with \(\rho\).

\item 
In the case $\widetilde{\rho} \in [2/11, 1/5]$ (equivalently, \(\rho \in [2/11, 1/5]\)),
the function to be minimized is:
\begin{align*}
    f(\widetilde{\rho},a) =
    \left\{
    \begin{array}{ll}
        n^{-a} & \mbox{if } 0 < a \leq \widetilde{\rho} \\
        \ln(n) n^{- \widetilde{\rho}}
        & \mbox{if } \widetilde{\rho} \leq a
        \leq 1 - 4\widetilde{\rho} \\
        n^{-1/4 + a/4}
        & \mbox{if }
        1 - 4\widetilde{\rho} \leq a \leq 0.5 \\
        n^{-1/4 + 3a/8} & \mbox{if } 0.5 \leq a \leq 2/3.
    \end{array}
    \right.
\end{align*}
On each interval, the smallest rates are respectively
$n^{- \widetilde{\rho}}$,
$\ln(n) n^{- \widetilde{\rho}}$,
$n^{-1/4 + ( 1 - 4\widetilde{\rho})/4}
= n^{ - \widetilde{\rho}}$
and
$n^{-1/4 + 3 \times (1/2)/8}
= n^{-1/4 + 3/16}
= n^{-1/16}$.
Since, by hypothesis in this case, $\widetilde{\rho} \geq 2/11 > 1/16$,
the optimal rate is then $n^{- \widetilde{\rho}}$ and is obtained for \(a = \widetilde{\rho} = \rho\).

\item  
In the case $\widetilde{\rho} \in [1/8, 2/11]$ (equivalently, \(\rho \in [1/8, 2/11]\)),
the function to be minimized is:
\begin{align*}
    f(\widetilde{\rho},a) =
    \left\{
    \begin{array}{ll}
        n^{-a} & \mbox{if } 0 < a \leq 2/11 \\
        n^{-1/4 + 3a/8}
        & \mbox{if } 2/11 \leq a \leq 2/3 - 8\widetilde{\rho}/3 \\
        \ln(n) n^{- \widetilde{\rho}}
        & \mbox{if } 2/3 - 8\widetilde{\rho}/3 \leq a
        \leq 1 - 4\widetilde{\rho} \\
        n^{-1/4 + a/4}
        & \mbox{if }
        1 - 4\widetilde{\rho} \leq a \leq 0.5 \\
        n^{-1/4 + 3a/8} & \mbox{if } 0.5 \leq a \leq 2/3.
    \end{array}
    \right.
\end{align*}
Again, we study the interval separately.
On each interval, the smallest rates are respectively
$n^{-2/11}$,
$n^{-1/4 + 3 \times (2 / 11) / 8}
= n^{-1/4 + 3/44}
= n^{-11/44 + 3/44}
= n^{-8/44}
= n^{-2/11}$,
$\ln(n) n^{- \widetilde{\rho}}$,
$n^{-1/4 + (1 - 4\widetilde{\rho})/4}
= n^{- \widetilde{\rho}}$,
and
$n^{-1/4 + 3 (1/2)/8}
= n^{-1/4 + 3/16}
= n^{-4/16 + 3/16}
= n^{-1/16}$.
Consequently, the fastest rate is $n^{-2/11}$ obtained when \(a = 2/11\).

\item 

In the case $\widetilde{\rho} \in [0, 1/8]$ (equivalently \(\rho \in [0, 1/8]\)),
the function to be minimized is:
\begin{align*}
    f(\widetilde{\rho},a) =
    \left\{
    \begin{array}{ll}
        n^{-a} & \mbox{if } 0 < a \leq 2/11 \\
        n^{-1/4 + 3a/8}
        & \mbox{if } 2/11 \leq a \leq 2/3 - 8\widetilde{\rho}/3 \\
        \ln(n) n^{- \widetilde{\rho}}
        & \mbox{if } 2/3 - 8\widetilde{\rho}/3 \leq a
        \leq 2/3 - 4\widetilde{\rho}/3 \\
        n^{-1/4 + 3a/8}
        & \mbox{if } 2/3 - 4\widetilde{\rho}/3 \leq a \leq 2/3.
    \end{array}
    \right.
\end{align*}
On each interval, the smallest rates are respectively
$n^{-2/11}$,
$n^{-1/4 + 3 \times (2 / 11) / 8}
= n^{-1/4 + 3 / 44}
= n^{-11/44 + 3 / 44}
= n^{-8/44}
= n^{-2/11}$,
$\ln(n) n^{- \widetilde{\rho}}$,
$n^{-1/4 + 3 \times (2/3 - 4\widetilde{\rho} / 3) /8}
= n^{-1/4 + (2 - 4\widetilde{\rho}) /8}
= n^{-1/4 + 1/4 - \widetilde{\rho}/2}
= n^{- \widetilde{\rho} / 2}
$.
Since $\widetilde{\rho} \leq 1/8$, we obtain
$\widetilde{\rho} / 2 \leq 1/16 < 2/11$.
So the optimal rate is $n^{-2/11}$.
As in the previous case (\(\rho \in [1/8, 2/11]\)), it is obtained for \(a = 2/11\).

\end{itemize}

All in all, the choice of \(a\) yielding the fastest rate for \(\widetilde{\mathfrak{a}}\) as a function of \(\rho\) is thus: 
\begin{align*}
    a
    =
    r(\rho)
    =
    \left\{
    \begin{array}{ll}
        1/5 & \mbox{if } \rho > 1/5 \\    
        \rho & \mbox{if } 2/11 \leq \rho \leq 1/5\\
        2/11
        & \mbox{if } 0 \leq \rho < 2/11.
    \end{array}
    \right.
    =
    \frac{2}{11} \Indicator\{\rho < 2/11\}
    + \rho \Indicator\{2/11 \leq \rho \leq 1/5\}
    + \frac{1}{5} \Indicator\{1/5 < \rho\},
\end{align*}
and the corresponding rate for \(\widetilde{\mathfrak{a}}\) happens to be precisely \(\widetilde{\mathfrak{a}} \lesssim n^{-r(\rho)}\) for any~\(\rho \in (0, +\infty)\).

\medskip

Let us conclude this paragraph devoted to the study of \(\widetilde{\mathfrak{a}}\) by a recap.
Depending on the value of~\(\rho\), the choice of~\(a\) yielding the fastest rate are, respectively
\begin{itemize}

\item if \(\rho = 0\), \(a = 2/11\) yielding \(\widetilde{\mathfrak{a}} \lesssim n^{-2/11}\);

\item if \(\rho \in (0, +\infty)\), \(a = r(\rho)\) yielding 
\(\widetilde{\mathfrak{a}} \lesssim n^{-r(\rho)}\);

\item if \(\rho = +\infty\), \(a = 1/5\) yielding 
\(\widetilde{\mathfrak{a}} \lesssim n^{-1/5}\).

\end{itemize}

Remark that \(r(0) = 2/11\) and \(r(+\infty) = 1/5\).
Therefore, we can summarize the three cases in one expression: for any~\(\rho \in [0, +\infty]\), the choice of~\(a\) that yields the fastest rate is \(a = r(\rho)\) yielding \(\widetilde{\mathfrak{a}} \lesssim n^{-r(\rho)}\).

\paragraph{Study of $\widetilde{\mathfrak{b}}_n$ and choice of~\(b\).}


We now turn to the analysis of the term in the bound of \(\sqrt{b_n} + \nuEdg + \mathfrak{e}_n\) that depends only on the tuning parameter~\(b\) (and not on~\(a\)).
Let us first recall its expression:
\begin{equation*}
    \widetilde{\mathfrak{b}}_n
    :=
    \exp\!\Big(-n (2 \majorantKurtxi)^{-1}
    (n^{-2b} + O(n^{-4b}) \Big)
    + n^{-b/2},
\end{equation*}
and the bound on \(\sqrt{b_n} + \nuEdg + \mathfrak{e}_n\):
\begin{equation*}
    \sqrt{b_n} + \nuEdg + \mathfrak{e}_n
    \lesssim
    n^{-1/2}
    +
    \widetilde{\mathfrak{a}}_n
    +
    \widetilde{\mathfrak{b}}_n.
\end{equation*}

The larger~\(b\) (with the constraint that \(b < 1/2\)), the fastest the rate of \(n^{-b/2}\): a priori, we would like to pick~\(b\) as large as possible.

However, the previous paragraph shows that the rate of
\(\widetilde{\mathfrak{a}}_n\)
is never faster than $n^{-1/5}$.
Hence, similarly to the method followed to choose~\(a\), we want to choose \(b\) so as to equalize the fastest possible rate of \(\widetilde{\mathfrak{a}}_n\) with the rate of \(n^{-b/2}\).
In other words, we solve
\(-1/5 = -b/2\), that is,
\(b = 2/5\).
For \(b = 2/5\)
(and more generally, for any \(b \in (2/5 , 1/2)\)), we have
\begin{equation*}
    \widetilde{\mathfrak{b}}_n
    \lesssim
    n^{-1/5}.
\end{equation*}
Combined with the choice~\(a = r(\rho)\),
we obtain, for \(b = 2/5\),
\begin{align*}
    \sqrt{b_n} + \nuEdg + \mathfrak{e}_n
    & \lesssim
    n^{-1/2}
    + \widetilde{\mathfrak{a}}_n
    + \widetilde{\mathfrak{b}}_n 
    \\ 
    & \lesssim
    n^{-1/2}
    + n^{-1/5}
    + n^{-1/5}
    \\
    & \lesssim
    n^{-1/5}.
\end{align*}

Finally, to conclude the proof, we remark the following.
Fix any~\(\rho \in [0, +\infty]\).
For any choice of \(b \in [2/5, 1/2)\), the term \(n^{-b/2}\) has a faster rate of decrease to zero than the term \(\widetilde{\mathfrak{a}}_n\) when choosing \(a = r(\rho)\) since the rate of \(\widetilde{\mathfrak{a}}_n\) is \(r(\rho) \leq 1/5\).
The latter term is thus dominant.
As a consequence, for any~\(b \in [2/5, 1/2)\), if \(a = r(\rho)\),
\begin{align*}
    \sqrt{b_n} + \nuEdg + \mathfrak{e}_n
    & \lesssim
    n^{-1/2}
    +
    \widetilde{\mathfrak{a}}_n
    +
    \widetilde{\mathfrak{b}}_n 
    \\ 
    & \lesssim
    n^{-1/2}
    +
    n^{-b/2}
    +
    n^{-r(\rho)}
    \\
    & \lesssim
    n^{-r(\rho)}.
\end{align*}

\section{Additional lemmas}
\label{appendix:sec:additional_lemmas}

\begin{lemma}
    \label{lem:concentr_sq_mat}
    Let $n \geq 1$ and $(A_i)_{i=1}^n$ be an \iid{} sequence of random square matrices of dimension $p$ with finite second moment. For every $\gamma\in(0,1)$,
    \begin{align*}
        & \Prob\left( \left|\left| \frac{1}{n}\sum_{i=1}^nA_i - \E[A] \right|\right| > \sqrt{\frac{B}{n\gamma}} \right) < \gamma,
    \end{align*}
    where $B := \E\!\left[ \left|\left| \operatorvec(A-\E[A]) \right|\right|^2 \right].$
\end{lemma}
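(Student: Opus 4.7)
The plan is to combine Markov's inequality applied to the squared operator norm with the standard bound by the Frobenius norm, and then exploit the i.i.d.\ structure to compute the second moment exactly.

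First, for any random matrix $M$, the spectral norm is dominated by the Frobenius norm, which coincides with the Euclidean norm of the vectorization: $\|M\| \leq \|M\|_F = \|\operatorvec(M)\|$. Applying this with $M = n^{-1}\sum_{i=1}^n A_i - \E[A]$, it suffices to control $\E\!\left[\|\operatorvec(M)\|^2\right]$ and then invoke Markov's inequality on the nonnegative random variable $\|\operatorvec(M)\|^2$.

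Next, the vectorization is linear, so
\[
    \operatorvec\!\left(\frac{1}{n}\sum_{i=1}^n A_i - \E[A]\right)
    = \frac{1}{n}\sum_{i=1}^n \operatorvec(A_i - \E[A]),
\]
and the terms $\operatorvec(A_i - \E[A])$ are i.i.d.\ and centered in $\Rb^{p^2}$ with common second moment $B$. Expanding the squared Euclidean norm and using independence to kill the cross terms yields
\[
    \E\!\left[\left\|\frac{1}{n}\sum_{i=1}^n \operatorvec(A_i - \E[A])\right\|^2\right] = \frac{1}{n}\,\E\!\left[\|\operatorvec(A - \E[A])\|^2\right] = \frac{B}{n}.
\]

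Finally, Markov's inequality gives, for $t = \sqrt{B/(n\gamma)}$,
\[
    \Prob\!\left(\left\|\frac{1}{n}\sum_{i=1}^n A_i - \E[A]\right\| > t\right)
    \leq \frac{\E\!\left[\|\operatorvec(M)\|^2\right]}{t^2}
    \leq \frac{B/n}{B/(n\gamma)} = \gamma,
\]
which is the desired inequality. There is no real obstacle here: the only thing to be a bit careful about is invoking $\|M\| \leq \|\operatorvec(M)\|$ (rather than working directly with the spectral norm, for which cross-term cancellation would fail) so that the i.i.d.\ computation of the second moment goes through cleanly.
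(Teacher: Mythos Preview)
Your proof is correct and follows essentially the same route as the paper: bound the spectral norm by the Frobenius norm (equivalently $\|M\|^2 \leq \sum_{l,l'} (M^{(l,l')})^2 = \|\operatorvec(M)\|^2$), apply Markov's inequality to the squared norm, and use the i.i.d.\ structure to reduce $\E[\|\operatorvec(M)\|^2]$ to $B/n$. The only cosmetic difference is that you phrase the Frobenius bound via $\operatorvec$ from the outset, whereas the paper writes out the entrywise sum explicitly; the content is identical. (As a minor remark, both your argument and the paper's yield $\leq \gamma$ rather than the strict $< \gamma$ stated in the lemma.)
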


\medskip

\noindent
{\it Proof of
Lemma~\ref{lem:concentr_sq_mat}: }
By Markov's inequality and the inequality $||M||^2 \leq \sum_{1 \leq l,l' \leq p}(M^{(l,l')})^2$ valid for every square matrix $M,$ we get for every $t>0$
\begin{align*}
    \Prob\left( \left|\left|
    \frac{1}{n}\sum_{i=1}^n A_i - \E[A] \right|\right| > t \right)
    & \leq \frac{1}{t^2}\E\!\left[ \left|\left| \frac{1}{n}\sum_{i=1}^nA_i - \E[A] \right|\right|^2 \right] \\
    & \leq \frac{1}{t^2}\E\!\left[ \sum_{1 \leq l,l' \leq p}\left( \frac{1}{n}\sum_{i=1}^nA_i^{(l,l')} - \E\!\left[A^{(l,l')}\right] \right)^2 \right].
\end{align*}
Since $(A_i)_{i=1}^n$ is an \iid{} sequence, we can write
\begin{align*}
    \E\!\left[ \sum_{1 \leq l,l' \leq p}\left( \frac{1}{n}\sum_{i=1}^nA_i^{(l,l')} - \E\!\left[A^{(l,l')}\right] \right)^2 \right] & = \frac{1}{n}\sum_{1 \leq l,l' \leq p}\E\!\left[\left(A^{(l,l')}-\E\!\left[A^{(l,l')}\right]\right)^2\right] \\
    & = \frac{1}{n}\E\!\left[ \left|\left| \operatorvec(A-\E[A]) \right|\right|^2 \right].
\end{align*}

Choosing $t = \sqrt{\frac{B}{n\gamma}}$ concludes the proof. $\Box$

\medskip

For reader's convenience, we recall the following eigenvalue stability lemma, which is a corollary of Weyl's inequality $\minvp(A+B) \leq \minvp(A) + \maxvp(B)$ for real symmetric matrices, see for example \cite[Section 8.2]{hogben2006handbook}.
\begin{lemma}
    \label{lem:lower_bound_mineigen_sym_mat}
    For $A_1$ and $A_2$ be two symmetric matrices of dimension $d$, we have
    $|\minvp(A_1) - \minvp(A_2)|
    \leq ||A_1-A_2||$.
\end{lemma}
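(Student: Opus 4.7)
The plan is to apply Weyl's inequality twice, once in each direction, and then convert the resulting bound on the maximal eigenvalue of the difference into a bound on its spectral norm. First, writing $A_1 = A_2 + (A_1 - A_2)$ and invoking Weyl's inequality (as stated just above the lemma), we get
\begin{equation*}
    \minvp(A_1) \leq \minvp(A_2) + \maxvp(A_1 - A_2).
\end{equation*}
By the symmetric role of $A_1$ and $A_2$, we also have
\begin{equation*}
    \minvp(A_2) \leq \minvp(A_1) + \maxvp(A_2 - A_1).
\end{equation*}
Rearranging both inequalities gives
\begin{equation*}
    \minvp(A_1) - \minvp(A_2) \leq \maxvp(A_1 - A_2)
    \quad \textnormal{and} \quad
    \minvp(A_2) - \minvp(A_1) \leq \maxvp(A_2 - A_1),
\end{equation*}
so that
\begin{equation*}
    | \minvp(A_1) - \minvp(A_2) | \leq \max\{ \maxvp(A_1 - A_2),\, \maxvp(A_2 - A_1) \}.
\end{equation*}

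To conclude, the remaining step is to observe that for any real symmetric matrix $B$, the spectral norm satisfies $\|B\| = \max_i |\lambda_i(B)|$, so in particular $\maxvp(B) \leq \|B\|$. Applying this to $B = A_1 - A_2$ and to $B = A_2 - A_1$, and noting that $\|A_1 - A_2\| = \|A_2 - A_1\|$ by definition of the operator norm, yields the desired inequality $|\minvp(A_1) - \minvp(A_2)| \leq \|A_1 - A_2\|$. There is no real obstacle here: the statement is a two-line corollary of Weyl's inequality, and the only substantive point is the conversion between $\maxvp(\cdot)$ on symmetric matrices and the spectral norm, which is immediate from the spectral theorem.
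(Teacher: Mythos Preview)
Your proof is correct and follows essentially the same approach as the paper: both apply Weyl's inequality and then bound $\maxvp(A_1-A_2)$ by the spectral norm. The only cosmetic difference is that the paper disposes of the trivial equality case and then assumes without loss of generality $\minvp(A_1)>\minvp(A_2)$, so it needs Weyl's inequality only once, whereas you apply it in both directions.
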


\medskip

\noindent
{\it Proof of
Lemma~\ref{lem:lower_bound_mineigen_sym_mat}: }
If $\minvp(A_1) = \minvp(A_2)$, the results follows directly. 
Without loss of generality, $\minvp(A_1) > \minvp(A_2)$.
Applying Weyl's inequality with $A:= A_2$ and $B:= A_1-A_2$
we get $0 < \minvp(A_1) - \minvp(A_2) \leq \maxvp(A_1 - A_2) \leq ||A_1-A_2||$. $\Box$

%

\begin{lemma}
\label{lem:concentr_inf_function}
Let $n \geq 1$ and \((X_i)_{i=1}^n\) be an \iid{} sequence of random vectors of dimension~$p$ and 
\((\eps_i)_{i=1}^n\) an \iid{} sequence of random variables such that \(\E[X\eps] = 0\) and 
\(\E \big[\norme{X\eps}^4 \big] < +\infty\).
For every $\gamma\in(0,1)$,
\begin{align*}
    & \Prob\left( \bigg|\bigg| \frac{1}{n}\sum_{i=1}^nX_i\eps_i \bigg|\bigg|
    \leq \sqrt{\frac{2}{n}}\left( \frac{\E \big [\norme{X\eps}^4 \big]}{\gamma} \right)^{1/4} \right) \geq 1-\gamma.
\end{align*}
\end{lemma}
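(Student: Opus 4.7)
}
Set $Y_i := X_i \eps_i$. By hypothesis $\E[Y_i] = 0$ and $B := \E[\norme{Y}^4] < +\infty$. Denote $S_n := n^{-1} \sum_{i=1}^n Y_i$. The plan is to apply Markov's inequality to the fourth power $\norme{S_n}^4$; this will yield the claimed $\gamma^{-1/4}$ dependence, whereas applying Markov to $\norme{S_n}^2$ would only give the weaker rate $\gamma^{-1/2}$. The key computation is therefore an explicit bound on $\E\big[\norme{S_n}^4\big]$ in terms of $B$.

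I would expand
\begin{equation*}
    \E\big[\norme{n S_n}^4\big]
    = \sum_{1 \leq i,j,k,l \leq n} \E\big[ (Y_i'Y_j)(Y_k'Y_l) \big]
\end{equation*}
and exploit independence plus $\E[Y_i]=0$: any term in which some index appears exactly once yields, by conditioning, a factor of the form $\E[Y_m]' (\cdot) = 0$. Only two families of indices survive: (a) the diagonal $i=j=k=l$, contributing $n \, \E[\norme{Y}^4] = n B$; (b) the three ``two-pair'' patterns $\{i=j,\,k=l,\,i\neq k\}$, $\{i=k,\,j=l,\,i\neq j\}$, $\{i=l,\,j=k,\,i\neq j\}$, each with $n(n-1)$ index choices. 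On pattern (a) of type $i=j$, $k=l$, the contribution is $\E[\norme{Y_i}^2]\E[\norme{Y_k}^2] = \E[\norme{Y}^2]^2$. On the other two, Cauchy-Schwarz applied to $(Y_i'Y_j)^2 \leq \norme{Y_i}^2 \norme{Y_j}^2$ together with independence gives the same bound $\E[\norme{Y}^2]^2$. Jensen yields $\E[\norme{Y}^2]^2 \leq \E[\norme{Y}^4] = B$, so
\begin{equation*}
    \E\big[\norme{n S_n}^4\big]
    \leq nB + 3n(n-1) B \leq 3 n^2 B,
\end{equation*}
and hence $\E\big[\norme{S_n}^4\big] \leq 3 B / n^2 \leq 4 B/n^2$.

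Markov's inequality then gives, for every $t > 0$,
\begin{equation*}
    \Prob(\norme{S_n} > t)
    \leq \frac{\E[\norme{S_n}^4]}{t^4}
    \leq \frac{4 B}{n^2 t^4}.
\end{equation*}
Choosing $t$ so that the right-hand side equals $\gamma$ yields $t = \sqrt{2/n}\,(B/\gamma)^{1/4}$, which is precisely the stated threshold. Taking the complementary event concludes the proof. The main (mild) obstacle is simply keeping the combinatorial accounting of the four-index sum clean; once the surviving index patterns are identified, Cauchy-Schwarz and Jensen dispatch the rest mechanically.
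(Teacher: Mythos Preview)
Your proof is correct and follows essentially the same route as the paper: Markov's inequality on the fourth moment of the sample mean, a combinatorial expansion of $\E[\norme{nS_n}^4]$ that kills all terms with a lone index by centering and independence, then Cauchy--Schwarz and Jensen to bound the surviving diagonal and two-pair contributions by $4n^2B$. The only cosmetic difference is that the paper expands $\norme{\cdot}^4$ componentwise as $\sum_{l,l'}(\cdot)$ before summing over observation indices, whereas you use the coordinate-free form $\sum_{i,j,k,l}(Y_i'Y_j)(Y_k'Y_l)$; the resulting bounds and the final choice of $t$ coincide.
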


\medskip

\noindent
{\it Proof of
Lemma~\ref{lem:concentr_inf_function}: }
By Markov's inequality, we can write
\begin{align}
\label{eq:concentr_inf_1}
    & \Prob\left( \left|\left| \frac{1}{n}\sum_{i=1}^nX_i\eps_i \right|\right| > t \right) \leq \frac{1}{(tn)^4}\E\!\left[ \left|\left| \sum_{i=1}^nX_i\eps_i \right|\right|^4 \right].
\end{align}

We now focus on $\E\!\left[ \left|\left| \sum_{i=1}^nX_i\eps_i \right|\right|^4 \right]$. We have
\begin{align}
\label{eq:concentr_inf_2}
    & \E\!\left[ \left|\left| \sum_{i=1}^nX_i\eps_i \right|\right|^4 \right] = \sum_{1 \leq l,l' \leq d}\sum_{1 \leq i,j,i',j' \leq n} \E\!\left[ X_{il}\eps_iX_{jl}\eps_jX_{i'l'}\eps_{i'}X_{j'l'}\eps_{j'} \right]
\end{align}

The sequence $(X_i\eps_i)_{i=1}^n$ is \iid{} and centered. Consequently,
\begin{align*}
    \E\big[ X_{il} &\eps_i X_{jl} \eps_j X_{i'l'} \eps_{i'} X_{j'l'}\eps_{j'} \big] \\
    &= \E\!\left[ X_l^2X_{l'}^2\eps^4 \right]\mathbbm{1}_{\left\{ i=j=i'=j' \right\}} + \E\!\left[ X_l^2\eps^2 \right]\E\!\left[ X_{l'}^2\eps^2 \right]\mathbbm{1}_{\left\{ i=j, i'=j', i\neq i' \right\}} \\
    & + \E\!\left[ X_lX_{l'}\eps^2 \right]^2\left\{ \mathbbm{1}_{\left\{ i=i', j=j', i\neq j \right\}} + \mathbbm{1}_{\left\{ i=j', j=i', i\neq j \right\}} \right\}.
\end{align*}

Combining this with~\eqref{eq:concentr_inf_2}, we get (using Jensen's inequality on the last line)
\begin{align}
\label{eq:concentr_inf_3}
    & \E\!\left[ \left|\left| \sum_{i=1}^nX_i\eps_i \right|\right|^4 \right] \nonumber \\
    &= \sum_{1 \leq l,l' \leq d} \left\{ n\E\!\left[ X_l^2X_{l'}^2\eps^4 \right] + n(n-1)\E\!\left[ X_l^2\eps^2 \right]\E\!\left[ X_{l'}^2\eps^2 \right] + 2n(n-1)\E\!\left[ X_lX_{l'}\eps^2 \right]^2 \right\} \nonumber \\
    &= n\E\!\left[ ||X\eps||^4 \right] + n(n-1)\E\!\left[||X\eps||^2\right]^2 + 2n(n-1)\sum_{1 \leq l,l' \leq d}\E\!\left[ X_lX_{l'}\eps^2 \right]^2 \nonumber \\
    &\leq 4n^2\E\!\left[ ||X\eps||^4 \right].
\end{align}

We plug~\eqref{eq:concentr_inf_3} back in~\eqref{eq:concentr_inf_1} and choose $t = \sqrt{\frac{2}{n}}\left( \frac{\E[||X\eps||^4]}{\gamma} \right)^{1/4}$ to conclude. $\Box$

\begin{lemma}
    \label{lem:up_bound_moments}
    Under Assumptions~\ref{hyp:subset_exo_non-asymptotic_validity}(i),~\ref{hyp:subset_exo_non-asymptotic_validity}(iii) and~\ref{hyp:subset_exo_unif_asymptotic_exactness}(ii), we have
    \begin{align*}
        \E \big[||X\varepsilon||^4 \big]
        \leq \majorantEnormeXquatreunplusrho^{\frac{1}{1+\rho}}
        \majorantEnormeXepsquatre \text{ and }
        \E\left[ \left(u'\E[XX']^{-1}X \varepsilon\right)^4 \right]
        \leq \frac{||\uu||^4 \majorantEnormeXepsquatre}{\minlambdaEXXt^2}.
    \end{align*}
\end{lemma}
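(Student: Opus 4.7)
The plan is to reduce both bounds to the assumed bound $\E[\|\widetilde{X}\eps\|^4]\le\majorantEnormeXepsquatre$ by exploiting the relation $X=\E[XX']^{1/2}\widetilde{X}$ together with elementary operator-norm inequalities, and then to absorb the remaining factor $\maxvp(\E[XX'])$ via the moment bound in Assumption~\ref{hyp:subset_exo_unif_asymptotic_exactness}(ii).

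For the first inequality, I would start from the identity $X=\E[XX']^{1/2}\widetilde{X}$, from which
\[
\|X\|^2 = \widetilde{X}'\E[XX']\widetilde{X}\le \maxvp(\E[XX'])\,\|\widetilde{X}\|^2,
\]
so that $\|X\eps\|^4=\eps^4\|X\|^4\le \maxvp(\E[XX'])^2\|\widetilde{X}\eps\|^4$. Taking expectations and using Assumption~\ref{hyp:subset_exo_non-asymptotic_validity}(iii) gives $\E[\|X\eps\|^4]\le \maxvp(\E[XX'])^2\,\majorantEnormeXepsquatre$. It then remains to bound $\maxvp(\E[XX'])$. Since $\E[XX']$ is symmetric positive semidefinite, $\maxvp(\E[XX'])=\|\E[XX']\|\le \E[\|XX'\|]=\E[\|X\|^2]$; and Jensen's inequality (applied to $x\mapsto x^{2(1+\rho)}$, which is convex for $\rho\ge 0$) combined with Assumption~\ref{hyp:subset_exo_unif_asymptotic_exactness}(ii) yields $\E[\|X\|^2]\le \majorantEnormeXquatreunplusrho^{1/(2(1+\rho))}$. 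Squaring gives the announced factor $\majorantEnormeXquatreunplusrho^{1/(1+\rho)}$.

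For the second inequality, the idea is to rewrite the linear functional in terms of $\widetilde{X}$. Setting $v:=\E[XX']^{-1/2}u$, one has $u'\E[XX']^{-1}X\eps=v'\widetilde{X}\eps$, and Cauchy--Schwarz gives
\[
(v'\widetilde{X}\eps)^4\le \|v\|^4\,\|\widetilde{X}\eps\|^4.
\]
Taking expectations and using Assumption~\ref{hyp:subset_exo_non-asymptotic_validity}(iii) yields $\E[(u'\E[XX']^{-1}X\eps)^4]\le \|v\|^4\,\majorantEnormeXepsquatre$. The bound $\|v\|^2\le \|\E[XX']^{-1}\|\,\|u\|^2=\|u\|^2/\minvp(\E[XX'])\le \|u\|^2/\minlambdaEXXt$ (using Assumption~\ref{hyp:subset_exo_non-asymptotic_validity}(i)) then gives the stated inequality.

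No step looks genuinely delicate here; the only mildly subtle point is making sure that the bound on $\maxvp(\E[XX'])$ in the first part uses only the $(4(1+\rho))$-th moment of $\|X\|$ (via $\|XX'\|=\|X\|^2$ plus Jensen), so that the proof goes through uniformly for every $\rho\in[0,+\infty]$ in Assumption~\ref{hyp:subset_exo_unif_asymptotic_exactness}.
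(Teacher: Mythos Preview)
Your proof is correct and follows essentially the same route as the paper's own proof: reduce to $\E[\|\widetilde{X}\eps\|^4]\le\majorantEnormeXepsquatre$ via $X=\E[XX']^{1/2}\widetilde{X}$, then control $\maxvp(\E[XX'])$ by $\E[\|X\|^2]\le\majorantEnormeXquatreunplusrho^{1/(2(1+\rho))}$ for the first bound, and use $u'\E[XX']^{-1}X=v'\widetilde{X}$ together with $\|v\|^2\le\|u\|^2/\minlambdaEXXt$ for the second. The only cosmetic difference is that the paper bounds $\maxvp(\E[XX'])$ by $\trace(\E[XX'])=\E[\|X\|^2]$ rather than by $\E[\|XX'\|]$, which amounts to the same thing.
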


\medskip

\noindent
{\it Proof of Lemma~\ref{lem:up_bound_moments}:} We can write
\begin{equation*}
    \E[\|X\varepsilon\|^4] \leq \|\E[XX']^{1/2}\|^4\E\left[\big|\big|\widetilde{X}\varepsilon\big|\big|^4\right] \leq \maxvp(\E[XX'])^2\majorantEnormeXepsquatre.
\end{equation*}
Using standard properties of the trace of a matrix, we obtain
\begin{equation*}
    \maxvp(\E[XX'])
    \leq \trace(\E[XX']) = \E[\|X\|^2]
    \leq \E\left[\|X\|^{4(1+\rho)}\right]^{\frac{1}{2(1+\rho)}} \leq \majorantEnormeXquatreunplusrho^{\frac{1}{2(1+\rho)}},
\end{equation*}
which is enough to get the first result.

\medskip

For the second result, we first remark that
\begin{equation*}
    \E\left[\left(u'\E[XX']^{-1}X\varepsilon\right)^4\right] \leq ||\uu||^4\big|\big|\E[XX']^{-1/2}\big|\big|^4\E\big[\|\widetilde{X}\varepsilon\|^4\big] \leq \|\uu\|^4\big|\big|\E[XX']^{-1/2}\big|\big|^4\majorantEnormeXepsquatre.
\end{equation*}
To conclude, we use the fact that $\big|\big|\E[XX']^{-1/2}\big|\big|^4 = \minvp\big(\E[XX']\big)^{-2} \leq \minlambdaEXXt^{-2}$. $\Box$

\begin{lemma}
    \label{lem:marcinkiewicz_zygmund}
    Let $n \geq 1$ and \((X_i)_{i=1}^n\) be an \iid{} sequence of random vectors of dimension~$p$ such that \(\E[||X||^{4(1+\rho)}] \leq K\) for some $\rho>0$. There exists a constant $C(\rho)$, depending only on $\rho$ such that
    \begin{equation*}
        \E\left[ \left|\left|
        \frac{1}{n}\sum_{i=1}^n||X_i||^4
        - \E \big[||X||^4 \big]  \right|\right|^{1+\rho} \right]
        \leq 2^{1+\rho} C(\rho) 
        K\left\{ n^{-\rho}
        \Indicator\{\rho \leq 1\}
        + n^{-(1+\rho)/2}
        \Indicator\{\rho > 1\} \right\}
    \end{equation*}
\end{lemma}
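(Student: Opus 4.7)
The plan is to reduce the statement to a standard moment inequality for sums of i.i.d.\ centered random variables (Marcinkiewicz--Zygmund, or equivalently von Bahr--Esseen in the range $1+\rho\leq 2$). Set $Z_i := \|X_i\|^4 - \E[\|X\|^4]$ for $i=1,\ldots,n$. The $Z_i$ are i.i.d.\ and centered. Because $\E[\|X\|^{4(1+\rho)}] \leq K$, the random variable $Z$ admits a finite $(1+\rho)$-th absolute moment, so the sum $\sum_{i=1}^n Z_i$ is amenable to moment inequalities at exponent $p=1+\rho$.

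The first step is to bound $\E[|Z|^{1+\rho}]$ in terms of $K$ by centering: using $|a-b|^{1+\rho} \leq 2^{\rho}(|a|^{1+\rho}+|b|^{1+\rho})$ with $a=\|X\|^4$ and $b=\E[\|X\|^4]$, together with Jensen's inequality $|\E[\|X\|^4]|^{1+\rho} \leq \E[\|X\|^{4(1+\rho)}]$, yields
\begin{equation*}
    \E\bigl[|Z|^{1+\rho}\bigr] \leq 2^{\rho}\bigl(\E[\|X\|^{4(1+\rho)}] + |\E[\|X\|^4]|^{1+\rho}\bigr) \leq 2^{1+\rho} K.
\end{equation*}

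The second step applies the Marcinkiewicz--Zygmund inequality at exponent $p=1+\rho$: there exists a constant $C(\rho)$ depending only on $\rho$ such that
\begin{equation*}
    \E\!\left[\left|\sum_{i=1}^n Z_i\right|^{1+\rho}\right] \leq C(\rho) \, \E[|Z|^{1+\rho}] \times
    \begin{cases} n & \text{if } 0<\rho\leq 1, \\ n^{(1+\rho)/2} & \text{if } \rho>1. \end{cases}
\end{equation*}
(For the sub-Gaussian regime $1+\rho\leq 2$ one may equivalently invoke the von Bahr--Esseen inequality, which gives the linear-in-$n$ bound; for $1+\rho\geq 2$ the classical Rosenthal/Marcinkiewicz--Zygmund form provides the $n^{(1+\rho)/2}$ rate.) Dividing both sides by $n^{1+\rho}$ produces the factor $n^{-\rho}$ when $\rho\leq 1$ and $n^{-(1+\rho)/2}$ when $\rho>1$.

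Combining the two steps gives
\begin{equation*}
    \E\!\left[\left|\frac{1}{n}\sum_{i=1}^n\|X_i\|^4 - \E[\|X\|^4]\right|^{1+\rho}\right]
    \leq 2^{1+\rho} C(\rho) K \bigl\{n^{-\rho}\Indicator\{\rho\leq 1\} + n^{-(1+\rho)/2}\Indicator\{\rho>1\}\bigr\},
\end{equation*}
which is the announced bound. The only non-routine aspect is quoting the right moment inequality with a constant that depends only on $\rho$; this is the source of $C(\rho)$ in the final bound and is the main step one would want to reference precisely (e.g., Rosenthal's inequality, Theorem~3 of von Bahr--Esseen for $\rho\leq 1$). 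No additional structure of $X$ beyond the moment hypothesis is needed, so there is no genuine obstacle beyond citing this standard result.
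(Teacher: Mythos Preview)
Your proposal is correct and matches the paper's own proof essentially step for step: the paper also centers to $Z_i=\|X_i\|^4-\E[\|X\|^4]$, invokes a Marcinkiewicz--Zygmund-type bound (citing Corollary~8.2 in Gut's textbook) to get the $n^{-\rho}$ vs.\ $n^{-(1+\rho)/2}$ dichotomy, and then bounds $\E[|Z|^{1+\rho}]\leq 2^{1+\rho}K$ via the same convexity-plus-Jensen argument you give.
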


\noindent
{\it Proof of Lemma~\ref{lem:marcinkiewicz_zygmund}:} By a direct application of Corollary 8.2 in~\cite{Gut2005} (with $||X||^4-\E[||X||^4]$ instead of $X$ and $1+\rho$ instead of $p$)
\begin{align*}
    & \E\left[ \left|\left| \frac{1}{n}\sum_{i=1}^n||X_i||^4 -\E[||X||^4]  \right|\right|^{1+\rho} \right] \\
    &\qquad \leq C(\rho)\E\left[\left| \, ||X||^4 - \E[||X||^4] \, \right|^{1+\rho}\right]\left\{n^{-\rho}\Indicator\{\rho \leq 1\} + n^{-(1+\rho)/2}\Indicator\{\rho > 1\}\right\}.
\end{align*}
We also use the triangle and Jensen inequality to write
\begin{equation*}
    \E\left[\left| \, ||X||^4 - \E[||X||^4] \, \right|^{1+\rho}\right] \leq 2^\rho\left( \E[||X||^{4(1+\rho)}] + \E[||X||^4]^{1+\rho} \right) \leq 2^{1+\rho}K. \qquad\qquad\qquad\qquad \Box
\end{equation*}

\begin{lemma}[Edgeworth expansion]
    \label{lem:edg_exp}
    Let \(n \geq 1\) 
    and $(\xi_i)_{i=1}^n$ be an \iid{} sequence of random variables
    with $\E[\xi] = 0,$ $\V(\xi) = \sigma^2 > 0$ and $\E\!\left[\xi^4\right]/\sigma^4 \leq \majorantKurtxi$.
    Let \(\overline{\xi}_n := n^{-1} \sumiunn \xi_i\), $\widehat{\sigma}_0^2 := n^{-1}\sum_{i=1}^n \xi_i^2$ and $\sigmanhat^2 := n^{-1}\sum_{i=1}^n\left(\xi_i-\overline{\xi}_n\right)^2$ and recall the definition of $\DeltanB$ and $\DeltanE$ in Section~\ref{sec:expectation_example}, respectively in Equations~\eqref{eq:definition_DeltanB} and~\eqref{eq:definition_DeltanE}. For every $x>0$ and $a>1$
    \begin{align*}
        (i) & \quad \Prob\left( \sqrt{n}\left| \overline{\xi}_n \right|/\sigma > x \right)
        \leq 2\Big\{ \Phi(-x) + \DeltanE \wedge \DeltanB \Big\} \\
        (ii) & \quad \Prob\left( \sqrt{n}\left| \overline{\xi}_n \right|
        > \sigmazeronhat x \right) 
        \leq 2 \Big\{ \Phi(-x/\sqrt{a}) + \DeltanE \wedge \DeltanB \Big\} + \exp\left( -\frac{n(1-1/a)^2}{2\majorantKurtxi} \right) \\
        (iii) & \quad \Prob\left( \sqrt{n}\left| \overline{\xi}_n \right| > \sigmanhat x \right) \leq 2\Big\{ \Phi\big(-x/\sqrt{a(1+x^2/n)}\big) + \DeltanE \wedge \DeltanB \Big\} + \exp\left( -\frac{n(1-1/a)^2}{2\majorantKurtxi} \right).
    \end{align*}
\end{lemma}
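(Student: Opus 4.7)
\medskip

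\noindent
\textbf{Proof proposal for Lemma~\ref{lem:edg_exp}.}
The plan is to treat the three statements in order, each one reducing (modulo standard algebraic manipulations) to the previous one combined with a single additional ingredient.

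\medskip

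For part~(i), I would start from the definitions of $\DeltanB$ and $\DeltanE$ applied to $S_n := \sqrt{n}\,\overline{\xi}_n/\sigma$. Writing
\(\Prob(|S_n| > x) = \big(1 - \Prob(S_n \leq x)\big) + \Prob(S_n \leq -x)\)
and using the Berry-Esseen bound twice directly yields $\Prob(|S_n| > x) \leq 2\Phi(-x) + 2\DeltanB$. The Edgeworth-based bound is obtained by the same decomposition: substituting the expansion $\Prob(S_n \leq y) = \Phi(y) + (\lambda_{3,n}/(6\sqrt{n}))(1-y^2)\varphi(y) + R_y$ at $y = x$ and $y = -x$, the key point is that the correction $(1-y^2)\varphi(y)$ is an \emph{even} function of~$y$, so the two Edgeworth terms cancel upon addition, leaving $2\Phi(-x) + 2\DeltanE$. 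Taking the minimum of the two bounds gives~(i).

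\medskip

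For part~(ii), the strategy is the classical decomposition along the event where $\sigmazeronhat$ is not too small compared to $\sigma$. Write
\begin{equation*}
    \Prob\big(\sqrt{n}|\overline{\xi}_n| > \sigmazeronhat x\big)
    \leq \Prob\Big(\{\sqrt{n}|\overline{\xi}_n| > \sigmazeronhat x\} \cap \{\sigmazeronhat^2/\sigma^2 \geq 1/a\}\Big)
    + \Prob\big(\sigmazeronhat^2/\sigma^2 < 1/a\big).
\end{equation*}
On the first event, $\sigmazeronhat x \geq \sigma x/\sqrt{a}$, so the first probability is bounded by $\Prob\big(\sqrt{n}|\overline{\xi}_n|/\sigma > x/\sqrt{a}\big)$, to which we apply~(i). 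For the second probability, we invoke Theorem~2.19 in \cite{pena2008self} (already used in Section~\ref{ssec:unknown_variance_mean}), which gives the exponential bound $\exp(-n(1-1/a)^2/(2\majorantKurtxi))$ under Assumption~\ref{hyp:subset_exo_non-asymptotic_validity}(iv). Summing the two yields~(ii).

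\medskip

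For part~(iii), the plan is to reduce to~(ii) by exploiting the identity $\sigmanhat^2 = \sigmazeronhat^2 - \overline{\xi}_n^2$ (which holds because $\E[\xi]=0$). Squaring the inequality $\sqrt{n}|\overline{\xi}_n| > \sigmanhat x$ and rearranging gives
\begin{equation*}
    \big\{ \sqrt{n}|\overline{\xi}_n| > \sigmanhat x \big\}
    = \Big\{ \sqrt{n}|\overline{\xi}_n| > \sigmazeronhat x/\sqrt{1+x^2/n} \Big\},
\end{equation*}
so applying~(ii) with $x$ replaced by $\widetilde{x} := x/\sqrt{1+x^2/n}$ immediately yields the claimed bound since $\widetilde{x}/\sqrt{a} = x/\sqrt{a(1+x^2/n)}$. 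No step above looks delicate; the only subtlety is making sure in~(i) to invoke the \emph{evenness} of the Edgeworth correction to obtain the clean factor~$2$ in front of $\DeltanE$, and in~(iii) to handle the algebraic rearrangement cleanly.
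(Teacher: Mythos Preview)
Your proposal is correct and follows essentially the same route as the paper: part~(i) via the two-sided tail decomposition and the evenness of the Edgeworth correction, part~(ii) via the split on $\{\sigmazeronhat^2/\sigma^2 \geq 1/a\}$ combined with Theorem~2.19 of \cite{pena2008self}, and part~(iii) via the identity $\sigmanhat^2 = \sigmazeronhat^2 - \overline{\xi}_n^2$. Your treatment of~(iii) is in fact slightly cleaner than the paper's: you establish the event equality $\{\sqrt{n}|\overline{\xi}_n| > \sigmanhat x\} = \{\sqrt{n}|\overline{\xi}_n| > \sigmazeronhat x/\sqrt{1+x^2/n}\}$ globally (which indeed holds, including in the degenerate case $\sigmazeronhat = 0$) and then invoke~(ii), whereas the paper first intersects with $\{\sigmazeronhat^2/\sigma^2 \geq 1/a\}$, reasons via $T_n := \sqrt{n}\overline{\xi}_n/\sigmazeronhat$ and Lemma~\ref{lem:deterministic_equality}, and then applies~(i). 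One minor remark: in~(ii) the exponential bound comes from the kurtosis hypothesis of the lemma itself, not from Assumption~\ref{hyp:subset_exo_non-asymptotic_validity}(iv) (which pertains to the OLS setting); the content is the same but the reference should be to the lemma's own assumption $\E[\xi^4]/\sigma^4 \leq \majorantKurtxi$.
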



Note that we consider the events of (ii) and (iii) without dividing \(\sqrt{n}\left| \overline{\xi}_n \right|\) by \(\sigmazeronhat\) or by \(\sigmanhat\) as the latter random variables can take the value~0 (in that case, the fraction is not well defined).

\medskip

\noindent
{\it Proof of Lemma~\ref{lem:edg_exp}:}
We define $\lambda_3 := \E[\xi^3]/\sigma^3,$
$K_p := \E[|\xi|^p]/\sigma^p$ for any \(p \in \INTST\), and
$\Edg(x) := \dfrac{\lambda_3}{6\sqrt{n}}(1-x^2)\varphi(x)$ for any real number~\(x\).
We obtain
\begin{align}
\label{ch5:eq:edg_exp_1}
    &\Prob\left( \frac{\sqrt{n}\left|\overline{\xi}_n\right|}{\sigma} > x \right)
    \leq 1 - \Prob\left( \frac{\sqrt{n} \, \overline{\xi}_n}{\sigma} \leq x \right) + \Prob\left( \frac{\sqrt{n} \, \overline{\xi}_n}{\sigma} \leq -x \right) \nonumber \\
    &\leq 1 - \left( \Prob\left( \frac{\sqrt{n} \, \overline{\xi}_n}{\sigma} \leq x \right) - \Phi(x) - \Edg(x) \right)
    + \left( \Prob\left( \frac{\sqrt{n} \, \overline{\xi}_n}{\sigma} \leq -x \right) - \Phi(-x) - \Edg(-x) \right) \nonumber \\
    & \hspace{7.5cm} - \Phi(x) - \Edg(x) + \Phi(-x) + \Edg(-x) \nonumber \\
    &\leq 2\Phi(-x) + \left| \Prob\left( \frac{\sqrt{n} \, \overline{\xi}_n}{\sigma} \leq x \right) - \Phi(x) - \Edg(x) \right|
    + \left| \Prob\left( \frac{\sqrt{n} \, \overline{\xi}_n}{\sigma} \leq -x \right) - \Phi(-x) - \Edg(-x) \right| \nonumber \\
    & \leq 2\left\{\Phi(-x)+\DeltanE\right\},
\end{align}
where in the before-last line, we combine parity of $\Edg$ with the fact that $\Phi(x) = 1-\Phi(-x)$. In a similar fashion, we can write
\begin{align}
\label{ch5:eq:be_exp_1}
    &\Prob\left( \frac{\sqrt{n}\left|\overline{\xi}_n\right|}{\sigma} > x \right)
    \leq 1 - \Prob\left( \frac{\sqrt{n} \, \overline{\xi}_n}{\sigma} \leq x \right) + \Prob\left( \frac{\sqrt{n} \, \overline{\xi}_n}{\sigma} \leq -x \right) \nonumber \\
    &\leq 1 - \Prob\left( \frac{\sqrt{n} \, \overline{\xi}_n}{\sigma} \leq x \right) - (1-\Phi(x))
    + \Prob\left( \frac{\sqrt{n} \, \overline{\xi}_n}{\sigma} \leq -x \right) - \Phi(-x) + (1-\Phi(x)) + \Phi(-x) \nonumber \\
    & \leq 2\left\{\Phi(-x)+\DeltanB\right\},
\end{align}
Combining~\eqref{ch5:eq:edg_exp_1} and~\eqref{ch5:eq:be_exp_1} yields the first result of the lemma.

\medskip
 
For the second result, we use Theorem 2.19 in~\cite{pena2008self} which allows us to write for every $a > 1$
\begin{align}
\label{eq:left_tail_exp_concentration}
    & \Prob\left( \frac{\widehat{\sigma}_0^2}{\sigma^2} < \frac{1}{a} \right)
    \leq \exp\left( -\frac{n(1-1/a)^2}
    {2\majorantKurtxi} \right).
\end{align}
Combining Lemma~\ref{lem:edg_exp}(i) and~\eqref{eq:left_tail_exp_concentration}, we can claim for every $x>0$ and $a>1$
\begin{align*}
    \Prob\left( \sqrt{n}\left| \overline{\xi}_n \right| > \widehat{\sigma}_0 x \right)
    & \leq \Prob\left( \sqrt{n}\left| \overline{\xi}_n \right|/\sigma > x/\sqrt{a} \right) 
    + \Prob\left( \frac{\widehat{\sigma}_0^2}{\sigma^2}<\frac{1}{a} \right) \\
    & \leq 2\Big\{ \Phi(-x/\sqrt{a}) + \DeltanE \wedge \DeltanB \Big\}
    + \exp\left( - \frac{n(1-1/a)^2}{2\majorantKurtxi} \right),
\end{align*}
which concludes the proof of result (ii).

\medskip

The final result of the lemma can be proved as follows. We can write
\begin{align*}
    \Prob\left( \sqrt{n}\left| \overline{\xi}_n \right| > \sigmanhat x \right) \leq \Prob\left( \underbrace{ \left\{ \sqrt{n}\left| \overline{\xi}_n \right| > \sigmanhat x\right\} \cap \left\{\frac{\sigmazeronhat^2}{\sigma^2} \geq \frac{1}{a}\right\} }_{=: \mathcal{A}} \right) + \Prob\left( \frac{\sigmazeronhat^2}{\sigma^2} < \frac{1}{a} \right).
\end{align*}
On $\mathcal{A}$, we have $\sigmazeronhat^2/\sigma^2 \geq 1/a$ which implies that $\sigmazeronhat^2>0$
(remember that $\sigma^2>0$ by assumption)
and $T_n := \sqrt{n}\left| \overline{\xi}_n \right|/\sigmazeronhat$ is well-defined. Lemma~\ref{lem:deterministic_equality} therefore ensures on $\mathcal{A}$ that
\begin{align*}
    \left\{ \sqrt{n}\left| \overline{\xi}_n \right| > \sigmanhat x\right\} & = \left\{ \sqrt{n}\left| \overline{\xi}_n \right|/\sigmazeronhat > (\sigmanhat/\sigmazeronhat) x\right\} \\
    & = \left\{ \left| T_n \right| > \sqrt{1-\frac{T_n^2}{n}} x\right\} \\
    & = \left\{ |T_n| > x\left(1+\frac{x^2}{n}\right)^{-1/2} \right\}.
\end{align*}
Using again that $\sigmazeronhat^2/\sigma^2 \geq 1/a$ on $\mathcal{A}$ together with~\eqref{eq:left_tail_exp_concentration} and Lemma~\ref{lem:edg_exp}(i), we can write
\begin{align*}
    \Prob\left( \sqrt{n}\left| \overline{\xi}_n \right| > \sigmanhat x \right) & \leq \Prob\left( \sqrt{n}\left| \overline{\xi}_n \right| / \sigma > x/\sqrt{a(1+x^2/n)} \right) + \Prob\left( \frac{\sigmazeronhat^2}{\sigma^2} < \frac{1}{a} \right) \\
    & \leq 2\Big\{ \Phi\big(-x/\sqrt{a(1+x^2/n)}\big) + \DeltanE \wedge \DeltanB \Big\} + \exp\left( -\frac{n(1-1/a)^2}{2\majorantKurtxi} \right),
\end{align*}
which is result (iii). $\Box$

\begin{lemma}
    \label{lem:deterministic_equality}
    Let \(n \geq 1\) and $(\xi_i)_{i=1}^n$ be an \iid{} sequence of random variables
    with $\E[\xi] = 0$. Let \(\overline{\xi}_n := n^{-1} \sumiunn \xi_i\), $\sigmazeronhat^2 := n^{-1}\sum_{i=1}^n \xi_i^2$ and $\sigmanhat^2 := n^{-1}\sum_{i=1}^n \left(\xi_i-\overline{\xi}_n\right)^2$. If $\sigmazeronhat^2>0$, then \(\sigmanhat^2/\sigmazeronhat^2 = 1 - T_n^2/n\),
    with $T_n := \sqrt{n}\overline{\xi}_n/\sigmazeronhat$.
\end{lemma}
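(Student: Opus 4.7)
The statement is a purely algebraic identity, so the plan is simply to unpack the definitions and apply the standard variance decomposition. The assumption $\E[\xi]=0$ is actually irrelevant to the identity itself (it plays a role only in the contexts where the lemma is invoked, namely in Lemma~\ref{lem:edg_exp}).

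My plan is to start from the usual expansion
\begin{equation*}
    \sum_{i=1}^n (\xi_i - \overline{\xi}_n)^2
    = \sum_{i=1}^n \xi_i^2 - 2 \overline{\xi}_n \sum_{i=1}^n \xi_i + n \overline{\xi}_n^2
    = \sum_{i=1}^n \xi_i^2 - n \overline{\xi}_n^2,
\end{equation*}
where the second equality uses $\sum_{i=1}^n \xi_i = n\overline{\xi}_n$. Dividing by $n$ yields $\sigmanhat^2 = \sigmazeronhat^2 - \overline{\xi}_n^2$. Since by hypothesis $\sigmazeronhat^2 > 0$, I can divide both sides by $\sigmazeronhat^2$ to obtain
\begin{equation*}
    \frac{\sigmanhat^2}{\sigmazeronhat^2}
    = 1 - \frac{\overline{\xi}_n^2}{\sigmazeronhat^2}
    = 1 - \frac{1}{n}\left(\frac{\sqrt{n}\,\overline{\xi}_n}{\sigmazeronhat}\right)^{\!2}
    = 1 - \frac{T_n^2}{n},
\end{equation*}
which is the claimed identity.

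There is no real obstacle here: the proof is two lines of algebra, and no probabilistic argument is needed. The only minor point worth flagging in the write-up is that the hypothesis $\sigmazeronhat^2 > 0$ is what makes $T_n$ and the ratio $\sigmanhat^2/\sigmazeronhat^2$ well-defined; the i.i.d.\ and zero-mean assumptions are not used at all in the proof.
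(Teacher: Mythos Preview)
Your proof is correct and follows exactly the same route as the paper: establish the identity $\sigmanhat^2 = \sigmazeronhat^2 - \overline{\xi}_n^2$, divide by the positive $\sigmazeronhat^2$, and recognize $T_n^2/n$. Your observation that the i.i.d.\ and zero-mean assumptions are not used in the argument is also accurate.
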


\medskip

\noindent
{\it Proof of Lemma~\ref{lem:deterministic_equality}:}
We first note that the following is always true: $\sigmanhat^2 = \sigmazeronhat^2-\overline{\xi}_n^2$. Dividing on both side by $\sigmazeronhat^2$ which is positive and  using the definition of $T_n$ yields the result. $\Box$

\medskip

\begin{lemma}
    Let \(\Phi\) denote the c.d.f of the standard Gaussian \(\Normale(0,1)\) distribution.
    Let $A$ be a real-valued random variable, and $F$ be its c.d.f.
    Then for any $x, \alpha \in \Rb$,
    we have
    \begin{enumerate}
        \item[(i)] $\left| \Prob\left( |A| > x \right) - \alpha \right|
        \leq 2 \|F - \Phi \|_\infty
        + \left| 2 \Phi(-x) - \alpha \right|,$
        
        \item[(ii)] $\left| \Prob\left( |A| > x \right) - \alpha \right|
        \leq 2 \left\|F - (\Phi + \Edg) \right\|_\infty
        + \left| 2 \Phi(-x) - \alpha \right|,$
    \end{enumerate}
    where $\Edg(x) := \dfrac{\lambda}{6\sqrt{n}}(1-x^2)\varphi(x)$
    for $x \in \Rb$, $n \in \INTST$, $\lambda\in\Rb$.
    \label{lemma:bound_cdf_alpha_Phi}
\end{lemma}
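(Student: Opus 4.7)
The key identity is that for any real random variable $A$ with c.d.f.~$F$ and any $x \geq 0$, the two-sided tail decomposes as
\begin{equation*}
    \Prob(|A| > x) = \Prob(A > x) + \Prob(A < -x) = \bigl(1 - F(x)\bigr) + F\bigl((-x)^-\bigr),
\end{equation*}
where $F((-x)^-) := \lim_{y \uparrow -x} F(y)$. Since the standard Gaussian c.d.f.~$\Phi$ is continuous and satisfies $\Phi(x) + \Phi(-x) = 1$, I would replace $F$ by $\Phi$ in the previous display and note that the resulting expression is exactly $2\Phi(-x)$. Subtracting these two expressions yields
\begin{equation*}
    \Prob(|A| > x) - 2\Phi(-x) = -\bigl[F(x) - \Phi(x)\bigr] + \bigl[F((-x)^-) - \Phi(-x)\bigr],
\end{equation*}
so that the triangle inequality, together with the fact that $\Phi$ is continuous (so $\Phi(-x) = \Phi((-x)^-)$ and $|F((-x)^-) - \Phi(-x)| \leq \|F - \Phi\|_\infty$), gives $|\Prob(|A| > x) - 2\Phi(-x)| \leq 2\|F - \Phi\|_\infty$. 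Combining this with a further triangle inequality against $|2\Phi(-x) - \alpha|$ proves~(i).

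For part~(ii), the observation that makes the Edgeworth correction disappear in the two-sided statement is that $\Edg$ is an \emph{even} function of~$x$ (since $\Edg(x) = \lambda (1-x^2)\varphi(x)/(6\sqrt{n})$ and both $1-x^2$ and $\varphi$ are even). Setting $G := \Phi + \Edg$, one therefore has
\begin{equation*}
    \bigl(1 - G(x)\bigr) + G(-x) = 2\Phi(-x) + \Edg(-x) - \Edg(x) = 2\Phi(-x).
\end{equation*}
Repeating the decomposition above with $G$ in place of $\Phi$ and invoking the continuity of $G$ yields
\begin{equation*}
    \bigl|\Prob(|A| > x) - 2\Phi(-x)\bigr| \leq 2 \, \|F - (\Phi + \Edg)\|_\infty,
\end{equation*}
and a final application of the triangle inequality against $|2\Phi(-x) - \alpha|$ concludes~(ii).

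\textbf{Main subtlety.} There is no real obstacle here, only a mild measure-theoretic care: because $F$ may have jumps, one must work with $F((-x)^-)$ rather than $F(-x)$ in the tail identity. This is harmless because the comparison functions $\Phi$ and $\Phi+\Edg$ are continuous, so their left limits coincide with their values and the sup-norm bound applies without loss. The statement is actually only useful for $x \geq 0$ (otherwise $|2\Phi(-x) - \alpha|$ is already larger than $1$), but the formulation covers all~$x \in \Rb$ trivially.
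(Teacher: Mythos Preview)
Your argument is correct and essentially identical to the paper's: both decompose $\Prob(|A|>x)$ as $\Prob(A>x)+\Prob(A<-x)$, compare to $2\Phi(-x)$ using $1-\Phi(x)=\Phi(-x)$ (and the evenness of $\Edg$ for part~(ii)), and handle the possible jump of $F$ at $-x$ via the continuity of the reference function. You invoke the fact that a sup-norm bound passes to left limits; the paper spells out the same step with an explicit $\varepsilon$-argument.

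One small correction to your closing remark: the claim that the case $x<0$ is covered ``trivially'' is not right. Take $F=\Phi$, any $x<0$, and $\alpha=2\Phi(-x)\in(1,2)$; then the left-hand side equals $|1-\alpha|>0$ while the right-hand side equals $0$. So the inequality as literally stated (for all $x\in\Rb$) is false; your decomposition---and the paper's---is only valid for $x\geq 0$. The paper's statement shares this oversight, and since every application in the paper has $x>0$ it is harmless, but it is not ``trivial.''
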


A careful examination of the following proof shows that Statement (ii) in Lemma~\ref{lemma:bound_cdf_alpha_Phi} still holds as long as $\Edg$ is replaced by any function which is even and continuous.

\begin{proof}
We first prove (i).
Using the triangle inequality, we obtain
\begin{align}
\label{eq:detail_univ_conv_sigma_known_1}
    \left| \Prob\left( |A| > x \right) - \alpha \right| 
    & \leq \Big| \Prob( A > x) 
    + \Prob( A < -x) - \alpha \Big| \nonumber \\
    &\leq \left| \Prob( A \leq x) - \Phi(x) \right| 
    + \left| \Prob( A < -x ) - \Phi(-x) \right| 
    + \left| 2\Phi(-x) - \alpha \right| \nonumber \\
    &\leq \|F - \Phi \|_\infty
    + \left| \Prob( A < -x) - \Phi(-x) \right| 
    +  \left| 2 \Phi(-x) - \alpha\right|.
\end{align}
We remark that
\begin{align*}
     \Prob( A < -x) - \Phi(-x)
     \leq F(-x) - \Phi(-x)
     \leq \|F - \Phi \|_\infty.
\end{align*}
Moreover, for every $\eps > 0$, we have
\begin{align*}
    \Phi(-x) - \Prob( A < -x)
    &\leq \Phi(-x) - \Prob( A \leq -x - \eps) \\
    &= \Phi(-x) - \Phi(-x - \eps)
    + \Phi(-x - \eps) - \Prob( A \leq -x - \eps) \\
    &\leq \Phi(-x) - \Phi(-x - \eps)
    + \|F - \Phi \|_\infty.
\end{align*}
By continuity of $\Phi$, in the limit when $\eps \to 0$, we obtain
\begin{align*}
    \Phi(-x) - \Prob( A < -x)
    &\leq \|F - \Phi \|_\infty.
\end{align*}
This shows that
$\left| \Prob( A < -x) - \Phi(-x) \right|
\leq  \|F - \Phi \|_\infty$.
Combining this inequality with \eqref{eq:detail_univ_conv_sigma_known_1} finishes the proof of (i).

\medskip

We now prove (ii).
Using the triangle inequality and the parity of the function $\Edg$, we obtain
\begin{align}
    \label{eq:detail_univ_conv_sigma_known_2}
    & \left| \Prob\left( |A| > x \right) - \alpha \right| 
    \leq \Big| \Prob( A > x) 
    + \Prob( A < -x) - \alpha \Big| \nonumber \\
    &\leq \left| \Prob( A \leq x) - \Phi(x) - \Edg(x) \right| 
    + \left| \Prob( A < -x ) - \Phi(-x) - \Edg(-x) \right| 
    + \left| 2\Phi(-x) - \alpha \right| \nonumber \\
    &\leq \|F - \Phi - \Edg \|_\infty
    + \left| \Prob( A < -x) - \Phi(-x) - \Edg(-x) \right| 
    +  \left| 2 \Phi(-x) - \alpha \right|.
\end{align}
We remark that
\begin{align*}
     \Prob( A < -x) - \Phi(-x) - \Edg(-x)
     \leq F(-x) - \Phi(-x) - \Edg(-x)
     \leq \|F - \Phi - \Edg \|_\infty.
\end{align*}
Let $\widetilde{\Phi} := \Edg + \Phi$.
Moreover, for every $\eps > 0$, we have
\begin{align*}
    \widetilde{\Phi}(-x) - \Prob( A < -x)
    &\leq \widetilde{\Phi}(-x) - \Prob( A \leq -x - \eps) \\
    &= \widetilde{\Phi}(-x) - \widetilde{\Phi}(-x - \eps)
    + \widetilde{\Phi}(-x - \eps) - \Prob( A \leq -x - \eps) \\
    &\leq \widetilde{\Phi}(-x) - \widetilde{\Phi}(-x - \eps)
    + \|F - \widetilde{\Phi} \|_\infty.
\end{align*}
By continuity of $\widetilde{\Phi}$, in the limit when $\eps \to 0$, we obtain
\begin{align*}
    \widetilde{\Phi}(-x) - \Prob( A < -x)
    &\leq \|F - \widetilde{\Phi} \|_\infty.
\end{align*}
This show that
$\left| \Prob( A < -x) - \Phi(-x) - \Edg(- x) \right|
\leq  \|F - \Edg - \Phi\|_\infty$.
Combining this inequality with \eqref{eq:detail_univ_conv_sigma_known_2} finishes the proof of (ii).
\end{proof}

\end{document}